\tikzset{My style/.style={circle, minimum size=1pt, inner sep=1pt, fill}}
\newcommand{\C}{{\mathbb{C}}}
\newcommand{\R}{{\mathbb{R}}}
\newcommand{\Z}{{\mathbb{Z}}}
\newcommand{\N}{{\mathbb{N}}}
\renewcommand{\P}{{\mathcal{P}}}
\newcommand{\Pw}{{\P_w}}
\newcommand{\Gr}{{\mathcal{G}r}}
\newcommand{\F}{{\mathcal{F}}}
\newcommand{\h}{{\mathfrak{h}}}
\renewcommand{\t}{{\mathfrak{t}}}
\newcommand{\g}{{\mathfrak{g}}}
\renewcommand{\b}{{\mathfrak{b}}}
\newcommand{\n}{{\mathfrak{n}}}
\newcommand{\ui}{{\underline{i}}}
\newcommand{\uj}{{\underline{j}}}
\newcommand{\hs}{{\bar{s}}}
\newcommand{\tf}{{\tilde{f}}}
\newcommand{\te}{{\tilde{e}}}
\newcommand{\ep}{{\varepsilon}}
\newcommand{\wt}{{\operatorname{wt}}}
\newcommand{\new}{{\mathrm{new}}}
\newcommand{\trop}{{\mathrm{trop}}}
\newcommand{\conv}{{\operatorname{conv}}}
\newcommand{\Pol}{{\operatorname{Pol}}}
\newcommand{\dimvec}{{\underline{\operatorname{dim}}}}
\newcommand{\Ext}{{\operatorname{Ext}}}
\newcommand{\Hom}{{\operatorname{Hom}}}
\newtheorem{theorem}{Theorem}[section]
\newtheorem{lemma}[theorem]{Lemma}
\newtheorem{proposition}[theorem]{Proposition}
\newtheorem{corollary}[theorem]{Corollary}
\newtheorem{conjecture}[theorem]{Conjecture}
\newtheorem{theoremALPH}{Theorem}
\newtheorem{conjectureALPH}[theoremALPH]{Conjecture}
\theoremstyle{definition}
\newtheorem{definition}[theorem]{Definition}
\newtheorem{question}[theorem]{Question}
\newtheorem{example}[theorem]{Example}
\newtheorem{remark}[theorem]{Remark}
\newtheorem{claim}{Claim}
\newenvironment{claimproof}[1]{\par\noindent\underline{Proof:}\space#1}{\hfill $\blacksquare$}
\author{Kathlyn Dykes}
\title{MV polytopes and reduced double Bruhat cells}
\date{}
\begin{document}

\maketitle

\begin{abstract}
When $G$ is a complex reductive algebraic group, MV polytopes are in bijection with the non-negative tropical points of the unipotent group of $G$. By fixing $w$ from the Weyl group, we can define MV polytopes whose highest vertex is labelled by $w$. We show that these polytopes are in bijection with the non-negative tropical points of the reduced double Bruhat cell labelled by $w^{-1}$. To do this, we define a collection of generalized minor functions $\Delta_\gamma^\new$ which tropicalize on the reduced Bruhat cell to the BZ data of an MV polytope of highest vertex $w$. 

We also describe the combinatorial structure of MV polytopes of highest vertex $w$. We explicitly describe the map from the Weyl group to the subset of elements bounded by $w$ in the Bruhat order which sends $u \mapsto v$ if the vertex labelled by $u$ coincides with the vertex labelled by $v$ for every MV polytope of highest vertex $w$. As a consequence of this map, we prove that these polytopes have vertices labelled by Weyl group elements less than $w$ in the Bruhat order.
\end{abstract}

\tableofcontents

\section{Introduction}

For $G$ a complex reductive algebraic group, the irreducible representations are highest weight representations. To understand the tensor products of these irreducible representations, Lusztig defined a canonical basis for each $V(\omega_i)$, which behaves nicely with the decomposition of these tensor products into their irreducible subrepresentations \cite{CanonicalBases}. In \cite{MirkovicVilonen2007}, Mirkov\'ic and Vilonen provide another basis using the geometric Satake correspondence, which relates the representation theory of the Langlands dual group $G^\vee$ with the intersection homology of the affine Grassmanian, $\Gr$.  

Under this correspondence, the bases of the representations correspond to certain subvarieties of $\Gr$, called Mirkov\'ic-Vilonen (MV) cycles. These MV cycles are the irreducible components of the intersection of infinite cells and as such, are difficult to understand as geometric objects. Anderson first conjectured that MV cycles could be analysed by studying their moment polytopes \cite{Anderson} and in \cite{MVpolytopes}, Kamnitzer gives a combinatorial description of MV cycles using these moment polytopes, called MV polytopes. Goncharov and Shen \cite{Geometryofcanonical} take this one step further by explicitly showing that the set of MV polytopes are the tropical points of the unipotent subgroup of $G$. The benefit to this point of view is that the tropical Pl\"{u}cker relations come from the Pl\"{u}cker relations on $N$, which arise naturally by studying the transition maps of Lusztig's positive atlas \cite{TotalPositivityin}. 

In this paper, for $w \in W$, we investigate a subset of MV polytopes called \emph{MV polytopes of highest vertex $w$}, denoted by $\Pw$. These polytopes are MV polytopes whose vertex labelled by $w$ is equal to the vertex labelled by $w_0$.

The original motivation to study these polytopes was to develop a better understanding of affine MV polytopes, although these MV polytopes are also of interest due to their connection to preprojective algebra modules and MV cycles. In \cite{AffineMVpolytopes}, the authors define a class of preprojective algebra modules of interest, $\mathscr{T}^w$ and in \cite{Menard:Richardsonvarieties}, M\'{e}nard proves that $\Pw$ is exactly set of MV polytopes associated to these modules. 

For any MV polytope, there is a canonical labelling of the vertices by the Weyl group, so that the vertex data of $P\in \Pw$ can be labelled $(\mu_v)_{v\in W}$. The main result of this paper is that the vertex data is only dependent on the Weyl group elements bounded by $w$. 
\begin{theoremALPH}[Theorem \ref{theorem:mu_w}, Corollary \ref{corollary:vertexdata}]\label{theoremA}
For every $P \in \Pw$ with vertex data $(\mu_v)_{v\in W}$, $P = \text{conv} \{ \mu_v: v \in W, v \leq w\}$. 
\end{theoremALPH}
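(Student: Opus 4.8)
The plan is to deduce Theorem~\ref{theoremA} from the projection of Theorem~\ref{theorem:mu_w} -- a map $\pi_w\colon W \to \{v \in W : v \le w\}$ with $\mu_u = \mu_{\pi_w(u)}$ for every $P \in \Pw$ and every $u \in W$ -- after which Corollary~\ref{corollary:vertexdata} is immediate, since for $P \in \Pw$ with vertex data $(\mu_v)_{v \in W}$,
\[
P \;=\; \conv\{\mu_u : u \in W\} \;=\; \conv\{\mu_{\pi_w(u)} : u \in W\} \;\subseteq\; \conv\{\mu_v : v \le w\} \;\subseteq\; P,
\]
the last inclusion holding because each $\mu_v$ is a point of $P$. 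So the content is Theorem~\ref{theorem:mu_w}.

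I would argue using the Lusztig-data model of an MV polytope $P$: for each reduced word $\ui = (i_1,\dots,i_m)$ of $w_0$ there is an edge path $\mu_e = \mu_{[0]}, \dots, \mu_{[m]} = \mu_{w_0}$ with $\mu_{[k]} = \mu_{s_{i_1}\cdots s_{i_k}}$ and $\mu_{[k]} - \mu_{[k-1]} = n_k^{\ui}\,\beta_k^\vee$, where $\beta_k = s_{i_1}\cdots s_{i_{k-1}}\alpha_{i_k} > 0$ and $n_k^{\ui} \ge 0$, and as $\ui$ varies these data transform by the tropical Plücker relations of braid moves; equivalently one may phrase everything through the BZ data, the valuations of the new minors $\Delta_\gamma^{\new}$. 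The soft ``easy half'' is: if $w \le_R v$ in right weak order then $\mu_v = \mu_{w_0}$ for all $P \in \Pw$. Choosing $\ui$ whose length-$\ell(w)$ prefix spells $w$ and whose length-$\ell(v)$ prefix spells $v$ (possible as $w \le_R v \le_R w_0$), the equality $\mu_{[\ell(w)]} = \mu_w = \mu_{w_0} = \mu_{[m]}$ gives $\sum_{k > \ell(w)} n_k^{\ui}\beta_k^\vee = 0$; pairing with a regular dominant weight and using that each $\beta_k^\vee$ is a positive coroot forces $n_k^{\ui} = 0$ for all $k > \ell(w)$, hence $\mu_{[k]} = \mu_w$ for $k \ge \ell(w)$, in particular $\mu_v = \mu_{w_0}$. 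Geometrically, the normal cone of $P$ at $\mu_{w_0}$ is convex and contains the Weyl chambers attached to $w$ and to $w_0$, hence all chambers attached to $v \ge_R w$. In the $w$-adapted word this also shows that $\Pw$ is exactly the $\ell(w)$-parameter family $(n_1^{\ui},\dots,n_{\ell(w)}^{\ui})$ with the remaining entries zero.

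For an arbitrary $u$ I would take $\pi_w(u)$ to be the Bruhat-minimal $v$ with $\mu_u = \mu_v$ on all of $\Pw$; the tasks are to show this minimum exists, to identify it, and to check $\pi_w(u) \le w$. The engine is a determination, for every reduced word $\ui$ of $w_0$, of which entries $n_k^{\ui}$ vanish identically on $\Pw$: one starts from the $w$-adapted word, where the forced zeros are precisely the positions past $\ell(w)$, and propagates the forced-zero pattern along an arbitrary chain of braid moves via the rank-two tropical Plücker relations; reading $\mu_u - \mu_e$ off a word of $w_0$ whose initial segment spells $u$ then exhibits it as an explicit nonnegative sum of coroots $\beta_k^\vee$, from which both the minimal $v$ and the bound $v \le w$ drop out.

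I expect this propagation -- and its repackaging as a combinatorial map $W \to \{v \le w\}$ -- to be the main obstacle: the vanishing pattern is genuinely reduced-word dependent (in rank two the same polytope has its forced zero in different root directions for the two reduced words), the $\min$'s in the tropical Plücker relations do essential work (an extra edge collapses in a way not forced by the pseudo-Weyl structure alone), and the resulting map is not a naive truncation of $u$ in Bruhat or in weak order. Finally, surjectivity of $\pi_w$ onto $\{v \le w\}$, which gives the labelling assertion but is not needed for the displayed equality, follows by exhibiting for each $v \le w$ a sufficiently generic member of the family $\Pw$ at which $\mu_v$ is a genuine vertex.
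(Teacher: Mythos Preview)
Your overall strategy matches the paper: reduce to showing $\mu_u = \mu_{\pi_w(u)}$ for a map $\pi_w \colon W \to [e,w]$, do the easy half exactly as you describe, and identify the hard content as locating the forced zeros in the Lusztig data for an arbitrary reduced word of $w_0$. The paper's explicit formula is $\pi_w(v) = v_w$, the unique longest element of $[e,v]_R \cap [e,w]$ (equivalently $v_w = (vw_0)\bigl((w_0v^{-1}) * w\bigr)$ via the Demazure product), and it pins down the zero pattern precisely: for any reduced word $\ui$ of $w_0$, the forced zeros of $n_\bullet^{\ui}$ sit exactly at the positions of the \emph{rightmost} reduced subword $\ui^{w^{-1}w_0}$ of $w^{-1}w_0$ inside $\ui$ (Proposition~\ref{proposition:zeros}), and a combinatorial argument (Lemmas~\ref{lemma:intersection}--\ref{lemma:reducedwords}) shows those positions fill the interval from $v_w$ to $v$ on the path through $\ui$.

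The substantive divergence is the mechanism. You propose to propagate the zero pattern through \emph{local braid moves} using rank-two tropical Pl\"ucker relations. The paper instead propagates by \emph{Saito crystal reflections} $\sigma_{j^*}^*$, which cyclically shift both the word and the Lusztig data globally; the crucial Lemma~\ref{lemma:saitoPw} asserts that $\sigma_{j^*}^*$ preserves $\Pw$ whenever $s_j \notin D_R(w^{-1}w_0)$. That lemma in turn rests on the \emph{generalized diagonal relations} (Lemmas~\ref{lemma:generaldiagonalrelations-simplylaced}--\ref{lemma:generaldiagonalrelations-nonsimplylaced}): for any MV polytope and any $s_j \in D_L(u)$, one has $\langle \mu_u - \mu_{s_ju}, \omega_k \rangle \le 0$ for all $k \ne j$. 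This inequality relates vertices that need not share a $2$-face, so no purely local tropical-Pl\"ucker manipulation yields it; the paper's proof uses preprojective-algebra modules in the simply-laced case (a genuine geometric input, due to Baumann) and folding otherwise. From it one deduces (Lemma~\ref{lemma:s_jw}) that for $P \in \Pw$ and $s_j \in D_L(w)$ one has $\mu_{s_jw} = \mu_{w_0 s_{j^*}}$, which is exactly what makes the Saito step go through.

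This is the missing idea in your sketch. You have correctly located the obstacle (``the $\min$'s do essential work, the map is not a naive truncation''), but not the lemma that overcomes it. Your braid-move route could in principle substitute a direct finite check of the rank-two theorems ($A_2$, $B_2$, $G_2$) for the generalized diagonals, together with a separate combinatorial verification that the rightmost-subword pattern transforms compatibly under each braid move; but as you already note, the zero pattern after a braid move is \emph{not} determined by the input zero pattern alone (e.g.\ in $A_2$, input pattern $\{2\}$ yields no forced output zero), so only the specific patterns arising from $\Pw$ behave well, and certifying this is where the real work lies.
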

This theorem is proven by explicitly describing the map $W \rightarrow \{ v \leq w\}$. 

We would also like to realize $\Pw$ as the non-negative tropical points on some subvariety of $N$ such that the tropicalized generalized minors functions send a non-negative tropical point to the BZ data of an MV polytope of highest vertex $w$. The candidate for this subvariety is the reduced double Bruhat cell, $L^{w^{-1}} = N \cap B_- w^{-1} B_-$. On this subvariety, some of these generalized minor functions vanish. Instead, we redefine these minors $\Delta_{v\omega_i}^\new$ to be the smallest weight $\gamma$ such that $\Delta_{\gamma, v\omega_i}\neq 0$ on $L^{w^{-1}}$. Consider the collection of tropical functions $M_\gamma = (\Delta_\gamma^\new \circ \eta_{w^{-1}}^{-1})^\trop$ for $\gamma \in \Gamma$, where $\eta_{w^{-1}}$ is a necessary change of coordinates. 

\begin{theoremALPH}[Proposition \ref{proposition:edgeequalities}]\label{theoremB}
On $L^{w^{-1}}(\Z^\trop)_\geq$, the collection $(M_{\gamma})_{\gamma \in \Gamma}$ satisfies the following conditions:
\begin{enumerate}[label=(\roman*)]
\item the edge inequalities,
\item the tropical Pl\"{u}cker relations on the subcollection $(M_{\gamma})_{\gamma \in \Gamma^w}$, 
\end{enumerate}
\end{theoremALPH}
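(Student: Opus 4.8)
The plan is to deduce both parts by tropicalizing subtraction-free algebraic identities among the functions $\Delta_{\gamma}^\new$ on $L^{w^{-1}}$, using the positivity of $\eta_{w^{-1}}$. The input is the three-term (generalized Pl\"ucker) relation for generalized minors, which holds on all of $G$: for a chamber weight $\gamma=v\omega_i$ it expresses the product $\Delta_{v\omega_i}\,\Delta_{vs_i\omega_i}$ as a sum of two monomials in generalized minors with non-negative integer exponents --- a ``crossed'' product of minors and the factor $\prod_{j\neq i}\Delta_{v\omega_j}^{\,-a_{ij}}$. Restricting this relation to $L^{w^{-1}}$ and passing to leading (lowest-degree with respect to the $T$-grading) parts --- which, for each minor appearing, is by definition the corresponding $\Delta^\new$ --- I would obtain an identity whose leading terms read
\[
\Delta_{v\omega_i}^\new\,\Delta_{vs_i\omega_i}^\new\;=\;\prod_{j\neq i}\bigl(\Delta_{v\omega_j}^\new\bigr)^{-a_{ij}}\;+\;B,
\]
where $B$ is the contribution of the crossed term; the crux of the matter is that, after restriction, $B$ need not have the same degree as the other two terms, in which case it drops out of the leading identity. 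Since $\eta_{w^{-1}}$ is a positive birational isomorphism (it comes from Lusztig's positive atlas), this is a subtraction-free identity between the functions $\Delta_{\gamma}^\new\circ\eta_{w^{-1}}^{-1}$ and therefore tropicalizes on $L^{w^{-1}}(\Z^\trop)_\geq$ to a tropical three-term relation among the $M_\gamma$.

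Granting this, part (i) is the one-sided consequence of that tropical relation: dropping the $B$-term gives the edge inequality relating $M_{v\omega_i}$, $M_{vs_i\omega_i}$ and $\sum_{j\neq i}(-a_{ij})M_{v\omega_j}$ for every chamber weight $\gamma=v\omega_i\in\Gamma$ (that it is an inequality rather than an equality reflects precisely that $B$ may be absent). The step that needs genuine care here is the derivation of the displayed leading identity: one must verify that taking leading parts is compatible with the three-term relation, i.e.\ that the lowest-degree part of each side is controlled by the $\Delta^\new$'s rather than by some auxiliary regular function on $L^{w^{-1}}$. I expect this to follow from the $T$-equivariance of $L^{w^{-1}}=N\cap B_-w^{-1}B_-$ together with a prior, explicit description of the leading term of each generalized minor restricted to $L^{w^{-1}}$.

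It then remains to prove (ii): that when $\gamma$ ranges over $\Gamma^w$ the term $B$ is present in the leading identity, with leading part a product of two $\Delta^\new$'s, so that tropicalizing yields the full tropical Pl\"ucker relation; and, dually, that $B$ is absent precisely for $\gamma\in\Gamma\setminus\Gamma^w$, consistently with the combinatorial definition of $\Gamma^w$ and with Theorem~\ref{theoremA}. I expect essentially all the work to be here. It reduces to a sharp criterion, in terms of the Bruhat order, for which generalized minors vanish identically on the reduced double Bruhat cell $L^{w^{-1}}$ and, more precisely, for the degree of the leading part of those that do not; I would extract this from the classical description of the vanishing of minors on cells $B_-uB_-$ (Fomin--Zelevinsky, Marsh--Rietsch) together with the explicit reduced-word coordinates on $L^{w^{-1}}$. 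One also needs the closure property that every chamber weight occurring in a relation indexed by some $\gamma\in\Gamma^w$ again lies in $\Gamma^w$, so that the subcollection in (ii) is genuinely closed under the tropical Pl\"ucker relations; this should fall out of the same Bruhat-order bookkeeping, and is closely tied to the map $W\to\{v\le w\}$ underlying Theorem~\ref{theoremA}. Once this dichotomy is established, (i) and (ii) both follow by tropicalizing the identity.
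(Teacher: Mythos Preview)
Your overall strategy---tropicalize three-term relations among the $\Delta_\gamma^\new$---matches the paper's, but you have misjudged where the difficulty lies and how $\Delta^\new$ is defined.

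First, $\Delta^\new_{v\omega_i}$ is not defined in the paper as the leading part of $\Delta_{v\omega_i}$; it is \emph{defined} to be the specific generalized minor $\Delta_{v_w^{-1}v\,\omega_i,\,v\omega_i}$, where $v_w$ is the longest element of $[e,v]_R\cap[e,w]$. The paper then proves (Lemma~\ref{lemma:zerocondition}, via the weights of the Demazure module $U(\b)\cdot v_{u\lambda}$) that this is the ``smallest'' nonvanishing minor with second index $v\omega_i$, but that is a theorem, not the definition. Your proposed degree/leading-term formalism would have to be built from scratch and then reconciled with this definition.

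Second, you have the difficulty in (ii) exactly backwards. For $\gamma=v\omega_i\in\Gamma^w$ one has $v\leq_R w$, hence $v_w=v$ and $\Delta^\new_\gamma=\Delta_\gamma$. Thus on $\Gamma^w$ the $\Delta^\new$'s are the ordinary highest-weight minors, which are nonvanishing on $L^{w^{-1}}$ by its defining equations, and the Pl\"ucker relations among them hold on all of $G$ and are subtraction-free. Tropicalizing gives (ii) immediately; no leading-term analysis is needed, and the closure property you worry about is just the observation that if $w'$ and $w's_i, w's_is_j,\dots$ are all $\leq_R w$ then every weight appearing already lies in $\Gamma^w$. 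The paper dispatches (ii) in one sentence inside the proof of Theorem~\ref{theorem:BZdata}.

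The actual work is in (i), specifically for edges $(v,i)$ with $vs_i\omega_i\notin\Gamma^w$. Here the paper does not argue by dropping a term to get an inequality; it proves the edge \emph{equality} by showing that in the shifted three-term relation
\[
\Delta_{u\omega_i,v\omega_i}\,\Delta_{us_i\omega_i,vs_i\omega_i}
=\Delta_{us_i\omega_i,v\omega_i}\,\Delta_{u\omega_i,vs_i\omega_i}
+\prod_{j\neq i}\Delta_{u\omega_j,v\omega_j}^{-a_{ji}}
\]
(with $u=v_w^{-1}v$; this is \cite[Proposition 4.1]{Tensorproductmultiplicities}) the factor $\Delta_{u\omega_i,vs_i\omega_i}$ vanishes identically on $L^{w^{-1}}$. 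The vanishing is Lemma~\ref{lemma:zerocondition}/Corollary~\ref{lemma:zerominorwu}: for $g\in L^{w^{-1}}$ one has $\Delta_{u\lambda,\mu}(g)=0$ whenever $\mu\not\geq wu\lambda$, proved by writing $g=b_1w^{-1}b_2$ and tracking which weights of $g^t u v_\lambda$ can survive, using that the weights of $U(\b)\cdot v_{u\lambda}$ lie in $u\lambda+(\Delta_+\cap u\Delta_-)$ and that $w$ carries this set into $wu\lambda+Q_+$. Your appeal to ``the classical description of vanishing of minors on cells'' points in the right direction but does not identify this mechanism.

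In short: the $B$-term is always present for $\gamma\in\Gamma^w$ (those are the ordinary relations on $G$, so (ii) is cheap), and the substance of the proof is showing the cross term is \emph{absent} for the collapsed edges, via a direct representation-theoretic vanishing argument rather than a leading-term bookkeeping.
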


\begin{conjectureALPH}
On $L^{w^{-1}}(\Z^\trop)_\geq$, the collection $(M_{\gamma})_{\gamma \in \Gamma}$ satisfy the edge equalities on the subcollection $(M_\gamma)_{\gamma \in \Gamma \setminus \Gamma^w}$.
\end{conjectureALPH}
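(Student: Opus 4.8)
Given the edge inequalities already furnished by Theorem~\ref{theoremB}, proving the conjecture amounts to establishing the reverse inequalities, and since each $M_\gamma$ is the tropicalization of a regular function on $L^{w^{-1}}$ this is equivalent to an \emph{algebraic} statement: for every $\gamma \in \Gamma \setminus \Gamma^w$ the renormalized minor $\Delta_\gamma^{\new}$ should satisfy on $L^{w^{-1}}$ a monomial identity of the form
$$\Delta_\gamma^{\new}\cdot \prod_{\delta} (\Delta_\delta^{\new})^{c_{\delta}}\;=\;(\text{Laurent monomial in the frozen minors}),$$
where $\delta$ ranges over the chamber weights appearing alongside $\gamma$ in the edge relation at $\gamma$ and the $c_\delta$ are the corresponding (Cartan) coefficients. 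Once such an identity is in hand the tropicalization is automatic --- valuation is additive on products, so no positivity input is needed for this step --- and the edge equality for $M_\gamma$ drops out after accounting for the constant contributed by the frozen monomial. Thus the entire content of the conjecture is the family of monomial identities above.

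To produce them I would induct on $\ell(w)$. Write $w = w' s_i$ with $\ell(w') = \ell(w)-1$ and compare $L^{w^{-1}}$ with $L^{(w')^{-1}}$ through the one-step factorization of Lusztig's positive atlas underlying the change of coordinates $\eta_{w^{-1}}$. The combinatorics is dictated by the map $\kappa\colon W \to \{v \le w\}$ of Theorem~\ref{theoremA}: the edge of $P\in\Pw$ joining $\mu_u$ to $\mu_{u s_j}$ collapses precisely when $\kappa(u)=\kappa(u s_j)$, and Theorem~\ref{theoremA} moreover pins down \emph{which} vertex $\mu_v$, $v\le w$, the collapsed vertex equals --- hence which minors must be identified and what the target monomial is. In the inductive step one assumes the monomial identities on $L^{(w')^{-1}}$, transports them through $\eta_{w^{-1}}$, and checks that the new collapsing edges created by appending $s_i$ give rise to the predicted new identities. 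The raw relations are the classical generalized-minor identities of Fomin--Zelevinsky among the $\Delta_{v\omega_j}$ (three-term relations and exchange relations); the task is to show that, after restriction to $L^{w^{-1}}$ and passage to the leading weights $\gamma$ defining the $\Delta^{\new}_{v\omega_j}$, all but one of the monomials on the relevant side of such a relation vanish, so that it collapses to exactly the desired $\Delta_\gamma^{\new} = (\text{monomial})$.

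The crux --- and presumably the reason the statement is left as a conjecture --- is that last degeneration. One must control the leading weight occurring in the definition of each $\Delta^{\new}_{v\omega_j}$ finely enough to decide which terms of a minor identity survive on $L^{w^{-1}}$ and which vanish, \emph{and} rule out ``unexpected cancellation,'' i.e.\ guarantee that the surviving term really is a single monomial rather than a longer subtraction-free expression tropicalizing to the same linear function. This calls for a precise description --- again most naturally by induction on $\ell(w)$, parallel to the above --- of the vanishing loci of the ordinary minors $\Delta_{\gamma, v\omega_j}$ on the reduced double Bruhat cell. Sanity checks are given by the rank-one case and $w = w_0$, where $\Gamma\setminus\Gamma^w$ is empty or the identities are classical, and by the first genuinely new examples in type $A_2$ (e.g.\ $G = SL_3$, $w = s_1 s_2$), which already exhibit the mechanism. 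Alternatively, one can argue in reverse using the bijection $\Pw \leftrightarrow L^{w^{-1}}(\Z^{\trop})_{\ge}$ of the paper: for a tropical point $t$, the polytope $P(t)\in\Pw$ carries a full BZ datum $(\widetilde M_\gamma)_{\gamma\in\Gamma}$ which, by Theorem~\ref{theoremA}, satisfies the edge equalities on $\Gamma\setminus\Gamma^w$ and which agrees with $(M_\gamma)$ on $\Gamma^w$; it then remains only to show $\widetilde M_\gamma = M_\gamma$ for $\gamma\notin\Gamma^w$, i.e.\ that these $M_\gamma$ are determined by the ones on $\Gamma^w$ via the same recursion that determines $\widetilde M_\gamma$ --- which is once again the monomial-identity statement above, so the two routes meet at the same technical core.
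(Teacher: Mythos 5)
You should note first that the statement you set out to prove is left as a conjecture in the paper, so there is no complete proof to compare against; what the paper does provide (the proof of Proposition \ref{proposition:edgeequalities}) is a conditional argument that your sketch essentially rediscovers. Namely, the paper applies the Berenstein--Zelevinsky generalized-minor identity to the pair $\Delta^\new_{v\omega_i}$, $\Delta^\new_{vs_i\omega_i}$ and observes that if the cross term $\Delta_{u\omega_i,\,vs_i\omega_i}$ (with $u=v_w^{-1}v$) vanishes on $L^{w^{-1}}$, the identity degenerates to the monomial identity
\[
\Delta^\new_{v\omega_i}\,\Delta^\new_{vs_i\omega_i}\;=\;\prod_{j\neq i}\bigl(\Delta^\new_{v\omega_j}\bigr)^{-a_{ji}},
\]
which tropicalizes to the edge equality. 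Your ``monomial identity'' formulation is exactly this degeneration, and your diagnosis of the crux is accurate: everything reduces to the vanishing statement, which is precisely the paper's Conjecture \ref{lemma:zerominorvw}, established there only in the extreme cases $v_w=w$ (Corollary \ref{lemma:zerominorwu}, via the weight estimate of Lemma \ref{lemma:zerocondition}) and $v_w=v$ (via the defining equations (\ref{equation:reducedbruhatcell}) of $L^{w^{-1}}$). One small simplification relative to your write-up: your worry about ``unexpected cancellation'' does not arise in this route, since once the single cross term vanishes the surviving side of the BZ identity is literally one monomial; the only genuine issue is the vanishing itself.

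That said, your proposal does not close the gap. The induction on $\ell(w)$ is only a plan: no mechanism is given for deciding which minors $\Delta_{\gamma,\,v\omega_j}$ vanish on $L^{w^{-1}}$ after appending $s_i$, and the minors $\Delta^\new_{v\omega_j}$ do not transport naively from $L^{(w')^{-1}}$ to $L^{w^{-1}}$ because $v_{w'}$ and $v_w$ (hence the row weights defining $\Delta^\new$) change with $w$; this is exactly the open technical content. Moreover, your fallback ``reverse'' route through the bijection of Theorem \ref{theorem:BZdata} is circular as stated: the paper's proof of that bijection already invokes Proposition \ref{proposition:edgeequalities}, i.e.\ the edge equalities you are trying to establish (and hence Conjecture \ref{lemma:zerominorvw}). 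So the proposal is a sound reduction, matching the paper's own, but the essential step --- the vanishing of $\Delta_{v_w^{-1}v\,\omega_i,\;vs_i\omega_i}$ on $L^{w^{-1}}$ whenever $(vs_i)_w=v_w$ --- remains unproved, and with it the conjecture.
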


Using this new collection of tropical functions $(M_\gamma)_{\gamma \in \Gamma}$, we obtain an identical result to the case of $N$. 

\begin{theoremALPH}[Theorem \ref{theorem:BZdata}]
There is a bijection $L^{w^{-1}}(\Z^\trop)_{\geq} \longrightarrow \Pw$ by  $\ell \rightarrow (M_\gamma(\ell))_{\gamma \in \Gamma}$. 
\end{theoremALPH}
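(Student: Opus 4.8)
The plan is to show that $\Psi\colon \ell \mapsto (M_\gamma(\ell))_{\gamma\in\Gamma}$ is well defined, in the sense that its image lies in $\Pw$, and then that it is injective and surjective, treating these three points in turn. Throughout I would rely on the known bijection between $N(\Z^\trop)_\geq$ and the set of all MV polytopes, on Theorem \ref{theoremB}, and on the combinatorial structure result Theorem \ref{theoremA}.

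\textbf{Well-definedness.} For $\ell \in L^{w^{-1}}(\Z^\trop)_\geq$, Theorem \ref{theoremB} says $(M_\gamma(\ell))_{\gamma\in\Gamma}$ satisfies the edge inequalities on all of $\Gamma$ and the tropical Pl\"ucker relations on the subcollection indexed by $\Gamma^w$. The task is to upgrade this partial information to the statement that the tuple is the BZ datum of an MV polytope, and that this polytope lies in $\Pw$. Rather than proving the conjectural edge equalities on $\Gamma\setminus\Gamma^w$, I would feed the $\Gamma^w$-part of the data into the construction of a pseudo-Weyl polytope and appeal to Corollary \ref{corollary:vertexdata}: the resulting polytope has vertices $\mu_v$ only for $v\le w$, so its vertex labelled $w$ agrees with the one labelled $w_0$, i.e.\ it is a member of $\Pw$; the remaining values $M_\gamma(\ell)$ for $\gamma\in\Gamma\setminus\Gamma^w$ are then forced to equal the $\mu_w$-determined ones, which I would confirm by a direct evaluation of $\Delta_\gamma^\new\circ\eta_{w^{-1}}^{-1}$ on a chart of $L^{w^{-1}}$.

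\textbf{Injectivity.} The coordinate change $\eta_{w^{-1}}$ is arranged so that $L^{w^{-1}}$, a variety of dimension $\ell(w)$, carries a positive toric chart whose coordinates are among the functions $\Delta_\gamma^\new$ for $\gamma$ ranging over a reduced-word-indexed subset $\Gamma'\subseteq\Gamma^w$. Since $\eta_{w^{-1}}$ is an isomorphism, tropicalizing this chart identifies $L^{w^{-1}}(\Z^\trop)_\geq$ with $\Z^{\ell(w)}$ and recovers these coordinates as $(M_\gamma(\ell))_{\gamma\in\Gamma'}$; hence $\ell$ is recovered from $\Psi(\ell)$, giving injectivity.

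\textbf{Surjectivity and the main obstacle.} Given $P\in\Pw$, let $n\in N(\Z^\trop)_\geq$ be the tropical point corresponding to $P$ under the $N$-bijection. The heart of the argument is to show $n\in L^{w^{-1}}(\Z^\trop)_\geq$: the defining condition $\mu_w=\mu_{w_0}$ of $\Pw$ should translate, through the linear duality between vertex data $(\mu_v)$ and generalized minors, into the tropical valuative conditions cutting out the Bruhat cell $B_-w^{-1}B_-$ — concretely, the minors vanishing identically on $L^{w^{-1}}$ are precisely the ones whose tropicalizations record edges of $P$ that have collapsed because $\mu_w=\mu_{w_0}$. Once $n$ is known to lie on $L^{w^{-1}}(\Z^\trop)_\geq$, matching $\Psi(n)$ with the BZ datum of $P$ comes down to comparing $(\Delta_\gamma^\new)^\trop$ with $(\Delta_\gamma)^\trop$ on this locus: for $\gamma\in\Gamma^w$ the two minors differ by an invertible factor, so the values agree with $P$ directly, while for $\gamma\in\Gamma\setminus\Gamma^w$ both equal the $\mu_w$-dictated value by Corollary \ref{corollary:vertexdata}. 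I expect this surjectivity step — pinning down exactly which tropical points of $N$ land on the reduced double Bruhat cell, and identifying $(\Delta_\gamma^\new)^\trop$ on $\Gamma\setminus\Gamma^w$ without invoking the open conjecture — to be the main obstacle, and I would attack both by first proving an explicit formula for $\Delta_\gamma^\new\circ\eta_{w^{-1}}^{-1}$ on the toric chart, from which the required matchings follow.
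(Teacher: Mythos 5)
There is a genuine gap, in two places. First, your well-definedness step is circular: Corollary \ref{corollary:vertexdata} is a statement about polytopes \emph{already known} to lie in $\Pw$, so you cannot invoke it to conclude that the polytope you build from the $\Gamma^w$-part of the data has vertices only at $v\leq w$ and hence lies in $\Pw$. What is actually needed is precisely the edge equalities of Lemma \ref{lemma:BZdatahighestvertexw}\ref{condition:vanishing} for the collection $(M_\gamma(\ell))$: in the paper these (Proposition \ref{proposition:edgeequalities}, which rests on the vanishing statements Corollary \ref{lemma:zerominorwu} and Conjecture \ref{lemma:zerominorvw} fed into the minor identities of \cite{Tensorproductmultiplicities}) are what allow one to recursively determine $M_\gamma$ for $\gamma\in\Gamma\setminus\Gamma^w$ from the $\Gamma^w$-values, check the remaining tropical Pl\"ucker relations, and then apply Lemma \ref{lemma:BZdatahighestvertexw} to get membership in $\Pw$. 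Your ``direct evaluation of $\Delta_\gamma^\new\circ\eta_{w^{-1}}^{-1}$ on a chart'' is exactly this unproved content, deferred rather than supplied.

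Second, your surjectivity route is not well-founded as stated. The tropical points $L^{w^{-1}}(\Z^\trop)$ are defined intrinsically from the positive atlas of $L^{w^{-1}}$ (charts $x_\ui$ for reduced words of $w^{-1}$, so of rank $\ell(w)$); they are not a subset of $N(\Z^\trop)\cong\Z^{\ell(w_0)}$ cut out by ``tropical valuative conditions,'' and translating $\mu_w=\mu_{w_0}$ into such conditions on tropical points of $N$ would be a substantial new argument that the proposal does not give. The paper avoids this entirely: it composes the BZ-data-to-Lusztig-data map (\ref{equation:bijectionfromBZtolusztig}) with Proposition \ref{proposition:zeros} (the Lusztig data of any $P\in\Pw$ vanishes beyond position $\ell(w)$ for a word starting with $w$) and Theorem \ref{theorem:MVpolytopeslusztigdata} to obtain a bijection $\Pw\to\N^{\ell(w)}$, and then uses the identification $L^{w^{-1}}(\Z^\trop)_\geq\cong\N^{\ell(w)}$ coming from the $y$-coordinates $\eta_{w^{-1}}^{-1}\circ x_\ui$ to conclude that $\ell\mapsto(M_\gamma(\ell))_{\gamma\in\Gamma}$ is bijective, with no embedding of $L^{w^{-1}}(\Z^\trop)$ into $N(\Z^\trop)$ ever required. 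Your injectivity claim (a toric chart whose coordinates are among the $\Delta_\gamma^\new$, $\gamma\in\Gamma^w$) is likewise asserted rather than proved, but it would in any case be subsumed by the counting argument above.
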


This paper is organized as follows. In Section \ref{section:MVpolytopes}, we give a brief background of the theory of MV polytopes. In Section \ref{section:combinatorialdata}, we describe the Lusztig and vertex data of $\Pw$ and prove Theorem \ref{theoremA}. In Section \ref{section:tropicalpoints}, we outline the theory which relation MV polytopes of highest vertex $w$ to the tropical points of the reduced double Bruhat cells. 

\subsection*{Acknowledgments}

First, I thank my supervisor, Joel Kamnitzer, for his guidance and suggestions during this project. 
I would also like to thank Jiuzu Hong, Florian Herzig, Marco Gualtieri, Lisa Jeffrey and Peter Tingley for their suggestions and corrections. I thank Pierre Baumann for allowing me to include his proof of Lemma \ref{lemma:generaldiagonalrelations-simplylaced}. 

During this work, I was supported by an NSERC graduate scholarship. 

\section{Notation}

Let $G$ be a semisimple, simply connected, complex group. Let $T$ be a maximal torus of $G$. We define the weight and coweight lattice as $X^* = \text{Hom}(T, \C^\times)$ and $X_* = \text{Hom}(T, \C^\times)$ respectively. Let $W = N_G(T)/T$ be the Weyl group. 

Fix $B$ be a Borel subgroup of $G$ such that $T \subset B$. Let $N$ be the unipotent subgroup of $B$. Let $I$ be the index set of the simple roots and denote $\alpha_i$ as the simple root associated to the index $i$ while $\alpha_i^\vee$ is the simple coroot. Let $\Delta$ be the set of roots and $\Delta_+$ the set of positive roots while $\Delta^\vee$ is set of coroots and $\Delta_+^\vee$ the set of positive coroots. Let $\langle \cdot, \cdot \rangle: X_* \times X^* \rightarrow \C$ be the pairing of the weight and the coweight lattice and set $a_{ij} = \langle \alpha_i^\vee, \alpha_j \rangle$.  Denote by $Q = \N \Delta$  the root lattice so $Q_+ = \N \Delta_+$ is the positive root cone. Similarly, let $Q^\vee = \N\Delta^\vee$  be the coroot lattice and $Q_+ = \N\Delta_+^\vee$ the positive coroot cone. Let $\omega_i$ be the fundamental weights, which form a basis of the weight lattice $X^*$ such that $\langle \alpha_i^\vee, \omega_j \rangle = \delta_{i,j}$. 

Consider the space $\t_\R = X_*\otimes \R$ and $\t^*_\R = X^*\otimes \R$. Define a partial order on $X_*$ by $\mu \leq \lambda \iff \lambda - \mu \in Q^\vee_+$ and a partial order on $X^*$ by $\mu \leq \lambda \iff \lambda - \mu \in Q_+$. Define the twisted partial order $\geq_w$ on $\t_\R$ by $\beta \leq_w \alpha \iff \langle \beta - \alpha, w \omega_i \rangle \geq 0$ for all $i \in I$.

Let $s_i$ be the simple reflection associated to the simple root $\alpha_i$, i.e. $s_i(\alpha) = \alpha - \langle \alpha_i^\vee, \alpha \rangle \alpha_i$ and set $S = \{s_i: i \in I\}$. Then $W$ is also the Coxeter group generated by $S$. $W$ acts on the weight lattice by $s_i(\beta) = \beta - \langle \alpha_i^\vee, \beta \rangle \, \alpha_i$ for $\beta \in X^*$. Similarly, $W$ acts on the coroots and the coweight lattice by $s_i(\beta) = \beta - \langle \beta , \alpha_i \rangle \, \alpha_i^\vee$ for $\beta \in X_*$. 

For $w \in W$, let $\ell(w)$ denote the length of $w$. We say the product $s_{i_1} \cdots s_{i_k}$ is \emph{reduced} if $k = \ell(w)$. The tuple of indices $\ui = (i_1, \dots, i_k)$ a \emph{word} of $w$ if $w = s_{i_1} \cdots s_{i_k}$. A \emph{reduced word} of $w$ is $\ui$ such that $w = s_{i_1} \cdots s_{i_k}$ is reduced. 

Let $\leq$ denote the Bruhat order and let $\leq_R, \leq_L$ denote the right and left weak Bruhat orders respectively. We will denote intervals in the strong Bruhat order by $[v,w] = \{ x: v\leq x\leq w\}$. Similarly,  the weak Bruhat intervals are $[v,w]_R = \{ x: v \leq_R x \leq_R w\}$ and $[v,w]_L = \{ x: v \leq_L x \leq_L w\}$.

\section{MV polytopes}\label{section:MVpolytopes}

MV polytopes were originally defined by Anderson \cite{Anderson} as the moment polytopes of certain subvarieties of the affine Grassmanian called MV cycles. In \cite{MVpolytopes}, Kamnitzer gave a completely combinatorial description of MV polytopes using their hyperplane data. In particular, a GGMS polytope is an MV polytope exactly when the hyperplane data are a BZ datum. In this section, we review MV polytopes as combinatorial objects and outline their relation to preprojective algebra modules. We describe the crystal structure on the set of MV polytopes and define the Saito crystal reflection.

To define MV polytopes, we first consider GGMS polytopes. 

\begin{definition}
Consider a collection $\mu_\bullet = (\mu_w)_{w\in W }$  in the coroot lattice $Q^\vee$ such that $\mu_{v} \leq_w \mu_w$ for all $v, w \in W$. A Gelfand-Goresky-MacPherson-Serganova (GGMS) polytope is a convex polytope $P(\mu_\bullet)$ of the form $P(\mu_\bullet)=\bigcap_{w \in W} C_w^{\mu_w}$ where
\[
C_w^{\mu_w} = \lbrace x \in \t_\R : \langle x, w \cdot \omega_i \rangle \geq \langle \mu_w, w \cdot \omega_i \rangle, \forall i\rbrace.
\]

\end{definition}
By {\cite[Proposition 2.2]{MVpolytopes}}, $P(\mu_\bullet) = \conv \{ \mu_w: w \in W\}$. We call $(\mu_\bullet)$ the \emph{vertex data} of the polytope. 

We can also define a GGMS polytope using the hyperplane data. The hyperplanes are indexed by weights of the form $w \omega_i$. Define the set of \emph{chamber weights} $\Gamma = \{ w\omega_i: w\in W, i \in I\}$.  Let $M_\bullet = (M_\gamma)_{\gamma \in \Gamma}$ be a collection of integers that satisfy the \emph{edge inequalities} for each $w \in W$ and $i \in I$:
\begin{align}\label{equation:defofedgeequalities}
M_{ws_i\omega_i} + M_{w\omega_i} + \sum_{j \neq i} a_{ji} M_{w\omega_j} \leq 0
\end{align} 
where $a_{ji} = \langle \alpha_j^\vee, \alpha_i \rangle$. Then the polytope $P(M_\bullet)$ defined by the hyperplane data is
\[
P(M_\bullet) = \{ x \in \t_\R: \langle x, \gamma \rangle \geq M_\gamma, \forall \gamma \in \Gamma\}.
\]

By {\cite[Proposition 2.2]{MVpolytopes}}, these two definitions are equivalent in the following way. If $P = P(\mu_\bullet)$, then $P= P(M_\bullet)$ where we set $M_{w\omega_i} = \langle \mu_w, w \cdot \omega_i \rangle$. If $P = P(M_\bullet)$ then $P = P(\mu_\bullet)$ where we set $\mu_w = \sum_{i\in I} M_{w\omega_i} w \cdot \alpha_i^\vee$. From now on, for a GGMS polytope $P$, we will denote $(\mu_\bullet)$ as the vertex data and $(M_\bullet)$ as the hyperplane data. 

For $w \in W$ and $s_i$ such that $\ell(s_iw) > \ell(w)$, there is an edge in $P(\mu_\bullet)$ connecting $\mu_w$ and $\mu_{ws_i}$ where
\begin{align}\label{equation:lusztigdatafromvertexdata}
\mu_{ws_i} - \mu_{w} = c w \cdot \alpha_i^\vee
\end{align}
and $c = -M_{w\omega_i} - M_{ws_i\omega_i} - \sum_{j \neq i} a_{ji} M_{w\omega_j}$. Note that from the edge inequalities (\ref{equation:defofedgeequalities}) $c \geq 0$. We call $c$ the \emph{length} of the edge from $\mu_w$ to $\mu_{ws_i}$. 
The next lemma follows directly from (\ref{equation:lusztigdatafromvertexdata}). 

\begin{lemma}\label{lemma:edgesandLusztig} 
For $P$ a GGMS polytope with vertex data $(\mu_\bullet)$ and hyperplane data $(M_\bullet)$, $\mu_{ws_i} - \mu_{w} =0 \iff M_{w \omega_i} +  M_{w s_i \omega_i} = -\sum_{j \neq i} a_{ji} M_{w\omega_j}$.
\end{lemma}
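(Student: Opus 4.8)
The plan is to read the equivalence off directly from the edge formula (\ref{equation:lusztigdatafromvertexdata}). That formula records $\mu_{ws_i} - \mu_w = c\,(w\cdot\alpha_i^\vee)$ with $c = -M_{w\omega_i} - M_{ws_i\omega_i} - \sum_{j\neq i} a_{ji} M_{w\omega_j}$. Since $w\cdot\alpha_i^\vee$ is a coroot, it is a nonzero vector of $\t_\R$, so the scalar multiple $c\,(w\cdot\alpha_i^\vee)$ is the zero vector if and only if $c = 0$. Hence $\mu_{ws_i} - \mu_w = 0$ iff $c = 0$, and transposing the two terms $M_{w\omega_i}, M_{ws_i\omega_i}$ in the equation $c = 0$ gives exactly $M_{w\omega_i} + M_{ws_i\omega_i} = -\sum_{j\neq i} a_{ji} M_{w\omega_j}$. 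This is the whole argument.

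The one point I would spell out is that (\ref{equation:lusztigdatafromvertexdata}) was stated under a length hypothesis on $(w,i)$, while the lemma is phrased for arbitrary $w \in W$ and $i \in I$. I would note that (\ref{equation:lusztigdatafromvertexdata}) in fact holds for every pair $(w,i)$: if the length hypothesis fails for $(w,i)$ then it holds for $(w', i)$ with $w' = ws_i$, and applying the formula there and using $w's_i = w$, $w'\cdot\alpha_i^\vee = -\,w\cdot\alpha_i^\vee$, and $s_i\omega_j = \omega_j$ for $j \neq i$ (whence $M_{w'\omega_j} = M_{w\omega_j}$ for $j \neq i$, so that the scalar $c$ is the same for the two pairs), one recovers the identical vector equation for $(w,i)$. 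With (\ref{equation:lusztigdatafromvertexdata}) available for all $(w,i)$, the displayed equivalence holds for all $(w,i)$.

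I do not expect a genuine obstacle: the statement is an immediate consequence of a formula already established, and the only substantive remark is the triviality that a nonzero vector scaled by $c$ vanishes precisely when $c = 0$. If one prefers, the write-up compresses to a single line by restricting attention to the pairs $(w,i)$ satisfying the length hypothesis of (\ref{equation:lusztigdatafromvertexdata}), which is all that is needed for the edges of $P(\mu_\bullet)$.
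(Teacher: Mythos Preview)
Your argument is correct and is exactly the paper's approach: the paper simply remarks that the lemma ``follows directly from (\ref{equation:lusztigdatafromvertexdata}),'' which is precisely what you spell out. Your extra paragraph handling the length hypothesis is a nice bit of care that the paper omits, but the core idea is identical.
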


\begin{example}
For $G = SL_3$, the simple coroots are given by $\alpha^\vee_1 = (1,-1,0)$, $\alpha^\vee_2 = (0, 1,-1)$  so these GGMS polytopes are actually polygons. See Figure \ref{figure:A2polytopebackground} for an example. 

The fundamental weights are $\omega_1 = (1, 0,0)$, $\omega_2 = (1,1,0)$ and the chamber weights are 
\[
\Gamma = \{ \omega_1, \omega_2, s_1\omega_1, s_2 \omega_2, s_2s_1\omega_1, s_1s_2\omega_2\}.
\]
 These chamber weights index the hyperplanes $(M_\bullet)$ as in Figure \ref{figure:A2polytopebackground}. 

\begin{figure}[h]
\centering
$
\begin{tikzpicture}
\node[My style] (S1) at (0.965925826, 0.258819045)[label=right :$\mu_{s_1}$] {};
\node[My style] (S12) at (1.48357706606, 2.190621620632)[label=right:$\mu_{s_1s_2}$] {};
\node[My style] (SE) at (0,0)[label=below:$\mu_{e}$] {};
\node[My style] (W0) at (0.0693766541199999, 3.604786106264)[label=above:$\mu_{w_0}$] {};
\node[My style] (S2) at (-2.12130061791, 2.121246728448)[label=left:$\mu_{s_2}$] {};
\node[My style] (S21) at (-1.86247499788, 3.087148016264)[label=left:$\mu_{s_2s_1}$] {};

\draw[thick] (SE) -- (S1) -- (S12) -- (W0) -- (S21) -- (S2) -- (SE);

\draw[line width=1pt,-stealth](0.482962913, 0.1294095225)--(0.3535533905, 0.6123724355) node[anchor=south]{\footnotesize{$\omega_2$}};

\draw[line width=1pt,-stealth](-1.060650308955, 1.060623364224)--(-0.7196619112295, 1.46102086977)
 node[anchor=south]{\footnotesize{$\omega_1$}};

\draw[line width=1pt,-stealth](-1.991887807895, 2.604197372356)
--(-1.54830184399870, 2.48144311823562) node[anchor=north]{\footnotesize{$s_2\omega_2$}};

\draw[line width=1pt,-stealth](1.22889255075, 1.2092655196)--(0.7853065868537, 1.33201977372038)
 node[anchor=south]{\footnotesize{$s_1\omega_1$}};

\draw[line width=1pt,-stealth](0.78061796481, 2.882249050232) --(0.4396295670845, 2.481851544686) node[anchor=north]{\footnotesize{$s_1s_2\omega_2$}};

\draw[line width=1pt,-stealth](-0.89654917188, 3.345967061264)--(-0.76713964938, 2.863004148264) node[anchor=north]{\footnotesize{$s_2s_1\omega_1$}};

\end{tikzpicture}
$
\caption{An $A_2$ MV polytope}
\label{figure:A2polytopebackground}
\end{figure}
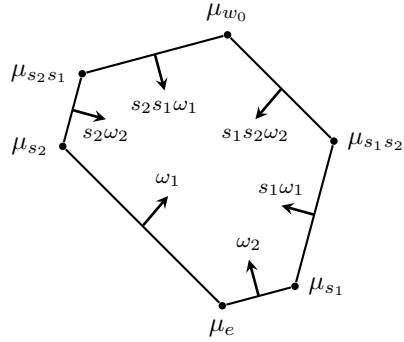
\end{example} 

When a GGMS polytope is an MV polytope, the hyperplane data satisfy certain relations. First, we recall the tropical Pl\"{u}cker relations, which come from the tropicalization of the Pl\"{u}cker relations of \cite{TotalPositivityin}. 

\begin{definition}
The collection $(M_\gamma)_{\gamma \in \Gamma}$ satisfies the tropical Pl\"{u}cker relations if for each $w\in W$ and every $i, j \in I$ such that $i\neq j$ and $s_i,s_j \not\in D_R(w)$, then either $a_{ij}=0$ or the following holds:
\begin{enumerate}[label=\arabic*)]
 \item If $a_{ij} = a_{ji} =-1$, then
 \[
 M_{ws_i\omega_i} + M_{ws_j\omega_j} = \min \{ M_{w\omega_i} + M_{ws_is_j\omega_j},  M_{ws_js_i\omega_i} +M_{w\omega_j}\}
 \]
 \item If $a_{ij} = -1, a_{ji}=-2$, then
 \begin{align*}
 M_{ws_j\omega_j} + M_{ws_is_j\omega_j} + M_{ws_i\omega_i} = \min\{ & 2M_{ws_is_j\omega_j} + M_{w\omega_i},2 M_{w\omega_j} + M_{ws_is_js_i\omega_i}, \\
 & M_{\omega_j} + M_{ws_js_is_j\omega_j} + M_{ws_i\omega_i} \}\\
 M_{ws_js_i\omega_i} + 2M_{ws_is_j\omega_j} + M_{ws_i\omega_i} = \min \{ &2M_{w\omega_j} + 2M_{ws_is_js_i\omega_i}, 2M_{ws_js_is_j\omega_j} + M_{ws_i\omega_i},\\
 & M_{ws_is_js_i\omega_i} + 2M_{ws_is_j\omega_j} + M_{w\omega_i} \}
 \end{align*}
 \item If $a_{ij} = -2, a_{ji} =-1$, then
 \begin{align*}
 M_{ws_js_i\omega_i} + M_{ws_i\omega_i} + M_{ws_is_j\omega_j}  = \min \{ &2M_{ws_i\omega_i} + M_{ws_js_is_j\omega_j}, 2M_{ws_is_js_i\omega_i} + M_{w\omega_j}, \\
 &M_{ws_is_js_i\omega_i} + M_{w\omega_i} + M_{ws_is_j\omega_j}\}\\
 M_{ws_j\omega_j} + 2M_{ws_i\omega_i} + M_{ws_is_j\omega_j} = \min \{ &2M_{ws_is_js_i\omega_i} + 2M_{w\omega_j}, 2M_{w\omega_i} + 2M_{ws_is_j\omega_j}, \\
 &M_{w\omega_j} + 2M_{ws_i\omega_i} + M_{ws_js_is_j\omega_j} \}
 \end{align*}
\end{enumerate}
If $a_{ij} = -3$ or $a_{ji} =-3$, the tropical Pl\"{u}cker relations are given in \cite[Proposition 4.2]{TotalPositivityin}. We omit them here due to length. 
\end{definition}

Note that the tropical Pl\"{u}cker relations impose conditions on each $2$-face of $P$. 

\begin{definition}\label{definition:BZdata}
The collection $(M_\gamma)_{\gamma \in \Gamma}$ is a Berenstein-Zelevinsky (BZ) datum of coweight $\lambda$ if:
\begin{enumerate}[label=(\roman*)]
 \item $(M_\bullet)$ satisfies the tropical Pl\"{u}cker relations,
 \item $(M_\bullet)$ satisfies the edge inequalities (\ref{equation:defofedgeequalities}),
 \item $M_{\omega_i} = 0$ and $M_{w_0 \cdot \omega_i} = \langle \lambda, w_0 \cdot \omega_i \rangle$.
\end{enumerate}
\end{definition}

We define an MV polytope as GGMS polytope $P$ whose hyperplane data $(M_\bullet)$ are a BZ datum. This definition is equivalent to the original definition of MV polytopes as the moment polytopes of MV cycles.

\begin{theorem}[{\cite[Theorem 3.1]{MVpolytopes}}]
A GGMS polytope $P(M_\bullet)$ is an MV polytope if and only if it is the moment polytope of a stable MV cycle. 
\end{theorem}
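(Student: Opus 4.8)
The plan is to realize the combinatorially-defined MV polytopes as exactly the moment polytopes of stable MV cycles, by constructing the moment-polytope map and proving it is a bijection. Recall that if a torus $T$ acts on an ind-scheme and $Z$ is a $T$-stable irreducible closed subvariety, then the image of $Z$ under the moment map is the convex hull of the images of the $T$-fixed points contained in $Z$. A stable MV cycle $Z$ is $T$-stable and its $T$-fixed points lie among the coweight translates $t^\nu$ of the base point, so $Z$ has a well-defined lattice polytope $\Pol(Z) = \conv\{\nu : t^\nu \in Z\}$. The first step is to identify this as a GGMS polytope: by definition a stable MV cycle is an irreducible component of an intersection of closures of semi-infinite orbits $\overline{S^{\mu_w}_w}$ (one for each $w \in W$, with $S^\nu_w$ the orbit through $t^\nu$ of the $w$-twisted "upper" unipotent), and since $\Pol(\overline{S^\nu_w}) = C^\nu_w$ we get $\Pol(Z) \subseteq \bigcap_w C^{\mu_w}_w$; checking that $Z$ meets each fixed point $t^{\mu_w}$ forces equality, and then $(\mu_w)_{w \in W}$ is the vertex data.

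\textbf{The BZ conditions.} Next I would verify the three conditions of Definition \ref{definition:BZdata} for the hyperplane data $(M_\bullet)$ of $\Pol(Z)$. The edge inequalities are automatic, being built into the notion of a GGMS polytope. The normalizations $M_{\omega_i} = 0$ and $M_{w_0\omega_i} = \langle \lambda, w_0\omega_i\rangle$ record that a stable MV cycle of coweight $\lambda$ is forced to lie in the extreme semi-infinite orbit closures $\overline{S^0_e}$ and $\overline{S^\lambda_{w_0}}$ and to meet the corresponding fixed points. The substantive point is the tropical Plücker relations, which are local conditions on the $2$-faces of the polytope. A $2$-face of a GGMS polytope is indexed by a pair $(w, \{i,j\})$ and corresponds geometrically to restricting to the affine Grassmannian of the rank-$2$ Levi $L_{\{i,j\}}$ (of type $A_1 \times A_1$, $A_2$, $B_2$, or $G_2$). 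I would show that the corresponding face of $\Pol(Z)$ is itself the moment polytope of an MV cycle for $L_{\{i,j\}}$, reducing the claim to rank $2$; there the relations can be established either by a finite enumeration of the possible MV cycles and their polytopes, or by matching the $M_\gamma$ with orders of vanishing of generalized minors and tropicalizing the Plücker relations of Berenstein–Zelevinsky on the corresponding unipotent group, using the transition maps of Lusztig's positive atlas.

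\textbf{Bijectivity.} It remains to show $Z \mapsto \Pol(Z)$ is a bijection onto the set of MV polytopes. For surjectivity, fix a reduced word $\ui$ for $w_0$; an MV polytope is determined by its Lusztig (edge-length) data $(n_1, \dots, n_N)$ along the $\ui$-path, so it suffices to construct a stable MV cycle with prescribed Lusztig data. I would build this inductively from the point cycle $\{t^0\}$ by a sequence of geometric operations realizing the crystal lowering operators raised to the powers $n_k$ — e.g. iterated correspondences built from the minuscule orbit closures $\overline{\Gr^{\omega_{i_k}}}$ — checking at each step that one still has an MV cycle and that the edge length in direction $i_k$ grows by exactly $n_k$. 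For injectivity, one shows that when $(M_\bullet)$ is an MV polytope the intersection $\bigcap_w \overline{S^{\mu_w}_w}$ is already irreducible, so $Z$ is recovered from $\Pol(Z)$; alternatively, one compares cardinalities — by geometric Satake together with the crystal structure on MV cycles, the number of stable MV cycles of coweight $\lambda$ equals the number of weight-$\lambda$ elements of $B(\infty)$, which is the combinatorial count of MV polytopes of coweight $\lambda$, so a surjection is forced to be a bijection.

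\textbf{Main obstacle.} The hard parts are, first, the tropical Plücker relations: even after reduction to rank $2$ this requires either a careful case analysis or importing the full Berenstein–Zelevinsky minor machinery; and second, the surjectivity/irreducibility half, where proving that the inductively constructed variety is irreducible, of the expected dimension, and carries exactly the predicted edge lengths is the technical heart of the argument — this is where the bulk of the work in \cite{MVpolytopes} lies.
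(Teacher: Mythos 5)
This theorem is not proved in the paper at all: it is quoted as background directly from \cite[Theorem 3.1]{MVpolytopes}, so there is no internal argument to compare yours against, and in the context of this paper the correct ``proof'' is simply the citation. Judged on its own terms, your write-up is a reasonable reconstruction of the strategy behind the cited result (Anderson's moment-polytope observation combined with Kamnitzer's BZ-data characterization), but it is a roadmap rather than a proof: the steps you defer are precisely the content of the theorem.

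Three concrete gaps. First, you take as a \emph{definition} that a stable MV cycle is an irreducible component of $\bigcap_{w\in W}\overline{S^{\mu_w}_w}$; the actual definition is an irreducible component of $\overline{S^{\mu}_e\cap S^{\nu}_{w_0}}$ (up to lattice translation), and the fact that such a component is the closure of a GGMS stratum, with a well-defined vertex datum $(\mu_w)_{w\in W}$ so that its moment polytope is $\conv\{\mu_w\}$, is itself a nontrivial step (Anderson; \cite[Propositions 2.2--3.1]{MVpolytopes}) that your setup silently absorbs. Second, for the tropical Pl\"ucker relations your proposed reduction to the rank-two Levi requires knowing that a $2$-face of the moment polytope of an MV cycle is again the moment polytope of an MV cycle for that Levi; this is not obvious and is not how the cited proof proceeds --- there one lifts to generic points of $N(\K)$, identifies $M_{v\omega_i}$ with valuations of generalized minors, and tropicalizes the Berenstein--Zelevinsky Pl\"ucker relations, which is also what makes the converse direction (every BZ datum is realized) accessible; your second suggested route is essentially this, but you cannot treat it as an optional alternative. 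Third, your surjectivity and injectivity arguments (inductive construction of cycles realizing prescribed Lusztig data, or a cardinality count via geometric Satake and $B(\infty)$) each presuppose facts --- irreducibility and dimension of the constructed varieties, or the weight-by-weight enumeration of stable MV cycles --- whose proofs are of the same order of difficulty as the theorem itself; in \cite{MVpolytopes} this is the content of the Lusztig-data parametrization cited here as Theorem \ref{theorem:MVpolytopeslusztigdata}. So the outline is sound in shape, but none of its load-bearing steps is actually carried out, and for the purposes of this paper the statement should remain a citation.
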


Denote by $\P$ the set of MV polytopes. For any $P \in \P$, the polytope is determined by its vertex data $(\mu_\bullet)$, which are a collection of points in $Q^\vee$, or its BZ data $(M_\bullet)$, which are a collection of integers. There is one more set of combinatorial data which determines $P$, closely related to the vertex data. 

For a reduced word $\ui = (i_1, \dots, i_m)$ of $w_0$,  define the Weyl group elements $w_k^\ui = s_{i_1}\cdots s_{i_k}$ for $1 \leq k \leq m$ and set $w_0^\ui =e$. The reduced word $\ui$ gives a path $\mu_e$, $\mu_{w_1^{\ui}}$, $\mu_{w_2^{\ui}}$, $\dots$, $\mu_{w_{m-1}^{\ui}}$, $\mu_{w_0}$ in the 1-skeleton of $P$.  From (\ref{equation:lusztigdatafromvertexdata}), 
\[
\mu_{w_k^\ui} - \mu_{w_{k-1}^\ui} = \left(-M_{w_{k-1}^\ui \omega_{i_k}} - M_{w_k^\ui\omega_{i_k}} - \sum_{j \neq {i_k}} a_{ji_k} M_{w_{k-1}^\ui\omega_j}\right) w_{k-1}^\ui\alpha_{i_k}^\vee
\]

\begin{definition}
Let $P\in \P$ with vertex data $(\mu_\bullet)$ and BZ data $(M_\bullet)$. For a reduced word $\ui = (i_1, \dots, i_m)$ of $w_0$, the \emph{Lusztig data of $P$ with respect to $\ui$} is defined by $n_k^\ui = -M_{w_{k-1}^\ui \omega_{i_k}} - M_{w_k^\ui\omega_{i_k}} - \sum_{j \neq {i_k}} a_{ji_k} M_{w_{k-1}^\ui\omega_j}$. 
\end{definition}

By the edge inequalities (\ref{equation:defofedgeequalities}), $n_k^\ui \geq 0$ for $1 \leq k \leq m$. The Lusztig data corresponds to the lengths of the edges along the path determined by $\ui$ above. Note that for any $P \in \P$ and any $\ui$, $n^\ui_\bullet(P) \in \N^m$. 

For convenience, we will call the path $\mu_e$, $\mu_{s_{i_1}}$, $\dots$, $\mu_{s_{i_1} \cdots s_{i_{m-1}}}$, $\mu_{w_0}$ determined by a reduced word $\ui$ of $w_0$ a \emph{minimal path from $\mu_e$ to $\mu_{w_0}$} in $P$. We will also use the shorthand $n_\bullet^\ui:= n_\bullet^{\ui}(P)$ when it is clear what $P$ is.  

\begin{example}
For the $A_2$ polytope in Figure \ref{figure:A2polytopebackground}, the reduced word $\ui = (1,2,1)$ gives the Lusztig data $n_\bullet^{121} = (1,2,2)$, which are the lengths of the edges on the right side of the polytope. For $\ui = (2,1,2)$, $n_\bullet^{212} = (3,1,2)$ which are the lengths of the edges on the left side of the polytope. 
\end{example}

Any MV polytope is completely determined by its Lusztig data along one minimal path. 

\begin{theorem}[{\cite[Theorem 7.1]{MVpolytopes}}]\label{theorem:MVpolytopeslusztigdata}
Let $\ui$ be any reduced word of $w_0$. The Lusztig data with respect to $\ui$ gives a bijection $\P \rightarrow \N^m$. 
\end{theorem}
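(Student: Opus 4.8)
The plan is to prove Theorem~\ref{theorem:MVpolytopeslusztigdata} by establishing a bijection between $\P$ and $\N^m$ via Lusztig data along a fixed reduced word $\ui$ of $w_0$, and I would do this by leveraging the identification of MV polytopes with tropical points of $N$ together with Lusztig's parametrization of the positive part of $N$. First I would recall that for a reduced word $\ui = (i_1,\dots,i_m)$ of $w_0$, Lusztig's positive atlas gives a coordinate chart $\C^m \to N$, $(t_1,\dots,t_m) \mapsto x_{i_1}(t_1)\cdots x_{i_m}(t_m)$, and that the transition maps between two such charts are subtraction-free rational functions. Tropicalizing, one gets a bijection between $N(\Z^\trop)$ and $\Z^m$ for each $\ui$, with the transition maps becoming the piecewise-linear Lusztig data transformations; restricting to the non-negative tropical points $N(\Z^\trop)_{\geq}$ gives $\N^m$. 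The content of the theorem is then that the set of BZ data (equivalently GGMS polytopes satisfying the tropical Pl\"ucker and edge relations) is exactly $N(\Z^\trop)_\geq$, and that under this identification the $\ui$-coordinates are precisely the Lusztig data $n_\bullet^\ui$.

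The key steps in order: (1) Fix $\ui$ and define the map $\P \to \N^m$ by $P \mapsto n_\bullet^\ui(P)$; non-negativity is immediate from the edge inequalities as already noted in the excerpt. (2) Injectivity: show that the BZ datum $(M_\bullet)$ — hence the polytope — can be reconstructed from $n_\bullet^\ui$. The natural mechanism is to use the tropical Pl\"ucker relations to propagate: the minor $M_\gamma$ for each chamber weight $\gamma$ can be computed from the values along the $\ui$-path by repeatedly applying the tropical Pl\"ucker relations (this is the tropical analogue of the fact that in $N$, all generalized minors are determined by the $\ui$-chart coordinates via the Chamber Ansatz). One must check this recursion is well-defined, i.e.\ independent of choices, which ultimately reduces to the braid-move compatibility of the tropical Pl\"ucker relations. (3) Surjectivity: given $n_\bullet \in \N^m$, define $M_\gamma$ by the same recursion and verify that the resulting collection satisfies the tropical Pl\"ucker relations, the edge inequalities, and the normalization $M_{\omega_i} = 0$. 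Equivalently, take the point $x_{i_1}(t_1)\cdots x_{i_m}(t_m) \in N_{>0}$, apply the generalized minors, tropicalize, and read off that one lands on a BZ datum — this is cleanest if one cites the dictionary between $N(\Z^\trop)_\geq$ and $\P$.

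I expect the main obstacle to be the well-definedness of the reconstruction in step (2)–(3): showing that the piecewise-linear transition maps between Lusztig data for different reduced words are consistent (the tropical braid relations), and that they are compatible with the tropical Pl\"ucker relations defining BZ data. Concretely, one needs that if $n_\bullet^\ui$ and $n_\bullet^{\ui'}$ are related by the tropical transition map coming from a braid move, then the resulting $M_\bullet$ is the same; for the rank-two braid moves this is exactly the tropical Pl\"ucker relation, and the general case follows since any two reduced words are connected by braid moves (Tits). An alternative, possibly cleaner route is to bypass the combinatorial recursion entirely: invoke Goncharov--Shen / Kamnitzer's identification $\P \cong N(\Z^\trop)$ (mentioned in the introduction) and Lusztig's result that $N_{>0} \cong \R_{>0}^m$ via the $\ui$-chart, so that tropically $N(\Z^\trop)_\geq \cong \N^m$ with the isomorphism sending a tropical point to its $\ui$-coordinates, which are the Lusztig data by construction. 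I would structure the write-up around this second approach, using the first only to make the combinatorial reconstruction explicit where needed.
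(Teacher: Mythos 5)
The paper does not prove Theorem~\ref{theorem:MVpolytopeslusztigdata}; it quotes it from \cite{MVpolytopes}, and your overall strategy (tropicalize generalized minors on Lusztig's chart for $N$) is in the same spirit as the proof given there. But as written your plan has a circularity and a deferred key input. The route you say you would structure the write-up around invokes the bijection $\P \cong N(\Z^\trop)_\geq$ (Theorem~\ref{theorem:tropicalpoints} here, Goncharov--Shen) together with Lusztig's chart, and then asserts that the $\ui$-coordinates of a tropical point ``are the Lusztig data by construction.'' In this paper, and in \cite{MVpolytopes}, the bijection with non-negative tropical points is \emph{deduced from} the Lusztig-data bijection ($N(\Z^\trop)_\geq \cong \N^m \cong \P$), so citing it to prove Theorem~\ref{theorem:MVpolytopeslusztigdata} is circular unless you give an independent proof of the dictionary. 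Moreover, the identification of chart coordinates with the combinatorial Lusztig data (edge lengths read off the BZ datum via the formula in (\ref{equation:lusztigdatafromvertexdata})) is not ``by construction'': it is precisely the nontrivial content of the commuting triangle displayed after Theorem~\ref{theorem:tropicalpoints}, namely that $M_\gamma = (\Delta_\gamma \circ \eta)^\trop$ in those coordinates.

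In your fallback route you correctly identify the main obstacle (well-definedness of reconstructing $(M_\gamma)_{\gamma\in\Gamma}$ from $n_\bullet^\ui$) but do not resolve it: braid-move compatibility of the tropical Pl\"ucker relations does not by itself show that the propagation reaches every chamber weight, nor that the reconstructed collection satisfies the edge inequalities for all $w$ and the normalizations of Definition~\ref{definition:BZdata}. The resolution used in the actual proof is the Chamber Ansatz of Berenstein--Fomin--Zelevinsky (in the form used in \cite{TotalPositivityin}): every generalized minor $\Delta_{w\omega_i}$ is an explicit subtraction-free rational function of the twisted factorization coordinates --- the $y$-coordinates $y_\ui(b_\bullet) = \eta^{-1}(x_\ui(b_\bullet))$ that Section 4.2 of this paper alludes to --- so all $M_\gamma$ are defined simultaneously by tropicalization, they satisfy the tropical Pl\"ucker relations because the minors satisfy the Pl\"ucker relations, and the edge-length formula returns the input $n_\bullet$, giving both injectivity and surjectivity at once. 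Your outline becomes a proof once you replace the appeal to the $\P \cong N(\Z^\trop)_\geq$ dictionary and the ad hoc Pl\"ucker propagation with this input (or an equivalent, such as the $i$-trail minor formulas of Berenstein--Zelevinsky).
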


\subsection{Crystal structure of $\P$}\label{section:crystals}

The set of MV polytopes has a bicrystal structure and hence a reflection of the crystal will result in an action on the set of MV polytopes. First, we define a crystal structure as in {\cite[Section 7.2]{OnCrystalBases}}. 

\begin{definition}
A \emph{crystal} is a set $B$ along with the maps 
\begin{align*}
\wt : B \rightarrow X_*, && \te_i: B \rightarrow B \sqcup \{0\} && \tf_i : B \rightarrow B \sqcup \{0\}, && \ep_i: B \rightarrow \Z \cup\{-\infty\}, && \varphi_i: B \rightarrow \Z \cup\{-\infty\}
\end{align*}
for each $i \in I$ with the following axioms:
\begin{enumerate}[label=\arabic*)]
 \item For all $b \in B, i \in I$, $\varphi_i(b) = \ep_i(b) + \langle \wt(b), \alpha_i \rangle$
 \item If $b \in B, i \in I$ and $\te_i (b) \neq 0$, then 
 \begin{align*}
 \wt(\te_i (b)) = \wt(b) + \alpha_i^\vee, && \ep_i(\te_i(b)) = \ep_i(b) -1, &&\varphi_i(e_i(b)) = \varphi_i(b)+1
 \end{align*}
 \item If $b \in B, i \in I$ and $\tf_i (b) \neq 0$, then 
 \begin{align*}
 \wt(\tf_i(b)) = \wt(b) - \alpha_i^\vee, && \ep_i(\tf_i(b)) = \ep_i(b) +1, &&\varphi_i(f_i(b)) = \varphi(b) -1
 \end{align*}
 \item $b' = \te_i(b) \iff f_i(b') = b$
\end{enumerate}
A \emph{highest weight crystal} has a unique element $b_0$ such that $b_0$ can be obtained by any element $b \in B$ by applying a sequence of $\te_i$ for different $i \in I$. 
\end{definition}

In particular, we are interested in the crystal $B(\infty)$. This is the highest weight crystal determined by the relations $\wt(b_0)=0$ and $\ep_i(b) =  \max \{n: \te_i^n b \neq 0\}$.

Let $*$ denote Kashiwara's involution on $B(\infty)$ \cite{KashiwaraInvolution}. Define $\te_i^* = * \circ \te_i \circ *$, $\tf_i^* = * \circ \tf_i \circ *$, $\ep_i^*(b) = \ep(*b)$ and $\varphi_i^* = \varphi_i(*b)$ for every $i \in I, b \in B(\infty)$. Then $(B(\infty), \wt, \ep_i^*, \varphi_i^*, \te_i^*, \tf_i^*)$ is also a crystal.  We call $B(\infty)$ a \emph{bicrystal} with these two crystal structures where the weight functions agree and $\wt(b) \in -Q_+$ for every $b \in B(\infty)$.  In \cite{MVcrystals}, Kamnitzer defines the bicrystal structure on the set of MV polytopes and proves that this structure is isomorphic to the $B(\infty)$ bicrystal. 
\begin{theorem}[{\cite[Theorem 6.2, Corollary 6.3]{MVcrystals}}]\label{theorem:MVcrystalstructure}
Let $P$ be an MV polytope with vertex data $(\mu_\bullet)$. 
\begin{enumerate}[label=\arabic*)]
 \item $\tf_j(P)$ is the unique MV polytope with vertex data $(\mu_\bullet')$ where
 \[
 \mu_e' = \mu_e \text{ and } \mu_{w}' = \mu_{w} + \alpha_j^\vee \text{ if } s_jw < w.
 \]
 \item $\te_j(P)=0 \iff \mu_e = \mu_{s_j}$. Otherwise, $\te_j(P)$ is the unique MV polytope with vertex data $(\mu'_\bullet)$ where
\[
\mu_e' = \mu_e \text{ and } \mu_w' = \mu_w - \alpha_j^\vee \text{ if } s_j w<w.
\]
 \item $\tf^*_j (P)$ is the unique MV polytope with vertex data $(\mu'_\bullet)$ where
 \[
 \mu'_{w_0} = \mu_{w_0} + \alpha_j^\vee \text{ and } \mu_w' = \mu_w \text{ for } s_jw>w.
 \]
 \item $\te^*_j (P)=0 \iff \mu_{w_0s_j} = \mu_{w_0}$. Otherwise, $\te^*_j(P)$ is the unique MV polytope with vertex data $(\mu'_\bullet)$ where
 \[
 \mu'_{w_0} = \mu_{w_0} - \alpha_j^\vee \text{ and } \mu_w' = \mu_w \text{ for } s_jw>w.
 \]
\end{enumerate}
\end{theorem}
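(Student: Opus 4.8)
The plan is to realize all four operators through the Lusztig data of Theorem~\ref{theorem:MVpolytopeslusztigdata} and then match the resulting combinatorial bicrystal with $B(\infty)$. Fix $j \in I$. For a reduced word $\ui = (i_1, \dots, i_m)$ of $w_0$ with $i_1 = j$, I would \emph{define} $\tf_j$ to be the operator sending the MV polytope with Lusztig data $(n_1, \dots, n_m)$ relative to $\ui$ to the one with data $(n_1 + 1, n_2, \dots, n_m)$, and $\te_j$ to be its partial inverse, returning $0$ exactly when $n_1 = 0$. The first point to check is independence of the chosen $\ui$: deleting the initial letter identifies the reduced words of $w_0$ that begin with $j$ with the reduced words of $s_j w_0$, so any two of them are connected by a chain of braid moves supported in positions $\ge 2$; since the transition maps between Lusztig parametrizations are local --- a braid move changes only the coordinates in the positions it touches, which is exactly what the tropical Pl\"ucker relations encode on each $2$-face --- the coordinate $n_1$ is never affected, so $\tf_j$ and $\te_j$ are well defined.

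Next I would translate these into vertex data. Changing $n_1$ alters only the first edge of the $\ui$-minimal path, which issues from $\mu_e$, so $\mu_e$ is unchanged. If $s_j w < w$, then $w$ has a reduced word beginning with $j$, which I extend to a reduced word $\ui$ of $w_0$ with $w = w_k^\ui$ for some $k \ge 1$; iterating (\ref{equation:lusztigdatafromvertexdata}) along this path and using $w_0^\ui = e$, the only summand that changes when $n_1 \mapsto n_1+1$ is the first, equal to $n_1 \alpha_j^\vee$, so $\mu_w(\tf_j P) = \mu_w(P) + \alpha_j^\vee$, and symmetrically $\mu_w(\te_j P) = \mu_w(P) - \alpha_j^\vee$. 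For uniqueness: the data $\mu_e$ together with $\{\mu_w : s_j w < w\}$ already determines the full Lusztig datum relative to any reduced word of $w_0$ beginning with $j$, because every interior path vertex $w_k^\ui$ ($k\ge1$) of such a word satisfies $s_j w_k^\ui < w_k^\ui$; hence there is at most one MV polytope with the prescribed vertices, and $\tf_j P$ (resp.\ $\te_j P$) is one. Finally $\te_j P = 0 \iff n_1 = 0 \iff \mu_{s_j} = \mu_e$, since $\mu_{s_j} - \mu_e = n_1 \alpha_j^\vee$.

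For the starred operators I would run the mirror argument: for a reduced word $\ui$ of $w_0$ \emph{ending} in $j^*$, where $\alpha_{j^*} = -w_0\alpha_j$, incrementing the last coordinate $n_m$ moves $\mu_{w_0}$ along $(w_0 s_{j^*})\alpha_{j^*}^\vee = -w_0\alpha_{j^*}^\vee = \alpha_j^\vee$ and fixes every $\mu_w$ for which $w$ is an interior path vertex of such a word. The combinatorial heart of this is that the set of those $w$ is precisely $\{w : s_j w > w\}$: using $w_0 s_{j^*} = s_j w_0$ one computes $\ell(w^{-1}w_0 s_{j^*}) = m - \ell(s_j w)$, so $w^{-1}w_0$ admits a reduced word ending in $j^*$ exactly when $s_j w > w$, and then a reduced word of $w$ extends to one of $w_0$ ending in $j^*$ with $w$ in an interior slot. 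This yields the displacements in the last two parts; the identities $\tf_j^* = {*}\circ\tf_j\circ{*}$ and $\te_j^* = {*}\circ\te_j\circ{*}$ then follow from the fact that Kashiwara's involution acts on the Lusztig parametrizations by reversing reduced words together with $i\mapsto i^*$, which I would fold into the $B(\infty)$ identification.

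The crystal axioms are then immediate: $\wt(P) = \mu_e - \mu_{w_0}$ gives $\wt(\tf_j P) = \wt(P) - \alpha_j^\vee$ because $s_j w_0 < w_0$; $\ep_j(P) = n_1 = \langle\mu_{s_j}-\mu_e,\omega_j\rangle$, $\varphi_j = \ep_j + \langle\wt(P),\alpha_j\rangle$; and the remaining compatibilities are the trivial behaviour of the shift $n_1\mapsto n_1\pm 1$. The last and hardest step is to identify $(\P,\wt,\te_j,\tf_j,\ep_j,\varphi_j)$ with $B(\infty)$: by Theorem~\ref{theorem:MVpolytopeslusztigdata} the family $(n^\ui)$ presents $\P$ as a system of $\N^m$-charts whose transition maps are the tropicalizations of Lusztig's transition maps on $N$, and these coincide with the maps relating the PBW parametrizations of $B(\infty)$ (Lusztig; Berenstein--Fomin--Zelevinsky), so there is a unique bijection $\P\to B(\infty)$ intertwining all the charts; since $\tf_j$ on $B(\infty)$ is likewise ``add $1$ to the first PBW coordinate for a word beginning with $j$'', this bijection is a bicrystal isomorphism carrying the operators above to $\te_j,\tf_j,\te_j^*,\tf_j^*$. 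I expect comparing the two families of transition maps to be the main obstacle; the independence-of-$\ui$ check in the first paragraph is a baby case of it.
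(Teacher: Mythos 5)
The paper does not prove this statement: it is imported from \cite{MVcrystals}, where the crystal structure is built on MV cycles (following Braverman--Finkelberg--Gaitsgory) and its effect on BZ/vertex data is then computed. Your route is genuinely different: you define the operators directly on Lusztig data and transport the known description of $B(\infty)$ in Lusztig's PBW parametrization. That strategy can be made to work, but note that every hard step is outsourced: the fact that the transition maps between the charts $n_\bullet^\ui$ of Theorem \ref{theorem:MVpolytopeslusztigdata} are the tropicalized PBW reparametrization maps, the fact that $\tf_j$ on $B(\infty)$ adds $1$ to the first PBW coordinate for words beginning with $j$, and the fact that Kashiwara's involution reverses Lusztig data along $\ui \mapsto (i_m^*, \dots, i_1^*)$ are each theorems (Lusztig, Saito, Reineke, Berenstein--Zelevinsky) of roughly the same depth as the result being proved; they should be cited precisely rather than ``folded in'', since together they constitute essentially all of the content. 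The combinatorial parts you do carry out are correct: well-definedness via locality of braid moves in positions $\geq 2$, the computation $\mu_w(\tf_j P) = \mu_w(P) + \alpha_j^\vee$ for $s_j w < w$, uniqueness because $\mu_e$ and $\{\mu_w : s_jw<w\}$ determine the Lusztig datum along any word beginning with $j$, and the identification of $\{w : s_j w > w\}$ as the vertices fixed by the starred operators. What each approach buys: Kamnitzer's proof yields the compatibility with the geometric (MV cycle) crystal, whereas yours is purely combinatorial once the PBW facts are granted.

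One concrete point needs attention: you never verify the vanishing criterion in part 4). Your construction gives $\te_j^*(P) = 0$ exactly when the last Lusztig coordinate vanishes for a reduced word ending in $j^*$, i.e.\ when $\mu_{w_0 s_{j^*}} = \mu_{w_0}$ (equivalently $\mu_{s_j w_0} = \mu_{w_0}$), not $\mu_{w_0 s_j} = \mu_{w_0}$ as displayed. The paper's own Example \ref{example:crystal} is consistent with your version ($\te_1^*(P) = 0$ while $\mu_{w_0 s_1} = \mu_{s_1 s_2} \neq \mu_{w_0}$ and $\mu_{w_0 s_{1^*}} = \mu_{s_2 s_1} = \mu_{w_0}$), so the displayed index appears to be a slip; still, you should state explicitly which vertex your argument controls, and likewise record the analogous criterion for $\te_j$ (which you do) so that parts 2) and 4) are both actually established.
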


\begin{example}\label{example:crystal}
When $G = SL_3$, consider the polytope $P$ given by the Lusztig data $n_\bullet^{(1,2,1)}(P) = (1,0,2)$ and $n_\bullet^{(2,1,2)}(P) = (1,1,0)$. Then the crystal operators act as follows:
\begin{align*}
n_\bullet^{(1,2,1)}(\tf_1(P)) &= (2,0,2), & n_\bullet^{(1,2,1)}(\te_1(P)) &= (0,0,2), \\ n_\bullet^{(1,2,1)}(\tf_2^*(P)) &= (1,0,3), & n_\bullet^{(1,2,1)} (\te_2^*(P)) &=(1,0,1) \\
n_\bullet^{(2,1,2)}(\tf_2(P)) &= (2,1,0), & n_\bullet^{(2,1,2)}(\te_2(P)) &= (0,1,0), \\ n_\bullet^{(2,1,2)}(\tf_1^*(P)) & = (1,1,1), & n_\bullet^{(2,1,2)}(\te_1^*(P)) & = 0.
\end{align*}
Note that $\tf_2(P) = \tf_2^*(P)$ and $\te_2(P) = \te_2^*(P)$. 
\end{example}

For each $j$, define $j^*$ to be the index such that $s_{j^*} w_0 = w_0 s_j$. Note that for the reduced word $\ui = (i_1, \dots, i_m)$ of $w_0$, $s_{i_1} \cdots s_{i_{m-1}} \alpha_{i_m} = \alpha_{i^*_m}$ so that $f_{i_m^*}^*$ and $e_{i_m^*}^*$ will change the last component of the Lusztig data with respect to $\ui$. 

We can explicitly see how these operators act on the Lusztig data of a polytope. Suppose $n_\bullet^\ui(P)$ is the Lusztig data with respect to $\ui$ for a polytope $P \in \P$. Then
\begin{align*}
n_\bullet^\ui(\tf_{i_1}(P)) &= (n_1 +1, n_2, \dots, n_m), & n_\bullet^\ui (\te_{i_1}(P)) &= (n_1 -1, n_2, \dots, n_m),\\
n_\bullet^\ui(\tf_{i^*_m}^*(P)) &= (n_1, \dots, n_{m-1}, n_m +1), & n_\bullet^\ui(\te_{i^*_m}^*(P)) &= (n_1, \dots, n_{m-1}, n_m -1).
\end{align*}
The value of the crystal operators $\ep_i$ can be easily determined by the Lusztig data. 

\begin{corollary}\label{corollary:crystaloperatorslusztig}
For a reduced word $\ui = (i_1, \dots, i_m)$ of $w_0$ and $P \in \P$, if $P$ has Lusztig data $n_\bullet^\ui$, then $\ep_{i_1}(P) = n_{1}^{\ui}$ and $\ep_{i^*_m}^*(P) = n_m^{\ui}$.
\end{corollary}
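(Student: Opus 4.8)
The plan is to read each crystal string length directly off Theorem~\ref{theorem:MVcrystalstructure}, together with the description of the $\te$- and $\tf$-operators on Lusztig data recorded just above the corollary. Recall that in $B(\infty)$ one has $\ep_{i_1}(P)=\max\{k\ge 0:\te_{i_1}^{k}P\neq 0\}$, and, since the $*$-twisted structure $(\P,\wt,\ep_i^*,\varphi_i^*,\te_i^*,\tf_i^*)$ is again the crystal $B(\infty)$, likewise $\ep_{i_m^*}^{*}(P)=\max\{k\ge 0:(\te_{i_m^*}^{*})^{k}P\neq 0\}$. Thus it suffices to show: for every $Q\in\P$, $\te_{i_1}(Q)=0\iff n_1^\ui(Q)=0$, and $n_1^\ui(\te_{i_1}Q)=n_1^\ui(Q)-1$ whenever $n_1^\ui(Q)\ge 1$; and the analogous statement for $\te_{i_m^*}^*$ and $n_m^\ui$.

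For the first edge: by (\ref{equation:lusztigdatafromvertexdata}) and Lemma~\ref{lemma:edgesandLusztig}, $n_1^\ui(Q)$ is the length of the edge from $\mu_e$ to $\mu_{s_{i_1}}$, so $n_1^\ui(Q)=0\iff\mu_e=\mu_{s_{i_1}}$, which by Theorem~\ref{theorem:MVcrystalstructure}(2) is exactly the condition $\te_{i_1}(Q)=0$. When $n_1^\ui(Q)\ge1$, the same part of the theorem produces $\te_{i_1}(Q)$ with $\mu_e$ unchanged and $\mu_{s_{i_1}}$ replaced by $\mu_{s_{i_1}}-\alpha_{i_1}^\vee$, so the first edge is shortened by exactly one: $n_1^\ui(\te_{i_1}Q)=n_1^\ui(Q)-1$. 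Iterating, for $0\le k\le n_1^\ui(P)$ the polytope $\te_{i_1}^{k}P$ is an honest (hence nonzero) MV polytope with first Lusztig coordinate $n_1^\ui(P)-k$, while $\te_{i_1}^{n_1^\ui(P)+1}P=0$ since the preceding polytope has vanishing first edge. Hence $\ep_{i_1}(P)=n_1^\ui$.

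The $*$-statement is the mirror image, carried out at the vertex $\mu_{w_0}$. Using the identity $s_{i_1}\cdots s_{i_{m-1}}\alpha_{i_m}=\alpha_{i_m^*}$ recalled above the corollary, the last edge of the $\ui$-minimal path runs from $\mu_{w_{m-1}^\ui}$ to $\mu_{w_0}$ in direction $w_{m-1}^\ui\alpha_{i_m}^\vee=\alpha_{i_m^*}^\vee$ and has length $n_m^\ui$. By Theorem~\ref{theorem:MVcrystalstructure}(4), $\te_{i_m^*}^*$ lowers $\mu_{w_0}$ by $\alpha_{i_m^*}^\vee$ while fixing every $\mu_w$ with $s_{i_m^*}w>w$ — in particular $\mu_{w_{m-1}^\ui}$, because $s_{i_m^*}w_{m-1}^\ui=w_0>w_{m-1}^\ui$ — and it vanishes exactly when that edge at $\mu_{w_0}$ has length zero, i.e.\ when $n_m^\ui(Q)=0$; otherwise it shortens this edge by one, so $n_m^\ui(\te_{i_m^*}^*Q)=n_m^\ui(Q)-1$. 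Iterating exactly as before gives $\ep_{i_m^*}^*(P)=n_m^\ui$.

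I do not anticipate a real obstacle: this is a direct corollary of Theorem~\ref{theorem:MVcrystalstructure} and the discussion preceding it. The only care required is in the bookkeeping — matching the vertex coincidences in Theorem~\ref{theorem:MVcrystalstructure}(2),(4) with the vanishing of the first, respectively last, Lusztig coordinate (which uses Lemma~\ref{lemma:edgesandLusztig} and the identity $s_{i_1}\cdots s_{i_{m-1}}\alpha_{i_m}=\alpha_{i_m^*}$, the latter guaranteeing that it is precisely the $m$-th coordinate that $\te_{i_m^*}^*$ moves), and in the trivial base case that a polytope with $n_1^\ui=0$ (resp.\ $n_m^\ui=0$) is still a genuine MV polytope, so that the crystal string has length exactly $n_1^\ui$ (resp.\ $n_m^\ui$) and not less.
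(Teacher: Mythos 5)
Your argument is correct and is essentially the paper's own: the corollary is stated as an immediate consequence of the displayed one-step action of $\te_{i_1}$ and $\te_{i_m^*}^*$ on Lusztig data together with $\ep_i(b)=\max\{n:\te_i^{\,n}b\neq 0\}$, and you simply re-derive that one-step action from Theorem~\ref{theorem:MVcrystalstructure} via (\ref{equation:lusztigdatafromvertexdata}) and iterate. One small caution on the $*$-side: the vanishing criterion you use is really $\te_j^*(P)=0\iff \mu_{w_0 s_{j^*}}=\mu_{w_0}$, so that for $j=i_m^*$ it concerns the path vertex $\mu_{w_0 s_{i_m}}=\mu_{w_{m-1}^{\ui}}$ exactly as you say, whereas Theorem~\ref{theorem:MVcrystalstructure}(4) as printed reads $\mu_{w_0 s_j}$, which for $j=i_m^*$ is the different vertex $\mu_{s_{i_m}w_0}$ and is inconsistent with Example~\ref{example:crystal}; your reading is the consistent one, and in any case the direction $n_m^{\ui}(Q)=0\Rightarrow \te_{i_m^*}^*(Q)=0$ that your induction needs also follows from the action alone, since otherwise the image would have a last edge of negative length.
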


Theorem \ref{theorem:MVcrystalstructure} associates a unique MV polytope $\Pol(b)$ to each $b \in B(\infty)$, where $\Pol(b_0)$ is the polytope consisting of the point $\mu_e$. We can also use the Saito reflection on the bicrystal $B(\infty)$ to describe the polytope $\Pol(b)$. For the rest of the subsection, we follow {\cite[Section 3.3]{TheMVBasis}}. 
\begin{definition}\label{definition:Saitoreflection}
Define the map $\tilde{\sigma}_i :\{ b \in B(\infty) : \ep_i(b) =0\} \rightarrow \{b \in B(\infty): \ep_i^*(b)=0\}$ by $\tilde{\sigma}_i(b) = \left(\tf_i \right)^{\varphi_i^*(b)} \left( \te^*_i\right) ^{\ep_i^*(b)} (b)$. The \emph{Saito reflection} is the map 
\[
\sigma_{i}: B(\infty) \rightarrow \{ b \in B(\infty): \ep_{i}^*(b)=0\}
\] defined by $\sigma_{i}(b) =\tilde{\sigma}_i ((\te_i)^{\ep_i(b)}b) $.

Similarly, define $\tilde{\sigma}_i^*: \{ b \in B(\infty): \ep_i^*(b)=0 \} \rightarrow \{ b \in B(\infty): \ep_i(b)=0\}$ by  $\tilde{\sigma_i}^*(b) = \left(\tf_i^* \right)^{\varphi_i(b)} \left( \te_i\right) ^{\ep_i(b)} (b)$. Define the \emph{$*$-Saito reflection} as the map
\[
\sigma_i^*: B(\infty) \rightarrow \{ b \in B(\infty): \ep_{i}(b)=0\}
\]
defined by $\sigma_i^*(b) = \tilde{\sigma}^*_i \left( ({\te^*_{i}})^{\ep_i^*(b)} b\right)$. 
\end{definition}
Note that by definition, $\ep^*_i(\sigma_i(b))=0$ and $\ep_i(\sigma_i^*(b))=0$. Also, $\tilde{\sigma}_i^* = \tilde{\sigma}_i^{-1}$ by {\cite[Corollary 3.4.8]{SaitoReflection}}. The operators $\sigma_i, \sigma_i^*$ satisfy the same braid relations as the simple reflections $s_i$ thus for any $w\in W$, it is well defined to set $\sigma_w := \sigma_{i_1} \cdots \sigma_{i_m}$  where $\ui$ is a reduced word of $w$.

\begin{lemma}[{\cite[Proposition 3.4.7]{SaitoReflection}}, {\cite[Property (L3)]{PreprojectiveMVpolytopesBK}}]\label{lemma:saitoandlusztig}
Let $b \in B(\infty)$ and let $n_\bullet^\ui$ be the Lusztig data of $\Pol(b)$ with respect to $\ui = (i_1, \dots, i_m)$. Consider $\uj = (i_2, \dots, i_m,i_1^*)$. Then 
\[
n_\bullet^\uj(\Pol(\sigma_{i_1}(b))) = (n_2, n_3, \dots, n_m, 0)
\]
Consider $\underline{k} = (i_m^*, i_1, i_2, \dots, i_{m-1})$. Then
\[
n_\bullet^{\underline{k}}(\Pol( \sigma_{i^*_m}^*(b)) = (0, n_1, \dots, n_{m-1})
\]
\end{lemma}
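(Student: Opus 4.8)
The plan is to reduce the statement to a claim about the action of a single Saito reflection on one MV polytope and then to trace the relevant minimal paths using Theorem~\ref{theorem:MVcrystalstructure}. The two asserted identities are interchanged by Kashiwara's involution $*$: on MV polytopes $*$ reverses every minimal path and swaps the two crystal structures, it carries the Lusztig datum $n_\bullet^\ui(\Pol(b))$ to the reversal of $n_\bullet^{(i_m^*,\dots,i_1^*)}(\Pol(*b))$, and one checks $*\,\sigma_i\,* = \sigma_i^*$ directly from Definition~\ref{definition:Saitoreflection}. So I would prove the first identity and deduce the second formally.

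For the first identity I would first reduce to the case $\ep_{i_1}(b)=0$. By Corollary~\ref{corollary:crystaloperatorslusztig} we have $\ep_{i_1}(\Pol(b))=n_1$, and by the action of $\te_{i_1}$ on Lusztig data the element $b':=(\te_{i_1})^{n_1}b$ satisfies $n_\bullet^\ui(\Pol(b'))=(0,n_2,\dots,n_m)$. Straight from Definition~\ref{definition:Saitoreflection}, $\sigma_{i_1}(b)=\tilde\sigma_{i_1}\big((\te_{i_1})^{\ep_{i_1}(b)}b\big)=\tilde\sigma_{i_1}(b')$, and the target tuple $(n_2,\dots,n_m,0)$ is unchanged under $b\mapsto b'$. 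So it suffices to show: if $\ep_{i_1}(b)=0$ and $n_\bullet^\ui(\Pol(b))=(0,n_2,\dots,n_m)$, then $n_\bullet^\uj(\Pol(\tilde\sigma_{i_1}(b)))=(n_2,\dots,n_m,0)$. The last coordinate is immediate: $\tilde\sigma_{i_1}(b)$ lies in $\{c:\ep_{i_1}^*(c)=0\}$ by construction, and since $\uj$ ends in $i_1^*$ with $(i_1^*)^*=i_1$, Corollary~\ref{corollary:crystaloperatorslusztig} gives $n_m^\uj(\Pol(\tilde\sigma_{i_1}(b)))=\ep_{i_1}^*(\tilde\sigma_{i_1}(b))=0$.

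For the remaining entries, write $\tilde\sigma_{i_1}(b)=(\tf_{i_1})^{p}(\te_{i_1}^*)^{q}b$ with $q=\ep_{i_1}^*(b)$ and $p=\varphi_{i_1}^*\big((\te_{i_1}^*)^{q}b\big)$, and treat the two stages separately. The combinatorial input is that $(i_2,\dots,i_m)$ is a reduced word for $s_{i_1}w_0$ while $(i_1,\dots,i_k)$ is a prefix of the reduced word $\ui$ for each $k$; hence every vertex label on the $\uj$-minimal path other than $w_0$ (namely $s_{i_2}\cdots s_{i_k}$) satisfies $s_{i_1}w>w$. By Theorem~\ref{theorem:MVcrystalstructure}(4) the operator $\te_{i_1}^*$ fixes exactly those vertices, so the first $m-1$ edges of the $\uj$-path are the same in $\Pol((\te_{i_1}^*)^q b)$ as in $\Pol(b)$, while the last $\uj$-edge has length $0$ there. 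It then remains to show that the subsequent $(\tf_{i_1})^p$ moves the $\uj$-edges in precisely such a way that, using $\ep_{i_1}(b)=0$ (equivalently $\mu_e=\mu_{s_{i_1}}$), their lengths become the original $\ui$-edge lengths $n_2,\dots,n_m$.

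This last point is the main obstacle: among the vertices on the $\uj$-path, $\tf_{i_1}$ fixes only $\mu_e$, so its effect cannot be read off Theorem~\ref{theorem:MVcrystalstructure} as stated -- the interior vertices genuinely move and the polytope is pinned down only through its full combinatorial structure. I would resolve it by combining Theorem~\ref{theorem:MVcrystalstructure} with Theorem~\ref{theorem:MVpolytopeslusztigdata} and the tropical Pl\"ucker relations, tracking how the $\uj$-edges degenerate and re-form as $\mu_e$ separates from $\mu_{s_{i_1}}$; alternatively, and more efficiently, one transports the question to Saito's PBW parametrization of $B(\infty)$ and invokes \cite[Proposition~3.4.7]{SaitoReflection}. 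A third route that sidesteps the explicit bookkeeping is induction on the height $|\wt(b)|$: when $n_1>0$ the identity follows at once from $\sigma_{i_1}\circ\tf_{i_1}=\sigma_{i_1}$ (a direct consequence of the crystal axioms) together with the fact that the target tuple ignores the first coordinate, and when $n_1=0$, $b\neq b_0$, one peels a box with an operator $\tf_j$ for some $j\neq i_1$ and uses the commutation of $\tilde\sigma_{i_1}$ with such $\tf_j$ -- again provided by Saito's analysis.
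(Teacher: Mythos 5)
First, note that the paper does not prove this lemma at all: it is imported verbatim from \cite[Proposition 3.4.7]{SaitoReflection} and \cite[Property (L3)]{PreprojectiveMVpolytopesBK}, so there is no internal argument to compare against. Read as a citation-plus-reduction, your proposal is compatible with how the paper treats the statement; read as a proof, it has a genuine gap, and you essentially say so yourself.

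Your preliminary steps are sound: the identity $*\sigma_{i}* = \sigma_i^*$ does follow from Definition~\ref{definition:Saitoreflection} together with the defining relations $\te_i^* = *\te_i*$, $\ep_i^* = \ep_i\circ *$; the reduction to $\ep_{i_1}(b)=0$ via $\sigma_{i_1}(b)=\tilde\sigma_{i_1}\bigl((\te_{i_1})^{\ep_{i_1}(b)}b\bigr)$ is correct; the vanishing of the last $\uj$-coordinate via Corollary~\ref{corollary:crystaloperatorslusztig} is correct; and your observation that $(\te_{i_1}^*)^{q}$ fixes all $\uj$-path vertices except $\mu_{w_0}$ (by Theorem~\ref{theorem:MVcrystalstructure}(4), since $s_{i_1}\cdot s_{i_2}\cdots s_{i_k}$ is reduced) is also correct. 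But the crux --- that applying $(\tf_{i_1})^{p}$ afterwards produces a polytope whose first $m-1$ edge lengths along the $\uj$-path are exactly $n_2,\dots,n_m$ --- is never established. Route one (``tracking how the $\uj$-edges degenerate and re-form'' via Theorem~\ref{theorem:MVpolytopeslusztigdata} and the tropical Pl\"ucker relations) is precisely the whole difficulty and is left unexecuted; route two is a citation of the very result to be proved; route three's induction does not close as sketched: for $n_1>0$ the step $\sigma_{i_1}\tf_{i_1}=\sigma_{i_1}$ is fine, but for $n_1=0$ you need both a commutation of $\tilde\sigma_{i_1}$ with $\tf_j$ ($j\neq i_1$), which you again outsource to ``Saito's analysis,'' and --- even granting it --- control of how $\tf_j$ for $j\neq i_1$ changes the Lusztig data with respect to $\ui$ and $\uj$, which is not the simple first-coordinate shift and is not supplied by anything in the paper. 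In addition, the $*$-duality step silently uses that Kashiwara's involution reverses Lusztig data (equivalently negates the MV polytope), a nontrivial theorem of \cite{MVcrystals} not stated here; it is true and citable, but it is another external input, not a check. So the proposal is an honest reduction to the literature rather than a proof; the substantive content of the lemma remains exactly where the paper leaves it, in \cite{SaitoReflection} and \cite{PreprojectiveMVpolytopesBK}.
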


Using this lemma, the Lusztig data can be computed by composing crystal operators with certain Saito reflections. 

\begin{corollary}\label{equation:crystalsepsilonaction}
For $b \in B(\infty)$ suppose $\Pol(b)$ has vertex data $(\mu_\bullet)$. Then for every $w\in W$ and $j \in I$ such that $ws_j > w$, 
\[
\mu_{ws_j} - \mu_w = \ep_j(\sigma_{w^{-1}}(b)) w \alpha_j^\vee
\]
\end{corollary}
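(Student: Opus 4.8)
The plan is to reduce the statement to Lemma~\ref{lemma:saitoandlusztig} together with Corollary~\ref{corollary:crystaloperatorslusztig}. Fix $w \in W$ and $j \in I$ with $ws_j > w$, and fix a reduced word $\ui = (i_1, \dots, i_k)$ of $w$, so that $(i_1, \dots, i_k, j)$ is the beginning of a reduced word of some element, and in fact since $ws_j > w$ we may extend $(i_1,\dots,i_k,j)$ to a reduced word $\uh = (i_1, \dots, i_k, j, i_{k+2}, \dots, i_m)$ of $w_0$. The key observation is that along the minimal path determined by $\uh$, the $(k+1)$-st edge runs from $\mu_{w}$ to $\mu_{ws_j}$, and by (\ref{equation:lusztigdatafromvertexdata}) its vector is $n_{k+1}^{\uh}(\Pol(b))\, w\alpha_j^\vee$. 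So it suffices to prove that $n_{k+1}^{\uh}(\Pol(b)) = \ep_j(\sigma_{w^{-1}}(b))$.

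Next I would peel off the first $k$ steps of the word $\uh$ using the Saito reflection half of Lemma~\ref{lemma:saitoandlusztig}, applied iteratively. Writing $w_r^{\uh} = s_{i_1}\cdots s_{i_r}$, one shows by induction on $r$ that if $n_\bullet^{\uh}(\Pol(b)) = (n_1, \dots, n_m)$ then, for the cyclically rotated reduced word $\uh^{(r)}$ obtained by moving the first $r$ letters to the end (with the appropriate $*$-twists, exactly as in the statement of Lemma~\ref{lemma:saitoandlusztig}), we have
\[
n_\bullet^{\uh^{(r)}}\bigl(\Pol(\sigma_{i_r} \cdots \sigma_{i_1}(b))\bigr) = (n_{r+1}, \dots, n_m, 0, \dots, 0).
\]
Taking $r = k$, the first letter of $\uh^{(k)}$ is $j$ and the first component of the rotated Lusztig data is $n_{k+1}$; but $\sigma_{i_k}\cdots\sigma_{i_1} = \sigma_{(s_{i_1}\cdots s_{i_k})^{-1}} = \sigma_{w^{-1}}$ because the $\sigma_i$ satisfy the braid relations and $(i_k, \dots, i_1)$ is a reduced word for $w^{-1}$. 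Finally, Corollary~\ref{corollary:crystaloperatorslusztig} identifies the first component of the Lusztig data of $\Pol(\sigma_{w^{-1}}(b))$ with respect to a word beginning in $j$ as exactly $\ep_j(\sigma_{w^{-1}}(b))$, which is what we want.

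The main obstacle I anticipate is bookkeeping rather than anything conceptual: making sure the iterated application of Lemma~\ref{lemma:saitoandlusztig} is legitimate, i.e. that the hypothesis $\ep_{i_r}(\,\cdot\,)$ is not getting in the way (it is not — Lemma~\ref{lemma:saitoandlusztig} is stated for arbitrary $b \in B(\infty)$) and, more delicately, that the reduced words and the $*$-operations track correctly through the rotation. In particular one must check that the element whose Saito reflection appears at stage $r$ is indeed $\sigma_{i_r}$ applied to the stage-$(r-1)$ crystal element, which uses the precise form of the rotated word in Lemma~\ref{lemma:saitoandlusztig} (namely $\uj = (i_2, \dots, i_m, i_1^*)$) and the identity $s_{i_1}\cdots s_{i_{r-1}}\alpha_{i_r}$ being a positive root, guaranteed by reducedness. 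Once the rotation is set up correctly, identifying $\sigma_{i_k}\cdots\sigma_{i_1}$ with $\sigma_{w^{-1}}$ and invoking Corollary~\ref{corollary:crystaloperatorslusztig} is immediate. An alternative, slightly cleaner route avoids explicit rotation: argue directly that both sides of the claimed identity are characterized by the same recursion under $\sigma_{i}$ (using Lemma~\ref{lemma:saitoandlusztig} to handle the left side and the definition of the Saito reflection for the right), and check the base case $w = e$, where the claim reduces to $\mu_{s_j} - \mu_e = \ep_j(b)\alpha_j^\vee$, which is precisely Corollary~\ref{corollary:crystaloperatorslusztig}.
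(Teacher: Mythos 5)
Your proposal is correct and follows essentially the same route as the paper: choose a reduced word of $w_0$ beginning with a reduced word of $w$ followed by $j$, peel off the first $\ell(w)$ letters by iterating Lemma \ref{lemma:saitoandlusztig} so that $\Pol(\sigma_{w^{-1}}(b))$ has Lusztig data $(n_{\ell(w)+1},\dots,n_m,0,\dots,0)$ with respect to the rotated word, and then read off $\ep_j(\sigma_{w^{-1}}(b)) = n_{\ell(w)+1}$ via Corollary \ref{corollary:crystaloperatorslusztig}. The bookkeeping points you flag (the $*$-twists in the rotated word and the identification $\sigma_{i_k}\cdots\sigma_{i_1}=\sigma_{w^{-1}}$ via the braid relations) are handled the same way in the paper's argument.
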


\begin{proof}
Consider $\Pol(b)$ with vertex data $(\mu_\bullet)$. For $w \in W$ and $j\in I$ such that $ws_j > w$, there is a reduced word $\ui$ such that $s_{i_1} \cdots s_{i_{\ell(w)}} = w$ and $s_{i_{\ell{w}+1}} = s_j$. By definition, $\mu_{ws_j} - \mu_{w} = n_{\ell(w)+1}(\Pol(b)) \cdot w \alpha_j^\vee$, where $n_\bullet^\ui(P) = (n_1, \dots, n_m)$. 

Recall that $\sigma_{w^{-1}}= \sigma_{s_{i_{\ell(w)}}} \sigma_{s_{i_{\ell(w)-1}}} \cdots \sigma_{s_{i_2}} \sigma_{s_{i_1}}$. By Lemma \ref{lemma:saitoandlusztig}, then $\Pol(\sigma_{w^{-1}}(b))$ has Lusztig data $(n_{\ell(w)+1}, \dots, n_m, 0, \dots, 0)$ with respect to the reduced word $(j, i_{\ell(w)+2}, \dots, i_m, i_1^*, \dots, i_{\ell(w)}^*)$. By Corollary \ref{corollary:crystaloperatorslusztig}, $\ep_j (\sigma_{w^{-1}}(b)) = \ep_j(\Pol(\sigma_{w^{-1}}(b)) = n_{\ell(w)+1}(\Pol(b))$. 
\end{proof} 

This corollary allows us to write $\mu_w(b)$ in a closed form. Note that the map $\tilde{\sigma}_i$ has the property that $\wt(\tilde{\sigma}_i(b)) = s_i \wt(b)$ so it follows that $\wt (\sigma_i(b)) = s_i (\wt( {e_i^*}^{\ep_i^*(b)}(b))) = s_i (\wt(b) + \ep_i(b) \alpha_i^\vee)$. For non-trivial $w = s_{i_1} \cdots s_{i_m}$, by inductively applying this equality we have
\begin{align*}
\wt(\sigma_{w^{-1}}(b)) = \sum_{k=1}^m \ep_{i_k} (\sigma_{s_{i_m} \cdots s_{i_k}}(b)) s_{i_{k}} \cdots s_{i_m} \alpha_{i_k}^\vee.
\end{align*}
As $\mu_e=0$, it follows from Corollary \ref{equation:crystalsepsilonaction} that
\begin{align*}
\mu_{s_{i_1} \cdots s_{i_m}} (b) &= \sum_{k=1}^m \mu_{s_{i_1} \cdots s_{i_k}} - \mu_{s_{i_1} \cdots s_{i_{k-1}}} = \sum_{k=1}^m \ep_{i_k} (\sigma_{s_{i_m} \cdots s_{i_{k-1}}} (b)) s_{i_1} \cdots s_{i_{k-1}} \alpha_{i_k}^\vee  = w \cdot \wt(\sigma_{w^{-1}}(b)).
\end{align*}
Thus the vertex data $(\mu_\bullet(b))$ of $\Pol(b)$ can be explicitly determined by the Saito reflection where
\begin{equation}
\mu_w(b) = w \cdot \wt(\sigma_{w^{-1}}(b)) - \wt(b).
\end{equation}
Note that we shift by $\wt(b)$ so that $\mu_e(b) = \wt(b) - \wt(b)=0$.

\subsection{Preprojective algebra modules}\label{section:preprojective}

We give a very brief background on preprojective algebra modules and the associated MV polytope. This section is needed to prove the \emph{generalized diagonal relations} of Section \ref{section:generalizeddiagonals}. 

In this section, we restrict to the case that $G$ is a simply-laced algebraic group. First, we start with some general definitions. 

\begin{definition}\label{definition:quiver}
A \emph{quiver} $Q =(I, E, s, t)$ consists of a vertex set $I$, an arrow set $E$, a source map $s: E \rightarrow I$ and a target map $t: E \rightarrow I$. We write the arrow $\alpha \in E$ as $\alpha: i \rightarrow j$, where $i = s (\alpha)$ and $j = t(\alpha)$. 

Define $E^* = \{ \alpha^*: \alpha \in E\}$ where $s(\alpha^*) = t(\alpha)$ and $t(\alpha^*) = s(\alpha)$. Let $\overline{Q} = (I, E \sqcup E^*, s, t)$ be the double quiver. 
\end{definition}

The \emph{path algebra} of $\overline{Q}$ over $\C$ is the algebra $\C \overline{Q}$. Consider the ideal $J$ generated by $\sum_{\alpha \in E} (\alpha \alpha^* - \alpha^* \alpha)$. 

\begin{definition}\label{definition:pathalgebra}
The \emph{preprojective} of $\overline{Q}$ over $\C$, denoted by $\Lambda(Q)$, is the quotient of $\C \overline{Q}$ by the ideal $J$. 
\end{definition}

A $\Lambda(Q)$-module $M$ is an $I$-graded vector space $\bigoplus_{i \in I} M_i$ with maps $M_\alpha: M_{s(\alpha)} \rightarrow M_{t(\alpha)}$ for each $\alpha \in E \sqcup E^*$ which satisfy
\[
\sum_{\alpha \in E, \, t(\alpha) =i} M_{\alpha} M_{\alpha^*} - M_{\alpha^*} M_{\alpha} =0
\] 
for each $i \in I$. 

We consider a few special $\Lambda(Q)$-modules. For $i \in I$, let $S_i$ be the 1-dimensional module concentrated at the vertex $i$, where all arrows act as zero. Let $I_i$ be the annihilator of $S_i$. For any $w \in W$ we define $I_w := I_{i_1} \cdots I_{i_m}$, where $\ui$ is a reduced word of $w$. Note that this is independent of the choice of $\ui$ and thus is well-defined. 

For a module $M$, the $i$-socle is the largest submodule of $M$ which is isomorphic to $S_i^{\oplus k}$ for some $k\in \N$, while the $i$-head is the largest quotient of $M$ which is isomorphic to $S_i^{\oplus k}$ for some $k \in \N$. In fact,  $\mathrm{soc}_iM \cong \Hom_{\Lambda(Q)}(\Lambda(Q)/I_i, M)$ and $\mathrm{hd}_i M \cong (\Lambda(Q)/I_i) \otimes_{\Lambda(Q)} M$. 

Let $G$ be a simply-laced complex algebraic group. Fix $Q$ to be an orientation of the Dynkin diagram associated to the simple coroots of $G$ and set $\Lambda := \Lambda(Q)$. For $M$ a $\Lambda$-module, we can define \emph{dimension vector} as $\dimvec M = \sum_{i\in I} \dim M_i \alpha^\vee_i$, which is contained in the coroot lattice $Q^\vee$. By \cite{PreprojectiveMVpolytopesBK}, we can associated a GGMS polytope to a $\Lambda$-module $M$ by
\[
\Pol(M) := \conv \{ \dimvec M - \dimvec N: N \subset M \text{ is a submodule}\}.
\]
By {\cite[Theorem 5.4]{AffineMVpolytopes}}, for any $w \in W$ we define the submodules $M^w \subseteq M$ as the image of the map $I_w \otimes_{\Lambda}\Hom_{\Lambda}(I_w, M) \rightarrow M$. By Remark 5.19 (i), $\Pol(M)$ will have vertex data $(\mu_w)_{w \in W}$ where  $\mu_w = \dimvec M -  \dimvec M^{w}$. For certain modules $M$, $M^w$ and $M^{s_i w}$ are closely related. 

\begin{lemma}[{\cite[Lemma 2]{ReflexionsBGK}}]
For $w \in W$, consider $i \in I$ such that $s_iw>w$. For $M$ a finite-dimensional $\Lambda$-module, if $\Ext_\Lambda^1(S_i, M)=0$ then $M^{s_iw} \cong I_i \otimes_{\Lambda}M^w$. 
\end{lemma}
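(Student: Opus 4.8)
The plan is to recast the operation $M\mapsto M^w$ in terms of trace submodules, peel off the extra factor $I_i$ by a tensor--hom adjunction, and use the two hypotheses to upgrade the resulting surjections to isomorphisms. First recall that for any finite-dimensional $\Lambda$-module $P$ the image of the evaluation map $P\otimes_\Lambda\Hom_\Lambda(P,M)\to M$ is the trace $\operatorname{tr}_P(M):=\sum_{f\in\Hom_\Lambda(P,M)}\operatorname{Im}(f)$; in particular $M^w=\operatorname{tr}_{I_w}(M)$. Since $s_iw>w$, prepending $i$ to a reduced word of $w$ yields a reduced word of $s_iw$, so $I_{s_iw}=I_iI_w$; and because the lengths add one moreover has $I_iI_w\cong I_i\otimes_\Lambda I_w$ as $\Lambda$-bimodules (the factorisation property of the ideals $I_w$, already used to see that $I_w$ is well defined; see \cite{ReflexionsBGK}). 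So it is enough to produce an isomorphism $\operatorname{tr}_{I_i\otimes_\Lambda I_w}(M)\cong I_i\otimes_\Lambda\operatorname{tr}_{I_w}(M)$.

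Next, set $N:=\Hom_\Lambda(I_i,M)$. The adjunction $\Hom_\Lambda\big(I_i\otimes_\Lambda(-),M\big)\cong\Hom_\Lambda(-,N)$, valid since $I_i$ is a $\Lambda$-bimodule, identifies the evaluation map defining $M^{s_iw}$ with the composite
\[
I_i\otimes_\Lambda\Big(I_w\otimes_\Lambda\Hom_\Lambda(I_w,N)\Big)\xrightarrow{\ \mathrm{id}_{I_i}\otimes\mathrm{ev}\ }I_i\otimes_\Lambda N\xrightarrow{\ \mathrm{ev}_{I_i,M}\ }M .
\]
The image of the first arrow is a quotient of $I_i\otimes_\Lambda N^w$, where $N^w=\operatorname{tr}_{I_w}(N)$, and the composite surjects onto $M^{s_iw}$. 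Thus the proof reduces to two claims: (i) that the two arrows restrict to monomorphisms on $I_i\otimes_\Lambda N^w$, so that $M^{s_iw}\cong I_i\otimes_\Lambda N^w$; and (ii) that $I_i\otimes_\Lambda N^w\cong I_i\otimes_\Lambda M^w$.

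Claim (i) is where $\Ext^1_\Lambda(S_i,M)=0$ enters: by the (stable) $2$-Calabi--Yau property of the preprojective algebra it also gives $\operatorname{Tor}^\Lambda_1(S_i,M)=0$, and together with the vanishing $I_i\otimes_\Lambda S_i=0$ (no loops, so $\operatorname{Tor}_1^\Lambda(S_i,S_i)=0$) one reads off from $0\to I_i\to\Lambda\to S_i\to0$ that $\mathrm{ev}_{I_i,M}\colon I_i\otimes_\Lambda N\to M$ is injective and, more generally, that $I_i\otimes_\Lambda(-)$ is exact enough on the modules in sight; the base case $w=e$ of the lemma, $M^{s_i}\cong I_i\otimes_\Lambda M$, already falls out here. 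For claim (ii), the sequence $0\to I_i\to\Lambda\to S_i\to0$ with $\Ext^1_\Lambda(S_i,M)=0$ yields $0\to\mathrm{soc}_iM\to M\xrightarrow{\pi}N\to0$, and $\pi$ maps $M^w=\operatorname{tr}_{I_w}(M)$ onto a submodule of $N^w$; the reverse inclusion says every map $I_w\to N$ lifts along $\pi$, with obstruction living in copies of $\Ext^1_\Lambda(I_w,S_i)$, and the residual discrepancy between $I_i\otimes_\Lambda M^w$ and $I_i\otimes_\Lambda N^w$ again vanishes because $I_i\otimes_\Lambda S_i=0$. \emph{The main obstacle is the vanishing $\Ext^1_\Lambda(I_w,S_i)=0$: this is precisely where the hypothesis $s_iw>w$ must be used}, through the structure of $I_w$ (equivalently of $\Lambda/I_w$) as developed in \cite{AffineMVpolytopes, ReflexionsBGK}; intuitively $s_iw>w$ says $i$ sits at the right end of no reduced word of $w$, so $S_i$ does not appear where it would create an extension. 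Granting this, the steps combine to give $M^{s_iw}\cong I_i\otimes_\Lambda N^w\cong I_i\otimes_\Lambda M^w$. One can equally organise the argument as an induction on $\ell(w)$ with base case $w=e$; the inductive step relies on the same adjunction and the same $\Ext$-vanishing, so it is not genuinely shorter.
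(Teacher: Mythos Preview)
The paper does not contain a proof of this lemma: it is quoted verbatim from \cite{ReflexionsBGK} and used as a black box in the proof of Lemma~\ref{lemma:generaldiagonalrelations-simplylaced}. There is therefore nothing in the paper to compare your argument against.

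As for the argument itself, the overall architecture---rewriting $M^w$ as a trace, using $I_{s_iw}\cong I_i\otimes_\Lambda I_w$ together with tensor--hom adjunction to factor the evaluation, and then killing the discrepancy between $M^w$ and $N^w$ with $I_i\otimes_\Lambda S_i=0$---is the standard route and is sound. Your auxiliary claims are correct: $I_i^2=I_i$ so $I_i\otimes_\Lambda S_i=0$, and the $2$-Calabi--Yau duality together with $DS_i\cong S_i$ does convert $\Ext^1_\Lambda(S_i,M)=0$ into the Tor-vanishing you need for the injectivity of $I_i\otimes_\Lambda M\to M$.

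However, you explicitly flag and then do not resolve the key step, namely $\Ext^1_\Lambda(I_w,S_i)=0$ when $s_iw>w$. This is not a minor technicality: it is exactly where the combinatorial hypothesis enters, and without it your lifting argument for claim~(ii) does not go through. The vanishing can be obtained from $0\to I_w\to\Lambda\to\Lambda/I_w\to 0$ together with self-injectivity of $\Lambda$ and the identification of $\operatorname{soc}_i(\Lambda/I_w)$ in terms of the descent set of $w$, but as written your proposal defers this to the literature rather than proving it. A second, smaller point: for claim~(i) you also need $I_i\otimes_\Lambda N^w\hookrightarrow I_i\otimes_\Lambda N$, which requires a Tor-vanishing for $N/N^w$ that you do not address. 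So the proposal is a correct outline with the right ingredients, but it is a sketch rather than a proof.
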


Finally, a result of Crawley-Boevey tells us that we can switch the rolls of $S_i$ and $M$ in the previous lemma. 

\begin{lemma}[{\cite[Lemma 1]{OntheExceptionalCB}}]
For any $\Lambda$-modules $X, Y$, $\dim \Ext_{\Lambda}^1(X,Y)=\dim \Ext_{\Lambda}^1 (Y,X)$.
\end{lemma}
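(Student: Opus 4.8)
The final statement to prove is the Crawley-Boevey lemma: $\dim\Ext^1_\Lambda(X,Y)=\dim\Ext^1_\Lambda(Y,X)$ for finite-dimensional $\Lambda$-modules over a preprojective algebra $\Lambda=\Lambda(Q)$. The plan is to use the functorial long exact sequence coming from Crawley-Boevey's fundamental exact sequence for preprojective algebras, which relates $\Ext^1$, $\Hom$, and a certain pairing symmetric in the two arguments. Let me sketch how I would carry this out.

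First I would recall (or reprove) the key homological input: for the preprojective algebra $\Lambda$ of a quiver $Q$ (with underlying graph having no loops), and any finite-dimensional $\Lambda$-modules $X,Y$, there is an exact sequence
\begin{equation*}
0 \to \Hom_\Lambda(X,Y) \to \bigoplus_{i\in I}\Hom_\C(X_i,Y_i) \to \bigoplus_{a\in E\sqcup E^*}\Hom_\C(X_{s(a)},Y_{t(a)}) \to \Ext^1_\Lambda(X,Y) \to 0,
\end{equation*}
together with the vanishing $\Ext^2_\Lambda(X,Y)=0$. This is the statement that $\Lambda$ has a self-dual (symmetric) 2-periodic-type projective resolution of the diagonal bimodule; it is exactly the content of Crawley-Boevey's paper and can be assembled from the standard projective resolution of a $\Lambda$-module by projectives $\Lambda e_i$ (length $2$, since $\Lambda$ has global dimension issues controlled by the preprojective relations). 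I would state this as the ``fundamental exact sequence'' and take it as the technical heart.

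Next, taking alternating sums of dimensions in that four-term exact sequence gives
\begin{equation*}
\dim\Ext^1_\Lambda(X,Y) - \dim\Hom_\Lambda(X,Y) = -\sum_{i\in I}\dim X_i\,\dim Y_i + \sum_{a\in E\sqcup E^*}\dim X_{s(a)}\,\dim Y_{t(a)}.
\end{equation*}
The point is that the right-hand side is manifestly symmetric under swapping $X$ and $Y$: the first term is obviously symmetric, and for the second term the involution $a\mapsto a^*$ on $E\sqcup E^*$ swaps source and target, so $\sum_a \dim X_{s(a)}\dim Y_{t(a)} = \sum_a \dim X_{t(a)}\dim Y_{s(a)} = \sum_a \dim Y_{s(a)}\dim X_{t(a)}$. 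Hence $\dim\Ext^1_\Lambda(X,Y)-\dim\Hom_\Lambda(X,Y)$ is symmetric in $X,Y$. Since $\dim\Hom_\Lambda(X,Y)=\dim\Hom_\Lambda(Y,X)$ need \emph{not} hold in general, I cannot conclude directly; instead I note that the Euler form expression being symmetric, combined with the fact that $\Hom$ is \emph{not} symmetric, forces the correction to lie in $\Ext^1$. The cleaner route: the fundamental exact sequence is itself symmetric in the sense that applying $\C$-duality $D=\Hom_\C(-,\C)$ to the resolution of $X$ and using that $\Lambda$ is a symmetric-like algebra exchanges the resolutions of $X$ and $Y$, yielding a perfect pairing $\Ext^1_\Lambda(X,Y)\times\Hom(\text{stuff})\to\C$; unwinding this gives a functorial isomorphism $\Ext^1_\Lambda(X,Y)\cong D\,\Ext^1_\Lambda(Y,X)$, hence the equality of dimensions.

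The main obstacle is establishing the fundamental exact sequence and the self-duality of $\Lambda$'s projective resolution of the diagonal bimodule with the correct signs/orientations — i.e., verifying that the middle map (built from the quiver arrows with the preprojective sign convention $\sum_a(aa^*-a^*a)$) is exactly the differential whose cokernel is $\Ext^1$ and whose kernel, together with the leftmost map, computes $\Hom$. Once that periodicity-type resolution is in hand, the symmetry is a formal consequence of the $a\leftrightarrow a^*$ involution. I would therefore spend most of the proof carefully setting up this resolution (citing Crawley-Boevey for the precise statement, since the excerpt already cites \cite{OntheExceptionalCB}), and then deduce the dimension equality in a single short paragraph via the symmetry of the Euler-form side combined with the duality $\Ext^1_\Lambda(X,Y)\cong D\,\Ext^1_\Lambda(Y,X)$.
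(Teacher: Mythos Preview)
The paper does not prove this lemma at all; it is simply quoted from Crawley--Boevey's paper \cite{OntheExceptionalCB} as a black box. So there is no ``paper's proof'' to compare against, and your proposal should be judged on its own merits as a reconstruction of Crawley--Boevey's argument.

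On that score, your four-term exact sequence is incorrect as stated. What you wrote,
\[
0 \to \Hom_\Lambda(X,Y) \to \bigoplus_{i}\Hom_\C(X_i,Y_i) \to \bigoplus_{a}\Hom_\C(X_{s(a)},Y_{t(a)}) \to \Ext^1_\Lambda(X,Y) \to 0,
\]
is the sequence one obtains for a \emph{hereditary} path algebra (global dimension $1$), not for the preprojective algebra. For $\Lambda$ the correct object is the three-term complex
\[
\bigoplus_{i}\Hom_\C(X_i,Y_i) \xrightarrow{\ d^0\ } \bigoplus_{a\in E\sqcup E^*}\Hom_\C(X_{s(a)},Y_{t(a)}) \xrightarrow{\ d^1\ } \bigoplus_{i}\Hom_\C(X_i,Y_i),
\]
coming from the standard length-two bimodule resolution of $\Lambda$. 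Its cohomology is $\Hom_\Lambda(X,Y)$ in degree $0$, $\Ext^1_\Lambda(X,Y)$ in degree $1$, and --- this is the key point you are missing --- $D\Hom_\Lambda(Y,X)$ in degree $2$, where the identification at the top uses the trace pairing and the involution $a\leftrightarrow a^*$. Taking the Euler characteristic then gives
\[
\dim\Ext^1_\Lambda(X,Y) = \dim\Hom_\Lambda(X,Y) + \dim\Hom_\Lambda(Y,X) - (\dimvec X,\dimvec Y),
\]
which is visibly symmetric in $X$ and $Y$. You correctly sensed that your four-term sequence was inadequate (``I cannot conclude directly''), and your ``cleaner route'' gestures toward the right mechanism, but as written it is too vague: the substance is precisely the identification $\operatorname{coker} d^1 \cong D\Hom_\Lambda(Y,X)$, and without making that explicit you have not actually closed the argument.
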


Finally, we define the subset of $\Lambda$-modules $\mathscr{T}^w$. 

\begin{definition}
Let $\mathscr{T}^w$ to be the set of $\Lambda$-modules $M$ such that $M^w = M$. 
\end{definition}

By \cite[Remark 5.5 (ii)]{AffineMVpolytopes}, this is the same category $\mathscr{T}^w$ defined in \cite{AffineMVpolytopes} and is also the category of modules $\mathcal{C}_{w^{-1}w_0}$ defined in \cite[Definition 2.5]{Menard:Richardsonvarieties}. Note that for $M \in \mathscr{T}^w$, the vertex data of $\Pol(M)$ will satisfy $\mu_w = \mu_e$. 

\section{Combinatorial data of MV polytopes of highest vertex $w$}\label{section:combinatorialdata}

In this section, we define a subset of MV polytopes, called MV polytopes of highest vertex $w$, and show that these polytopes only have vertices labelled by elements bounded by $w$ in the Bruhat order. First, we introduce the definition of an MV polytope of highest vertex $w$. 

\begin{definition}
Fix $w \in W$. Let $P$ be an MV polytope with vertex data $(\mu_\bullet)$. We say $P$ is an MV polytope of highest vertex at most $w$ if $\mu_w = \mu_{w_0}$. Denote by $\Pw$ the set of MV polytopes of highest vertex $w$. 
\end{definition}

\begin{remark}
Recall in Section \ref{section:preprojective} we define the set of MV polytopes associated to $\mathscr{T}^w$ as the set of $\Lambda$-modules $M$ such that $M^w = M$. By \cite[Proposition 5.33]{Menard:Richardsonvarieties}, $\P_{ww_0}$ is the set of MV polytopes associated to the modules in $\mathscr{T}^w$ (under a reflection by $w_0$ and a shift to make $\mu_e =0$).
\end{remark}

\begin{example}\label{example:B2highestvertexs212}
Consider MV polytopes associated to the group of type $B_2$. For $w = s_2s_1s_2$, $\P_{s_2s_1s_2}$ is the set of polytopes such that $\mu_{s_2s_1s_2} = \mu_{w_0}$, see Figure \ref{figure:B2MVpolytopes} for an example. This condition will also imply that $\mu_{s_1s_2} = \mu_{s_1s_2s_1}$. In Section \ref{section:generalizeddiagonals} we will explore how the condition $\mu_w = \mu_{w_0}$ affects the vertex data of a rank 2 polytope. 

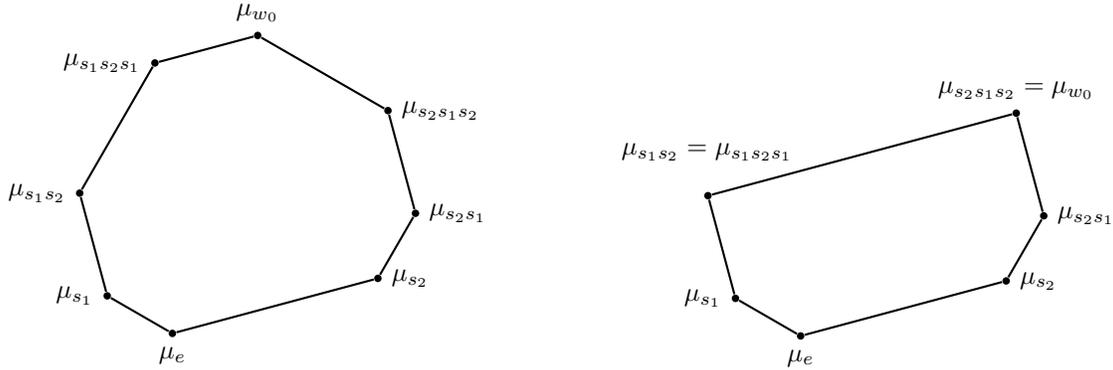
\begin{figure}[h]
\centering
\begin{subfigure}{.5\linewidth}
  \centering
  $\begin{tikzpicture}[rotate=60]
\node[My style] (SE) at (0,0) [label=below:$\mu_{e}$] {};
\node[My style] (S2) at (0,1)[label=left:$\mu_{s_1}$] {};
\node[My style] (S21) at (1,2)[label=left:$\mu_{s_1s_2}$] {};
\node[My style] (S1) at (2,-2)[label=right:$\mu_{s_2}$] {};
\node[My style] (S12) at (3,-2)[label=right:$\mu_{s_2s_1}$] {};
\node[My style] (S212) at (3,2)[label=left:$\mu_{s_1s_2s_1}$] {};
\node[My style] (S121) at (4,-1)[label=right:$\mu_{s_2s_1s_2}$] {};
\node[My style] (SW0) at (4,1)[label=above:$\mu_{w_0}$] {};

\draw[thick] (SE) -- (S1) -- (S12) -- (S121) -- (SW0) -- (S212) -- (S21) --  (S2) -- (SE);

\end{tikzpicture}$
\end{subfigure}%
\begin{subfigure}{.5\linewidth}
  \centering
$\begin{tikzpicture}[rotate=60]
\node[My style] (SE) at (0,0) [label=below:$\mu_{e}$] {};
\node[My style] (S2) at (0,1)[label=left:$\mu_{s_1}$] {};
\node[My style] (S21) at (1,2) {};
\node[My style] (S1) at (2,-2)[label=right:$\mu_{s_2}$] {};
\node[My style] (S12) at (3,-2)[label=right:$\mu_{s_2s_1}$] {};
\node[My style] (S121) at (4,-1)[label=above:{$\mu_{s_2s_1s_2}= \mu_{w_0}$}] {};

\node at (1.5, 2.3) {$\mu_{s_1s_2}=\mu_{s_1s_2s_1}$};
\node at (4,1)[label=above:{\color{white}$ \mu_{w_0}$}] {};

\draw[thick] (SE) -- (S1) -- (S12) -- (S121) --  (S21) --  (S2) -- (SE);

\end{tikzpicture}$
\end{subfigure}
\caption{A standard $B_2$ polytope (left) and a $B_2$ polytope of highest vertex $s_2s_1s_2$ (right)}
\label{figure:B2MVpolytopes}
\end{figure}
\end{example}

A reduced word $\ui$ for $w_0$ gives a  minimal path in the polytope of $P$ beginning at $\mu_e$ and ending at $\mu_{w_0}$. If this path passes through $\mu_w$, then the condition $\mu_w = \mu_{w_0}$ forces the Lusztig data $n_\bullet^\ui(P)$ to be zero in the coordinates after $\ell(w)$.  More precisely, we can show that every vertex which appears after $\mu_w$ in such a minimal path will necessarily be equal to $\mu_w$. 

\begin{lemma}\label{lemma:muw-muvinQ+}
Let $P$ be an MV polytope with vertex data $(\mu_w)_{w\in W}$. For $v, w \in W$, if $w \le_R v$ then $\mu_v - \mu_w \in Q^\vee_+$.
\end{lemma}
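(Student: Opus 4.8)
The statement to prove is: for an MV polytope $P$ with vertex data $(\mu_w)_{w\in W}$, if $w \leq_R v$ in the right weak Bruhat order, then $\mu_v - \mu_w \in Q^\vee_+$. The plan is to reduce to the case of a single covering relation in $\leq_R$ and then invoke the explicit description of edges in a GGMS polytope.

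First I would recall that $w \leq_R v$ means there is a chain $w = u_0 \lessdot_R u_1 \lessdot_R \cdots \lessdot_R u_r = v$ where each step is $u_{k} = u_{k-1} s_{i_k}$ with $\ell(u_k) = \ell(u_{k-1}) + 1$; equivalently, concatenating a reduced word for $w$ with the letters $i_1,\dots,i_r$ yields a reduced word for $v$. Since $Q^\vee_+$ is closed under addition, it suffices to prove the claim for a single cover $v = ws_i$ with $\ell(ws_i) > \ell(w)$. For such a cover, equation~(\ref{equation:lusztigdatafromvertexdata}) gives $\mu_{ws_i} - \mu_w = c\, w\alpha_i^\vee$ with $c \geq 0$ (the edge length, nonnegative by the edge inequalities~(\ref{equation:defofedgeequalities})). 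So the content reduces to: $w\alpha_i^\vee \in Q^\vee_+$ whenever $\ell(ws_i) > \ell(w)$.

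The key step is therefore the standard fact about the weak order: if $\ell(ws_i) > \ell(w)$ then $w\alpha_i^\vee$ is a positive coroot. This is the coroot analogue of the familiar statement that $w(\alpha_i)$ is a positive root exactly when $\ell(ws_i) > \ell(w)$ — equivalently, the inversion set of $w^{-1}$ (as a set of positive roots) does not contain $\alpha_i$ iff $\ell(ws_i) > \ell(w)$ — and it transfers verbatim to coroots since $w$ permutes $\Delta^\vee$ and sends $\Delta^\vee_+$-elements to $\Delta^\vee$-elements compatibly with the root picture (the length function is symmetric in roots vs. coroots because $\ell(w)$ counts $w^{-1}$-inversions among positive roots, and $\alpha \in \Delta_+ \iff \alpha^\vee \in \Delta^\vee_+$). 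Hence $w\alpha_i^\vee \in \Delta^\vee_+ \subset Q^\vee_+$, and multiplying by $c \geq 0$ keeps us in $Q^\vee_+$.

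Assembling: for a general chain realizing $w \leq_R v$, write $\mu_v - \mu_w = \sum_{k=1}^r (\mu_{u_k} - \mu_{u_{k-1}}) = \sum_{k=1}^r c_k\, u_{k-1}\alpha_{i_k}^\vee$, each summand lying in $Q^\vee_+$ by the above, so the total lies in $Q^\vee_+$. I do not expect any real obstacle here; the only point requiring care is making sure the edge from $\mu_w$ to $\mu_{ws_i}$ is genuinely present in the 1-skeleton and given by~(\ref{equation:lusztigdatafromvertexdata}) — which the excerpt has already established for GGMS polytopes whenever $\ell(s_i w) > \ell(w)$, i.e. (after relabelling) whenever $w \lessdot_R ws_i$ — and citing the coroot version of the inversion lemma correctly.
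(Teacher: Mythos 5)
Your proposal is correct and follows essentially the same route as the paper: decompose $w \leq_R v$ into a chain of right-weak covers, apply the edge formula~(\ref{equation:lusztigdatafromvertexdata}) with nonnegative lengths $c_k$, and use the standard fact that $u\alpha_i^\vee$ is a positive coroot when $\ell(us_i) > \ell(u)$, so the difference is a nonnegative sum of positive coroots. The only cosmetic difference is that you isolate the single-cover case and justify the inversion fact explicitly, which the paper simply asserts.
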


\begin{proof}
If $w \leq_R v$, then there exists a reduced word $(i_1, \dots, i_{\ell(v) - \ell(w)})$ such that $w s_{i_1} \dots s_{i_{\ell(v)-\ell(w)}} = v$. 
By (\ref{equation:lusztigdatafromvertexdata}) in  Section \ref{section:MVpolytopes}, 
\[
\mu_{v} - \mu_w = \sum_{k=1}^{\ell(v)-\ell(w)} c_k w s_{i_1} \dots s_{i_{k-1}} \alpha_{i_k}^\vee
\]
for coefficients $c_k \geq 0$. Since each $w s_{i_1} \dots s_{i_{k-1}}$ is reduced,  $w s_{i_1} \dots s_{i_{k-1}} \alpha_{i_k}^\vee$ is a positive coroot.  Thus $\mu_v - \mu_w$ is a non-negative sum of positive coroots so $\mu_v - \mu_w \in Q^\vee_+$.  
\end{proof}

\begin{lemma}\label{lemma:vw=w}
Fix $w \in W$ and suppose $P \in \Pw$. For $v \in W$, if $w \leq_R v$, then $\mu_{w} = \mu_v$. 
\end{lemma}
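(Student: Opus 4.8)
The plan is to combine the two previous lemmas in the natural way. Since $P \in \Pw$ means $\mu_w = \mu_{w_0}$, and $w \leq_R v$ gives $\mu_v - \mu_w \in Q^\vee_+$ by Lemma \ref{lemma:muw-muvinQ+}, I want to conclude $\mu_v = \mu_w$. The point is that $\mu_v$ must also lie below $\mu_{w_0}$ in an appropriate sense, so if $\mu_v - \mu_w$ is a nonnegative sum of positive coroots while $\mu_w$ is already the top vertex, then that sum must be zero.

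First I would note that for \emph{any} $v \in W$, $v \leq_R w_0$, so by Lemma \ref{lemma:muw-muvinQ+} applied with the pair $v \leq_R w_0$ we get $\mu_{w_0} - \mu_v \in Q^\vee_+$. Combining this with $\mu_w = \mu_{w_0}$, we obtain $\mu_w - \mu_v \in Q^\vee_+$. On the other hand, Lemma \ref{lemma:muw-muvinQ+} applied to $w \leq_R v$ gives $\mu_v - \mu_w \in Q^\vee_+$. Hence $\mu_v - \mu_w$ and its negative both lie in the cone $Q^\vee_+$ of nonnegative integer combinations of positive coroots. Since $Q^\vee_+$ is a strict (pointed) cone — it contains no nonzero element together with its negative, because all positive coroots pair nonnegatively with every dominant weight $\omega_i$ and strictly positively with at least one — we conclude $\mu_v - \mu_w = 0$, i.e. $\mu_v = \mu_w$.

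To make the pointedness argument precise without appealing to extra theory, I would simply observe: write $\mu_v - \mu_w = \sum_i c_i \alpha_i^\vee$ with $c_i \geq 0$ (from $w \leq_R v$) and also $\mu_w - \mu_v = \sum_i d_i \alpha_i^\vee$ with $d_i \geq 0$ (from the computation above). Adding, $0 = \sum_i (c_i + d_i)\alpha_i^\vee$, and since the simple coroots $\alpha_i^\vee$ are linearly independent, $c_i + d_i = 0$ for all $i$, forcing $c_i = d_i = 0$. Therefore $\mu_v = \mu_w$.

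There is essentially no obstacle here; the only thing to be careful about is that Lemma \ref{lemma:muw-muvinQ+} is stated for $w \leq_R v$ and I need it also in the form $v \leq_R w_0$ (which holds for every $v$), so the lemma is being used twice in two different instantiations. Everything else is the pointedness of the positive coroot cone, which is immediate from linear independence of the simple coroots.
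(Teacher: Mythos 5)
Your proposal is correct and follows the same route as the paper: both apply Lemma \ref{lemma:muw-muvinQ+} to the pairs $w \leq_R v$ and $v \leq_R w_0$, use $\mu_w = \mu_{w_0}$ to see that $(\mu_{w_0}-\mu_v)+(\mu_v-\mu_w)=0$, and conclude both summands vanish by pointedness of $Q^\vee_+$. Your explicit justification of pointedness via linear independence of the simple coroots is just a more detailed phrasing of the paper's remark that a sum of nonzero elements of $Q^\vee_+$ is nonzero.
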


\begin{proof}
Suppose $v$ is such that $w \leq_R v$. By Lemma \ref{lemma:muw-muvinQ+}, $\mu_v - \mu_w \in Q^\vee_+$ and $\mu_{w_0} - \mu_v \in Q^\vee_+$. Thus 
\[
0 = \mu_{w_0} - \mu_w = (\mu_{w_0} - \mu_v) + (\mu_v - \mu_w)
\]
But the sum of non-zero points in $Q^\vee_+$ is still a non-zero point in $Q^\vee_+$ and hence the only possible values of $\mu_{w_0}- \mu_v$ and $\mu_v - \mu_w$ are zero. Thus $\mu_w = \mu_v = \mu_{w_0}$. 
\end{proof}

By the definition of the Lusztig data and its relation to the vertices (see (\ref{equation:lusztigdatafromvertexdata})), this lemma allows us to characterize $\Pw$ in terms of its Lusztig data with respect to certain reduced words. 
 
\begin{corollary}\label{corollary:vw=w}
Fix $w\in W$. The following conditions are equivalent:
 \begin{enumerate}[label=(\roman*)]
 \item $P \in \Pw$,
 \item There exists a reduced word $\ui$ of $w_0$ with $(i_1, \dots, i_{\ell(w)})$ a reduced word for $w$ such that the Lusztig data $n_\bullet^\ui(P)$ have $n_k =0$ for $k \geq \ell(w)$,
 \item For every reduced word $\ui$ of $w_0$ with $(i_1, \dots, i_{\ell(w)})$ a reduced word for $w$, the Lusztig data $n_\bullet^\ui(P)$ have $n_k =0$ for $k \geq \ell(w)$.
\end{enumerate}
\end{corollary}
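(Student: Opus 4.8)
The plan is to prove the cycle of implications $(iii) \Rightarrow (ii) \Rightarrow (i) \Rightarrow (iii)$, using Lemma~\ref{lemma:vw=w} for the step into $\Pw$ and the defining relation~(\ref{equation:lusztigdatafromvertexdata}) between Lusztig data and the vertices for the reverse direction. The implication $(iii) \Rightarrow (ii)$ is immediate once we observe that a reduced word for $w_0$ extending a given reduced word for $w$ always exists: any reduced word of $w$ can be completed to a reduced word of $w_0$ since $w \leq_R w_0$.

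For $(ii) \Rightarrow (i)$, suppose $\ui$ is a reduced word of $w_0$ with $(i_1,\dots,i_{\ell(w)})$ a reduced word for $w$ and $n_k^\ui(P) = 0$ for all $k \geq \ell(w)+1$ (I would fix the off-by-one convention carefully: $n_k$ for $k$ ranging over $\ell(w)+1,\dots,m$, i.e. the edges strictly after $\mu_w$ on the minimal path). Writing $w_k^\ui = s_{i_1}\cdots s_{i_k}$, equation~(\ref{equation:lusztigdatafromvertexdata}) gives $\mu_{w_k^\ui} - \mu_{w_{k-1}^\ui} = n_k^\ui\, w_{k-1}^\ui \alpha_{i_k}^\vee$, so telescoping from $k = \ell(w)+1$ to $k=m$ shows $\mu_{w_0} - \mu_w = \sum_{k > \ell(w)} n_k^\ui\, w_{k-1}^\ui \alpha_{i_k}^\vee = 0$. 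Hence $\mu_w = \mu_{w_0}$, i.e. $P \in \Pw$.

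For $(i) \Rightarrow (iii)$, let $P \in \Pw$ and let $\ui$ be any reduced word of $w_0$ whose initial segment $(i_1,\dots,i_{\ell(w)})$ is a reduced word for $w$. For each $k \geq \ell(w)$ the element $w_k^\ui = s_{i_1}\cdots s_{i_k}$ satisfies $w = w_{\ell(w)}^\ui \leq_R w_k^\ui$ by construction, so Lemma~\ref{lemma:vw=w} gives $\mu_{w_k^\ui} = \mu_w$ for all such $k$. Then~(\ref{equation:lusztigdatafromvertexdata}) forces $n_k^\ui(P)\, w_{k-1}^\ui \alpha_{i_k}^\vee = \mu_{w_k^\ui} - \mu_{w_{k-1}^\ui} = 0$ for each $k \geq \ell(w)+1$; since $w_{k-1}^\ui \alpha_{i_k}^\vee$ is a (nonzero) positive coroot, $n_k^\ui(P) = 0$.

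The only genuinely delicate point is bookkeeping with the index ranges — making sure the statement "$n_k = 0$ for $k \geq \ell(w)$" is interpreted as the edges of the minimal path lying beyond the vertex $\mu_w$, which are indexed $k = \ell(w)+1, \dots, m$ (the edge indexed $\ell(w)$ ends at $\mu_w$, so it may be nonzero) — and invoking the standard fact that any reduced word of $w$ extends to a reduced word of $w_0$. Neither is a real obstacle: the substance of the argument is entirely contained in Lemma~\ref{lemma:vw=w} together with~(\ref{equation:lusztigdatafromvertexdata}).
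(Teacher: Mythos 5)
Your proof is correct and takes essentially the same route the paper intends: the corollary is presented as an immediate consequence of Lemma \ref{lemma:vw=w} together with the relation (\ref{equation:lusztigdatafromvertexdata}) between Lusztig data and vertex data, which is exactly the content of your telescoping argument in both directions. Your reading of the index range as $k \geq \ell(w)+1$ (the edges strictly beyond $\mu_w$) is the correct interpretation --- the literal ``$k \geq \ell(w)$'' in the statement is an off-by-one slip, as the way the corollary is invoked later (e.g.\ in the proof of Lemma \ref{lemma:TFAEpol(b)}) confirms.
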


Recall that an MV polytope $P$ is determined by its BZ data $(M_\gamma)_{\gamma \in \Gamma}$. We characterize the BZ data for $P \in \Pw$.

\begin{lemma}\label{lemma:BZdatahighestvertexw}
The collection $(M_\gamma)_{\gamma \in \Gamma}$ is the BZ datum of an MV polytope with highest vertex $w$ exactly when
\begin{enumerate}[label=(\roman*)]
 \item $(M_\gamma)_{\gamma \in \Gamma}$ is the BZ datum of an MV polytope,
 \item \label{condition:vanishing} There exists a reduced word $\uj = (j_{1}, \dots, j_{k})$ of $w^{-1}w_0$ such that for $\ell = 0, \dots, k-1$,
\begin{align}
  M_{w \cdot w_\ell^\uj \, s_{i_{\ell+1}}\omega_{i_{\ell+1}}} + M_{w \cdot w_\ell^\uj \, \omega_{i_{\ell+1}}} = - \sum_{j \neq i_{\ell+1}} a_{j,{i_{\ell+1}}} M_{w \cdot w_{\ell+1}^\uj \, \omega_j}. \label{equation:vanishing}
\end{align}
\end{enumerate}
\end{lemma}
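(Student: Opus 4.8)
The plan is to prove the equivalence by translating everything into statements about the Lusztig data, where the characterization of $\Pw$ is already clean (Corollary \ref{corollary:vw=w}), and then using Lemma \ref{lemma:edgesandLusztig} to pass between the vanishing of edge lengths and the stated relations among the $M_\gamma$. The key observation is that equation (\ref{equation:vanishing}) is exactly the condition from Lemma \ref{lemma:edgesandLusztig} saying that the edge of $P$ from $\mu_{w \cdot w_\ell^\uj}$ to $\mu_{w \cdot w_{\ell+1}^\uj}$ has length zero, i.e.\ $\mu_{w \cdot w_\ell^\uj} = \mu_{w \cdot w_{\ell+1}^\uj}$, provided this really is an edge of $P$. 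For that I need the index $i_{\ell+1}$ appearing in (\ref{equation:vanishing}) to be $j_{\ell+1}$ — here I am reading the subscripts in the statement as a typo/abbreviation in which $(i_{\ell+1})$ should be $(j_{\ell+1})$, the $(\ell+1)$-st letter of the fixed reduced word $\uj$ of $w^{-1}w_0$; with that reading $w \cdot w_\ell^\uj s_{j_{\ell+1}} = w \cdot w_{\ell+1}^\uj$, and since $\ell(w \cdot w_\ell^\uj) = \ell(w) + \ell < \ell(w) + \ell + 1 = \ell(w \cdot w_{\ell+1}^\uj)$ (because prepending $w$ to a reduced word for $w^{-1}w_0$ and truncating stays reduced — lengths add along $w \le_R w \cdot w_\ell^\uj \le_R w_0$), this pair does bound an edge of the GGMS polytope and (\ref{equation:lusztigdatafromvertexdata}) applies.

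\textbf{Step 1.} Concatenate: given a reduced word $\uj = (j_1,\dots,j_k)$ of $w^{-1}w_0$ and any reduced word $(i_1,\dots,i_{\ell(w)})$ of $w$, the tuple $\ui := (i_1,\dots,i_{\ell(w)}, j_1,\dots,j_k)$ is a reduced word of $w_0$ (since $\ell(w_0) = \ell(w) + \ell(w^{-1}w_0)$), and along the associated minimal path $w_0^\ui = e, w_1^\ui, \dots$, the vertex $w_{\ell(w)+\ell}^\ui = w \cdot w_\ell^\uj$ for $0 \le \ell \le k$. So condition \ref{condition:vanishing} as I have read it says precisely: there is a reduced word $\ui$ of $w_0$ extending a reduced word of $w$ such that the last $k = \ell(w^{-1}w_0)$ Lusztig coordinates $n_{\ell(w)+1}^\ui, \dots, n_{m}^\ui$ all vanish, where $m = \ell(w_0)$. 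Indeed, by Lemma \ref{lemma:edgesandLusztig}, (\ref{equation:vanishing}) for a given $\ell$ is equivalent to $n_{\ell(w)+\ell+1}^\ui(P) = 0$.

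\textbf{Step 2.} Now the equivalence is immediate from Corollary \ref{corollary:vw=w}. If $(M_\gamma)$ is the BZ datum of $P \in \Pw$, then by Corollary \ref{corollary:vw=w}(iii) \emph{every} reduced word $\ui$ of $w_0$ that extends a reduced word of $w$ has $n_k^\ui(P) = 0$ for $k > \ell(w)$; pick any such $\ui$, let $\uj = (i_{\ell(w)+1}, \dots, i_m)$, which is a reduced word of $w^{-1}w_0$, and Step 1 gives \ref{condition:vanishing}. Conversely, if (i) and (ii) hold, then $(M_\gamma)$ is the BZ datum of some MV polytope $P$; Step 1 turns (ii) into the statement that $n_k^\ui(P) = 0$ for all $k > \ell(w)$ for the particular reduced word $\ui = (i_1,\dots,i_{\ell(w)},j_1,\dots,j_k)$ of $w_0$, which is condition (ii) of Corollary \ref{corollary:vw=w}, hence $P \in \Pw$.

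\textbf{The main obstacle} I expect is bookkeeping with the indices in (\ref{equation:vanishing}) — in particular confirming that the $i_{\ell+1}$ there is meant to be the $(\ell+1)$-st letter of $\uj$ (so that the relation genuinely matches an \emph{edge} of $P$ in the direction $w \cdot w_\ell^\uj \alpha_{j_{\ell+1}}^\vee$), and checking carefully that prepending $w$ to $\uj$ and truncating keeps all the intermediate products reduced so that Lemma \ref{lemma:edgesandLusztig} and (\ref{equation:lusztigdatafromvertexdata}) are applicable at each $\ell$. The length-additivity $w \le_R w\cdot w_\ell^\uj$ for all $\ell$ — equivalently that $(i_1,\dots,i_{\ell(w)},j_1,\dots,j_\ell)$ is reduced — follows from $\ell(w_0) = \ell(w)+\ell(w^{-1}w_0)$ together with the exchange property, but it is the one genuinely non-formal point and is where I would spend care; everything after that is a direct translation through Lemma \ref{lemma:edgesandLusztig} and Corollary \ref{corollary:vw=w}.
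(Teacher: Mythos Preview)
Your proof is correct and follows essentially the same route as the paper: both arguments reduce condition \ref{condition:vanishing} to the chain of vertex equalities $\mu_{w\cdot w_\ell^\uj}=\mu_{w\cdot w_{\ell+1}^\uj}$ via Lemma \ref{lemma:edgesandLusztig}, and then identify this with $P\in\Pw$ (the paper uses Lemma \ref{lemma:vw=w} directly, you use its immediate restatement Corollary \ref{corollary:vw=w}). Your reading of $i_{\ell+1}$ as $j_{\ell+1}$ and your check that the prefixes $w\cdot w_\ell^\uj$ are reduced are both correct and are used implicitly in the paper's version as well.
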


\begin{proof}
Consider $P \in \P$ with vertex data $(\mu_\bullet)$ and BZ data $(M_\bullet)$. The only thing we need to show is that \ref{condition:vanishing} is equivalent to $\mu_w=\mu_{w_0}$. 

Suppose that $\mu_w = \mu_{w_0}$. For any reduced word $\uj$ of $w^{-1}w_0$, $w \leq_R w \cdot w_\ell^\uj$ for $0 \leq \ell \leq k$. By Lemma \ref{lemma:vw=w} it follows that $\mu_w = \mu_{w w_1^\uj} = \cdots = \mu_{w w_{k-1}^\uj} = \mu_{w_0}$. Thus $\mu_{w w_{\ell}^\uj} = \mu_{w w_{\ell+1}^\uj}$ for $0\leq \ell \leq k-1$, which is equivalent to (\ref{equation:vanishing}) by Lemma \ref{lemma:edgesandLusztig}. 

Suppose $\uj = (j_{1}, \dots, j_{k})$ is a reduced word for $w^{-1} w_0$ such that \ref{condition:vanishing} holds. As (\ref{equation:vanishing}) is equivalent to $\mu_{w w_{\ell}^\uj} = \mu_{w w_{\ell+1}^\uj}$ and this holds for $0 \leq \ell \leq k-1$, then $\mu_w = \mu_{ww^\uj_{1}} = \cdots = \mu_{ww^\uj_k} = \mu_{w_0}$ and so $P\in \Pw$. 
\end{proof}

Lemma \ref{lemma:BZdatahighestvertexw} and Corollary \ref{corollary:vw=w} both only give information about the structure of the polytope $P \in \Pw$ along the minimal paths from $\mu_e$ to $\mu_{w_0}$ that pass through the vertex $\mu_w$. To understand the whole structure of $P$, we need to understand the Lusztig data along any minimal path from $\mu_e$ to $\mu_{w_0}$. 

We will prove that for every $P \in \Pw$ with vertex data $(\mu_\bullet)$,  $\mu_v = \mu_{v_w}$ for some well defined element $v_w$. The proof is organized as follows. In Section \ref{section:bruhatintervals}, we define this element $v_w$ for $v, w \in W$. In Section \ref{section:generalizeddiagonals}, we outline the generalized diagonal relations and see how these relations completely determine the vertex data for rank 2 polytopes. In Section \ref{section:saitoreflectionresults}, we show that the Saito reflection acts on $\Pw$ in a useful way and finally, in Section \ref{section:lusztigdata} we will show exactly where the Lusztig data are zero for an arbitrary reduced word of $w_0$.

\subsection{Intersections of Bruhat intervals}\label{section:bruhatintervals}

In this section, we will investigate the intersections of intervals in the Bruhat order with intersections in the weak Bruhat order. 

First we recall some definitions and properties of Coxeter groups. Using the length, we can define the \emph{left descent set} and \emph{right descent set} respectively as
\begin{align*}
D_L(w) = \{ s_i \in S: \ell(s_iw) < \ell(w)\}, && D_R(w) = \{ s_i \in S: \ell(ws_i) < \ell(w)\}. 
\end{align*}
Note that the left and right descent sets can be defined via the weak orders: $s_i \in D_L(w) \iff s_i \leq_R w$ and $s_i \in D_R(w) \iff s_i\leq_L w$.  We can relate the weak Bruhat orders to the length function in the following way. 

\begin{proposition}[{\cite[Proposition 3.1.2]{CombinatoricsofCoxeter}}]\label{proposition:weakbruhats}
For $u, w \in W$, $u \leq_R w \iff \ell(u) + \ell(u^{-1}w) = \ell(w)$ and $u \leq_L w \iff \ell(wu^{-1}) + \ell(u) = \ell(w)$. 
\end{proposition}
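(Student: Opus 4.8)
The plan is to unwind the definition of the weak orders as transitive closures of their covering relations and then invoke two standard facts about Coxeter groups: subadditivity of length, $\ell(xy) \le \ell(x) + \ell(y)$, and the fact that every prefix (resp. suffix) of a reduced word is again reduced.

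First I would treat $\leq_R$. For the forward implication, suppose $u \leq_R w$, so there is a chain $u = u_0 \lessdot_R u_1 \lessdot_R \cdots \lessdot_R u_k = w$ of covering relations, where $u_{j+1} = u_j s_{c_j}$ and $\ell(u_{j+1}) = \ell(u_j) + 1$ for each $j$. In particular $\ell(w) = \ell(u) + k$ and $u^{-1}w = s_{c_0}\cdots s_{c_{k-1}}$, so $\ell(u^{-1}w) \le k$; combining this with subadditivity, $k = \ell(w) - \ell(u) \le \ell(u^{-1}w) \le k$, hence $\ell(u^{-1}w) = k$ and $\ell(u) + \ell(u^{-1}w) = \ell(w)$. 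For the converse, set $v = u^{-1}w$ and assume $\ell(u) + \ell(v) = \ell(w)$. Pick a reduced word $s_{d_1}\cdots s_{d_m}$ for $u$ and a reduced word $s_{c_1}\cdots s_{c_k}$ for $v$; their concatenation is a word of length $m + k = \ell(w)$ representing $uv = w$, hence a reduced word for $w$. Since every prefix of a reduced word is reduced, $\ell(u\, s_{c_1}\cdots s_{c_j}) = \ell(u) + j$ for $0 \le j \le k$, so the sequence $u,\ u s_{c_1},\ \dots,\ u s_{c_1}\cdots s_{c_k} = w$ realizes a chain of covering relations in $\leq_R$, giving $u \leq_R w$.

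Then I would deduce the statement for $\leq_L$ from the $\leq_R$ case via the length-preserving anti-automorphism $x \mapsto x^{-1}$, which interchanges the two weak orders ($u \leq_L w \iff u^{-1} \leq_R w^{-1}$) and satisfies $\ell(x^{-1}) = \ell(x)$. Substituting into the already-proved equivalence for $\leq_R$ and using $\ell(u w^{-1}) = \ell(w u^{-1})$ yields $u \leq_L w \iff \ell(wu^{-1}) + \ell(u) = \ell(w)$. Alternatively, the same concatenation argument runs verbatim with left multiplications in place of right multiplications.

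There is no serious obstacle here; the only point requiring care is bookkeeping with the left/right conventions — in particular verifying that the covering relations for $\leq_R$ are exactly the right multiplications by simple reflections that increase length, so that the sequence built in the converse direction is genuinely a saturated chain in the weak order and not merely a factorization of $w$ through $u$.
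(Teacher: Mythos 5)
Your argument is correct: the forward direction via subadditivity of length and the converse via concatenating reduced words (using that prefixes of reduced words are reduced), plus the transfer to $\leq_L$ by the length-preserving anti-automorphism $x \mapsto x^{-1}$, is exactly the standard proof of this fact. The paper itself gives no proof here — it simply cites \cite[Proposition 3.1.2]{CombinatoricsofCoxeter} — and your write-up is essentially the argument found in that reference, so there is nothing further to reconcile.
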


For convenience, if $u\leq_R w$, we will say that $u$ is an \emph{initial word} of $w$, while we will say that $u$ is a \emph{terminal word} of $w$ if $u \leq_L w$. We will also say for $v, w \in W$, $v \cdot w$ is a \emph{reduced product} if $\ell(vw) = \ell(v) + \ell(w) $ (note that this is equivalent to $v \leq_L v \cdot w$ and $w \leq _R v \cdot w$). 

As $W$ is finite, there is a unique longest element $w_0$. This element has a special property for any reduced decomposition into two elements. 

\begin{lemma}\label{lemma:DL(w)cupDR=s}
For any $x,y \in W$ such that $w_0 = x \cdot y$ is a reduced product, then $D_R(x) \cap D_L(y) = \emptyset$ and $D_R(x) \cup D_L(w^{-1}w_0) = S$. 
\end{lemma}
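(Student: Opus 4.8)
The plan is to prove the two assertions separately, both relying on standard facts about $w_0$. For the first claim, I would argue by contradiction: suppose $s_i \in D_R(x) \cap D_L(y)$, so $\ell(xs_i) < \ell(x)$ and $\ell(s_iy) < \ell(y)$. Since $w_0 = x\cdot y$ is a reduced product, a reduced word for $x$ followed by a reduced word for $y$ is a reduced word for $w_0$, of length $\ell(x)+\ell(y) = \ell(w_0)$. Now consider the element $x s_i \cdot s_i y = xy = w_0$. Using $\ell(xs_i) = \ell(x) - 1$ and $\ell(s_i y) = \ell(y) - 1$ we would get $\ell(xs_i) + \ell(s_iy) = \ell(x) + \ell(y) - 2 = \ell(w_0) - 2 < \ell(w_0) = \ell(xs_i \cdot s_i y)$, contradicting the triangle inequality $\ell(ab) \le \ell(a) + \ell(b)$ for the length function. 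Hence $D_R(x) \cap D_L(y) = \emptyset$.

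For the second claim I would use the fact that for the longest element, $D_R(w_0) = D_L(w_0) = S$, i.e. $\ell(w_0 s_i) < \ell(w_0)$ for every $i$. Given the reduced product $w_0 = x \cdot y$, I want to show $D_R(x) \cup D_L(y) = S$ (note $y = w^{-1}w_0$ if $x = w$, matching the statement). Fix $s_i \in S$ and suppose $s_i \notin D_R(x)$, i.e. $\ell(xs_i) > \ell(x)$. I claim $s_i \in D_L(y)$. Since $\ell(w_0 s_i) = \ell(w_0) - 1$ and $w_0 s_i = x(ys_i)$, we have $\ell(x) + \ell(ys_i) \ge \ell(x(ys_i)) = \ell(w_0 s_i) = \ell(x) + \ell(y) - 1$, so $\ell(ys_i) \ge \ell(y) - 1$; combined with $|\ell(ys_i) - \ell(y)| = 1$ this only tells us $\ell(ys_i) \in \{\ell(y)-1, \ell(y)+1\}$, so I need a sharper argument. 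Instead, I would compute directly: $\ell(xs_i) = \ell(x) + 1$, and if also $\ell(s_i y \text{-side})$... more cleanly, use that $w_0 = x \cdot y$ reduced means $x^{-1} \le_L w_0$ fails to be the right handle — the robust route is: for any $s_i$, since $\ell(w_0 s_i) < \ell(w_0)$, the Exchange Condition applied to a reduced word $\ui_x \ui_y$ for $w_0$ shows that right-multiplying by $s_i$ deletes some letter; that letter lies in the $\ui_x$-part iff $s_i \in D_R(x)$ (by the reduced-product structure and the subword characterization of weak order), and in the $\ui_y$-part iff $s_i \in D_L(y)$ after an analogous transposition. Since a letter is deleted somewhere, $s_i \in D_R(x) \cup D_L(y)$.

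More explicitly for that last step: $w_0 = x \cdot y$ reduced gives $\ell(x^{-1}w_0) = \ell(y) = \ell(w_0) - \ell(x)$, so $x \le_L w_0$ and $y = x^{-1}w_0$. Then $s_i \in D_R(x) \cup D_L(y)$ fails iff $\ell(xs_i) > \ell(x)$ and $\ell(s_i y) > \ell(y)$, i.e. $xs_i$ is a reduced product $x \cdot s_i$ and $s_i y$ is a reduced product $s_i \cdot y$; but then $x \cdot s_i \cdot y$ would have length $\ell(x) + 1 + \ell(y) = \ell(w_0) + 1$, while $x s_i y = w_0 \cdot (y^{-1} s_i y)$ has length $\le \ell(w_0) + 1$ with equality only if $y^{-1}s_iy$ is a reflection $t$ with $\ell(w_0 t) = \ell(w_0) + 1$ — impossible since $w_0$ is the longest element, so $\ell(w_0 t) < \ell(w_0)$ for the reflection $t = y^{-1}s_i y \neq e$. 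This contradiction shows $D_R(x) \cup D_L(y) = S$, and taking $x = w$, $y = w^{-1}w_0$ gives the statement.

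The main obstacle I anticipate is the second claim: the first is a clean three-line length computation, but for the union statement one must correctly leverage the maximality of $w_0$ (the fact that $w_0 t < w_0$ for every reflection $t$, equivalently $w_0$ sends every positive root to a negative root). The cleanest packaging is probably via the inversion-set / root-system description — $\mathrm{Inv}(w_0) = \Delta_+$, $\mathrm{Inv}(x) \subseteq \Delta_+$ with $\mathrm{Inv}(y) = x^{-1}(\Delta_+ \setminus \mathrm{Inv}(x))$ for a reduced product — from which $s_i \notin D_R(x)$ means $\alpha_i \notin \mathrm{Inv}(x)$, hence $\alpha_i \in \Delta_+ \setminus \mathrm{Inv}(x)$, hence $x^{-1}\alpha_i \in \mathrm{Inv}(y)$, which one checks forces $s_i \in D_L(y)$. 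I would present whichever of the two formulations (length-function bookkeeping vs. inversion sets) turns out shorter once written in full.
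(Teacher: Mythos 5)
Your proof of the disjointness claim is correct and is exactly the paper's argument (if $s\in D_R(x)\cap D_L(y)$ then $w_0=(xs)(sy)$ forces $\ell(w_0)\le \ell(xs)+\ell(sy)=\ell(w_0)-2$), and for the union you follow the paper's route as well: assume $s_i\notin D_R(x)\cup D_L(y)$ and contradict the maximality of $w_0$ via the element $xs_iy$. The issue is the pivotal step, which you assert but do not derive: that $\ell(xs_i)=\ell(x)+1$ and $\ell(s_iy)=\ell(y)+1$ force $\ell(xs_iy)=\ell(x)+\ell(y)+1$. This is not a formal consequence of those two facts alone: in type $A_2$ take $x=s_2s_1$, $s=s_2$, $y=s_1$; then $x\cdot s$ and $s\cdot y$ are reduced products but $xsy=s_1s_2$ has length $2$, not $4$. (The implication is true in your setting because $x\cdot y$ is also reduced, but that requires an argument.) Your ``while $xs_iy=w_0(y^{-1}s_iy)$'' clause does not repair this: it only shows $\ell(xs_iy)=\ell(w_0t)<\ell(w_0)$, which is no contradiction unless the length-$(\ell(w_0)+1)$ claim has already been established — so the contradiction you draw is between a justified statement and the unjustified one. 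The closing inversion-set sketch is also not sound as written: the two halves use incompatible conventions for $\mathrm{Inv}$, and the element you produce, $x^{-1}\alpha_i\in\mathrm{Inv}(y)$, is not the condition $y^{-1}\alpha_i\in\Delta_-$ that characterizes $s_i\in D_L(y)$. To be fair, the paper's own proof of the second claim is equally terse at exactly this point (it likewise just asserts that $x\cdot s\cdot y$ has length $\ell(w_0)+1$), so you are at parity with it; but since you explicitly set out to make the step rigorous, the attempt does not succeed.

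The clean fix is essentially your abandoned first computation, done on the correct side. Since $y=x^{-1}w_0$ and $\ell(uw_0)=\ell(w_0)-\ell(u)$ for every $u\in W$, one has $\ell(s_iy)=\ell(y^{-1}s_i)=\ell(w_0xs_i)=\ell(w_0)-\ell(xs_i)$ and $\ell(y)=\ell(w_0)-\ell(x)$; hence $s_i\in D_L(y)$ if and only if $s_i\notin D_R(x)$, i.e. $D_L(x^{-1}w_0)=S\setminus D_R(x)$, which gives the disjointness and the union simultaneously. Equivalently, in root terms, $y^{-1}=w_0x$, so $y^{-1}\alpha_i=w_0(x\alpha_i)$ is negative exactly when $x\alpha_i$ is positive, i.e. exactly when $s_i\notin D_R(x)$.
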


\begin{proof}
By the conditions of $\ell(x) + \ell(y) = \ell(w_0)$ and $w_0 = x \cdot y$, then $D_R(x) \cap D_L(y) = \emptyset$. 

Suppose there exists $s \not\in D_R(x) \cup D_L(y)$. Then $x \cdot s \cdot y$ is an element of length $\ell(w_0) +1$, which contradicts the maximality of $w_0$. 
\end{proof}

Finally, Coxeter groups have three important properties that we will make use of multiple times throughout this section:
\begin{theorem}[{\cite[Proposition 2.2.7, Theorem 1.5.1, Theorem 3.3.1]{CombinatoricsofCoxeter}}]\label{theorem:weylproperties}
For $W$ a Coxeter group, 
\begin{enumerate}[align=left]
\item[\textbf{Lifting Property:}] Suppose $u < w$ and $s_i \in D_L(w)\backslash D_L(u)$. Then $u \leq s_iw$ and $s_iu \leq w$. 
\item[\textbf{Exchange Property:}] Let $w = s_{i_1} s_{i_2} \cdots s_{i_k}$ be a reduced expression. If $\ell(s_iw) \leq \ell(w)$ for $s_i \in S$, then $s_iw = s_{i_1} s_{i_2} \cdots \widehat{s_{i_j}} \cdots s_{i_k}$, where $\widehat{s_{i_j}}$ means that this term is deleted. 
\item[\textbf{Word Property:}] Every two reduced words for $w$ can be connected via a sequence of braid relations. 
\end{enumerate}
\end{theorem}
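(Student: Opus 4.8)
These three are the classical structural properties of Coxeter groups, so the plan is to recall, for each, the tool that does the work; everything rests on the reflection representation and on the subword characterisation of the Bruhat order, and since the latter is itself deduced from the Exchange Property I would establish Exchange first and then use it (and the subword characterisation) for the Lifting and Word properties.

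I would first record the two geometric inputs: for a simple reflection $s_i$ and any $w \in W$ one has $\ell(s_iw) < \ell(w)$ exactly when $w^{-1}\alpha_i$ is a negative root, and $s_i$ permutes the positive roots other than $\alpha_i$ while sending $\alpha_i$ to $-\alpha_i$. For the \textbf{Exchange Property}, write $w = s_{i_1}\cdots s_{i_k}$ reduced with $\ell(s_iw) < \ell(w)$ and follow the roots $\beta_t = s_{i_t}\cdots s_{i_1}\alpha_i$: then $\beta_0 = \alpha_i$ is positive while $\beta_k = w^{-1}\alpha_i$ is negative, so the sign flips at some first index $j$, and the second geometric input forces $\beta_{j-1} = \alpha_{i_j}$, i.e.\ $s_{i_1}\cdots s_{i_{j-1}}\alpha_{i_j} = \alpha_i$. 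Passing to reflections gives $s_i\,s_{i_1}\cdots s_{i_{j-1}} = s_{i_1}\cdots s_{i_j}$, and right-multiplying by $s_{i_{j+1}}\cdots s_{i_k}$ yields $s_iw = s_{i_1}\cdots\widehat{s_{i_j}}\cdots s_{i_k}$.

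For the \textbf{Lifting Property}, the hypothesis $s_i \in D_L(w)$ together with Exchange produces a reduced word $(i, i_2,\dots,i_k)$ for $w$, so $(i_2,\dots,i_k)$ is a reduced word for $s_iw$. Since $u \le w$, the element $u$ occurs as a subword of $(i, i_2,\dots, i_k)$; a short case split on whether a chosen subword expression uses the leading letter---combined with the elementary fact that $\ell(s_ix) < \ell(x)$ and $x \le y$ imply $s_ix \le y$, and with the hypothesis $s_i \notin D_L(u)$ in the case where the leading letter is used---gives $u \le s_iw$ and $s_iu \le w$. The bookkeeping here is routine once the subword characterisation is available.

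For the \textbf{Word Property} (Tits--Matsumoto) I would induct on $\ell(w)$: given reduced words $\underline a,\underline b$ for $w$ with leading letters $s_a,s_b$, if $s_a = s_b$ delete them and apply the inductive hypothesis to $s_aw$; if $s_a\neq s_b$, let $m$ be the order of $s_as_b$ (necessarily finite, since two distinct simple reflections can lie in a common left descent set only if they generate a finite dihedral subgroup) and establish the key lemma that $w$ has a reduced word beginning with the alternating word $\underbrace{s_as_bs_a\cdots}_{m} = \underbrace{s_bs_as_b\cdots}_{m}$. Granting the lemma, one braid-moves $\underline a$ to a reduced word with that alternating prefix, applies the single braid relation to replace the prefix, and applies the inductive hypothesis to the common tail of length $\ell(w)-1$, joining $\underline a$ to $\underline b$ by braid moves. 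I expect the alternating-word lemma to be the main obstacle: one peels the two descents off in alternation, invoking the Exchange Property at each step to check that the length keeps strictly decreasing, and the delicate point is organising this sub-induction correctly rather than the individual braid manipulations.
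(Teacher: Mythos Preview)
The paper does not prove this theorem: it is stated with a citation to Bj\"orner--Brenti, \emph{Combinatorics of Coxeter Groups} (Proposition~2.2.7, Theorem~1.5.1, Theorem~3.3.1), and no proof is given in the text. So there is no ``paper's own proof'' to compare against; the authors are simply importing these standard facts as black boxes.

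Your sketch follows the classical arguments and is essentially sound. One small simplification in your Lifting argument: once you have a reduced word $(i,i_2,\dots,i_k)$ for $w$ and invoke the strong subword property (every reduced word for $w$ contains a reduced subword for $u$), the hypothesis $s_i\notin D_L(u)$ already forces the chosen reduced subword for $u$ to avoid the leading letter, so the case split collapses and the ``elementary fact'' you quote is not needed. Everything else---the sign-change argument for Exchange and the Matsumoto induction for the Word Property, including the observation that two distinct simple reflections sharing a left descent forces finite $m_{ij}$---is the standard route taken in the cited reference.
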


In Section \ref{section:lusztigdata}, we will prove that for $P \in \Pw$ with vertex data $(\mu_v)_{v \in W}$, $P = \conv \{ \mu_v: v \in W,  v \leq w\}$. The main result will be to explicitly describe the map $W \rightarrow [e,w]$ which arises by sending $v \mapsto u$ if for every $P \in \Pw$, $\mu_v = \mu_u$.  By Lemma \ref{lemma:vw=w}, we know that for $v \geq_R w$, $v \mapsto w$. When $v \not\geq_R w$, this map is slightly more complicated.

\begin{example}\label{example:vertexcollapsing}
Consider $G$ of type $A_3$. The simple coroots are $\alpha^\vee_1, \alpha^\vee_2, \alpha^\vee_3$ and the Weyl group is given by the presentation
$ W = \langle s_1, s_2, s_3 : (s_1s_3)^2=1, (s_1s_2)^3 = 1, (s_2s_3)^3 = 1, s_1^2 = s_2^2 = s_3^2 =1 \rangle$. Let $w = s_1s_2s_3$ and consider $P \in \Pw$ with Lusztig data $(1,1,1,0,0,0)$ associated to the reduced word $\ui = (1,2,3,1,2,1)$. This polytope has the following form:
\[
\begin{tikzpicture}

\node[My style] (SE) at (0, 0, 0, 0) [label=below:$\mu_{e}$] {};
\node[My style] (S3) at (-1, 1,0, 0) [label=left:$\mu_{s_3}$] {};
\node[My style] (S2) at (1, 0, 0, -1)[label=below:$\mu_{s_2}$] {};
\node[My style] (S23) at (1, 1,0, -2)[label=above: $\mu_{s_2s_3}{\color{white}111}$]{};
\node[My style] (S1) at (0, 0,-1, 1)[label=left:$\mu_{s_1}$] {};
\node[My style] (S13) at (-1, 1,-1, 1)[label=above:$\mu_{s_1s_3}$] {};
\node[My style] (S12) at (1, 0,-2, 1)[label=right: $\mu_{s_1s_2}$]{};
\node[My style] (S123) at (1, 1,-3, 1) [label=above: $\mu_{s_1s_2s_3}$]{};

\draw[thick] (SE) -- (S3) ;
\draw[thick, dotted] (SE) --  (S1) -- (S12); 
\draw[thick, dotted] (S1) -- (S13) -- (S3);
\draw[thick] (SE) -- (S2) -- (S23) ;
\draw[thick] (S123) -- (S23) -- (S3) -- (S13) -- (S123) -- (S12) -- (S2);

\end{tikzpicture}
\]
Note that the vertices are indeed labelled by the set $\{ v \in W: v \leq w\}$. 
For $v\in W$ larger than $w$, the relations  on the vertices $\mu_v$ are:
\begin{align*}
\mu_{s_2s_3s_1s_2s_1} = \mu_{s_2s_3}, && \mu_{s_1s_3s_2s_1} = \mu_{s_1s_3}, && \mu_{s_3s_2s_1} = \mu_{s_3}, && \mu_{s_1s_2s_1} = \mu_{s_1s_2}, && \mu_{s_2s_1} = \mu_{s_2}.
\end{align*}
Notice that if $\mu_v = \mu_u$ then $u \leq_R v$. 
\end{example}

Suppose for $v \in W$, $u$ is the Weyl group element such that $u \leq w$ and $\mu_v = \mu_u$ for every $P \in \Pw$. By examining the previous example, we expect two conditions on $u$: first, we expect that $u\leq_R v$; equivalently, this says there must be a minimal path from $\mu_e$ to $\mu_{w_0}$ in the polytope that passes through both the vertices $\mu_u$ and $\mu_v$. Second, we expect that $u$ is the longest element such that $u \leq w$ and $u \leq_R v$. First we prove that this element is well-defined. To do this, we will need a result of Bj\"{o}rner and Wachs. 

For $x,y \in W$, we will say $z$ is a \emph{minimal upper bound} for $x$ and $y$ if $x,y \leq z$  and for any $z' \in W$ such that $x, y \leq z' \leq z$,  then $z = z'$.

\begin{theorem}[{\cite[Theorem 3.7, Theorem 4.4]{Coxeter1987}}]\label{theorem:theorem3.7coxeter}
Fix $v \in W$. Let $x,y \in [e,v]_R$ and suppose $z$ is a minimal upper bound of $x$ and $y$. Then $z \in [e,v]_R$. 
\end{theorem}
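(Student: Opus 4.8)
We may assume $W$ is finite, as it is here, so that $(W,\leq_R)$ is a lattice; write $x\vee_R y$ for the join. The plan is to reduce the statement to the following claim, which no longer involves $v$: if $z$ is a minimal upper bound, in the strong order, of $x$ and $y$, then $z\leq_R x\vee_R y$. Granting this: if $x,y\leq_R v$ then $x\vee_R y\leq_R v$ by the defining property of the join, hence $z\leq_R x\vee_R y\leq_R v$, which is the assertion. (Conversely the claim is the case $v=x\vee_R y$, so the two formulations are equivalent.)

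To prove the claim I would use the inversion-set description of the right weak order: for $u\in W$ set $N(u)=\{\alpha\in\Delta_+:u^{-1}\alpha\notin\Delta_+\}$, so that $\ell(u)=|N(u)|$, $u\leq_R w$ if and only if $N(u)\subseteq N(w)$, and $N(x\vee_R y)$ is the smallest biconvex (closed and co-closed) subset of $\Delta_+$ containing $N(x)\cup N(y)$. Assume for contradiction that $z\not\leq_R x\vee_R y$, and fix a reduced word $z=s_{a_1}\cdots s_{a_m}$. Let $t$ be the least index for which the prefix $z_t:=s_{a_1}\cdots s_{a_t}$ fails to lie in $[e,x\vee_R y]_R$; then $z_{t-1}\leq_R x\vee_R y$ while the inversion $\beta:=z_{t-1}\alpha_{a_t}$ gained at the $t$-th step satisfies $\beta\notin N(x\vee_R y)$, and in particular $\beta\notin N(x)\cup N(y)$. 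Deleting the $t$-th letter gives an element $\bar z:=s_{a_1}\cdots\widehat{s_{a_t}}\cdots s_{a_m}<z$ with $\bar z=s_\beta z$. It now suffices to show $x\leq\bar z$ and $y\leq\bar z$, for this contradicts the minimality of $z$.

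Proving $x\leq s_\beta z$ (and symmetrically for $y$) is the step I expect to be the real obstacle. One cannot delete an arbitrary ``exiting'' letter: the naive assertion that $x\leq z$ and $\beta\in N(z)\setminus N(x)$ imply $x\leq s_\beta z$ is false for non-simple $\beta$. What saves the argument is that $\beta$ was chosen at the \emph{first} position where the running prefix leaves $[e,x\vee_R y]_R$, so $z_{t-1}\in[e,x\vee_R y]_R$ is ``benign'' and $\beta=z_{t-1}\alpha_{a_t}$ is a simple root transported by $z_{t-1}$; combined with the minimality of $z$, this should force the subword embeddings of $x$ and of $y$ inside the fixed reduced word of $z$ to avoid position $t$ (otherwise a deletion at an avoidable position would already produce a strictly smaller common upper bound). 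The Lifting Property of Theorem~\ref{theorem:weylproperties}, applied after conjugating by $z_{t-1}$ to the simple reflection $s_{a_t}$, is the tool for converting ``$z$ has no subword embedding of $x$ avoiding position $t$'' into the required inequality. Turning this outline into a proof is exactly the poset analysis of the intervals $[e,v]_R$ and of generalized quotients carried out by Bj\"orner and Wachs in \cite{Coxeter1987}, whose argument I would follow to complete the proof.
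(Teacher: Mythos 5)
This statement is not proved in the paper at all: it is quoted from Bj\"{o}rner and Wachs \cite{Coxeter1987} (their Theorems 3.7 and 4.4), so the paper's only ``proof'' is the citation. Your proposal instead attempts a self-contained argument, and its scaffolding is fine: the reduction to the claim that any minimal Bruhat upper bound $z$ of $x,y$ satisfies $z \leq_R x\vee_R y$ is correct (weak order on a finite Coxeter group is a lattice, and $x,y\leq_R v$ gives $x\vee_R y\leq_R v$), and the setup of the contradiction is also correct: the choice of the first position $t$ at which a prefix of a reduced word of $z$ leaves $[e,x\vee_R y]_R$, the observation that the inversion $\beta=z_{t-1}\alpha_{a_t}$ created at that step lies outside $N(x\vee_R y)\supseteq N(x)\cup N(y)$, and the identity $\bar z=s_\beta z<z$ for the word with the $t$-th letter deleted.

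The genuine gap is the decisive step: showing $x\leq\bar z$ and $y\leq\bar z$, which is what would contradict the minimality of $z$. You identify this yourself as ``the real obstacle,'' but what you offer in its place is not an argument. The claim that minimality of $z$ ``should force'' the subword embeddings of $x$ and of $y$ in the chosen reduced word of $z$ to avoid position $t$ is unsubstantiated, and is essentially a restatement of what must be proved; moreover, to contradict minimality you need one deletion compatible with both embeddings simultaneously, which separate avoidance statements for $x$ and $y$ do not by themselves provide. The intended application of the Lifting Property of Theorem \ref{theorem:weylproperties} after conjugating by $z_{t-1}$ is never carried out, and, as you note, the naive statement it would have to replace ($x\leq z$ and $\beta\in N(z)\setminus N(x)$ imply $x\leq s_\beta z$) is false for non-simple $\beta$. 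Deferring this step to the analysis of Bj\"{o}rner and Wachs is deferring exactly the content of the theorem; since the paper treats the result as a black-box citation, simply invoking \cite{Coxeter1987} would have sufficed, but as a standalone proof your proposal is incomplete at its core.
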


\begin{lemma}\label{lemma:longestexists}
For every $v, w \in W$, the set $[e,v]_R \cap [e,w]$ has a unique element of longest length. 
\end{lemma}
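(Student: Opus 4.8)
The plan is to show that the poset $[e,v]_R \cap [e,w]$ is nonempty and closed under taking minimal upper bounds, and then conclude that it has a unique maximal element. First I would note the set is nonempty, since $e$ lies in both intervals. Since $W$ is finite the set is finite, so it has at least one maximal element; the content of the lemma is \emph{uniqueness} of the maximal element. Suppose for contradiction that $x$ and $y$ are two distinct maximal elements of $[e,v]_R \cap [e,w]$. Both lie in $[e,v]_R$, so by Theorem \ref{theorem:theorem3.7coxeter} any minimal upper bound $z$ of $x$ and $y$ in $W$ lies again in $[e,v]_R$. The key additional observation is that such a $z$ can simultaneously be arranged to lie in $[e,w]$: indeed, since $x, y \le w$ in the strong Bruhat order, $w$ is \emph{some} upper bound of $\{x,y\}$ in the strong order, hence (as $[e,w]$ is finite) there is a minimal upper bound $z$ of $x$ and $y$ with $z \le w$. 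I would spell this out: take the set $U = \{ u \in W : x \le u, \ y \le u, \ u \le w\}$, which is nonempty (it contains $w$) and finite, pick $z$ minimal in $U$; I claim $z$ is then a minimal upper bound of $x$ and $y$ in the sense of the definition preceding Theorem \ref{theorem:theorem3.7coxeter}. This needs a small argument: if $z' \le z$ with $x,y \le z'$, then $z' \le z \le w$ gives $z' \in U$, so minimality of $z$ in $U$ forces $z' = z$.

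Having produced a $z$ that is a minimal upper bound of $x$ and $y$ and satisfies $z \le w$, Theorem \ref{theorem:theorem3.7coxeter} gives $z \in [e,v]_R$, so $z \in [e,v]_R \cap [e,w]$. But $x \le z$ and $y \le z$ (in the weak right order, since minimal upper bounds in the sense used by Björner–Wachs are taken in $\le_R$; here I should be careful about which order "$x,y \le z$" refers to), and by maximality of $x$ we get $z = x$, and likewise $z = y$, contradicting $x \ne y$. Hence the maximal element is unique, and calling it the longest element is justified because a unique maximal element of a finite graded poset (graded by length) is automatically the one of greatest length.

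The point that requires the most care — and which I expect to be the main obstacle — is the bookkeeping of \emph{which partial order} the minimal upper bound is taken in. Theorem \ref{theorem:theorem3.7coxeter} as quoted concerns $x, y \in [e,v]_R$ and a minimal upper bound $z$, and its conclusion $z \in [e,v]_R$; one must check that the relevant notion of upper bound there is the one in the weak right order (so that the statement is about $[e,v]_R$ being closed under weak-order joins when they are bounded above in $\le_R$), while the constraint $z \le w$ is in the strong order. These two orders interact through the fact that $\le_R$ refines $\le$ (if $x \le_R z$ then $x \le z$), so an element maximal in $[e,v]_R \cap [e,w]$ with respect to $\le_R$ and the existence of weak-order minimal upper bounds suffice. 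I would therefore reformulate: take $x, y$ two elements of $S := [e,v]_R \cap [e,w]$ maximal with respect to $\le_R$; the set of common weak-order upper bounds of $x,y$ that are $\le_R w'$... — in fact the cleanest route is to observe $S$ is a sub-meet-semilattice situation and invoke that a finite poset in which every pair of elements with a common upper bound has a \emph{least} upper bound has, if it has a top-bounded structure, a unique maximal element; concretely, if $x,y \in S$ are both maximal, Theorem \ref{theorem:theorem3.7coxeter} plus the $\le w$ bound produces $z \in S$ with $x \le_R z$ and $y \le_R z$, forcing $x = z = y$. Once the order-theoretic statement of Björner–Wachs is pinned down precisely, the rest is immediate.
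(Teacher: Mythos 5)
Your construction is essentially the paper's own proof: the paper likewise takes two distinct elements $x,y$ of longest length, picks a minimal(-length) element $z$ of $[x,w]\cap[y,w]$ (your set $U$), checks that it is a minimal upper bound in the sense defined just before Theorem \ref{theorem:theorem3.7coxeter}, and applies that theorem to conclude $z\leq_R v$, hence $z\in[e,v]_R\cap[e,w]$, giving the contradiction. The one point you flag as the main obstacle is resolved the other way from what you guess: in the paper the notion of minimal upper bound is taken in the \emph{strong} Bruhat order (the definition preceding the theorem uses $\leq$), and the theorem's conclusion is only that $z\leq_R v$ --- it does not give $x\leq_R z$ or $y\leq_R z$. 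Consequently your closing step, taking $x,y$ maximal with respect to $\leq_R$ and arguing that $x\leq_R z$ forces $x=z=y$, is not justified as written (your $z$ is an upper bound of $x,y$ only in the strong order). The repair is immediate and is exactly what the paper does: take $x\neq y$ both of longest length in $[e,v]_R\cap[e,w]$; your $z$ satisfies $x\leq z$, $y\leq z$, $z\leq w$, and $z\leq_R v$ by the theorem, so $z$ lies in the intersection; moreover $z\neq x$ (otherwise $y\leq x$ with $\ell(y)=\ell(x)$ would force $y=x$) and similarly $z\neq y$, so $\ell(z)>\ell(x)=\ell(y)$, contradicting the choice of $x,y$. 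With that bookkeeping corrected, your argument coincides with the paper's.
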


\begin{proof}
As $[e,v]_R \cap [e,w]$ is a finite set, there exists an element of longest length. Suppose there exists two distinct elements $x, y$ of longest length.  

Consider the set $[x,w] \cap [y,w]$. As this set is finite, there exists an element $z$ (not necessarily unique) of minimal length. This element $z$ has the property that for any $z'\in W$ such that $z'\leq z$, $x\leq z'$ and $y \leq z'$, then $\ell(z) =\ell(z')$ by the minimality of $z$ and hence $z = z'$. We apply \cite[Theorem 3.7]{Coxeter1987} (see Theorem \ref{theorem:theorem3.7coxeter}), so $z \leq_R v$ as well. Thus $z \in [e,v]_R \cap [e,w]$ but $\ell(z) > \ell(x) = \ell(y)$, which contradicts that $x, y$ are of longest length. 
\end{proof}

Since $x \leq_R v \iff x^{-1} \leq_L v^{-1}$ and $x \leq w \iff x^{-1} \leq w^{-1}$ then there is a bijection $[e,v^{-1}]_R \cap [e,w^{-1}] \rightarrow [e,v]_L \cap [e,w]$ by $x \mapsto x^{-1}$. As $\ell(x^{-1}) = \ell(x)$, this lemma also holds for the left Bruhat order. 

\begin{corollary}\label{corollary:longestexists}
For every $v, w \in W$, the set $[e,v]_L \cap [e,w]$ has a unique element of longest length. 
\end{corollary}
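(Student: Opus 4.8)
The plan is to deduce Corollary~\ref{corollary:longestexists} directly from Lemma~\ref{lemma:longestexists} by passing to inverses, exactly as the paragraph preceding the corollary already sketches. First I would record the three elementary order-theoretic facts about inversion in a Coxeter group: $\ell(x^{-1}) = \ell(x)$ for all $x \in W$; the Bruhat order is inversion-invariant, i.e. $x \leq w \iff x^{-1} \leq w^{-1}$; and the weak orders swap under inversion, i.e. $x \leq_R v \iff x^{-1} \leq_L v^{-1}$ (equivalently $x \leq_L v \iff x^{-1} \leq_R v^{-1}$). All three are standard and can be cited from \cite{CombinatoricsofCoxeter} or checked from Proposition~\ref{proposition:weakbruhats}, since $\ell(x) + \ell(x^{-1}v) = \ell(v)$ transforms under $x \mapsto x^{-1}$, $v \mapsto v^{-1}$ into $\ell(x^{-1}) + \ell(vx^{-1}) = \ell(v^{-1})$, wait—more carefully, $\ell(x^{-1}) + \ell((x^{-1})^{-1}v^{-1}) = \ell(x^{-1}) + \ell(xv^{-1})$; one uses $\ell(xv^{-1}) = \ell((xv^{-1})^{-1}) = \ell(vx^{-1})$, matching the criterion for $x^{-1} \leq_L v^{-1}$.

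Next I would assemble these into the bijection. Given $v, w \in W$, the map $x \mapsto x^{-1}$ is an involution on $W$, and by the three facts above it restricts to a bijection
\[
[e,v]_L \cap [e,w] \;\longrightarrow\; [e,v^{-1}]_R \cap [e,w^{-1}],
\]
since $x \leq_L v \iff x^{-1} \leq_R v^{-1}$ and $x \leq w \iff x^{-1} \leq w^{-1}$. Because $\ell(x^{-1}) = \ell(x)$, this bijection is length-preserving, so an element of longest length in $[e,v]_L \cap [e,w]$ corresponds under $x \mapsto x^{-1}$ to an element of longest length in $[e,v^{-1}]_R \cap [e,w^{-1}]$. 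By Lemma~\ref{lemma:longestexists} applied to the pair $(v^{-1}, w^{-1})$, the latter set has a unique element of longest length; pulling back through the bijection gives that $[e,v]_L \cap [e,w]$ has a unique element of longest length, which is the claim.

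There is essentially no obstacle here: the corollary is a formal transport of Lemma~\ref{lemma:longestexists} along the inversion anti-automorphism, and the only thing to be careful about is getting the direction of the weak-order swap right (left weak order goes to right weak order under inversion, not to itself). I would state the proof in three or four lines, making explicit the length-preserving bijection $x \mapsto x^{-1}$ between $[e,v]_L \cap [e,w]$ and $[e,v^{-1}]_R \cap [e,w^{-1}]$ and then invoking Lemma~\ref{lemma:longestexists}. No new combinatorics of Coxeter groups beyond what is already quoted in the excerpt is needed.
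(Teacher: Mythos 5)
Your proposal is correct and is exactly the paper's argument: the paragraph preceding the corollary deduces it from Lemma~\ref{lemma:longestexists} via the length-preserving bijection $x \mapsto x^{-1}$ between $[e,v]_L \cap [e,w]$ and $[e,v^{-1}]_R \cap [e,w^{-1}]$, using the inversion-invariance of the strong order and the swap of the weak orders. Nothing further is needed.
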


Lemma \ref{lemma:longestexists} ensures the following definition is well-defined. 

\begin{definition}
For any $v, w \in W$, denote $v_w$ to be the unique element of maximal length in $[e,v]_R \cap [e,w]$.
\end{definition}

This element is closely related to the Demazure product.  For $w\in W$ and $s_i \in S$, let $s_i* w := \max\{ w, s_i w\}$, where the maximum is the element in the set of maximal length. The \emph{Demazure product} can be defined recursively by $s_{i_1} * \dots * s_{i_k} := s_{i_1} * (s_{i_2} * \dots * s_{i_k})$. This product is associative and well-defined by \cite[Proposition 3.1]{Heckeproduct}.

\begin{proposition}[{\cite[Proposition 6.4]{Demazurepolytopes}}]
For $v, w \in W$, $v*w = \max\{ xw: x\leq v\}$.
\end{proposition}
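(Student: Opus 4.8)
The plan is to prove the identity by induction on $\ell(v)$. The base case $v=e$ is immediate: the empty product of Demazure operations gives $e*w=w$, while the only $x\le e$ is $x=e$, so both sides equal $w$. For the inductive step I would fix $s_i\in D_L(v)$ and write $v=s_iv'$ with $\ell(v')=\ell(v)-1$. Since prepending $s_i$ to a reduced word for $v'$ yields a reduced word for $v$, associativity and well-definedness of the Demazure product (\cite[Proposition 3.1]{Heckeproduct}) give $v*w=s_i*(v'*w)$. Writing $u:=v'*w$, the inductive hypothesis gives $u=\max\{xw:x\le v'\}$ (as a genuine Bruhat-largest element), and it remains to prove $s_i*u=\max\{xw:x\le v\}$.

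I would isolate two ingredients. First, the decomposition $[e,v]=[e,v']\cup s_i[e,v']$, which follows from the subword characterization of the Bruhat order (\cite{CombinatoricsofCoxeter}) applied to a reduced word for $v$ beginning with $s_i$: a reduced subword either omits or uses the initial letter. Hence, setting $A:=\{xw:x\le v'\}$, one gets $\{xw:x\le v\}=A\cup s_iA$ with $\max A=u$. Second, I need the monotonicity statement: $x\le y$ implies $s_i*x\le s_i*y$. I would prove this by a short case analysis — when $s_i\in D_L(y)$ one uses the Lifting Property (Theorem \ref{theorem:weylproperties}) to get $s_ix\le y=s_i*y$ (distinguishing $s_i\in D_L(x)$ from $s_i\notin D_L(x)$), and when $s_i\notin D_L(y)$ one argues $s_ix\le s_iy$ directly by prepending $s_i$ to a reduced word for $y$ (legitimate precisely because $s_iy>y$) and invoking the subword property again.

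Granting these, the finish is quick. For every $a\in A$ we have $a\le u$, hence $a\le s_i*a\le s_i*u$ and $s_ia\le s_i*a\le s_i*u$, using $s_i*a=\max\{a,s_ia\}$ together with monotonicity; thus every element of $A\cup s_iA$ is bounded above by $s_i*u$. Since $s_i*u\in\{u,s_iu\}\subseteq A\cup s_iA$, it is in fact the maximum, so $\max\{xw:x\le v\}=s_i*u=v*w$, completing the induction. As a byproduct this also shows the set $\{xw:x\le v\}$ genuinely has a Bruhat-largest element, so the two possible readings of $\max$ agree.

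I expect the monotonicity lemma $x\le y\Rightarrow s_i*x\le s_i*y$ to be the main obstacle — in particular the subcase $s_i\notin D_L(x)$ and $s_i\notin D_L(y)$, where one must produce $s_ix\le s_iy$ from $x\le y$; here the subword property (prepending $s_i$ to a reduced word for $y$) does the job, but it is the one step that genuinely needs the subword description rather than just the Lifting/Exchange properties quoted in the excerpt. Everything else is routine bookkeeping with reduced words.
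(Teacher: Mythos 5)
Your argument is correct. Note first that the paper does not actually prove this proposition: it is quoted from \cite[Proposition 6.4]{Demazurepolytopes}, and the paper only reproduces ``the same proof technique'' when it proves Proposition \ref{proposition:generalProp6.4}. That in-paper argument shares your inductive skeleton (induct on $\ell(v)$, pick $s_i\in D_L(v)$, use associativity to write $v*w=s_i*((s_iv)*w)$, and use the Lifting Property), but it only exhibits $v*w$ as an element of the relevant set, i.e.\ $v*w=xw$ with $x\le v$ and $\ell(xw)=\ell(x)+\ell(w)$; the Bruhat-maximality of $v*w$ over all of $\{xw:x\le v\}$ is not argued there. Your proof supplies exactly that missing half, via the two auxiliary facts: the decomposition $[e,v]=[e,v']\cup s_i[e,v']$ and the monotonicity $x\le y\Rightarrow s_i*x\le s_i*y$, both of which you prove correctly (the monotonicity case analysis via the Lifting Property is fine, and the remaining subcase via prepending $s_i$ to reduced words is legitimate since $s_ix>x$ and $s_iy>y$ there). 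Two small remarks: the subword characterization of the Bruhat order that you lean on is not among the properties quoted in Theorem \ref{theorem:weylproperties}, but it is standard and available from the cited \cite{CombinatoricsofCoxeter}; and for the inclusion $s_i[e,v']\subseteq[e,v]$ (needed so that $s_iu$ really lies in $\{xw:x\le v\}$) you should note the short case split --- if $s_iy<y$ it is immediate, and if $s_iy>y$ prepend $s_i$ or use the Lifting Property --- which you only gesture at. With those details filled in, your proof is complete and, unlike the paper's quoted technique, actually establishes the maximum statement (including the existence of a Bruhat-largest element of the set), which is a worthwhile strengthening.
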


Using the same proof technique as Proposition 6.4, we can relate the Demazure product to the weak orders.

\begin{proposition}\label{proposition:generalProp6.4}
For $v, w \in W$, 
\begin{align}
v*w &= \max\{ xw: x \leq v \text{ and } \ell(xw) = \ell(x) + \ell(w)\} \label{equation:demazurexw} \\
&= \max \{ vy: y\leq w \text{ and } \ell(vy) = \ell(v) + \ell(y)\}. \label{equation:demazurevx} 
\end{align}
Moreover, $v*w =xw = vy$ where $x$ is the maximal length element such that $x\leq v$ and $xw$ is reduced and $y$ is the maximal length element such that $y \leq w$ and $vy$ is reduced. 
\end{proposition}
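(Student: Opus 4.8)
The plan is to mimic the proof of \cite[Proposition 6.4]{Demazurepolytopes} (stated just above as $v*w = \max\{xw : x \leq v\}$) but carrying an extra bookkeeping of lengths so that the maximizer is automatically realized by a \emph{reduced} product. I will prove the two displayed equalities \eqref{equation:demazurexw} and \eqref{equation:demazurevx}; by the symmetry $v*w = (w^{-1}*v^{-1})^{-1}$ together with $\ell(xw)=\ell(x)+\ell(w) \iff \ell(w^{-1}x^{-1}) = \ell(w^{-1})+\ell(x^{-1})$, it suffices to prove \eqref{equation:demazurevx}, and the final ``moreover'' sentence will then follow by unwinding which element attains each maximum.

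First I would prove $v*w = \max\{vy : y \leq w,\ \ell(vy) = \ell(v)+\ell(y)\}$ by induction on $\ell(w)$. The base case $w = e$ is trivial. For the inductive step, write $w = s_i w'$ reduced with $\ell(w') = \ell(w)-1$, so $v*w = v*(s_i * w')$ — wait, that is the wrong associativity; instead use $w = w' s_i$ reduced (pick $s_i \in D_R(w)$) so that $v*w = (v*w')*s_i = (v*w') * s_i$, which by definition is $\max\{v*w',\ (v*w')s_i\}$. By the induction hypothesis $v*w' = vy'$ for the maximal-length $y' \leq w'$ with $vy'$ reduced. Now I split into two cases according to whether $s_i \in D_R(v*w')$. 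If $s_i \notin D_R(v*w') = D_R(vy')$, then $v*w = vy's_i$ and I must check $y's_i \leq w$ and $\ell(vy's_i) = \ell(vy')+1$ (the latter is exactly $s_i \notin D_R(vy')$); for $y's_i \leq w = w's_i$ I use that $y' \leq w'$ and the Lifting Property (Theorem \ref{theorem:weylproperties}). If $s_i \in D_R(vy')$, then $v*w = vy'$ and I need to argue that no strictly longer $y \leq w$ with $vy$ reduced exists — here I would suppose such a $y$ existed, use $\ell(y) > \ell(y')$ together with $y \leq w$, and derive that $y$ (or $ys_i$) contradicts the maximality of $y'$ over $[e,w']$, again via Lifting/Exchange. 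In both cases one also checks the claimed element is $\geq$ every $vy$ with $y \leq w$, $vy$ reduced, which follows because any such $y$ satisfies $y \leq w$ and hence $vy \leq v*w$ in Bruhat order by \cite[Proposition 6.4]{Demazurepolytopes}, while equality of lengths with the extreme element forces it to be the unique longest such.

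The analogous identity \eqref{equation:demazurexw} follows by applying \eqref{equation:demazurevx} to $w^{-1}*v^{-1}$ and inverting: $x \leq v$ with $\ell(xw)=\ell(x)+\ell(w)$ corresponds bijectively via $x \mapsto x^{-1}$ to $x^{-1}\leq v^{-1}$ with $\ell(w^{-1}x^{-1}) = \ell(w^{-1})+\ell(x^{-1})$, and $(w^{-1}*v^{-1})^{-1} = v*w$ by associativity and the fact that $(s_i * s_j)^{-1} = s_j * s_i$ extends to $(\sigma*\tau)^{-1} = \tau^{-1}*\sigma^{-1}$. Finally, for the ``moreover'' clause: the $y$ attaining \eqref{equation:demazurevx} is by construction the maximal-length $y \leq w$ with $vy$ reduced, and symmetrically the $x$ attaining \eqref{equation:demazurexw} is the maximal-length $x \leq v$ with $xw$ reduced, and both equal $v*w$; so there is nothing more to prove once the two displays are established.

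The main obstacle I anticipate is the inductive case $s_i \in D_R(v*w')$, i.e. showing no longer admissible $y$ appears when we pass from $w'$ to $w = w's_i$: one must rule out a candidate $y$ with $y \leq w$, $y \not\leq w'$, $vy$ reduced, and $\ell(y) > \ell(y')$. The Lifting Property gives $ys_i \leq w'$ or $y \leq w'$; combined with $vy$ reduced this should force $\ell(ys_i) \leq \ell(y') = \ell(v*w') - \ell(v)$ and then $\ell(y) \leq \ell(y')+1 = \ell(vy's_i)-\ell(v)$, but $s_i \in D_R(vy')$ makes $vy's_i$ non-reduced, squeezing out the contradiction. Getting this length/Bruhat juggling exactly right — and making sure the maximality is over the correct set — is the delicate part; the rest is a routine transcription of the known Demazure-product argument.
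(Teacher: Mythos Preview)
Your approach is correct and is essentially the mirror image of the paper's: the paper inducts on $\ell(v)$ by peeling off a simple reflection from the \emph{left} of $v$ (using $v*w = s_i*((s_iv)*w)$ for $s_i \in D_L(v)$), whereas you induct on $\ell(w)$ by peeling off from the \emph{right} of $w$. Both arguments split into the same two cases and use the Lifting Property at the same spot.

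Where you take a detour is in the ``main obstacle'' you flag. The paper avoids it entirely by a cleaner logical structure: the induction only shows that $v*w$ \emph{belongs} to the set $\{vy : y \leq w,\ \ell(vy)=\ell(v)+\ell(y)\}$; maximality over that set is then free, because the already-cited \cite[Proposition 6.4]{Demazurepolytopes} gives $v*w = \max\{vy : y \leq w\}$ over the larger unconstrained set. So in your Case 2 ($s_i \in D_R(vy')$) there is nothing to rule out: $v*w = vy'$ with $y' \leq w' \leq w$ and $vy'$ reduced is already enough. The ``moreover'' clause then drops out as the paper observes: on the constrained set $\ell(vy) = \ell(v)+\ell(y)$, so the maximal element $v*w$ is attained exactly at the $y$ of maximal length, and that $y$ is unique since $vy_1=vy_2 \Rightarrow y_1=y_2$. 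You can therefore delete the entire ``rule out a longer $y$'' paragraph and the proof becomes as short as the paper's.
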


\begin{proof}
If $v=e$, then $e*w = w$ and clearly (\ref{equation:demazurexw}) holds. We proceed by induction. 

For $v \neq e$, there exists $s_i \in D_L(v)$. As $\ell(s_i v) = \ell(v) - 1$, then by induction $(s_i v) * w = xw$ for some $x \leq s_i v$ and $\ell(xw) = \ell(x) + \ell(w)$. Since $s_i* (s_iv) = v$, then 
\[
v * w = s_i * (s_i v) * w = s_i  * (xw)
\]
because $*$ is associative. If $\ell(xw)> \ell(s_ixw)$, then $v*w = xw$ for $x \leq s_i v \leq v$ and $\ell(xw) = \ell(x) + \ell(w)$. Otherwise, $\ell(s_ixw) = \ell(xw) + 1$ so $v*w = s_ixw$. Note that $\ell(s_ixw) = 1 + \ell(xw) = \ell(x) + \ell(w) +1$ so $s_i \not\in D_L(x)$ and $\ell(s_ix) = \ell(x) +1$. This implies $\ell(s_ixw) = \ell(s_ix) + \ell(w)$. Finally, by the Lifting Property, $s_ix \leq v$ and (\ref{equation:demazurexw}) holds. A similar proof works for (\ref{equation:demazurevx}). 

The maximal length element in the set $\{ xw : x \leq v \text{ and } \ell(xw) = \ell(x) + \ell(w)\}$ must occur when $\ell(x)$ is of maximal length. Thus $v*w = xw$ where $x$ is the maximal length element such that $x \leq v$  and $xw$ is reduced. By an identical argument, $v*w = vy$ for $y$ the maximal length element such that $y \leq w$ and $vy$ is reduced. 
\end{proof}
It immediately follows that $w \leq_L v*w$ and $v \leq_R v*w$ by  Proposition \ref{proposition:weakbruhats}. 

To relate $v_w$ to the Demazure product, we first need the following lemma. 

\begin{lemma}\label{lemma:reducedproductandbruhat}
For $u,v \in W$, the following conditions are equivalent:
\begin{enumerate}[label=\arabic*)]
\item $u \cdot v$ is a reduced product
\item $v \leq_R u^{-1}w_0$
\item $u \leq_L w_0v^{-1}$
\end{enumerate}
\end{lemma}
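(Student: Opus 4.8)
The plan is to prove the three conditions equivalent by a cyclic chain $1) \Rightarrow 2) \Rightarrow 3) \Rightarrow 1)$, using length identities and the characterization of the weak orders from Proposition \ref{proposition:weakbruhats}, together with the basic fact that $\ell(w_0 x) = \ell(w_0) - \ell(x)$ for all $x \in W$ (which also gives $\ell(x w_0) = \ell(w_0) - \ell(x)$).

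\textbf{Proof sketch.} First I would unwind the statement using Proposition \ref{proposition:weakbruhats}: condition 2) says $\ell(v) + \ell(v^{-1} u^{-1} w_0) = \ell(u^{-1} w_0)$, and condition 3) says $\ell(w_0 v^{-1} u^{-1}) + \ell(u) = \ell(w_0 v^{-1})$. Since $\ell(u^{-1} w_0) = \ell(w_0) - \ell(u^{-1}) = \ell(w_0) - \ell(u)$ and similarly $\ell(w_0 v^{-1}) = \ell(w_0) - \ell(v)$, and $\ell(v^{-1} u^{-1} w_0) = \ell((uv)^{-1} w_0) = \ell(w_0) - \ell(uv)$ while $\ell(w_0 v^{-1} u^{-1}) = \ell(w_0 (uv)^{-1}) = \ell(w_0) - \ell(uv)$, both condition 2) and condition 3) reduce, after cancelling $\ell(w_0)$, to the single identity $\ell(uv) = \ell(u) + \ell(v)$, which is precisely condition 1). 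So in fact all three conditions are directly equivalent to $\ell(uv) = \ell(u) + \ell(v)$ once one applies the length formula for multiplication by $w_0$ and the definition of reduced product.

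\textbf{Key steps in order.} (i) Recall/cite the identity $\ell(w_0 x) = \ell(x w_0) = \ell(w_0) - \ell(x)$ for all $x \in W$, and $\ell(x^{-1}) = \ell(x)$. (ii) Rewrite condition 2) via Proposition \ref{proposition:weakbruhats} as $\ell(v) + \ell(v^{-1} u^{-1} w_0) = \ell(u^{-1} w_0)$, then substitute the length identities to get $\ell(v) + \ell(w_0) - \ell(uv) = \ell(w_0) - \ell(u)$, i.e. $\ell(uv) = \ell(u) + \ell(v)$. (iii) Rewrite condition 3) via Proposition \ref{proposition:weakbruhats} as $\ell(w_0 v^{-1} u^{-1}) + \ell(u) = \ell(w_0 v^{-1})$, substitute to get $\ell(w_0) - \ell(uv) + \ell(u) = \ell(w_0) - \ell(v)$, i.e. again $\ell(uv) = \ell(u) + \ell(v)$. (iv) Observe condition 1) is by definition $\ell(uv) = \ell(u) + \ell(v)$, so all three are equivalent.

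\textbf{Main obstacle.} There is essentially no deep obstacle here; the only care needed is bookkeeping with inverses and the order of multiplication, e.g. making sure that $u \leq_L w_0 v^{-1}$ translates (via the left-order clause of Proposition \ref{proposition:weakbruhats}, which involves $\ell(w u^{-1}) + \ell(u) = \ell(w)$ with $w = w_0 v^{-1}$) to $\ell(w_0 v^{-1} u^{-1})$ rather than something else, and that $\ell((uv)^{-1} w_0) = \ell(w_0 (uv)^{-1})$ are both handled by the same $w_0$-length identity. I would state the $w_0$-length identity as a cited standard fact (it follows from $w_0$ being the longest element and sending positive roots to negative roots) to keep the proof short.
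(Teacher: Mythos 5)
Your proof is correct and follows essentially the same route as the paper: both arguments translate conditions 2) and 3) into length identities via Proposition \ref{proposition:weakbruhats} and then use $\ell(xw_0)=\ell(w_0x)=\ell(w_0)-\ell(x)$ and $\ell(x^{-1})=\ell(x)$ to reduce everything to $\ell(uv)=\ell(u)+\ell(v)$. The only cosmetic difference is that you make the reversibility explicit for both 2) and 3), whereas the paper writes out the computation for 2) and notes that 3) is similar.
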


\begin{proof}
By definition, $u \cdot v$ is reduced if and only if $\ell(uv) = \ell(u) + \ell(v)$. Then
\begin{align*}
\ell(v) + \ell(v^{-1} u^{-1}w_0) & = \ell(v) + \ell(w_0) - \ell(v^{-1}u^{-1}) = \ell(v) + \ell(w_0) -\ell(uv) \\ & = \ell(w_0) - \ell(u)  = \ell(u^{-1}w_0)
\end{align*}
so by Proposition \ref{proposition:weakbruhats}, $v \leq_R u^{-1}w_0$. A similar proof works for $u \leq_L w_0v^{-1}$. 
\end{proof}

This lemma is applying the fact that for the longest element, $w_0 = w \cdot (w^{-1}w_0)$ is a reduced product for any $w$, so $w^{-1}w_0$ multiplied on the left with any terminal word of $w$  will also be reduced. 

\begin{proposition}\label{proposition:demazureproduct}
Fix $w \in W$. For any $v \in W$, $v_w = (vw_0)((w_0v^{-1})* w)$, where $*$ is the Demazure product.
\end{proposition}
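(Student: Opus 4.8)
The plan is to reduce the identity $v_w = (vw_0)\big((w_0v^{-1})*w\big)$ to a statement about the Demazure product by translating everything through the anti-automorphism $x \mapsto xw_0$ and the inverse map. First I would set $x := w_0 v^{-1}$, so that $x \leq_L w_0$ automatically and $v = w_0 x^{-1}$, hence $vw_0 = w_0 x^{-1} w_0$. The claim then becomes $v_w = w_0 x^{-1} w_0 \cdot (x * w)$, and by Proposition \ref{proposition:generalProp6.4} we may write $x * w = x y$ where $y$ is the maximal-length element with $y \leq w$ and $xy$ reduced; alternatively $x*w = x'w$ with $x'$ maximal such that $x' \leq x$ and $x'w$ reduced. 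The key reformulation is: $vw_0 \cdot (x*w)$ should equal the longest element of $[e,v]_R \cap [e,w]$.

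Next I would unwind the right-hand side. Using $x*w = x'w$ with $x' \leq x = w_0 v^{-1}$ maximal with $x'w$ reduced, and the fact that multiplication by $w_0$ on the left reverses length ($\ell(w_0 z) = \ell(w_0) - \ell(z)$) and that $x' \leq w_0 v^{-1} \iff w_0(x')^{-1} \cdots$, I expect $vw_0 \cdot (x'w)$ to simplify to an element of the form $v \cdot z$ for some $z$, which I then want to show is a reduced product giving an element $u$ with $u \leq_R v$ (via Lemma \ref{lemma:reducedproductandbruhat}, since $u \leq_R v$ amounts to $v = u \cdot (u^{-1}v)$ reduced). Concretely: $x' \leq w_0 v^{-1}$ with $x'w$ reduced corresponds, under Lemma \ref{lemma:reducedproductandbruhat} applied appropriately, to $v \cdot (\text{something})$ being reduced; the maximality of $\ell(x')$ should translate to maximality of the length of $u = vw_0(x'w)$ among elements that are both $\leq_R v$ and $\leq w$. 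The two defining properties of $v_w$ — being $\leq_R v$ and being $\leq w$, and being longest such — must be matched one at a time. That $u \leq w$ should come from $u = vw_0 \cdot (x*w)$ together with $x*w \geq_L w$ (noted right after Proposition \ref{proposition:generalProp6.4}) and a Bruhat-order manipulation; that $u \leq_R v$ should come from the reduced-product bookkeeping above.

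The main obstacle I anticipate is the length/Bruhat bookkeeping needed to show that the maximal-length element $x'$ in Proposition \ref{proposition:generalProp6.4}'s description of $(w_0v^{-1})*w$ corresponds exactly to the maximal-length element of $[e,v]_R \cap [e,w]$ — i.e., that the optimization problem defining the Demazure product matches the optimization problem defining $v_w$ after the substitution $u = vw_0(x*w)$. This requires a clean dictionary: I would prove a lemma of the form "for $u \leq w$, $u \leq_R v$ if and only if $u = vw_0 \cdot z$ for some $z \leq w$ with $z \geq_L w$ and $\ell(u) + \ell(u^{-1}v) = \ell(v)$", and check that under this dictionary lengths are monotone in the right direction so that "longest $u$" matches "longest $x'$". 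Establishing that this correspondence is a length-preserving (or length-reversing) bijection between the two sets being maximized over is the crux; once it is in place, uniqueness of $v_w$ from Lemma \ref{lemma:longestexists} forces the equality. A secondary check is that $vw_0 \cdot (x*w)$ is genuinely in $[e,v]_R \cap [e,w]$ rather than merely an upper bound, but this should follow from the reduced-product characterizations in Lemma \ref{lemma:reducedproductandbruhat} and Proposition \ref{proposition:weakbruhats} without extra difficulty.
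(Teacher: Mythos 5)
Your proposal stalls at exactly the point you flag as the crux, and that crux is the whole content of the proposition, so as written there is a genuine gap. You pivot to the decomposition $x*w = x'w$ (with $x'\leq x=w_0v^{-1}$ maximal such that $x'w$ is reduced) and then say you would prove a ``dictionary lemma'' matching the maximization over $x'$ with the maximization defining $v_w$; that matching is never established, and the lemma you sketch is not even correctly posed: the conditions ``$z\leq w$ and $z\geq_L w$'' force $\ell(w)\leq\ell(z)\leq\ell(w)$ and hence $z=w$, so it cannot serve as the intended correspondence. Nothing in the proposal actually verifies either that $(vw_0)\bigl((w_0v^{-1})*w\bigr)$ lies in $[e,v]_R\cap[e,w]$ or that it is longest there.

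The irony is that the other decomposition you quote in passing finishes the proof in two lines, and is the paper's argument. By the ``moreover'' part of Proposition \ref{proposition:generalProp6.4} (the form (\ref{equation:demazurevx})), $(w_0v^{-1})*w=(w_0v^{-1})\cdot y$ where $y$ is the maximal-length element with $y\leq w$ and $(w_0v^{-1})\cdot y$ reduced. Then there is nothing to ``unwind'': $(vw_0)\bigl((w_0v^{-1})*w\bigr)=(vw_0)(w_0v^{-1})y=y$. Lemma \ref{lemma:reducedproductandbruhat} applied with $u=w_0v^{-1}$ says $(w_0v^{-1})\cdot y$ is reduced if and only if $y\leq_R (w_0v^{-1})^{-1}w_0=v$, so $y$ is the maximal-length element with $y\leq w$ and $y\leq_R v$, i.e.\ $y=v_w$ by definition (well-defined by Lemma \ref{lemma:longestexists}). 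If you replace your $x'w$ route by this observation, your outline collapses to the paper's proof; without it, the central step remains unproved.
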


\begin{proof}
By Proposition \ref{proposition:generalProp6.4}, $(w_0v^{-1}) * w = (w_0v^{-1}) \cdot x$ where $x$ is the maximal length element such that $x \leq w$ and $(w_0v^{-1})\cdot x$ is reduced.  By Lemma \ref{lemma:reducedproductandbruhat}, $w_0 v^{-1} \cdot x$ is reduced if and only if $x\leq_R v$. Thus $x$ is the maximal length element such that $x \leq w$ and $x\leq_R v$ so by definition, $x = v_w$. 
\end{proof}

We could alternatively take $(vw_0)((w_0v^{-1})*w)$ as the definition of $v_w$ and the uniqueness of $v_w$ will be automatic as this product is well-defined. For our purposes, it is useful to use the Bruhat orders to define $v_w$ but this connection to the Demazure product simplifies some of the proofs. 

As $v_w$ is an initial word of $v$, then $v= v_w \cdot (v_w^{-1} v)$ is a reduced product. The next lemma shows how the terminal word $v_w^{-1}v$ of $v$ relates to $w^{-1} w_0$.

\begin{lemma}\label{lemma:vwinverse}
Fix $w \in W$. For every $v \in W$, $v_w^{-1} v \leq_R w^{-1} w_0$.  
\end{lemma}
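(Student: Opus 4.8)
The plan is to establish the bound $v_w^{-1}v \leq_R w^{-1}w_0$ by translating it, via Lemma~\ref{lemma:reducedproductandbruhat}, into a statement about reduced products, and then exploiting the maximality that defines $v_w$. Recall that we already know $v = v_w \cdot (v_w^{-1}v)$ is a reduced product, so $v_w^{-1}v \leq_R v$. The claim to prove is that this terminal factor also satisfies $v_w^{-1}v \leq_R w^{-1}w_0$. By Lemma~\ref{lemma:reducedproductandbruhat} (with $u = w$, $v$ replaced by $v_w^{-1}v$), this is equivalent to showing that $w \cdot (v_w^{-1}v)$ is a reduced product, i.e. $\ell\bigl(w\,v_w^{-1}v\bigr) = \ell(w) + \ell(v_w^{-1}v)$.

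The key idea is to use the characterization $v_w = x$ where $x$ is the maximal-length element with $x \leq w$ and $x \leq_R v$ (this is how $v_w$ was defined, and it is made explicit again in the proof of Proposition~\ref{proposition:demazureproduct}). First I would argue that, since $v_w \leq w$, we can write $w = w' \cdot v_w$ for some $w'$ — wait, $v_w \leq w$ in the \emph{strong} Bruhat order need not give such a factorization, so instead I would work with the Demazure product. By Proposition~\ref{proposition:demazureproduct}, $v_w = (vw_0)\bigl((w_0v^{-1})*w\bigr)$, and by Proposition~\ref{proposition:generalProp6.4} we have $(w_0v^{-1})*w = (w_0v^{-1})\cdot v_w$ as a reduced product, while also $(w_0 v^{-1})*w = (w_0 v^{-1}) y$ for $y$ the maximal-length element with $y \leq w$ and $(w_0 v^{-1})y$ reduced — and that proof identifies $y = v_w$. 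The cleanest route: set $u = v_w^{-1}v$ (the terminal word of $v$) and use that $v_w$ is maximal among elements $x \leq w$ with $x \leq_R v$. Suppose for contradiction that $w \cdot u$ is \emph{not} reduced; then by the Exchange Property there is a simple reflection that can be cancelled, i.e.\ there exists $s_i \in D_R(w) \cap D_L(u^{-1})$, equivalently $s_i \leq_L u$ ... — here I would instead directly compute: $w u = w v_w^{-1} v$, and since $v = v_w u$ is reduced, $\ell(wv_w^{-1}v) \leq \ell(wv_w^{-1}) + \ell(u)$; I need the reverse, which should follow from $v_w^{-1} \leq_L w^{-1}w_0$ (since $v_w \leq w$, hence $w_0 v_w^{-1} \geq w_0 w^{-1}$, so $v_w^{-1} = w_0^{-1}(w_0 v_w^{-1})$...) combined with a length-additivity argument for $w_0$-factorizations.

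A more robust approach, which I expect to be the one that works cleanly: use Lemma~\ref{lemma:reducedproductandbruhat} in the direction ``$u\cdot v$ reduced $\iff v \leq_R u^{-1}w_0$'' contrapositively. We want $v_w^{-1}v \leq_R w^{-1}w_0$, i.e.\ $w\cdot(v_w^{-1}v)$ reduced. Suppose not. Then choose $s_i \in D_L\bigl((v_w^{-1}v)\bigr)$ with $\ell\bigl(w \cdot s_i\bigr) \neq \ell(w)+1$... — rather, pick the first letter of a reduced word of $v_w^{-1}v$ where reducedness of the concatenation with $w$ first fails, say after an initial segment $z \leq_R v_w^{-1}v$ with $wz$ reduced but $wzs_i$ not. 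Then $s_i \in D_R(wz)$, so by the Exchange/Lifting properties $wzs_i < wz$, and one shows $wzs_i \leq w \cdot *$-product manipulations force $v_w s_i' \leq w$ for an appropriate reflection while still $v_w s_i' \leq_R v$, contradicting maximality of $v_w$. The main obstacle is exactly bookkeeping this ``transport'' of a descent past $w$: showing that a failure of reducedness of $w \cdot (v_w^{-1}v)$ produces an element strictly longer than $v_w$ that still lies in $[e,v]_R \cap [e,w]$. I expect this to go through using the Lifting Property (Theorem~\ref{theorem:weylproperties}) applied to $v_w < $ (the larger element) together with the fact that the larger element remains $\leq_R v$ because we only appended letters of the reduced word of $v$; the strong Bruhat bound $\leq w$ is the delicate part and is where the Demazure-product description $v_w = (vw_0)((w_0v^{-1})*w)$ and Proposition~\ref{proposition:generalProp6.4} should be invoked to avoid a long Coxeter-combinatorial case analysis.
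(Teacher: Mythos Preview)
Your proposal is not a complete proof: you circle through several strategies (factorization of $w$, a contradiction via Exchange/Lifting, ``transport of a descent'') without carrying any of them to a conclusion. The sketch in your last paragraph in particular has a real obstacle you flag yourself: producing, from a failure of reducedness of $w\cdot(v_w^{-1}v)$, an element of $[e,v]_R\cap[e,w]$ strictly longer than $v_w$ requires controlling the \emph{strong} Bruhat bound $\leq w$, and nothing in your outline does that.

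The irony is that you already name every ingredient of the paper's three-line argument and then abandon it. You observe (correctly) that $(w_0v^{-1})*w=(w_0v^{-1})\cdot v_w$ is a reduced product; this is exactly Proposition~\ref{proposition:demazureproduct} rewritten as $(vw_0)^{-1}v_w=(w_0v^{-1})*w$. Now just use the consequence of Proposition~\ref{proposition:generalProp6.4} that $w\leq_L u*w$ for any $u$: this gives $w\leq_L (vw_0)^{-1}v_w$. Taking inverses, $w^{-1}\leq_R v_w^{-1}vw_0$, which by the length identity $\ell(xw_0)=\ell(w_0)-\ell(x)$ unpacks to $\ell(w\,v_w^{-1}v)=\ell(w)+\ell(v_w^{-1}v)$, i.e.\ $w\cdot(v_w^{-1}v)$ is reduced. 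By Lemma~\ref{lemma:reducedproductandbruhat} this is exactly $v_w^{-1}v\leq_R w^{-1}w_0$. No contradiction argument, no Exchange, no Lifting.

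So: drop the Coxeter-combinatorial detour and finish the Demazure-product thread you started in your middle paragraph.
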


\begin{proof}
Note that $v_w^{-1} v \leq_R w^{-1}w_0 \iff v_w^{-1} v w_0 \geq_R w^{-1} \iff  (vw_0)^{-1} v_w \geq_L w$. But $v_w = (vw_0)((vw_0)^{-1} * w)$ so that $(vw_0)^{-1} v_w =  (vw_0)^{-1} * w$. A consequence of Proposition \ref{proposition:generalProp6.4} is that $(vw_0)^{-1}v_w =(vw_0)^{-1} * w \geq_L w$ as desired. 
\end{proof}

\subsection{Generalized diagonals}\label{section:generalizeddiagonals}

In this section we prove two technical lemmas, which state the \emph{generalized diagonal relations} on MV polytopes. These relations are inspired by the diagonal relations in the rank 2 case, see the discussion at the end of \cite[Section 3]{MVpolytopes} for more details. These inequalities are interesting because they relate vertices of the form $\mu_{w}$ and $\mu_{s_jw}$ which are vertices that do not necessarily share a face of the polytope (see Figure \ref{figure:A3generalizeddiagonals}). On the other had, the tropical Pl\"{u}cker relations only give relations amongst vertices with a shared face.  

\begin{figure}[h]
  \centering
$
\begin{tikzpicture}
\node[My style] (SE) at (0, 0, 0, 0) [label=below:$\mu_{e}$] {};
\node[My style] (S3) at (-1, 1, 0, 0 ) [label=below:$\mu_{s_3}$] {};
\node[My style] (S32) at (-1,2, 0, -1)  {};
\node[My style] (S2) at (1.5, 0, 0, -1.5) [label=below:$\mu_{s_2}$] {};
\node[My style] (S23) at (1.5, 1, 0, -2.5) {};
\node[My style] (S232) at (0.5, 2, 0, -2.5)[label=below:$\mu_{s_2s_3s_2}$]{};
\node[My style] (S1) at (0,0, -1, 1) [label=above:$\mu_{s_1}$] {};
\node[My style] (E6) at (-1, 1, -1, 1) {};
\node[My style] (S321) at (-1, 3,-1, -1) {};
\node[My style] (S21) at (2.5, 0, -1, -1.5) {};
\node[My style] (S231) at (2.5, 1, -1, -2.5) {};
\node[My style] (E9) at (0.5, 3,-1, -2.5) {};
\node[My style] (S12) at (0.5, 0,-1.5, 1) {};
\node[My style] (E11) at (-1, 1.5,-1.5, 1) {};
\node[My style] (E12) at (-1, 3,-1.5,- 0.5) {};
\node[My style] (B1) at (2.5, 0,-1.5, -1) {};
\node[My style] (B2) at (2.5, 1.5,-1.5, -2.5) {};
\node[My style] (B3) at (1, 3,-1.5, -2.5) {};
\node[My style] (B4) at (0.5, 1,-2.5, 1) {};
\node[My style] (B5) at (0, 1.5,-2.5, 1) [label=above:$\mu_{s_1s_2s_3s_2}$] {};
\node[My style] (B6) at (0, 3,-2.5, -0.5) {};
\node[My style] (B7) at (2.5, 1,-2.5, -1) {};
\node[My style] (B8) at (2.5, 1.5,-2.5, -1.5) {};
\node[My style] (SW0) at (1, 3,-2.5, -1.5)[label=above:$\mu_{w_0}$]{};

\draw[thick] (E12) -- (S321) -- (S32);
\draw[thick] (S23) -- (S232) -- (S32)-- (S3) -- (SE) -- (S2) -- (S21) -- (B1) -- (B7) -- (B8) -- (B2) -- (S231);
\draw[thick] (S2) -- (S23) -- (S231) -- (S21);
\draw[thick] (S232) -- (E9) -- (S321);
\draw[thick] (E12) -- (B6) -- (SW0) -- (B3) -- (E9);
\draw[thick] (B3) -- (B2);
\draw[thick] (SW0) -- (B8);
\draw[thick, dotted] (SE) -- (S1) -- (S12) -- (B1);
\draw[thick, dotted] (E6) -- (S1);
\draw[thick, dotted] (S3) -- (E6) -- (E11) -- (E12);
\draw[thick, dotted] (E11) -- (B5) -- (B4) -- (S12);
\draw[thick, dotted] (B6) -- (B5);
\draw[thick, dotted] (B4) -- (B8);

\filldraw[draw=black, fill=black, opacity=0.2]  (S232.center) -- (2, 2, -1.5, -2.5) -- (2, 2, -2.5, -1.5) -- (0, 2, -2.5, 0.5) -- (-1, 2, -1.5, 0.5) -- (-1, 2, 0, -1) -- cycle;

\end{tikzpicture}
$
\caption{A generalized diagonal with strict inequality}
\label{figure:A3generalizeddiagonals}
\end{figure}
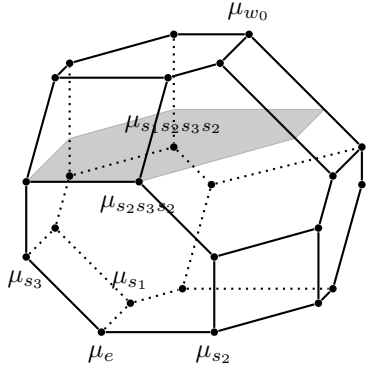

The first lemma requires the use of preprojective algebra modules, see Section \ref{section:preprojective} for more details. We recall a few definitions. For $G$ a simply-laced complex algebraic group, let $\Lambda$ be the preprojective algebra associated to the double quiver of an orientation of the Dynkin diagram of the coroots of $\g$. For a $\Lambda$-module $M$, define the submodules $M^w$ as the image of the map $I_w \otimes_{\Lambda} \Hom_{\Lambda}(I_w, M) \rightarrow M$. The associated MV polytope is given by
\[
\Pol (M) = \conv \{ \mu_w: w \in W\}, \text{ where } \mu_w = \dimvec M - \dimvec M^w. 
\]

The following proof of the simply-laced case is due to Pierre Baumann. We thank him for allowing its inclusion in this text. 

\begin{lemma}\label{lemma:generaldiagonalrelations-simplylaced}
Assume $G$ is simply-laced. Let $P$ be an MV polytope with vertex data $(\mu_w)_{w \in W}$. For every $w \in W$, $s_j \in D_L(w)$,  the inequality $\langle \mu_{w} - \mu_{s_j w}, \omega_k\rangle \leq 0$ holds for every $ k \neq j$. 
\end{lemma}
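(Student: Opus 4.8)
The plan is to work with the preprojective algebra model of the MV polytope. Since $G$ is simply-laced, write $P = \Pol(M)$ for some finite-dimensional $\Lambda$-module $M$, so that $\mu_v = \dimvec M - \dimvec M^v$ for all $v \in W$. Then the claimed inequality $\langle \mu_w - \mu_{s_jw}, \omega_k\rangle \le 0$ for $k \ne j$ becomes $\langle \dimvec M^{s_jw} - \dimvec M^w, \omega_k\rangle \le 0$, i.e. since $\langle \cdot, \omega_k \rangle$ reads off the coefficient of $\alpha_k^\vee$ in a dimension vector, we must show that $\dim (M^{s_jw})_k \le \dim (M^w)_k$ for every vertex $k \ne j$ of the Dynkin diagram.

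The key structural input is the relation between $M^w$ and $M^{s_jw}$ when $s_j \in D_L(w)$, equivalently $s_jw > w$ after relabeling — wait, here $s_j \in D_L(w)$ means $s_jw < w$, so I would instead set $w' = s_jw$, so that $s_jw' > w'$, and compare $M^{w'}$ with $M^{s_jw'} = M^w$. By the lemma quoted from \cite{ReflexionsBGK}, if $\Ext^1_\Lambda(S_j, M^{w'}) = 0$ then $M^{s_jw'} \cong I_j \otimes_\Lambda M^{w'}$; and by Crawley-Boevey's lemma $\Ext^1_\Lambda(S_j, M^{w'}) \cong \Ext^1_\Lambda(M^{w'}, S_j)$, which vanishes precisely when $M^{w'}$ has no $S_j$ in its head, i.e. $\mathrm{hd}_j M^{w'} = 0$. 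The point is that $M^{w'} = (M^{w'})^{w'}$ by idempotency of the construction $N \mapsto N^v$ together with $w' \le_L \cdots$; more to the point, $M^{w'}$ lies in $\mathscr{T}^{?}$-type subcategories on which the head condition at $j$ is automatic because $s_jw' > w'$ forces $M^{w'}$ to have no $S_j$-head. I would verify this vanishing carefully — it should follow from $I_j I_{j_2}\cdots I_{j_m} = I_{w'}$ being a left ideal contained in $I_j$ when $j$ starts a reduced word for $w'$... but $s_jw' > w'$ means $j$ does \emph{not} start such a word, so actually the relevant statement is that $\mathrm{hd}_j M^{w'}=0$ because $M^{w'}$ is generated as a $\Lambda$-module in a way compatible with $I_{w'}$, and $I_{w'} \subseteq I_j$ is false here; instead one uses that $\Hom_\Lambda(\Lambda/I_j, M^{w'}) $ relates to $\Ext$, and the needed vanishing is exactly the hypothesis under which the cited lemma applies — so I would instead argue that when $s_jw' > w'$ we may choose a reduced word for $w'$ and directly analyze $M^{s_jw'}/M^{w'}$.

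Granting $M^w = M^{s_jw'} \cong I_j \otimes_\Lambda M^{w'}$ with $M^{w'} = M^{s_jw}$, I then need: for each $k \ne j$, $\dim(I_j \otimes_\Lambda M^{w'})_k \ge \dim(M^{w'})_k$. This is the genuinely module-theoretic heart of the argument. The functor $I_j \otimes_\Lambda -$ is the reflection functor (Baumann–Kamnitzer's $\Sigma_j$), and the standard description of $I_j \otimes_\Lambda N$ via the complex $N_j \xrightarrow{} \bigoplus_{k - j} N_k \xrightarrow{} N_j$ (sum over edges incident to $j$) shows that $(I_j\otimes_\Lambda N)_k = N_k$ for $k \ne j$ — the reflection functor only changes the vector space at vertex $j$. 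So in fact $\dim(M^w)_k = \dim(M^{w'})_k$ exactly for $k\ne j$, giving equality, hence certainly $\le 0$. I expect the main obstacle to be the $\Ext$-vanishing step: justifying that $\Ext^1_\Lambda(S_j, M^{s_jw}) = 0$ when $s_j\in D_L(w)$ (equivalently $\mathrm{hd}_j M^{s_jw} = 0$), which requires knowing that the submodule $M^{s_jw}$ has trivial $j$-head precisely because, in a reduced word for $s_jw$, the letter $j$ may be taken last rather than first — or rather because $s_jw < w$ — so that $M^{s_jw} = I_{s_jw}\otimes_\Lambda \Hom_\Lambda(I_{s_jw},M)$ and $I_{s_jw}$ can be written with $I_j$ as an appropriate factor controlling the head. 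I would settle this using Proposition~\ref{proposition:weakbruhats} and the $\leq_L$-characterization of descents together with the compatibility $I_{xy} = I_x I_y$ for reduced products $x\cdot y$, reducing to the rank-one module computation for $\Lambda/I_j$.
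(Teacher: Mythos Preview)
Your setup via the preprojective model is the right starting point and matches the paper. The gap is in the $\Ext$-vanishing step. The lemma from \cite{ReflexionsBGK} requires $\Ext^1_\Lambda(S_j, M) = 0$ --- the hypothesis is on the ambient module $M$, not on $M^{w'}$ as you write --- and this simply fails for a general $\Lambda$-module $M$: there is no reason an arbitrary $M$ should admit no extensions by $S_j$. Your attempted justification via head/descent considerations cannot succeed, because it would have to apply to every $M$. Consequently you have only treated the special case $\Ext^1_\Lambda(S_j, M) = 0$. There your claim that $(I_j \otimes_\Lambda N)_k = N_k$ for $k \ne j$ is in fact correct (the kernel of the natural map $I_j\otimes_\Lambda N\to N$ is $\mathrm{Tor}_1^\Lambda(S_j,N)$, which as a $\Lambda$-module is a direct sum of copies of $S_j$ and hence concentrated at vertex $j$), and even yields equality rather than just $\le 0$.

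What is missing is the reduction from general $M$ to this special case. The paper accomplishes this by replacing $M$ with its maximal extension $N$ by copies of $S_j$, sitting in $0 \to S_j^{\oplus m} \to N \to M \to 0$; by construction $\Ext^1_\Lambda(N,S_j)=0$ (equivalently $\Ext^1_\Lambda(S_j,N)=0$ by Crawley--Boevey), so the first case applies to $N$. One then compares $\dimvec N^v$ with $\dimvec M^v$ for $v \in \{w, s_jw\}$: one has $\dimvec N^{s_jw} = \dimvec M^{s_jw} + m\alpha_j^\vee$ directly, while $\dimvec N/N^w \ge \dimvec M/M^w$ componentwise since the composite $N^w \hookrightarrow N \twoheadrightarrow M \twoheadrightarrow M/M^w$ vanishes. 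Unwinding these contributes an additional term $-\gamma$ with $\gamma \in Q^\vee_+$ to $\mu_w - \mu_{s_jw}$, which is exactly why the final statement is an inequality and not the equality your special-case argument produces.
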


\begin{proof}
Let $M$ be the $\Lambda$-module associated to the polytope $P$, i.e. $\dimvec M - \dimvec M^w = \mu_w$ for $w \in W$. We want to prove that $\mu_w - \mu_{s_j w} =  \dimvec M^{s_jw}  - \dimvec M^w = n\alpha_j^\vee - \beta$ for $n \in \N$ and $\beta \in Q^\vee_+$. 

First, suppose that $\Ext^1(M, S_j)=0$. By \cite[Lemma 2]{ReflexionsBGK}, $M^{w} = I_j \otimes_{\Lambda}M^{s_jw}$. Consider the short exact sequence
\[
0 \rightarrow I_j \rightarrow \Lambda \rightarrow \Lambda/I_j \rightarrow 0.
\] 
As the tensor product is right exact, by applying the functor $\otimes_{\Lambda} M^{s_jw}$ we get the long exact sequence $ \dots \rightarrow I_j \otimes_{\Lambda}M^{s_jw} \rightarrow \Lambda \otimes_{\Lambda} M^{s_jw} \rightarrow \left( \Lambda/I_j \right) \otimes_{\Lambda} M^{s_jw}$. Note that $\left( \Lambda/I_j \right) \otimes_{\Lambda} M^{s_jw} = M^{s_iw}/I_j$; by definition, $M^{s_jw}/I_j = \text{hd}_j (M^{s_jw}) = S_j^{\oplus n}$ for some $n \in \N$. Thus we have the resulting exact sequence
\[
0 \rightarrow \ker(\phi) \rightarrow M^{w} \xrightarrow{\phi} M^{s_jw} \rightarrow S_j^{\oplus n} \rightarrow 0, 
\]
with the dimension vectors $\dimvec M^{s_jw} + \dimvec \ker(\phi) = \dimvec M^{w} + n \alpha_i$.
As $\dimvec \ker(\phi) \in Q^\vee_+$, the claim holds for $M$ of this form. 

Suppose $M$ is a general $\Lambda$-module. Let $N$ be the maximal extension of $M$ by $S_j$, i.e. take $m \in \N$ such that
\[
0 \rightarrow S_j^{\oplus m} \rightarrow N \rightarrow M \rightarrow 0.
\]

By the proof of \cite[Lemma 2]{ReflexionsBGK}, $0 \to S_j^{\oplus m} \to N^{s_jw} \to M^{s_jw} \to 0$ is exact and thus $\dimvec{N^{s_jw}} = \dimvec{M^{s_jw}} + m \alpha^\vee_j$. Also, as the composition $N^{w} \rightarrow N \rightarrow M \rightarrow M/M^{w}$ is zero, then $\dimvec N/N^{w} \geq \dimvec M/M^{w}$. Thus there exists $\gamma \in Q^\vee_+$ such that  $\dimvec N/N^w -\dimvec M/M^w = \gamma$. Then
\begin{align*}
\dimvec M^{w} - \dimvec N^{w}  = \gamma + \dimvec M - \dimvec N = \gamma - m \alpha_j^\vee
\end{align*}
Finally, the difference between the dimension vector of $M^w$ and $M^{s_jw}$ is as follows:
\begin{align*}
\dimvec M^{s_jw} - \dimvec M^w &= \dimvec N^{s_jw} - \dimvec N^w - (\dimvec M^w - \dimvec N^w) - (\dimvec N^{s_jw} - \dimvec M^{s_jw})\\
& = n\alpha^\vee_j - \beta - (\gamma - m \alpha^\vee_j) - (m\alpha^\vee_j) = n \alpha^\vee_j - \beta - \gamma
\end{align*}
Since $\mu_w = \dimvec M - \dimvec M^w$, then $\mu_{w} - \mu_{s_jw} = n\alpha^\vee_j - \beta - \gamma$ so $\langle\mu_w - \mu_{s_jw}, \omega_k \rangle  =- \langle \beta + \gamma, \omega_k \rangle \leq 0$ for every $k \neq j$. 
\end{proof}

Now, we implement the folding technique of to prove the general case. We will follow the notation used in \cite{JiangSheng2017}. 
    
 Let $G$ be a simply-laced algebraic group. Consider a bijection $\sigma: I \rightarrow I $  with $a_{ij} = a_{\sigma(i)\sigma(j)}$.  This induces a \emph{Dynkin diagram automorphism} on $G$ by $\sigma: G \rightarrow G$ such that $\sigma(x_{\pm i}(a)) = x_{\pm\sigma(i)} (a)$.  Let $G^\sigma$ be the fixed point group on $G$ and call the pair $(G, G^\sigma)$ a \emph{symmetric pair}. 
 
  A Dynkin diagram automorphism $\sigma$ induces an action on the weight and coweight lattices by $\sigma(\alpha_i) = \alpha_{\sigma(i)}$ and $\sigma(\alpha_i^\vee) = \alpha_{\sigma(i)}^\vee$. It  also induces a group automorphism on $W$ by $\sigma(s_i) = s_{\sigma(i)}$. We set $ W^\sigma := \{ w \in W: \sigma(w) = w\}$. 
 
 Denote by $\overline{\g}$ to be the Lie algebra of $G^\sigma$. Let $\overline{W}$ be the Weyl Group of $\overline{\g}$, generated by simple reflections $\hs_i$. There is a group isomorphism $\Theta: \overline{W} \rightarrow W^\sigma$ defined by
 \[
 \Theta(\hs_i) = s_i^\sigma:= \prod_{t=0}^{k_i-1} s_{\sigma^t(i)}
 \]
 where $k_i$ is the number of elements in the $\sigma$-orbit of $i$.  

Now, we will consider the $\sigma$-invariant MV polytopes of $G$.  Denote $\P$ to be the set of MV polytopes for $\g$. The diagram automorphism $\sigma$ induces an action on $\P$ by
 \[
 \sigma(P) := \text{conv} \{ \sigma^{-1}(\mu_{\sigma(w)}): w\in W\}.
 \]
 If $\sigma(P) = P$, we call \emph{$P$ $\sigma$-invariant}. Denote the set of $\sigma$-invariant MV polytopes by $\P^\sigma$ and let  $\overline{\P}$ be the set of MV polytopes for $\overline{\g}$. There is an identification between these two sets of polytopes. 
 
\begin{theorem}[{\cite[Theorem 3.10]{Hong2007MVSymmetricPair}}, {\cite[Theorem 6.2]{JiangSheng2017}}]\label{theorem:dynkindiagramautomorphism}
For $P \in \P^\sigma$ with vertex data $(\mu_{w})_{w\in W}$, define $\Phi(P) = \text{conv} \{ \overline{\mu}_{\overline{w}}: \overline{w}\in \overline{W}\}$, where $\overline{\mu}_{\overline{w}} := \mu_{\Theta({\overline{w}})}$. The map $\Phi: \P^\sigma \rightarrow \overline{\P}$ is a bijection. 
\end{theorem}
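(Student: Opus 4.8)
The plan is to route everything through the parametrization of MV polytopes by Lusztig data (Theorem \ref{theorem:MVpolytopeslusztigdata}): after unfolding a reduced word of $\overline{w}_0$ to one of $w_0$, the $\sigma$-invariance of a polytope will turn out to be equivalent to its Lusztig data being constant on the blocks cut out by the $\sigma$-orbits. The first step is to record how Lusztig data transforms under the $\sigma$-action. For a reduced word $\ui = (i_1,\dots,i_m)$ of $w_0$, set $\sigma(\ui) = (\sigma(i_1),\dots,\sigma(i_m))$; since $w_0$ is $\sigma$-invariant this is again a reduced word of $w_0$. Using $\sigma(w_k^\ui) = w_k^{\sigma(\ui)}$, the definition $\sigma(P) = \conv\{\sigma^{-1}(\mu_{\sigma(w)}): w\in W\}$, and the edge formula (\ref{equation:lusztigdatafromvertexdata}), a direct computation gives $n_k^\ui(\sigma(P)) = n_k^{\sigma(\ui)}(P)$ for all $k$. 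Hence $P\in\P^\sigma$ if and only if $n_\bullet^\ui(P) = n_\bullet^{\sigma(\ui)}(P)$ for some (equivalently, every) reduced word $\ui$ of $w_0$.

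Next I would fix a reduced word $\overline{\ui} = (\overline{i}_1,\dots,\overline{i}_{\overline{m}})$ of the longest element $\overline{w}_0\in\overline{W}$ and unfold it: since $\Theta(\overline{w}_0)=w_0$, replacing each letter $\overline{i}_\ell$ by a reduced word of $\Theta(\hs_{\overline{i}_\ell}) = s_{\overline{i}_\ell}^\sigma$ produces a reduced word $\ui$ of $w_0$, cut into consecutive blocks $B_1,\dots,B_{\overline{m}}$ with $|B_\ell| = k_{\overline{i}_\ell}$. Assume first that the folding is \emph{admissible}, i.e.\ every $\sigma$-orbit in $I$ consists of pairwise non-adjacent vertices; then the reflections occurring in a block $B_\ell$ pairwise commute, $\sigma$ acts on $\ui$ by cyclically rotating each block, and this rotation is realized by a sequence of commutation moves lying entirely inside the blocks. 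By (\ref{equation:lusztigdatafromvertexdata}) and the linear independence of the two simple coroots it involves, a single commutation move merely transposes the corresponding two entries of the Lusztig data, so $n_\bullet^{\sigma(\ui)}(P)$ is $n_\bullet^\ui(P)$ with its entries cyclically permuted within each block. Combined with the transformation law above, $P\in\P^\sigma$ if and only if $n_\bullet^\ui(P)$ is constant on every block $B_\ell$.

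To assemble the bijection, observe that block-constant tuples in $\N^m$ correspond to tuples $(\overline{n}_1,\dots,\overline{n}_{\overline{m}})\in\N^{\overline{m}}$ via the common block values; by Theorem \ref{theorem:MVpolytopeslusztigdata} for $\g$ the former biject with $\P^\sigma$, while by Theorem \ref{theorem:MVpolytopeslusztigdata} for $\overline{\g}$ (applied to $\overline{\ui}$) the latter biject with $\overline{\P}$, and composing yields a bijection $\P^\sigma\to\overline{\P}$. To identify this with the claimed map $\Phi$, let $P\in\P^\sigma$ have block values $(\overline{n}_\ell)$ and let $\overline{P}\in\overline{\P}$ be the corresponding polytope. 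Identifying the coroot lattice of $\overline{\g}$ with the $\sigma$-fixed sublattice of $Q^\vee$ via $\overline{\alpha}_i^\vee\mapsto\sum_t\alpha_{\sigma^t(i)}^\vee$ and the $W$-action through $\Theta$, the folding identity $\sum_{k\in B_\ell}w_{k-1}^\ui\alpha_{i_k}^\vee = \Theta(\overline{w}_{\ell-1}^{\,\overline{\ui}})\cdot\overline{\alpha}_{\overline{i}_\ell}^\vee$ (again using orbit non-adjacency) shows that the total displacement of $P$ across block $B_\ell$, namely $\overline{n}_\ell\,\Theta(\overline{w}_{\ell-1}^{\,\overline{\ui}})\cdot\overline{\alpha}_{\overline{i}_\ell}^\vee$, matches the $\ell$-th edge displacement of $\overline{P}$. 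Summing over the first $\ell$ blocks gives $\mu_{\Theta(\overline{w}_\ell^{\,\overline{\ui}})}(P) = \overline{\mu}_{\overline{w}_\ell^{\,\overline{\ui}}}(\overline{P})$; letting $\overline{\ui}$ range over all reduced words of $\overline{w}_0$, every $\overline{w}\in\overline{W}$ occurs as a block boundary, so the vertex data of $\overline{P}$ and of $\conv\{\mu_{\Theta(\overline{w})}:\overline{w}\in\overline{W}\}$ coincide, and the constructed bijection is exactly $\Phi$.

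I expect the main obstacle to be the non-admissible folding, i.e.\ type $A_{2n}$, where the middle $\sigma$-orbit consists of two adjacent vertices whose reflections do not commute. There the block replacing a single letter is braid-type rather than a commuting product (e.g.\ of the form $(n,n+1,n)$), $\sigma$ permutes its letters by a braid move instead of a commutation, and the induced transformation on Lusztig data is the piecewise-linear $A_2$ transition map, not a permutation; the characterization ``$\sigma$-invariant $\iff$ block-constant'' therefore fails for this one block. It must be replaced by a direct description of which rank-$2$, type-$A_2$ sub-polytopes are invariant under the swap of the two orbit vertices, together with the folding identity adapted to the resulting short (resp.\ long) coroot of $\overline{\g}$. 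Once this single block type is understood, the rest of the argument carries over verbatim.
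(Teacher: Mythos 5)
This theorem is quoted in the paper from Hong and from Jiang--Sheng; the paper contains no proof of it, so your proposal has to be measured against what those results actually require, and there it falls short of a complete argument.

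The genuine gap is in your final step, and it is present already for admissible foldings, not only in the $A_{2n}$ case you flag. Your bijection $\P^\sigma \to \overline{\P}$ is built from one fixed reduced word $\overline{\ui}$ of $\overline{w}_0$ and its unfolding $\ui$: block-constant tuples correspond to $\N^{\overline{m}}$, and Theorem \ref{theorem:MVpolytopeslusztigdata} is applied on both sides. Matching edge displacements block by block then only shows $\overline{\mu}_{\overline{w}}(\overline{P}) = \mu_{\Theta(\overline{w})}(P)$ for those $\overline{w}$ that are prefixes of this one word $\overline{\ui}$. When you ``let $\overline{\ui}$ range over all reduced words of $\overline{w}_0$'' you are tacitly assuming that a different choice $\overline{\uj}$ produces the \emph{same} polytope $\overline{P}$; to know that, you would need that the block values of $n^{\uj}_\bullet(P)$ are obtained from the block values of $n^{\ui}_\bullet(P)$ by the $\overline{\g}$-transition maps, i.e.\ that the folded Lusztig/BZ data satisfies the tropical Pl\"ucker relations of the folded, generally non-simply-laced, group. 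That compatibility (a single braid move in $\overline{W}$, say of length $4$ or $6$, unfolds to a composite of several simply-laced tropical transition maps, and one must check that this composite preserves block-constancy and induces exactly the $B_2/C_2$ or $G_2$ piecewise-linear transition on the block values --- e.g.\ for $A_3 \rightsquigarrow C_2$ and $D_4 \rightsquigarrow G_2$) is precisely the technical core of the cited proofs, and without it you have produced \emph{some} bijection $\P^\sigma \to \overline{\P}$ but not the map $\Phi$ of the statement; in particular you have not yet shown that $\conv\{\mu_{\Theta(\overline{w})} : \overline{w} \in \overline{W}\}$, with that labelling of vertices, is an MV polytope for $\overline{\g}$ at all, which is part of what the theorem asserts.

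The $A_{2n}$ issue you acknowledge is a second, independent gap: there $\Theta(\hs_i)$ for the adjacent orbit must be the longest element $s_n s_{n+1} s_n$ of the orbit parabolic (so the displayed product formula in the paper needs adjusting), the corresponding block is a braid word rather than a commuting one, $\sigma$ acts on it through a braid move, and the induced map on Lusztig data is the piecewise-linear $A_2$ transition rather than a permutation, so the characterization ``$\sigma$-invariant $\iff$ block-constant'' and your folding identity for the displacement both have to be reworked. You correctly identify this, but your sketch of how to repair it is not yet an argument. Finally, a small point in your first step: to conclude $\sigma(P)=P$ from $n_\bullet^{\ui}(P)=n_\bullet^{\sigma(\ui)}(P)$ for a single word via Theorem \ref{theorem:MVpolytopeslusztigdata}, you should note that $\sigma(P)$ is again an MV polytope (since $a_{ij}=a_{\sigma(i)\sigma(j)}$ preserves the edge inequalities and tropical Pl\"ucker relations), as the uniqueness in that theorem is a statement within $\P$.
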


Now we have the machinery to prove the non-simply-laced case. 

\begin{lemma}\label{lemma:generaldiagonalrelations-nonsimplylaced}
Assume $G$ is non-simply-laced. Let $P$ be an MV polytope with vertex data $(\mu_w)_{w \in W}$. For every $w \in W$, $s_j \in D_L(w)$, the inequality $\langle \mu_{w} - \mu_{s_j w}, \omega_k\rangle \leq 0$ holds for every $ k \neq j$. 
\end{lemma}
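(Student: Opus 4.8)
The plan is to deduce the non-simply-laced case from the simply-laced case (Lemma~\ref{lemma:generaldiagonalrelations-simplylaced}) via the folding machinery of Theorem~\ref{theorem:dynkindiagramautomorphism}. Given $\overline{\g}$ non-simply-laced, realize it as the fixed-point algebra $\overline{\g} = \g^\sigma$ for a suitable simply-laced $\g$ with Dynkin diagram automorphism $\sigma$, with the isomorphism $\Theta \colon \overline{W} \to W^\sigma$ and the bijection $\Phi \colon \P^\sigma \to \overline{\P}$ as in the excerpt. Let $\overline{P} \in \overline{\P}$ be given with vertex data $(\omu_{\ow})_{\ow \in \overline{W}}$; pull it back to the $\sigma$-invariant polytope $P = \Phi^{-1}(\overline{P}) \in \P^\sigma \subseteq \P$ with vertex data $(\mu_w)_{w \in W}$, so that $\omu_{\ow} = \mu_{\Theta(\ow)}$ for all $\ow$.

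Fix $\ow \in \overline{W}$ and $\hs_j \in D_L(\ow)$; I want $\langle \omu_{\ow} - \omu_{\hs_j \ow}, \omega_k \rangle \le 0$ for all $k \ne j$ in the index set of $\overline{\g}$. Write $w = \Theta(\ow)$ and note $\Theta(\hs_j \ow) = s_j^\sigma \, w$ where $s_j^\sigma = s_{\sigma^{t_1}(j)} \cdots s_{\sigma^{t_r}(j)}$ is the reduced product over the $\sigma$-orbit $O_j$ of $j$; since $\hs_j \in D_L(\ow)$ one checks $\ell(s_j^\sigma w) = \ell(w) - |O_j|$, so that each simple reflection appearing in $s_j^\sigma$ is a left descent at the appropriate stage. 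Thus $\mu_{\ow} - \mu_{\hs_j\ow} = \mu_w - \mu_{s_j^\sigma w}$ telescopes as a sum over the orbit: writing $w^{(0)} = w$ and $w^{(\ell+1)} = s_{\sigma^{t_{\ell+1}}(j)} w^{(\ell)}$, we get $\mu_w - \mu_{s_j^\sigma w} = \sum_{\ell} \big(\mu_{w^{(\ell)}} - \mu_{w^{(\ell+1)}}\big)$, and to each telescoping term apply Lemma~\ref{lemma:generaldiagonalrelations-simplylaced} for $\g$ with the simple reflection $s_{\sigma^{t_{\ell+1}}(j)}$ (which is in $D_L(w^{(\ell)})$). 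This gives $\langle \mu_{w^{(\ell)}} - \mu_{w^{(\ell+1)}}, \omega_m \rangle \le 0$ for every vertex $m$ of the Dynkin diagram of $\g$ with $m \ne \sigma^{t_{\ell+1}}(j)$, i.e.\ for every $m \notin O_j$, and also we know the coroot direction of this edge is a non-negative multiple of $\alpha_{\sigma^{t_{\ell+1}}(j)}^\vee$.

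It remains to translate the fundamental weights $\omega_k$ of $\overline{\g}$ (for $k \ne j$) into a non-negative combination of the $\omega_m$ of $\g$ so that the inequalities above can be summed. Concretely, under folding the pullback of $\omega_k$ for $\overline{\g}$ is $\sum_{m \in O_k} \omega_m$ (the $\sigma$-orbit sum), and for $k \ne j$ every such $m$ lies outside $O_j$; hence each telescoping term pairs non-positively against $\sum_{m\in O_k}\omega_m$, and summing over $\ell$ gives $\langle \mu_w - \mu_{s_j^\sigma w}, \sum_{m \in O_k}\omega_m\rangle \le 0$, which is exactly $\langle \omu_{\ow} - \omu_{\hs_j\ow}, \omega_k\rangle \le 0$. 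The main obstacle I anticipate is the bookkeeping in the middle step: verifying that $\hs_j \in D_L(\ow)$ really does force $s_j^\sigma$ to act as a genuine chain of left descents on $w$ (so that each telescoping difference is an honest edge of $P$ and Lemma~\ref{lemma:generaldiagonalrelations-simplylaced} applies with the correct simple root), and pinning down the precise statement that the folding identifies $\omega_k^{\overline{\g}}$ with the orbit sum $\sum_{m\in O_k}\omega_m$ under the weight-lattice map dual to $X_*(\t^\sigma) \hookrightarrow X_*(\t)$. Both are standard facts about symmetric pairs, but they need to be stated carefully; once they are in place the inequality follows by additivity.
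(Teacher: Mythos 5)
Your proposal is correct and follows essentially the same route as the paper: fold along a diagram automorphism $\sigma$, pull $\overline{P}$ back to a $\sigma$-invariant polytope via $\Phi$, telescope $\mu_w - \mu_{s_j^\sigma w}$ over the $\sigma$-orbit of $j$ (using, as you note, that $\hs_j \in D_L(\ow)$ makes each orbit reflection a left descent at its stage), and apply the simply-laced lemma to each term. The only cosmetic difference is that you pair against the orbit sum $\sum_{m \in O_k}\omega_m$ while the paper pairs against a single $\omega_k$ and identifies $\overline{\omega}_k$ with its restriction to $\h^\sigma$; on $\sigma$-invariant vertex differences these agree up to a positive factor, so the sign conclusion is identical.
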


\begin{proof}
Let $\sigma$ be a Dynkin diagram automorphism of $G$ and let $\P^\sigma$ be the set of $\sigma$-invariant MV polytopes of $G$. Let $\overline{\P}$ be the set of MV polytopes associated to $G^\sigma$, the fixed point group of $\sigma$. Recall by Theorem \ref{theorem:dynkindiagramautomorphism}, $\Phi: \P^\sigma \rightarrow \overline{\P}$ is a bijection, where $P$ with vertex data $(\mu_w)_{w\in W}$ is sent to $\overline{P}$ with vertex data $(\mu_{\Theta(\overline{w})}) = (\overline{\mu}_{\overline{w}})$. 

Let $\overline{P} \in \overline{\P}$. Consider $\overline{w} \in \overline{W}$ arbitrary. Let $\hs_j \in D_L(\overline{w})$ and $k \in \overline{I}$ such that $k \neq j$. We want to show that $\langle \overline{\mu}_{\overline{w}} - \overline{\mu}_{\hs_j \overline{w}}, \overline{\omega}_k \rangle \leq 0$.

Since $\Phi$ is a bijection, there exists a $P \in \P^\sigma$ such that 
$\overline{\mu}_{\overline{w}} = \mu_{\Theta(\overline{w})}$. Then for the vertices of $\overline{P}$, 
\[
\overline{\mu}_{\overline{w}} - \overline{\mu}_{\hs_j \overline{w}} = \mu_{\Theta(\overline{w})} - \mu_{\Theta(\hs_j\overline{w})} =  \mu_{\Theta(\overline{w})} - \mu_{s_j^\sigma \Theta(\overline{w})} .
\]

Note that $s_i^\sigma$ depends on the number of elements in the $\sigma$-orbit of $i$, which can only equal $1,2$ or $3$. We consider the case where there are $3$ elements in the orbit. Then
\begin{align*}
\mu_{\Theta(\overline{w})} - \mu_{s_j^\sigma {\Theta(\overline{w})}} &= \mu_{\Theta(\overline{w})} -\mu_{s_{\sigma^2(j)} {\Theta(\overline{w})}}+\mu_{s_{\sigma^2(j)} {\Theta(\overline{w})}}-\mu_{s_{\sigma(j)} s_{\sigma^2(j)} {\Theta(\overline{w})}} \\
& {\color{white}=} + \mu_{s_{\sigma(j)} s_{\sigma^2(j)} {\Theta(\overline{w})}} - \mu_{s_{j} s_{\sigma(j)} s_{\sigma^2(j)} {\Theta(\overline{w})}}\\
\implies \langle \mu_{\Theta(\overline{w})} - \mu_{s_j^\sigma {\Theta(\overline{w})}}, \omega_k \rangle & = \langle \mu_{\Theta(\overline{w})} -\mu_{s_{\sigma^2(j)} {\Theta(\overline{w})}} , \omega_k \rangle + \langle \mu_{s_{\sigma^2(j)} {\Theta(\overline{w})}}-\mu_{s_{\sigma(j)} s_{\sigma^2(j)} {\Theta(\overline{w})}} , \omega_k \rangle  \\
& {\color{white}=} + \langle \mu_{s_{\sigma(j)} s_{\sigma^2(j)} {\Theta(\overline{w})}} - \mu_{s_{j} s_{\sigma(j)} s_{\sigma^2(j)} {\Theta(\overline{w})}} , \omega_k \rangle 
\end{align*}
Since $k \in \overline{I}$ but $\{ j, \sigma(j), \sigma^2(j)\}\cap \overline{I} = \{j\}$, then $k \neq \sigma(j)$ or $\sigma^2(j)$. Thus we can apply the simply-laced case to each term on the right side of the above equation, and hence $\langle \mu_{\Theta(\overline{w})} - \mu_{s_j^\sigma {\Theta(\overline{w})}},\omega_k \rangle \leq 0$. As $\overline{\omega}_k$ is the restriction of $\omega_k$ to the subspace $\h^\sigma$, then $\langle \overline{\mu}_{\overline{w}} - \overline{\mu}_{\hs_j \overline{w}}, \overline{\omega}_k \rangle=\langle \mu_{\Theta(\overline{w})} - \mu_{s_i^\sigma {\Theta(\overline{w})}}, \omega_k \rangle  \leq 0$. For the cases with $1$ or $2$ elements in the $\sigma$-orbit, we will simply have fewer terms on the right side of the above equation. 
\end{proof}

Recall we define $*:I \rightarrow I$ where $i^*$ is the index such that $s_i w_0 = w_0 s_{i^*}$. For $w = s_{i_1} \cdots s_{i_m}$, we define $w^* = s_{i_1^*} \cdots s_{i^*_m}$. 

\begin{lemma}\label{lemma:s_jw}
Fix $w \in W$. For every $P \in \Pw$, and for every $s_j \in D_L(w)$, $\mu_{s_j w} = \mu_{w_0 s_{j^*}}$. 
\end{lemma}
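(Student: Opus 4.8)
The plan is to prove the equivalent statement $\mu_{s_j w} = \mu_{s_j w_0}$; this is the claimed identity because by the definition of $*$ we have $w_0 s_{j^*} = s_j w_0$, so $\mu_{w_0 s_{j^*}} = \mu_{s_j w_0}$. First I would record the two weak-order facts I need. One is the general fact $s_j w_0 \leq_R w_0$. The other is $s_j w \leq_R s_j w_0$, which uses $s_j \in D_L(w)$: since $(s_j w)^{-1}(s_j w_0) = w^{-1}w_0$ and $\ell(w^{-1}w_0) = \ell(w_0) - \ell(w)$, one gets $\ell(s_j w) + \ell\big((s_j w)^{-1}(s_j w_0)\big) = (\ell(w)-1) + (\ell(w_0)-\ell(w)) = \ell(s_j w_0)$, so Proposition \ref{proposition:weakbruhats} applies.

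Next I would show that $\mu_{s_j w_0} - \mu_{s_j w}$ is a nonnegative multiple of $\alpha_j^\vee$. By Lemma \ref{lemma:muw-muvinQ+}, the two facts above give $A := \mu_{w_0} - \mu_{s_j w_0} \in Q^\vee_+$ and $B := \mu_{s_j w_0} - \mu_{s_j w} \in Q^\vee_+$, so $A + B = \mu_{w_0} - \mu_{s_j w}$, which equals $\mu_w - \mu_{s_j w}$ since $P \in \Pw$. On the other hand the generalized diagonal relations (Lemmas \ref{lemma:generaldiagonalrelations-simplylaced} and \ref{lemma:generaldiagonalrelations-nonsimplylaced}) applied to $w$ and $s_j \in D_L(w)$ give $\langle \mu_w - \mu_{s_j w}, \omega_k \rangle \leq 0$ for all $k \neq j$. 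Writing $A + B = \sum_{l} c_l\,\alpha_l^\vee$, the pairing with $\omega_k$ reads $c_k \leq 0$; but $c_l \geq 0$ since $A+B \in Q^\vee_+$, so $c_k = 0$ for $k \neq j$. As $A$ and $B$ both lie in $Q^\vee_+$ and $A+B$ has vanishing $\alpha_l^\vee$-coordinate for $l \neq j$, each of $A$ and $B$ is a nonnegative multiple of $\alpha_j^\vee$; in particular $\mu_{s_j w_0} - \mu_{s_j w} = d\,\alpha_j^\vee$ with $d \in \Z_{\geq 0}$.

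Finally I would pin down $d = 0$ using a single supporting inequality of the GGMS polytope. Since $P = \bigcap_v C_v^{\mu_v}$ we have $P \subseteq C_{w_0 s_{j^*}}^{\mu_{w_0 s_{j^*}}}$, so evaluating at $\mu_{s_j w} \in P$ and the index $j^*$ gives $\langle \mu_{s_j w}, (w_0 s_{j^*})\omega_{j^*} \rangle \geq \langle \mu_{w_0 s_{j^*}}, (w_0 s_{j^*})\omega_{j^*} \rangle$; since $\mu_{w_0 s_{j^*}} = \mu_{s_j w_0}$ this rearranges to $\langle \mu_{s_j w_0} - \mu_{s_j w}, (w_0 s_{j^*})\omega_{j^*} \rangle \leq 0$. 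Using $w_0\omega_i = -\omega_{i^*}$ and $w_0\alpha_i = -\alpha_{i^*}$ one computes $(w_0 s_{j^*})\omega_{j^*} = w_0(\omega_{j^*} - \alpha_{j^*}) = -\omega_j + \alpha_j$, hence $\langle \alpha_j^\vee, (w_0 s_{j^*})\omega_{j^*} \rangle = -1 + 2 = 1$. Therefore the left-hand side equals $d$, forcing $d \leq 0$ and hence $d = 0$; this gives $\mu_{s_j w} = \mu_{s_j w_0} = \mu_{w_0 s_{j^*}}$.

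The step I expect to be the real obstacle is the last one. The generalized diagonal relations only see the coordinates of $\mu_w - \mu_{s_j w}$ in the directions $\omega_k$ with $k \neq j$, so they cannot by themselves control the $\alpha_j^\vee$-coordinate of $\mu_{s_j w_0} - \mu_{s_j w}$; the point is that once that difference has been reduced to a multiple of $\alpha_j^\vee$, one clean supporting-hyperplane inequality of $P$ at the facets through $\mu_{w_0 s_{j^*}}$ already kills it, precisely because the chamber weight $(w_0 s_{j^*})\omega_{j^*}$ pairs to $+1$ with $\alpha_j^\vee$.
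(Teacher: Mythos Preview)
Your proof is correct. The first two thirds coincide with the paper's argument: both use Lemma~\ref{lemma:muw-muvinQ+} together with the generalized diagonal relations to reduce to the statement that $\mu_{w_0 s_{j^*}}-\mu_{s_jw}=d\,\alpha_j^\vee$ with $d\ge 0$.

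The difference is in how you kill $d$. The paper expands $\mu_{w_0 s_{j^*}}-\mu_{s_jw}$ along a reduced path from $s_jw$ to $w_0s_{j^*}$ as a nonnegative combination of the distinct positive coroots appearing along that path, checks that none of these coroots equals $\alpha_j^\vee$, and concludes by the simplicity of $\alpha_j^\vee$ that all coefficients vanish. You instead invoke a single supporting-hyperplane inequality of the GGMS polytope at the vertex $\mu_{w_0 s_{j^*}}$ in the direction $(w_0 s_{j^*})\omega_{j^*}$ and compute $\langle \alpha_j^\vee,(w_0s_{j^*})\omega_{j^*}\rangle=1$ to force $d\le 0$. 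Your route avoids any bookkeeping with paths and the identification of which coroots occur along them; the paper's route is more explicit about the edge structure but needs that extra combinatorial check. Both are short once the reduction to $d\,\alpha_j^\vee$ is in hand.
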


\begin{proof}
First, as $w \leq_R w_0$ then $s_j w \leq_R s_j w_0 = w_0 s_{j^*}$ so by Lemma \ref{lemma:muw-muvinQ+}, $\mu_{w_0 s_{j^*}} - \mu_{s_jw} \in Q^\vee_+$. Thus for a reduced word $\ui$ of $w^{-1}w_0$ ending in $j^*$, 
\begin{align}\label{equation:muw0sj*-musjw}
\mu_{w_0s_{j^*}} - \mu_{s_jw} = \sum_{r=1}^{\ell(w_0) - \ell(w)-1} c_r (s_jw) s_{i_1} \dots s_{i_{r-1}} \alpha_{i_r}^\vee
\end{align}
for $c_r \geq 0$ and positive coroots $(s_j w) s_{i_1}\cdots s_{i_{r-1}}\alpha_{i_r}^\vee$. It follows that $\langle \mu_{w_0 s_{j^*}} - \mu_{s_jw}, \omega_k \rangle \geq 0$ for all $k \in I$. 

By the generalized diagonal relations, $\langle \mu_w - \mu_{s_jw}, \omega_k \rangle \leq 0$ for $k \neq j$ and hence $\langle \mu_{w_0} - \mu_{s_j w}, \omega_k \rangle \leq 0$ as well. As $ (w_0 s_{j^*}) \cdot \alpha_{j^*}^\vee = \alpha_j^\vee$, then $\mu_{w_0} - \mu_{w_0 s_{j^*}}  \in \Z\alpha^\vee_j$, so for all $k \neq j$, 
\begin{align*}
0=\langle \mu_{w_0} - \mu_{w_0 s_{j^*}}, \omega_k \rangle = \langle \mu_{w_0} - \mu_{s_j w}, \omega_k \rangle  - \langle \mu_{w_0 s_{j^*}} - \mu_{s_j w}, \omega_k \rangle.
\end{align*}
Both of these terms must be zero for all $k \neq j$, so $\mu_{w_0 s_{j^*}} - \mu_{s_j w} \in \Z \alpha_j^\vee$. As each $(s_jw) s_{i_1} \cdots s_{i_{r-1}}\alpha_{i_r}^\vee$ is a distinct positive coroot for every $r$ and 
\[
(s_j w) s_{i_1} \cdots s_{i_{\ell(w_0) - \ell(w)}} \alpha_{i_{\ell(w_0) - \ell(w)}}^\vee = (s_j w) w^{-1}w_0 \alpha_{j^*}^\vee = (w_0 s_{j^*}) \alpha_{j^*}^\vee = \alpha_j^\vee
\]
then it follows that $(s_jw)s_{i_1} \cdots s_{i_{r-1}}\alpha_{i_r}^\vee \neq \alpha_j^\vee$ for every $r< \ell(w_0) - \ell(w)$. If any $c_r \neq 0$ in (\ref{equation:muw0sj*-musjw}), then $\alpha_j^\vee$ is a positive sum of positive coroots, which contradicts that $\alpha_j^\vee$ is a simple coroot. Thus $c_r=0$ for all $1 \leq r \leq \ell(w_0) - \ell(w)-1$ and $\mu_{s_0s_{j^*}} = \mu_{s_jw}$.
\end{proof}

When $G$ is of rank 2, then Lemma \ref{lemma:s_jw} completely determines the vertex data of a polytope in $\Pw$. To see this, consider $w = s_{i_1} s_{i_2} \dots s_{i_m} \in W$. As $G$ has two simple roots, there are only two simple reflections and so $w$ is an alternating product of $s_1$ and $s_2$. 

The existence of only two simple roots means that MV polytopes are 2-dimensional polygons. The two simple reflections generate two distinct reduced words for $w_0$: the alternating product $s_1s_2s_1\dots$ of length $\ell(w_0)$ and the alternating product $s_2s_1s_2\dots$ of length $\ell(w_0)$. These two reduced words give two minimals paths from $\mu_e$ to $\mu_{w_0}$ and correspond to the two sides of the polygon. 

For $P \in \Pw$, $\mu_w$ is on one side of the polygon and the vertex data for any vertex along this minimal path is described by Lemma \ref{lemma:vw=w}, i.e. if $v\geq_R w$, then $\mu_v = \mu_w$, otherwise $v \leq_R w$ and $\mu_v$ can be distinct. For $v \not\leq_R w$ and $v\not\geq_R w$, then $\mu_v$ is necessarily on the minimal path from $\mu_e$ to $\mu_{w_0}$ which does not contain $\mu_w$, and hence either $v \leq_R s_{i_1} w$ or $v \geq_R s_{i_1} w$. By Lemma \ref{lemma:s_jw}, $D_L(w) = s_{i_1}$ and so $\mu_{s_{i_1}w} = \mu_{w_0s_{i_1^*}}$. 

If $v \not\leq w$, then $v\not\leq_R w$ and $v \not \leq_R s_{i_1}w$, so it must follow that either $v \geq_R w$ or $v \geq_R s_{i_1}w$. For the first case, we have already shown $\mu_v = \mu_w$. For the second case, as $\mu_{v}$ is between the vertices $\mu_{s_{i_1}w}$ and $\mu_{w_0s_{i_1^*}}$, the equality $\mu_{s_{i_1}w} = \mu_{w_0 s_{i_1^*}}$ forces $\mu_{v} = \mu_{s_{i_1}w}$. It follows that on the side of the polygon which does not contain $\mu_w$, the highest vertex is labelled by $\mu_{s_{i_1}w}$ and the only possible distinct vertices are labelled by $v \leq w$. Hence $P = \conv \{ \mu_v : v \in W, v\leq w \}$. 

\subsection{Crystal action on $\Pw$}\label{section:saitoreflectionresults}

In this section, we will show that the Saito reflection behaves well with $\Pw$. First, we briefly recall the crystal structure of MV polytopes and the Saito reflection, see Section \ref{section:crystals} for more details. 

The set of MV polytopes has crystal structure $B(\infty)$. The MV polytope associated to $b \in B(\infty)$, denoted $\Pol(b)$, is given by the vertex data $(\mu_w(b))_{w\in W}$, where
\begin{align}\label{equation:saitoreflectionpolytope}
\mu_w(b)  = w \cdot \text{wt} (\sigma_{w^{-1}} (b)) - \wt(b).
\end{align}

The action of the Saito reflection on the crystal $B(\infty)$ has a known effect on the Lusztig data of $\Pol(b)$ for $b \in B(\infty)$. If $\Pol(b)$ has Lusztig data $(n_1, \dots n_m)$ associated to the reduced word $(i_1, \dots, i_m)$, then $\Pol(\sigma_{i_1}(b))$ has Lusztig data $(n_2, \dots, n_m, 0)$ associated to the reduced word $(i_2, \dots, i_m, i_1^*)$ while $\Pol(\sigma^*_{i_m}(b))$ has Lusztig data $(0, n_1, \dots, n_{m-1})$ associated to the reduced word $(i_m^*, i_1, \cdots, i_{m-1})$. The crystal operators are also related to the Lusztig data $n_\bullet^\ui(P)$ by $\ep_{i_m}(b)=n_m$ and $\ep_{i^*_1}^*(b) = n_1$.  

 \begin{lemma} \label{lemma:TFAEpol(b)}
 Fix $w \in W$. Let $\ui = (i_1, \dots, i_m)$ be a reduced word for $w^{-1}w_0$. For $b \in B(\infty)$, the following are equivalent:
 \begin{enumerate}[label=(\roman*)]
  \item $\Pol(b) \in \Pw$, \label{condition:TFAEpol(b)1}
  \item $\ep^*_{i^*_m}(b)=0$ and  $\ep^*_{i^*_k}(\sigma^*_{s_{i^*_{k+1}} \dots s_{i^*_{m}}}(b)) =0$ for $1 \leq k < m$, \label{condition:TFAEpol(b)2}
  \item $\sigma_{w^{-1}}(b) = b_0$. \label{condition:TFAEpol(b)3}
 \end{enumerate}
\end{lemma}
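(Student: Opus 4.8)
The strategy is to prove the chain of implications $\ref{condition:TFAEpol(b)1} \Rightarrow \ref{condition:TFAEpol(b)3} \Rightarrow \ref{condition:TFAEpol(b)2} \Rightarrow \ref{condition:TFAEpol(b)1}$, using the explicit effect of the Saito reflections on Lusztig data recalled just before the statement. Fix a reduced word $\ui = (i_1,\dots,i_m)$ for $w^{-1}w_0$ and extend it on the left by a reduced word $(i_{-\ell(w)+1},\dots,i_0)$ for $w$, obtaining a reduced word $\uk = (i_{-\ell(w)+1},\dots,i_0,i_1,\dots,i_m)$ for $w_0$. For any $b \in B(\infty)$, write $n_\bullet^{\uk}(\Pol(b))$ for the Lusztig data. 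The key observation is that, by Corollary~\ref{corollary:vw=w}, $\Pol(b) \in \Pw$ is equivalent to the last $m$ coordinates of $n_\bullet^{\uk}(\Pol(b))$ vanishing; and by the iterated Saito formula (Lemma~\ref{lemma:saitoandlusztig}), applying $\sigma_{i_0}\sigma_{i_{-1}}\cdots\sigma_{i_{-\ell(w)+1}} = \sigma_{w^{-1}}$ shifts the Lusztig data of $\Pol(b)$ so that $\Pol(\sigma_{w^{-1}}(b))$ has Lusztig data $(n_1,\dots,n_m,0,\dots,0)$ with respect to a suitable reduced word for $w_0$ whose first $m$ letters are $(i_1,\dots,i_m)$. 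Hence $\Pol(b)\in\Pw$ iff $n_1=\cdots=n_m = 0$ iff $\Pol(\sigma_{w^{-1}}(b))$ has Lusztig data identically zero iff $\sigma_{w^{-1}}(b) = b_0$ (the unique element whose MV polytope is the point). This handles $\ref{condition:TFAEpol(b)1}\Leftrightarrow\ref{condition:TFAEpol(b)3}$ in one stroke.

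For the equivalence with \ref{condition:TFAEpol(b)2}, I would unwind the definition $\sigma_{w^{-1}} = \sigma_{i_0}\cdots\sigma_{i_{-\ell(w)+1}}$ in terms of $*$-Saito reflections. There is a standard identity relating $\sigma_v$ applied after a word and $\sigma^*$ applied before: concretely, using that $\sigma_i$ shifts Lusztig data forward while $\sigma^*_i$ shifts it backward, the condition $\sigma_{w^{-1}}(b)=b_0$ should translate, after re-expressing via the $*$-operators on the tail word $(i_1,\dots,i_m)$, into the statement that successively stripping off the last Lusztig coordinate with $\sigma^*$-reflections yields zero each time. That is exactly condition \ref{condition:TFAEpol(b)2}: $\ep^*_{i^*_m}(b) = 0$ says the last coordinate $n_m$ (with respect to $\uk$) is zero, and $\ep^*_{i^*_k}(\sigma^*_{s_{i^*_{k+1}}\cdots s_{i^*_m}}(b))=0$ says that after removing the already-zero tail coordinates $n_m,\dots,n_{k+1}$ via $*$-Saito reflections, the new last coordinate $n_k$ is also zero. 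The cleanest route is to show $\ref{condition:TFAEpol(b)1}\Leftrightarrow\ref{condition:TFAEpol(b)2}$ directly: by Corollary~\ref{corollary:vw=w} applied to the reduced word $\uk$, membership in $\Pw$ is $n_k^{\uk}=0$ for $k > \ell(w)$ (i.e. for the tail word $(i_1,\dots,i_m)$), and by Corollary~\ref{corollary:crystaloperatorslusztig} together with Lemma~\ref{lemma:saitoandlusztig} the quantities $\ep^*_{i^*_m}(b)$ and $\ep^*_{i^*_k}(\sigma^*_{s_{i^*_{k+1}}\cdots s_{i^*_m}}(b))$ are precisely these tail coordinates $n_m, n_{m-1},\dots, n_1$ read off in reverse order.

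The main obstacle I anticipate is bookkeeping: matching the indexing conventions in Lemma~\ref{lemma:saitoandlusztig} (which strips from the \emph{front} with $\sigma_{i_1}$ and from the \emph{back} with $\sigma^*_{i_m^*}$) against the composite $\sigma_{w^{-1}} = \sigma_{i_0}\cdots\sigma_{i_{-\ell(w)+1}}$ appearing in \eqref{equation:saitoreflectionpolytope}, and confirming that the reduced word produced after all the shifts genuinely has $(i_1,\dots,i_m)$ as its first $m$ letters so that Corollary~\ref{corollary:vw=w} applies verbatim. I would be careful that the $*$-operation on indices is tracked correctly — each front-stripping $\sigma_{i}$ sends the trailing letter through a $*$, and the cumulative effect on the tail word $(i_1,\dots,i_m)$ must be reconciled with the $i_k^*$ appearing in \ref{condition:TFAEpol(b)2}. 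Once the dictionary between "Lusztig tail coordinates with respect to $\uk$" and "iterated $\ep^*$ values after $*$-Saito reflections" is nailed down, all three conditions become literal restatements of "the tail of the Lusztig data is zero," and the proof is short. Everything else — the fact that $b_0$ is the unique element with a point polytope, the nonnegativity of Lusztig data, the braid-invariance of $\sigma_w$ — is already available in the excerpt.
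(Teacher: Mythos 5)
Your proposal is correct. The equivalence of \ref{condition:TFAEpol(b)1} and \ref{condition:TFAEpol(b)2} is argued exactly as in the paper: extend $\ui$ on the left by a reduced word for $w$, identify $\ep^*_{i_m^*}(b)$ and $\ep^*_{i_k^*}(\sigma^*_{s_{i^*_{k+1}}\cdots s_{i^*_m}}(b))$ with the tail Lusztig coordinates via Corollary \ref{corollary:crystaloperatorslusztig} and iterated use of Lemma \ref{lemma:saitoandlusztig}, and then invoke Corollary \ref{corollary:vw=w}. Where you genuinely diverge is \ref{condition:TFAEpol(b)1} $\Leftrightarrow$ \ref{condition:TFAEpol(b)3}: you iterate Lemma \ref{lemma:saitoandlusztig} along the front-stripping reflections in $\sigma_{w^{-1}}$ to see that $\Pol(\sigma_{w^{-1}}(b))$ has Lusztig data equal to the tail of $n_\bullet^{\uk}(\Pol(b))$ followed by zeros, with respect to a reduced word beginning with $(i_1,\dots,i_m)$, so $\sigma_{w^{-1}}(b)=b_0$ exactly when that tail vanishes; this step needs the injectivity of $b\mapsto\Pol(b)$ (all-zero Lusztig data forces the point polytope $\Pol(b_0)$), which is available and is in fact used by the paper in the proof of Lemma \ref{lemma:saitoPw}. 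The paper instead deduces this equivalence from the closed formula (\ref{equation:saitoreflectionpolytope}): $\mu_w=\mu_{w_0}$ iff $w\cdot\wt(\sigma_{w^{-1}}(b))=0$, and $b_0$ is the unique weight-zero element of $B(\infty)$. Your route costs some extra bookkeeping of rotated reduced words and starred indices (which your plan tracks correctly), while the paper's weight argument avoids word-tracking altogether; both rest on the same Saito-reflection machinery and both are short, so the difference is one of packaging rather than substance.
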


\begin{proof}
Extend $\ui$ to a reduced word $\ui' = (j_1, \dots, j_{\ell(w_0)-m} , i_1, \dots, i_m)$ of $w_0$. Denote the Lusztig data of $\Pol(b)$ associated to $\ui'$ by $(n_1, \dots, n_{\ell(w_0)-m}, N_1, \dots, N_m)$. 
By Corollary \ref{corollary:crystaloperatorslusztig}, we know that $\ep^*_{i^*_m}(b) = N_m$. For $1 \leq k <m$, consider the polytope $\Pol(\sigma^*_{s_{i^*_{k+1}} \dots s_{i^*_m}}(b))$. By Lemma \ref{lemma:saitoandlusztig}, this polytope has Lusztig data $(0, \dots, 0, n_1, \dots, n_{\ell(w_0)-m}, N_1, \dots, N_k)$ associated to the word $(i_{k+1}^*, \dots, i_m^*, j_1, \dots, j_{\ell(w_0) - m}, i_1, \dots, i_k)$. Then $\ep^*_{i^*_k}(\sigma^*_{s_{i_{k+1}} \dots s_{i_m}}(b)) = N_k$. 

Thus $\ep^*_{i^*_k}(\sigma_{s_{i^*_{k+1}} \cdots s_{i^*_m}}(b)) =0$ for $1 \leq k\leq m$ if and only if $N_k=0$ for $1 \leq k \leq m$. By Corollary \ref{corollary:vw=w}, this is equivalent to $\Pol(b) \in \Pw$ so \ref{condition:TFAEpol(b)1} is equivalent to \ref{condition:TFAEpol(b)2}. 

As $\Pol(b)$ has vertex data $\mu_v = v \cdot \wt(\sigma_{v^{-1}}(b)) - \wt(b)$ then
\[
\mu_w = \mu_{w_0} \iff w \cdot \wt (\sigma_{w^{-1}}(b)) = w_0 \cdot \wt(\sigma_{w_0^{-1}}(b)) \iff w\cdot \wt (\sigma_{w^{-1}}(b)) = 0
\] 
since $\sigma_{w_0}(b') = b_0$ for every $b' \in B(\infty)$. As $b_0$ is the unique element of weight zero, then the weight $\wt(\sigma_{w^{-1}}(b)) = 0 \iff \sigma_{w^{-1}}(b) = b_0$. Thus \ref{condition:TFAEpol(b)1} is equivalent to \ref{condition:TFAEpol(b)3}. 
\end{proof}

In the next two lemmas, we will show how the Saito reflection $\sigma^*_{j^*}$ acts on $P \in \Pw$. 

\begin{corollary}\label{corollary:saito}
Fix $w \in W$. For $b \in B(\infty)$, if $\Pol(b) \in \Pw$ and $s_j \in D_R(w^{-1} w_0)$, then $\Pol(\sigma^*_{j^*}(b)) \in \P_{s_{j^*} w}$. 
\end{corollary}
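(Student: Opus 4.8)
The plan is to deduce this from the characterization of $\Pw$ in Lemma \ref{lemma:TFAEpol(b)}, specifically from condition \ref{condition:TFAEpol(b)3}, which says $\Pol(b) \in \Pw$ if and only if $\sigma_{w^{-1}}(b) = b_0$. So I want to show that $\Pol(\sigma^*_{j^*}(b)) \in \P_{s_{j^*}w}$ is equivalent to $\sigma_{(s_{j^*}w)^{-1}}(\sigma^*_{j^*}(b)) = b_0$, and then verify this last equation holds under the stated hypotheses. First I would observe that since $s_j \in D_R(w^{-1}w_0)$, equivalently $s_{j} \in D_L(w_0^{-1}w) = D_L(w_0 w)$; translating through $*$, this should say exactly that $s_{j^*} \in D_L(w)$, so $\ell(s_{j^*}w) = \ell(w) - 1$ and $s_{j^*}w$ is a genuine Weyl group element with $s_{j^*}w <_R$-below nothing new — more precisely $(s_{j^*}w)^{-1} = w^{-1}s_{j^*}$ and $\ell(w^{-1}s_{j^*}) = \ell(w^{-1}) - 1$, so that $w^{-1} = (w^{-1}s_{j^*})\cdot s_{j^*}$ is a reduced product. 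Hence there is a reduced word for $w^{-1}$ ending in $j^*$, say $w^{-1} = s_{k_1}\cdots s_{k_{\ell-1}}s_{j^*}$ with $(s_{k_1}\cdots s_{k_{\ell-1}}) = (s_{j^*}w)^{-1}$.

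The key computation is then with the Saito operators: by the braid relations for the $\sigma_i$ (recalled after Definition \ref{definition:Saitoreflection}), $\sigma_{w^{-1}} = \sigma_{(s_{j^*}w)^{-1}} \circ \sigma_{j^*}$. I would next use the compatibility of $\sigma_{j^*}$ and $\sigma^*_{j^*}$. Recall $\sigma^*_i = \tilde\sigma^*_i \circ (\te^*_i)^{\ep^*_i}$ and, by \cite[Corollary 3.4.8]{SaitoReflection}, $\tilde\sigma^*_i = \tilde\sigma_i^{-1}$; comparing with $\sigma_i = \tilde\sigma_i \circ (\te_i)^{\ep_i}$, one sees on the subset $\{\ep_i = 0\}$ (equivalently $\{\ep^*_i = 0\}$ after applying $\sigma^*_i$, using $\ep_i(\sigma^*_i(b)) = 0$) that $\sigma_i \circ \sigma^*_i = \mathrm{id}$ there, i.e. $\sigma_i(\sigma^*_i(b)) = b$ whenever $\ep_i(\sigma^*_i(b)) = 0$, which holds automatically. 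Applying this with $i = j^*$ gives $\sigma_{j^*}(\sigma^*_{j^*}(b)) = b$. Therefore
\[
\sigma_{(s_{j^*}w)^{-1}}\big(\sigma^*_{j^*}(b)\big) = \sigma_{w^{-1}}(b),
\]
after peeling: $\sigma_{(s_{j^*}w)^{-1}}(\sigma^*_{j^*}(b))$ — hmm, I need the composition order to match; more carefully, $\sigma_{w^{-1}} = \sigma_{(s_{j^*}w)^{-1}}\circ\sigma_{j^*}$ means $\sigma_{w^{-1}}(b) = \sigma_{(s_{j^*}w)^{-1}}(\sigma_{j^*}(b))$, so I actually want to relate $\sigma_{j^*}(b)$ to $\sigma^*_{j^*}(b)$, not $\sigma_{j^*}(\sigma^*_{j^*}(b))$. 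The cleaner route is: since $\Pol(b) \in \Pw$, Lemma \ref{lemma:TFAEpol(b)}\ref{condition:TFAEpol(b)2} with a reduced word of $w^{-1}w_0$ starting with $j$ (which exists because $s_j \in D_R(w^{-1}w_0)$ forces, after checking, $s_j$ usable as the first letter of some reduced word of $w^{-1}w_0$ read appropriately) tells us $\ep^*_{j^*}(b) = 0$, hence $\sigma^*_{j^*}(b) = \tilde\sigma^*_{j^*}(b)$ and $\sigma_{j^*}(\sigma^*_{j^*}(b)) = \tilde\sigma_{j^*}(\tilde\sigma^*_{j^*}(b)) = b$ since $\tilde\sigma^*_{j^*} = \tilde\sigma_{j^*}^{-1}$. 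Combined with $\sigma_{w^{-1}} = \sigma_{(s_{j^*}w)^{-1}}\circ\sigma_{j^*}$, I get $b_0 = \sigma_{w^{-1}}(b) = \sigma_{(s_{j^*}w)^{-1}}(\sigma_{j^*}(b))$; but I want $\sigma_{(s_{j^*}w)^{-1}}(\sigma^*_{j^*}(b)) = b_0$. Since $\ep^*_{j^*}(b)=0$, $\sigma_{j^*}(b) = \tilde\sigma_{j^*}((\te_{j^*})^{\ep_{j^*}(b)}b)$ whereas $\sigma^*_{j^*}(b) = \tilde\sigma^*_{j^*}(b) = \tilde\sigma_{j^*}^{-1}(b)$; these need not be equal in general, so the honest statement to prove is $\sigma_{(s_{j^*}w)^{-1}}(\sigma^*_{j^*}(b)) = b_0$ directly. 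I would therefore instead argue: apply $\sigma_{j^*}$ to both sides — since $\sigma_{j^*}(\sigma^*_{j^*}(b)) = b$ (using $\ep^*_{j^*}(b)=0$), and $\sigma_{(s_{j^*}w)^{-1}}$ commutes appropriately only via the braid-word identity, a better organization is to directly apply Lemma \ref{lemma:TFAEpol(b)}\ref{condition:TFAEpol(b)3} to $\sigma^*_{j^*}(b)$ and the element $s_{j^*}w$: I must show $\sigma_{(s_{j^*}w)^{-1}}(\sigma^*_{j^*}(b)) = b_0$, and since $\ep^*_{j^*}(\sigma^*_{j^*}(b))$ — wait, $\sigma^*_{j^*}$ lands in $\{\ep_{j^*}=0\}$, so $\sigma_{j^*}(\sigma^*_{j^*}(b)) = \sigma^*_{j^*}(b)$ is false; rather $\sigma_{j^*}$ of something with $\ep_{j^*}=0$ is $\tilde\sigma_{j^*}$ of it. I will use $\sigma_{j^*}|_{\{\ep_{j^*}=0\}} = \tilde\sigma_{j^*}$ and $\sigma^*_{j^*}(b) = \tilde\sigma_{j^*}^{-1}(b)$, so $\sigma_{j^*}(\sigma^*_{j^*}(b)) = \tilde\sigma_{j^*}(\tilde\sigma_{j^*}^{-1}(b)) = b$, legitimate because $\sigma^*_{j^*}(b) \in \{\ep_{j^*}=0\}$. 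Then $\sigma_{w^{-1}}(\sigma^*_{j^*}(b))$ would require $\sigma_{j^*}(\sigma^*_{j^*}(b)) = b$ followed by $\sigma_{(s_{j^*}w)^{-1}}(b) = \sigma_{w^{-1}}(\sigma^*_{j^*}(b))$ — no. The clean identity is $\sigma_{w^{-1}} \circ \sigma^*_{j^*} \overset{?}{=} \sigma_{(s_{j^*}w)^{-1}}$ wherever defined, which follows from $\sigma_{w^{-1}} = \sigma_{(s_{j^*}w)^{-1}} \circ \sigma_{j^*}$ and $\sigma_{j^*} \circ \sigma^*_{j^*} = \mathrm{id}$ on the relevant subset. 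Hence $\sigma_{(s_{j^*}w)^{-1}}(\sigma^*_{j^*}(b)) = \sigma_{w^{-1}}(\sigma_{j^*}(\sigma^*_{j^*}(b)))$ — still not matching; I will carefully order as $\sigma_{(s_{j^*}w)^{-1}}(\sigma^*_{j^*}(b))$ and use $\sigma^*_{j^*}(b)$ has $\ep_{j^*} = 0$.

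The main obstacle, as the above groping indicates, is getting the composition order and the domains of definition of the half-reflections $\tilde\sigma_i, \tilde\sigma^*_i$ exactly right, together with checking that $s_j \in D_R(w^{-1}w_0)$ translates to $s_{j^*} \in D_L(w)$ and that $\ell((s_{j^*}w)^{-1}) + 1 = \ell(w^{-1})$ so the braid identity $\sigma_{w^{-1}} = \sigma_{(s_{j^*}w)^{-1}}\sigma_{j^*}$ is valid. Once those bookkeeping facts are pinned down, the statement falls out of Lemma \ref{lemma:TFAEpol(b)}. Concretely, the argument I will write is: (1) show $s_{j^*} \in D_L(w)$, hence $w^{-1} = (s_{j^*}w)^{-1}\cdot s_{j^*}$ reduced, giving $\sigma_{w^{-1}} = \sigma_{(s_{j^*}w)^{-1}}\circ\sigma_{j^*}$; (2) since $\Pol(b)\in\Pw$, Lemma \ref{lemma:TFAEpol(b)} gives $\sigma_{w^{-1}}(b) = b_0$, and in particular (taking a reduced word of $w^{-1}w_0$ with first letter $j$, valid since $s_j\in D_R(w^{-1}w_0)$) $\ep^*_{j^*}(b) = 0$; (3) with $\ep^*_{j^*}(b)=0$ we have $\sigma^*_{j^*}(b) = \tilde\sigma^*_{j^*}(b) = \tilde\sigma_{j^*}^{-1}(b)$, and this element has $\ep_{j^*} = 0$ so $\sigma_{j^*}(\sigma^*_{j^*}(b)) = \tilde\sigma_{j^*}(\tilde\sigma_{j^*}^{-1}(b)) = b$; (4) therefore $\sigma_{(s_{j^*}w)^{-1}}(\sigma^*_{j^*}(b)) = \sigma_{(s_{j^*}w)^{-1}}(\sigma_{j^*}^{-1}(b))$, but more usefully, applying $\sigma_{j^*}$ to $\sigma^*_{j^*}(b)$ first inside $\sigma_{w^{-1}}$: we obtain $\sigma_{(s_{j^*}w)^{-1}}(\sigma^*_{j^*}(b)) = \sigma_{(s_{j^*}w)^{-1}}(\sigma_{j^*}(\sigma^*_{j^*}(\sigma^*_{j^*}(b))))$ — I will instead simply note $\sigma_{w^{-1}}\circ\sigma^*_{j^*} = \sigma_{(s_{j^*}w)^{-1}}\circ\sigma_{j^*}\circ\sigma^*_{j^*} = \sigma_{(s_{j^*}w)^{-1}}$ on $\{\ep^*_{j^*} = 0\}$, hence $\sigma_{(s_{j^*}w)^{-1}}(\sigma^*_{j^*}(b)) = \sigma_{w^{-1}}(b) = b_0$; then by Lemma \ref{lemma:TFAEpol(b)}\ref{condition:TFAEpol(b)3} applied to $s_{j^*}w$ (noting $(s_{j^*}w)^{-1}w_0 = w^{-1}s_{j^*}w_0 = w^{-1}w_0 s_j$, so such reduced words exist), $\Pol(\sigma^*_{j^*}(b)) \in \P_{s_{j^*}w}$.
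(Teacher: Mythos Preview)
Your overall strategy—using condition \ref{condition:TFAEpol(b)3} of Lemma \ref{lemma:TFAEpol(b)} together with the relation $\tilde\sigma^*_{j^*}=\tilde\sigma_{j^*}^{-1}$—is a perfectly valid alternative to the paper's direct Lusztig-data argument (via Corollary \ref{corollary:vw=w} and Lemma \ref{lemma:saitoandlusztig}). However, there is a genuine error in your Weyl-group bookkeeping that breaks the argument as written.

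You claim that $s_j\in D_R(w^{-1}w_0)$ translates to $s_{j^*}\in D_L(w)$. This is backwards: since $w_0 s_j=s_{j^*}w_0$, one has $(w^{-1}w_0)s_j<w^{-1}w_0 \iff w^{-1}s_{j^*}w_0<w^{-1}w_0 \iff \ell(w^{-1}s_{j^*})>\ell(w^{-1})$, i.e.\ $s_{j^*}\notin D_L(w)$. (Compare the proof of Lemma \ref{lemma:saitoPw}, which uses exactly the complementary equivalence $s_j\notin D_R(w^{-1}w_0)\iff s_{j^*}\in D_L(w)$.) Consequently $\ell(s_{j^*}w)=\ell(w)+1$, not $\ell(w)-1$, and the reduced factorization you want is $(s_{j^*}w)^{-1}=w^{-1}\cdot s_{j^*}$, giving
\[
\sigma_{(s_{j^*}w)^{-1}}=\sigma_{w^{-1}}\circ\sigma_{j^*},
\]
not your claimed $\sigma_{w^{-1}}=\sigma_{(s_{j^*}w)^{-1}}\circ\sigma_{j^*}$. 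With the correct identity the rest of your argument works cleanly: from $\ep^*_{j^*}(b)=0$ (which you correctly extract from Lemma \ref{lemma:TFAEpol(b)}\ref{condition:TFAEpol(b)2}, using a reduced word of $w^{-1}w_0$ \emph{ending} in $j$) one gets $\sigma^*_{j^*}(b)=\tilde\sigma_{j^*}^{-1}(b)$, hence $\sigma_{j^*}(\sigma^*_{j^*}(b))=b$, and therefore
\[
\sigma_{(s_{j^*}w)^{-1}}\big(\sigma^*_{j^*}(b)\big)=\sigma_{w^{-1}}\big(\sigma_{j^*}(\sigma^*_{j^*}(b))\big)=\sigma_{w^{-1}}(b)=b_0,
\]
which by Lemma \ref{lemma:TFAEpol(b)}\ref{condition:TFAEpol(b)3} gives $\Pol(\sigma^*_{j^*}(b))\in\P_{s_{j^*}w}$. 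Your final displayed equation happens to be the right conclusion, but your stated justification ($\sigma_{w^{-1}}\circ\sigma^*_{j^*}=\sigma_{(s_{j^*}w)^{-1}}$) derives the wrong equality and does not support it; the repeated confusion in your middle paragraphs stems entirely from this reversed length inequality.
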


\begin{proof}
First, notice that the condition $s_j \in D_R(w^{-1}w_0)$ ensures there exists a reduced word  of $w_0$ 
$\ui = (i_1, \dots, i_{\ell(w)}, k_1, \dots, k_{m - \ell(w)-1}, j)$ such that $(i_1, \dots, i_{\ell(w)})$ is a reduced word of $w$ and $(k_1, \dots, k_{m - \ell(w)-1}, j)$ is a reduced word of $w^{-1}w_0$. 

Suppose that $\Pol(b)\in \Pw$. By Corollary \ref{corollary:vw=w}, the Lusztig data of $\Pol(b)$ is $(n_1, \dots, n_{\ell(w)}, 0, \dots, 0)$ with respect to $\ui$. By Lemma \ref{lemma:saitoandlusztig}, $\Pol(\sigma_{j^*}^*(b))$ has Lusztig data $(0, n_1, \dots, n_{\ell(w)}, 0, \dots, 0)$ with respect to the reduced word $(j^*, i_1, \dots, i_{\ell(w)}, k_1, \dots, k_{m - \ell(w)-1})$. Hence  $\Pol(\sigma_{j^*}^*(b)) \in \P_{s_j^*w}$ by Corollary \ref{corollary:vw=w}. 
\end{proof}

\begin{lemma}\label{lemma:saitoPw}
Fix $w \in W$. For $b \in B(\infty)$, if $\Pol(b) \in \Pw$ and $s_j \notin D_R(w^{-1} w_0)$, then $\Pol(\sigma^*_{j^*}(b)) \in \Pw$.
\end{lemma}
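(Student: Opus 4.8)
The plan is to reduce the statement to Corollary~\ref{corollary:saito}. First I would rewrite the hypothesis in terms of a \emph{left} descent. Using $w_0 s_j = s_{j^*}w_0$ and $\ell(xw_0)=\ell(w_0)-\ell(x)$, one has $\ell(w^{-1}w_0 s_j)=\ell((w^{-1}s_{j^*})w_0)=\ell(w_0)-\ell(w^{-1}s_{j^*})$ while $\ell(w^{-1}w_0)=\ell(w_0)-\ell(w)$, so $s_j\notin D_R(w^{-1}w_0)$ is equivalent to $\ell(w^{-1}s_{j^*})=\ell(w)-1$, i.e. to $s_{j^*}\in D_L(w)$. I would then write $w = s_{j^*}\cdot w'$ as a reduced product, so $\ell(w')=\ell(w)-1$ and $s_{j^*}w'=w$; in particular $\P_{s_{j^*}w'}=\Pw$. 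Applying the same equivalence to $w'$, and using that $s_{j^*}w'=w$ has length $\ell(w')+1$ (so $s_{j^*}\notin D_L(w')$), we obtain $s_j\in D_R(w'^{-1}w_0)$.

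Next I would use that $\sigma^*_{j^*}$ takes the same value on $b$ and on the top of the $\ast$-string through $b$ in direction $j^*$: by Definition~\ref{definition:Saitoreflection}, $\sigma^*_{j^*}(x)=\tilde\sigma^*_{j^*}\big((\te^*_{j^*})^{\ep^*_{j^*}(x)}x\big)$, and $b':=(\te^*_{j^*})^{\ep^*_{j^*}(b)}b$ satisfies $\ep^*_{j^*}(b')=0$, so $\sigma^*_{j^*}(b')=\tilde\sigma^*_{j^*}(b')=\sigma^*_{j^*}(b)$. Thus it suffices to prove $\Pol(\sigma^*_{j^*}(b'))\in\Pw$; and since $s_j\in D_R(w'^{-1}w_0)$, Corollary~\ref{corollary:saito} applied to $w'$ and $j$ reduces this to the claim $\Pol(b')\in\P_{w'}$.

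To prove $\Pol(b')\in\P_{w'}$, I would argue as follows. By Theorem~\ref{theorem:MVcrystalstructure}(4), passing from $\Pol(b)$ to $\Pol(b')$ fixes every vertex $\mu_v$ with $s_{j^*}v>v$ and replaces $\mu_{w_0}$ by $\mu_{w_0}-\ep^*_{j^*}(b)\,\alpha^\vee_{j^*}$. Since $s_{j^*}w'=w>w'$, the vertex $\mu_{w'}$ is unchanged, so I must check that in $\Pol(b)$ we have $\mu_{w'}=\mu_{w_0}-\ep^*_{j^*}(b)\,\alpha^\vee_{j^*}$. By Lemma~\ref{lemma:s_jw} applied with the left descent $s_{j^*}\in D_L(w)$, $\mu_{w'}=\mu_{s_{j^*}w}=\mu_{w_0 s_j}$. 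On the other hand, choosing a reduced word of $w_0$ ending in $j$, Corollary~\ref{corollary:crystaloperatorslusztig} identifies $\ep^*_{j^*}(b)$ with the length of the edge of $\Pol(b)$ from $\mu_{w_0 s_j}$ to $\mu_{w_0}$; since that edge has direction $(w_0 s_j)\cdot\alpha^\vee_j=-w_0\cdot\alpha^\vee_j=\alpha^\vee_{j^*}$, equation~(\ref{equation:lusztigdatafromvertexdata}) gives $\mu_{w_0}-\mu_{w_0 s_j}=\ep^*_{j^*}(b)\,\alpha^\vee_{j^*}$. Combining the two displays yields $\mu_{w'}=\mu_{w_0}-\ep^*_{j^*}(b)\,\alpha^\vee_{j^*}$, which is exactly the $w'$-vertex of $\Pol(b')$ equalling its $w_0$-vertex, i.e. $\Pol(b')\in\P_{w'}$.

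The step I expect to be the main obstacle is recognizing this reduction: the right object is not $b$ but the ``collapsed'' polytope $\Pol(b')$ obtained by pushing $\mu_{w_0}$ maximally inward along the $j^*$-direction, and the point is that this collapsed polytope lies in $\P_{w'}$, which is precisely the hypothesis needed to feed into Corollary~\ref{corollary:saito}. Verifying $\Pol(b')\in\P_{w'}$ is itself the delicate computation, since it amounts to matching the edge length $\ep^*_{j^*}(b)$ at $\mu_{w_0}$ with the displacement $\mu_{w_0}-\mu_{w'}$; this is where Lemma~\ref{lemma:s_jw}, and behind it the generalized diagonal relations, is essential. A slightly different route avoiding Corollary~\ref{corollary:saito} would write $\sigma_{w^{-1}}=\sigma_{w'^{-1}}\circ\sigma_{j^*}$, use $\tilde\sigma^*_{j^*}=\tilde\sigma_{j^*}^{-1}$ to compute $\sigma_{j^*}(\sigma^*_{j^*}(b))=b'$, and then invoke Lemma~\ref{lemma:TFAEpol(b)}(iii) for $w'$ to reduce to the same claim $\Pol(b')\in\P_{w'}$.
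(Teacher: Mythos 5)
Your proposal is correct, and it takes a route that differs in an interesting way from the paper's. The paper converts the hypothesis to $s_{j^*}\in D_L(w)$ (via Lemma \ref{lemma:DL(w)cupDR=s}), then works entirely through Lemma \ref{lemma:TFAEpol(b)}\emph{(iii)}: it tracks Lusztig data under $\sigma_{j^*}\sigma^*_{j^*}$ and $\sigma_{(s_{j^*}w)^{-1}}$ using Lemma \ref{lemma:saitoandlusztig}, and then invokes Lemma \ref{lemma:s_jw} to force the relevant Lusztig coordinates of $\sigma_{w^{-1}}(\sigma^*_{j^*}(b))$ to vanish, so that this element is $b_0$. You instead pass to the top $b'=(\te^*_{j^*})^{\ep^*_{j^*}(b)}b$ of the $*$-string, observe $\sigma^*_{j^*}(b')=\sigma^*_{j^*}(b)$, verify $\Pol(b')\in\P_{w'}$ with $w'=s_{j^*}w$ by matching the edge length $\ep^*_{j^*}(b)$ at $\mu_{w_0}$ (Corollary \ref{corollary:crystaloperatorslusztig}, direction $\alpha^\vee_{j^*}$) against the displacement $\mu_{w_0}-\mu_{w'}$ supplied by Lemma \ref{lemma:s_jw}, and then conclude by citing Corollary \ref{corollary:saito}. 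In fact the two arguments manipulate the same intermediate element: since $\ep_{j^*}(\sigma^*_{j^*}(b))=0$ and $\tilde\sigma^*_{j^*}=\tilde\sigma_{j^*}^{-1}$, the paper's $\sigma_{j^*}\sigma^*_{j^*}(b)$ is exactly your $b'$; the difference is that you certify the collapse at the top vertex through the crystal-operator description of vertex data (Theorem \ref{theorem:MVcrystalstructure}) and then reuse the already-proved Corollary \ref{corollary:saito}, whereas the paper does the bookkeeping purely in Lusztig data and finishes with the $\sigma_{w^{-1}}(\cdot)=b_0$ criterion. Both hinge on the same essential input, Lemma \ref{lemma:s_jw}; your version buys a shorter deduction by recycling Corollary \ref{corollary:saito}, while the paper's version stays within one combinatorial formalism (Lusztig data) and avoids iterating the vertex description of $\te^*_{j^*}$. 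All the steps you flag check out: $s_j\notin D_R(w^{-1}w_0)\iff s_{j^*}\in D_L(w)$, $s_j\in D_R(w'^{-1}w_0)$, $\ep^*_{j^*}(b')=0$, and $(w_0s_j)\alpha^\vee_j=\alpha^\vee_{j^*}$ are all correct, and you only use the parts of Theorem \ref{theorem:MVcrystalstructure}(4) (the shift of $\mu_{w_0}$ by $-\alpha^\vee_{j^*}$ and the fixing of $\mu_v$ for $s_{j^*}v>v$) that are consistent with the Lusztig-data description, so no gap arises there.
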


\begin{proof}
As $w_0 = w \cdot w^{-1} w_0$ is reduced, we also have the reduced product $w_0 = (w^{-1}w_0) \cdot w^*$. By setting $x = w^{-1}w_0$ and $y = w^*$, we can apply Lemma \ref{lemma:DL(w)cupDR=s} so that 
$D_R(w^{-1} w_0) \cap D_L(w^*) = \emptyset$ and $D_R(w^{-1} w_0) \cup D_L(w^*) = S$. Thus $s_j \not\in D_R(w^{-1}w_0) \iff s_{j^*} \in D_L(w)$. 

Consider $b\in B(\infty)$ such that $\Pol(b) \in \Pw$. Let $s_{j^*} \in D_L(w)$. By Lemma \ref{lemma:TFAEpol(b)}, to show $\Pol(\sigma_{j^*}^*(b)) \in \Pw$, it is enough to show that $\sigma_{w^{-1}}(\sigma_{j^*}^*(b)) = b_0$.

Let $\ui =(i_1, \dots, i_{m-1}, j)$ be a reduced word of $w_0$ such that $w = s_{j^*}s_{i_1} \cdots s_{i_k}$ for $k = \ell(w) -1$. Let $(n_1, \dots, n_{m-1}, n_m)$ be the Lusztig data of $\Pol(b)$ with respect to $\ui$. The polytope $\Pol\left(\sigma_{j^*}^*(b)\right)$ has Lusztig data $(0, n_1, \cdots, n_{m-1})$ for reduced word $(j^*, i_1, \dots, i_{m-1})$ so that $\Pol\left(\sigma_{j^*}\sigma_{j^*}^*(b)\right)$ has Lusztig data $(n_1, \dots, n_{m-1}, 0)$ with respect to $\ui$. Notice that 
\[
\sigma_{w^{-1}}(\sigma_{j^*}^*(b)) = \sigma_{(s_{j^*}w)^{-1}} (\sigma_{j^*}\sigma_j^*(b)) = \sigma_{s_{i_k} \cdots s_{i_2}s_{i_1}}(\sigma_{j^*}\sigma_j^*(b))
\] 
so that $\Pol\left(\sigma_{w^{-1}}(\sigma_{j^*}^*(b))\right)$ has Lusztig data $(n_{k+1}, \dots, n_{m-1}, 0, \dots, 0)$ with respect to the reduced word $(i_{k+1}, \dots, i_{m-1}, j, i_1^*, \dots, i_{k}^*)$. 

Since $\Pol(b) \in \Pw$, then  $\mu_{s_{i_1}\cdots s_{i_k}} = \mu_{s_{j^*} w} = \mu_{w_0s_j}$ by by the generalized diagonal relations of Lemma \ref{lemma:s_jw}. The relation between Lusztig data and vertices (see (\ref{equation:lusztigdatafromvertexdata})) implies that $n_{k+1} = \cdots n_{m-1} =0$ in this Lusztig data. Thus $\Pol\left( \sigma_{w^{-1}}(\sigma_{j^*}^*(b))\right) = \Pol(b_0) $. By the uniqueness of $\Pol(b)$, $\sigma_{w^{-1}}(\sigma_{j^*}^*(b))=b_0$ as desired.
\end{proof}

\subsection{Lusztig and vertex data of $\Pw$}\label{section:lusztigdata}

The goal of this section is to show that for any $P \in \Pw$,  $\mu_v = \mu_{v_w}$ for every $v \in W$. First, we need to investigate where the zeros in the Lusztig data are located. 

Let $\ui=(i_1, \dots, i_m)$ be a tuple. Consider two subwords of $\ui$, $\underline{a} = (i_{a_1}, \dots, i_{a_k})$ and $\underline{b} = (i_{b_1} \dots, i_{b_k})$. We say the subword $\underline{a}$ comes after $\underline{b}$ in the reverse-lexicographical order if for some $n$, $a_n<b_n$ and $a_j = b_j$ for every $j\geq n$. 

\begin{definition}
Let $\ui$ be a reduced word of $w_0$. For $w \in W$, define the \textbf{rightmost} subword $\ui^w$ as the first subword in the reverse-lexicographical ordering that is a reduced word of $w$. 
\end{definition}

The next two lemmas will show that this rightmost word for $w^{-1}w_0$ will always start with a reduced word for $v_w^{-1} v$. 

\begin{lemma}\label{lemma:intersection}
Fix $w \in W$. Let $\ui = (i_1, \dots, i_m)$ be a reduced word for $w_0$ and let $i^{w} = (i_{j_1}, \dots, i_{j_{\ell(w)}})$. 

For any terminal subword $\ui' = (i_k, i_{k+1}, \dots, i_{m})$ of $\ui$, the subword of $\ui$ indexed by the intersection $\{k, k+1, \dots, m\} \cap \{j_1, \dots, j_{\ell(w)}\}$ is a reduced word for the maximal length element in $[e, w]_L \cap [e,  s_{i_k} \cdots s_{i_{m}}]$.
\end{lemma}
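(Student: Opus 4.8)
The plan is to induct on the length of the terminal subword $\ui' = (i_k, \dots, i_m)$, i.e. downward induction on $k$ from $k = m+1$ (the empty terminal subword, where the statement is trivial since both sides are $e$) to $k = 1$. Write $v = s_{i_k} \cdots s_{i_m}$ and $v' = s_{i_{k+1}} \cdots s_{i_m}$, so $v = s_{i_k} v'$ is reduced (as $\ui$ is reduced). Let $u'$ be the maximal length element of $[e,w]_L \cap [e, v']$, which by the inductive hypothesis is given by the subword indexed by $\{k+1, \dots, m\} \cap \{j_1, \dots, j_{\ell(w)}\}$; I must show that the maximal length element $u$ of $[e,w]_L \cap [e,v]$ is $u'$ if $k \notin \{j_1, \dots, j_{\ell(w)}\}$, and is $s_{i_k} u'$ if $k \in \{j_1, \dots, j_{\ell(w)}\}$ — and in the latter case that $s_{i_k} u'$ is indeed a reduced product, so that it is the subword indexed by $\{k,k+1,\dots,m\} \cap \{j_1,\dots,j_{\ell(w)}\}$.

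The key observation is a recursive description of both the rightmost subword and the intersection of intervals when we prepend one letter. For the rightmost subword: $k \in \{j_1,\dots,j_{\ell(w)}\}$ precisely when $s_{i_k} \cdot (\text{rightmost subword of } (i_{k+1},\dots,i_m) \text{ for } s_{i_k}w')$... more cleanly, I would first establish (or cite, since $\ui^w$ was just defined) the standard fact that the rightmost subword $\ui^w$ includes position $k$ iff $s_{i_k} w'' < w''$ is \emph{not} forced, where $w''$ is the element of $w$'s "rightmost realization" using only positions $> k$; concretely, letting $w_{>k}$ denote the maximal-length element of $[e,w]_R$ realizable as a subword of $(i_{k+1},\dots,i_m)$ read left-to-right, position $k$ is used iff $\ell(s_{?}\cdots) $ increases — I will phrase this via: the rightmost subword of $\ui'=(i_k,\dots,i_m)$ for $w$ uses position $k$ iff $s_{i_k}$ is NOT a left descent of the maximal element $u'$ of $[e,w]_L \cap [e,v']$ that can still be "grown", i.e. iff $s_{i_k} u' \le w$ and $\ell(s_{i_k} u') = \ell(u')+1$. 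This last equivalence is exactly what needs to be matched against the interval side.

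For the interval side, the main tool is the Lifting Property (Theorem \ref{theorem:weylproperties}) together with Theorem \ref{theorem:theorem3.7coxeter} (via its left-handed version, Corollary \ref{corollary:longestexists}). Given $u' = $ max of $[e,w]_L \cap [e,v']$, I claim: if $s_{i_k} u'$ is a reduced product and $s_{i_k}u' \le w$, then $s_{i_k}u'$ is the max of $[e,w]_L\cap[e,v]$ — it lies in the set since $s_{i_k}u' \le_L w$ (reduced product plus $\le$... here I need $s_{i_k}u' \le_L w$, which follows from $u' \le_L w$, $s_{i_k}u' \le w$, and the Lifting Property) and $s_{i_k}u' \le s_{i_k}v' = v$; and no longer element works because any $z \in [e,w]_L \cap [e,v]$ has $z$ or $s_{i_k}z$ lying in $[e,w]_L\cap[e,v']$ hence of length $\le \ell(u')$, forcing $\ell(z) \le \ell(u')+1$. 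Conversely, if $s_{i_k}u'$ is not a reduced product (i.e. $s_{i_k} \in D_L(u')$) or $s_{i_k}u' \not\le w$, then the max of $[e,w]_L\cap[e,v]$ is still $u'$: clearly $u' \le v$ too, and maximality follows by the same "$z$ or $s_{i_k}z$" argument plus checking the boundary cases with the Exchange Property. The last thing to verify is that these two dichotomies — "position $k$ used in $\ui^w$" and "$s_{i_k}u'$ is a reduced product with $s_{i_k}u' \le w$" — coincide; this is where I expect the real work to be, reconciling the left-weak-order bookkeeping of the interval $[e,w]_L$ with the left-to-right greedy nature of the rightmost subword, and I anticipate needing a short separate lemma of the form "$s_{i_k}u' \le w$ and $\ell(s_{i_k}u')=\ell(u')+1$ $\iff$ the greedy algorithm for $\ui^w$ picks position $k$", proved by Exchange/Lifting. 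Modulo that lemma, assembling the induction is routine.
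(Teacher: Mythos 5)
Your induction is organized the same way as the paper's, and the length-bounding half of your maximality argument (splitting $z \in [e,w]_L \cap [e,v]$ into $z \le v'$ or $z = s_{i_k}z'$ reduced with $z' \in [e,w]_L \cap [e,v']$) is exactly what the paper does. The problem is the step you defer: the matching of ``position $k$ is used in $\ui^w$'' with your condition ``$s_{i_k}u' \le w$ and $\ell(s_{i_k}u') = \ell(u')+1$.'' This is not a routine sublemma to be filled in later --- it is the entire content of the lemma --- and, more seriously, it is false as you have stated it, because you use the strong Bruhat order where the left weak order is required. Membership in $[e,w]_L$ needs $s_{i_k}u' \le_L w$, and your claim that this follows from $u' \le_L w$, $s_{i_k}u' \le w$ and the Lifting Property does not hold: the Lifting Property yields only strong-order conclusions. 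Concretely, in type $A_3$ take $\ui = (1,2,3,2,1,2)$ (a reduced word for $w_0$), $w = s_2s_1s_3$, and $k=4$, so $v' = s_1s_2$ and $v = s_2s_1s_2$. Here $\ui^w$ occupies positions $\{2,3,5\}$, so $u' = y' = s_1$, and $s_{i_4}u' = s_2s_1$ is reduced with $s_2s_1 \le w$; your criterion would force position $4$ into the rightmost word and declare $s_2s_1$ the maximum of $[e,w]_L \cap [e,v]$. In fact $w(s_2s_1)^{-1} = s_2s_3s_2$ has length $3 \neq \ell(w)-2$, so $s_2s_1 \not\le_L w$, position $4$ is not used, and the true maximum is $s_1$.

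Note how the paper's proof sidesteps ever formulating such an ``iff'': it takes $y$ to be whatever element the positions $\{k,\dots,m\} \cap \{j_1,\dots,j_{\ell(w)}\}$ produce, observes $y \in [e,w]_L \cap [e,v]$ because terminal segments of $\ui^w$ are automatically $\le_L w$, and then bounds the length of every $x$ in the intersection by $\ell(y)$ via the same dichotomy you use; the one delicate case is $x = s_{i_k}y'$ reduced and lying in $[e,w]_L$, and there it is precisely the weak-order hypothesis $x \le_L w$ (not merely $x \le w$) that forces position $k$ into $\ui^w$, since then $w(s_{i_k}y')^{-1} < wy'^{-1}$ is still realizable as a subword of $(i_1,\dots,i_{k-1})$, contradicting rightmostness if $k$ were skipped. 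To repair your argument you must replace ``$s_{i_k}u' \le w$'' by ``$s_{i_k}u' \le_L w$'' throughout and actually prove this realizability statement; once that is done your proof collapses into the paper's.
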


\begin{proof}
We proceed by induction on $m+1-k$, the length of the terminal subword $\ui'$.  

Suppose $k =m$. If $[e,w]_L \cap [e, s_{i_m}] \neq \{e\}$, then the maximal element in this set is $s_{i_m}$. Then $s_{i_m} \in D_R(w)$ and hence $j_{\ell(w)} = m$ by definition. Thus the intersection $\{m\} \cap \{j_1, \dots, j_{\ell(w)}\} = m$ and the reduced word $(i_m)$ is a reduced word for $s_{i_m}$. If $[e,w]_L \cap [e, s_{i_m}]  = \{e\}$, then $s_{i_m} \not\in D_R(w)$ and so $\{m\} \cap \{j_1, \dots, j_{\ell(w)}\} = \emptyset$ which is a reduced word for the maximal element. 

Assume the hypothesis holds for the subword $(i_{k+1}, \dots, i_m)$ and let $y'$ be Weyl element given by the subword of $\ui$ indexed by $\{k+1, \dots, m\} \cap \{j_1, \cdots, j_{\ell(w)}\}$. By assumption, this is also the maximal element in $[e,w]_L \cap [e, s_{i_{k+1}} \cdots s_{i_m}]$. 

For $\ui' = (i_k, \dots, i_m)$, let $y$ be the Weyl element given by the subword of $\ui$ indexed by $\{k, k+1, \dots, m \} \cap \{j_1, \dots, j_{\ell(w)}\}$. Then either $y = y'$ or $y = s_{i_k} \cdot y'$ is a reduced product and so $\ell(y') \leq \ell(y)$. By definition of $\ui^w$, $y \leq_L w$ and hence $y \in[e,w]_L \cap [e,s_{i_k} \dots s_{i_m}]$. 

For any $x \in [e,w]_L \cap [e,s_{i_k} \dots s_{i_m}]$, either $x \in [e,w]_L \cap [e,s_{i_{k+1}} \dots s_{i_m}]$ or $x = s_{i_k} \cdot x'$ is a reduced product for some $x ' \in [e,w]_L \cap [e,s_{i_{k+1}} \dots s_{i_m}]$. In the first case, $\ell(x) \leq \ell(y') \leq \ell(y)$. In the second case, if $x' \neq y'$, then  $\ell(x) = \ell(x')+1 < \ell(y')+1$ so that $\ell(x) \leq \ell(y') \leq \ell(y)$. If $x'=y'$, then necessarily $x = s_{i_k} y'=y$ by above and $\ell(x) = \ell(y)$. Thus the length of every element in this intersection is bounded above by $\ell(y)$ and hence $y$ must be the unique of the maximal length element.
\end{proof}

Note that by the definition of $\ui^w$, the phrase ``the subword of $\ui$ indexed by'' in the previous lemma can be replaced with ``the terminal subword of $\ui^w$ indexed by''. 

\begin{lemma}\label{lemma:reducedwords}
Let $v,w \in W$. For every reduced word $\ui = (i_1, \dots, i_m)$ of $w_0$ such that $v_w = s_{i_1} \dots s_{i_{\ell(v_w)}}$ and $v = s_{i_1} \dots s_{i_{\ell(v)}}$, $\ui^{w^{-1}w_0}=(i_{\ell(v_w) + 1},\dots, i_{\ell(v)}, i_{j_{\ell(v)+1}} \dots, i_{j_{m + \ell(v_w) - \ell(w)}})$ for some indices $\ell(v)+1 \leq j_{\ell(v) + 1} \leq \dots \leq j_{m + \ell(v_w) - \ell(w)}\leq m$. 
\end{lemma}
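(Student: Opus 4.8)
The plan is to identify $\ui^{w^{-1}w_0}$ — the rightmost (reverse-lexicographically first) reduced subword of $\ui$ spelling $w^{-1}w_0$ — and show its initial segment is exactly $(i_{\ell(v_w)+1},\dots,i_{\ell(v)})$, a reduced word for $v_w^{-1}v$. First I would record the setup: since $v_w \leq_R v$, the reduced word $\ui$ of $w_0$ can indeed be chosen so that $s_{i_1}\cdots s_{i_{\ell(v_w)}} = v_w$ and $s_{i_1}\cdots s_{i_{\ell(v)}} = v$, and then $(i_{\ell(v_w)+1},\dots,i_{\ell(v)})$ is a reduced word for $v_w^{-1}v$. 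By Lemma \ref{lemma:vwinverse}, $v_w^{-1}v \leq_R w^{-1}w_0$, so this segment is an initial word of $w^{-1}w_0$; the total length $m+\ell(v_w)-\ell(w)$ is just $\ell(w^{-1}w_0)+(\ell(v)-\ell(v_w)) = \ell(w^{-1}w_0)+\ell(v_w^{-1}v)$, consistent with the claim. The content is therefore that the rightmost subword of $\ui$ for $w^{-1}w_0$ begins precisely at position $\ell(v_w)+1$ and occupies positions $\ell(v_w)+1,\dots,\ell(v)$ consecutively before scattering among later indices.

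The key tool is Lemma \ref{lemma:intersection}. Apply it with the terminal subword $\ui' = (i_k,\dots,i_m)$ for various $k$. Its output is that the subword of $\ui$ indexed by $\{k,\dots,m\}\cap\{j_1,\dots,j_{\ell(w)}\}$ (where $\ui^w$ sits at positions $j_1<\cdots<j_{\ell(w)}$) spells the maximal-length element of $[e,w]_L \cap [e, s_{i_k}\cdots s_{i_m}]$. I want to translate this into information about $\ui^{w^{-1}w_0}$. The complement of $\ui^w$ inside $\ui$ spells $w_0$ "quotiented" by $w$ in a suitable sense; more precisely, by the Word/Exchange properties, deleting the rightmost reduced subword for $w$ from a reduced word for $w_0$ leaves a reduced word for $w^{-1}w_0$ — and in fact leaves the rightmost such subword, because the greedy right-to-left extraction of $w$ is exactly the complement of the greedy right-to-left extraction of $w^{-1}w_0$. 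So $\ui^{w^{-1}w_0}$ is precisely the subword of $\ui$ indexed by the positions \emph{not} in $\{j_1,\dots,j_{\ell(w)}\}$. I would make this complementation precise as a preliminary step: in a reduced word $\ui$ for $w_0$, the positions of $\ui^{w}$ and of $\ui^{w^{-1}w_0}$ partition $\{1,\dots,m\}$. This follows by induction on $m$ using the Exchange Property at the last letter $s_{i_m}$ together with Lemma \ref{lemma:DL(w)cupDR=s}: either $s_{i_m}\in D_R(w)$, in which case $m$ belongs to $\ui^w$'s positions and $s_{i_m}\notin D_R(w^{-1}w_0)$ forces it out of $\ui^{w^{-1}w_0}$'s positions, or vice versa, and we recurse on $(i_1,\dots,i_{m-1})$.

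With the complementation established, I would compute which positions among $\{1,\dots,\ell(v)\}$ lie in $\ui^w$. Using Lemma \ref{lemma:intersection} with $\ui' = (i_{k},\dots,i_m)$ for $k$ ranging down through $\ell(v_w)+1,\dots$: since $v = s_{i_1}\cdots s_{i_{\ell(v)}}$ and $v_w = s_{i_1}\cdots s_{i_{\ell(v_w)}}$, and $v_w$ is by definition the maximal-length element of $[e,v]_R\cap[e,w]$, equivalently (passing to inverses) $v_w^{-1}$ is the maximal-length element of $[e,v^{-1}]_L\cap[e,w]$ — I need to match this against $[e,w]_L\cap[e,s_{i_k}\cdots s_{i_m}]$. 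Here is the crux: for $k = \ell(v_w)+1$, I claim $[e,w]_L \cap [e,s_{i_{\ell(v_w)+1}}\cdots s_{i_m}]$ contains $v_w^{-1}v$-related data making positions $\ell(v_w)+1,\dots,\ell(v)$ all avoid $\ui^w$; whereas for $k\leq \ell(v_w)$ the maximal element jumps up by including $s_{i_k}$. Concretely, I would show: (a) none of the positions $\ell(v_w)+1,\dots,\ell(v)$ lies in $\{j_1,\dots,j_{\ell(w)}\}$ — so by complementation they all lie in $\ui^{w^{-1}w_0}$, in order, giving the claimed initial segment; and (b) the remaining $m-\ell(v)$ positions of $\ui^{w^{-1}w_0}$ (there are $m+\ell(v_w)-\ell(w)-(\ell(v)-\ell(v_w)) = m-\ell(v)$... let me recount: total length $m+\ell(v_w)-\ell(w)$, initial segment length $\ell(v)-\ell(v_w)$, so remaining length $m+2\ell(v_w)-\ell(w)-\ell(v)$; hmm) lie among indices $>\ell(v)$, i.e. in $\{\ell(v)+1,\dots,m\}$, which is automatic once (a) and the count of $\ui^w$-positions $\leq \ell(v)$ are pinned down. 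For (a), the point is that the greedy right-to-left extraction of $w$ from $\ui$, when it reaches the block of positions $\leq \ell(v)$, has already collected the maximal terminal-word piece it can, and that piece has left-factor avoiding the "new" letters $i_{\ell(v_w)+1},\dots,i_{\ell(v)}$ precisely because $v_w$ — not a longer element — is the maximal member of $[e,v]_R\cap[e,w]$; including any $i_r$ with $\ell(v_w)<r\leq\ell(v)$ into $\ui^w$ would produce (via Lemma \ref{lemma:intersection} at $k=r$) an element of $[e,w]_L$ that is $\geq_L$ something forcing a member of $[e,v]_R\cap[e,w]$ longer than $v_w$, a contradiction.

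The main obstacle I anticipate is exactly step (a): carefully managing the left/right and inverse dualities so that Lemma \ref{lemma:intersection} (phrased in $\leq_L$ and terminal subwords) correctly controls the \emph{initial} behavior of $\ui^{w^{-1}w_0}$, and then converting "maximal element of $[e,w]_L\cap[e,s_{i_k}\cdots s_{i_m}]$ jumps when $k$ crosses $\ell(v_w)$" into "$r\in\{1,\dots,\ell(v)\}$ lies in $\ui^w$'s positions iff $r\leq\ell(v_w)$". I would handle this by inducting downward on $k$ from $m$ to $1$, at each step using Lemma \ref{lemma:intersection} to name the maximal element $y^{(k)}$ of $[e,w]_L\cap[e,s_{i_k}\cdots s_{i_m}]$, observing $y^{(k)}$ either equals $y^{(k+1)}$ (position $k\notin\ui^w$) or is $s_{i_k}\cdot y^{(k+1)}$ (position $k\in\ui^w$), and tying the transition for $k$ in the range $(\ell(v_w),\ell(v)]$ to the maximality-defining property of $v_w$ via Lemma \ref{lemma:reducedproductandbruhat} and Lemma \ref{lemma:vwinverse}. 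Once the positions of $\ui^w$ within $\{1,\dots,\ell(v)\}$ are shown to be exactly $\{1,\dots,\ell(v_w)\}$, the complementation lemma immediately yields that $\ui^{w^{-1}w_0}$ starts with $(i_{\ell(v_w)+1},\dots,i_{\ell(v)})$ followed by entries at indices $>\ell(v)$, which is the statement.
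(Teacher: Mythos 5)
Your argument hinges on a ``complementation'' claim that is false: the positions of $\ui^{w}$ and of $\ui^{w^{-1}w_0}$ need not partition $\{1,\dots,m\}$. Take type $A_2$, $\ui=(1,2,1)$, $w=s_1$, $v=s_1s_2$ (so $v_w=s_1$, $w^{-1}w_0=s_2s_1$). Then $\ui^{w}$ sits at position $3$ while $\ui^{w^{-1}w_0}$ sits at positions $2,3$: the two rightmost subwords overlap and position $1$ is in neither. The induction you sketch for the claim also misquotes Lemma \ref{lemma:DL(w)cupDR=s}: that lemma says $D_R(w)$ and $D_L(w^{-1}w_0)$ are complementary, not $D_R(w)$ and $D_R(w^{-1}w_0)$; in the example $s_1\in D_R(w)\cap D_R(w^{-1}w_0)$, so the dichotomy ``$s_{i_m}\in D_R(w)$ forces $m$ out of the positions of $\ui^{w^{-1}w_0}$'' fails. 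The same example kills the derived statement you want to feed into step (a): the positions of $\ui^{w}$ inside $\{1,\dots,\ell(v)\}$ are empty there, not $\{1,\dots,\ell(v_w)\}$. (The conclusion of the lemma itself is fine in this example: $\ui^{w^{-1}w_0}=(i_2,i_3)$, starting with a reduced word for $v_w^{-1}v=s_2$ --- but it is obtained without any relation to the positions of $\ui^{w}$.) Step (a) as written is also only a sketch (``would produce \dots a contradiction''), and you flag yourself that the position count does not come out cleanly.

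The workable route, and the one the paper takes, does not pass through $\ui^{w}$ at all: apply Lemma \ref{lemma:intersection} with $w$ replaced by $w^{-1}w_0$ and the terminal subword $(i_{\ell(v)+1},\dots,i_m)$, whose product is $v^{-1}w_0$. One then has to identify the maximal-length element of $[e,w^{-1}w_0]_L\cap[e,v^{-1}w_0]$ as $(v_w^{-1}v)^{-1}w^{-1}w_0$; this is where the real work lies, and it is done by translating to $[e,w_0w]_R\cap[e,w_0v]$ and computing with the Demazure product via Propositions \ref{proposition:generalProp6.4} and \ref{proposition:demazureproduct}. Lemma \ref{lemma:intersection} then says the part of $\ui^{w^{-1}w_0}$ lying in positions $>\ell(v)$ is a reduced word for $(v_w^{-1}v)^{-1}w^{-1}w_0$; since $w^{-1}w_0=(v_w^{-1}v)\cdot\bigl((v_w^{-1}v)^{-1}w^{-1}w_0\bigr)$ is reduced, the length count forces the initial part of $\ui^{w^{-1}w_0}$ (in positions $\le\ell(v)$) to be a reduced word for $v_w^{-1}v$, and rightmostness places it at positions $\ell(v_w)+1,\dots,\ell(v)$. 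Your proposal correctly identifies Lemma \ref{lemma:intersection} and Lemma \ref{lemma:vwinverse} as relevant and gets the length bookkeeping of the statement right, but without replacing the false complementation step by an identification of the maximal element of $[e,w^{-1}w_0]_L\cap[e,v^{-1}w_0]$ (or an equivalent argument), the proof does not go through.
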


\begin{proof}
Let $\ui$ be a reduced word of $w_0$ as in the statement of the lemma. To show that the word $\ui^{w^{-1}w_0}$ begins with a reduced word for $v_w^{-1}v$, we will show that $(v_w^{-1}v)^{-1} w^{-1}w_0$ is the longest length element of $[e,w^{-1}w_0]_L \cap [e,v^{-1}w_0]$. By Lemma \ref{lemma:intersection}, this says that the length $\ell(w^{-1} w_0) - \ell(v_w^{-1}v)$ terminal word of $\ui^{w^{-1}w_0}$ is a reduced word of $(v_w^{-1}v)^{-1}w^{-1}w_0$. But as $w^{-1}w_0 = \left(v_w^{-1}v \right) \cdot \left((v_w^{-1}v)^{-1}w^{-1}w_0\right)$ is a reduced product, this will imply the initial word of length $\ell(v_w^{-1}v)$ of $\ui^{w^{-1}w_0}$ must be a reduced word for $v_w^{-1}v$. 

\begin{claim} $(v_w^{-1}v)^{-1}w^{-1}w_0$ is the longest length element of $[e,w^{-1}w_0]_L \cap [e, v^{-1}w_0]$. 
\end{claim}

\begin{claimproof}
Note that $x \in [e, w^{-1}w_0]_L \cap [e,v^{-1}w_0] \iff x^{-1} \in [e, w_0 w]_R \cap [e, w_0 v]$. The longest element in this intersection is the Demazure product $(w_0 w w_0) ((w_0 w^{-1} w_0) * (w_0v))$ by Proposition \ref{proposition:demazureproduct}. We want to show that this product is equal to $w_0 w(v_w^{-1}v)$. 

By Proposition \ref{proposition:generalProp6.4},  $((w_0 w^{-1} w_0) * (w_0v)) = w_0 w^{-1} w_0 x$ where $x$ is the maximal length element such that $x \leq w_0 v$ and $(w_0 w^{-1} w_0) \cdot x$ is reduced. Recall that $a \mapsto w_0 a w_0$ is an automorphism of the weak and strong Bruhat orders. Then  $x\leq w_0 v \iff w_0 x w_0 \leq vw_0$. Also, by Lemma \ref{lemma:reducedproductandbruhat},
\[
(w_0w^{-1}w_0) \cdot x \text{ is reduced  } \iff x \leq_R w_0 w \iff w_0 xw_0 \leq_R ww_0 \iff w^{-1} \cdot (w_0xw_0) \text{ is reduced}
\]
Since $\ell(x) = \ell(w_0 xw_0)$, then $w^{-1} * (vw_0) = w^{-1} (w_0 xw_0)$ by the maximality of $x$. Thus
\begin{align*}
x = w_0 w (w^{-1} * (vw_0)) w_0 = w_0 w ((vw_0)^{-1} * w)^{-1} w_0 =  w_0 w (v_w^{-1} (vw_0)) w_0 = (w_0 w) (v_w^{-1} v)
\end{align*} 
Hence $w_0 w (v_w^{-1}v)$ is the longest length element in $[e, w_0 w]_R \cap [e,w_0 v]$ and so $(v_w^{-1}v)^{-1} w^{-1} w_0$ is the longest length element in $[e, w^{-1}w_0]_L \cap [e, v^{-1}w_0]$. 
\end{claimproof}

Now, by applying Lemma \ref{lemma:intersection}, the terminal subword of $\ui^{w^{-1}w_0}$ indexed by the intersection of the indices of $\ui^{w^{-1}w_0}$ with $\{\ell(v)+1, \dots, m\}$ is a reduced word of $(v_w^{-1}v)^{-1} w^{-1}w_0$. Thus this intersection is of length $\ell(w_0) + \ell(v_w) - \ell(w) - \ell(v)$ and is equal to $\{ j_{\ell(v)+1}, j_{\ell(v)+2}, \dots, j_{m + \ell(v_w) - \ell(w)}\}$ for some indices $\ell(v)+1 \leq j_{\ell(v)+1} \leq \cdots \leq j_{m + \ell(v_w)-\ell(w)} \leq m$.  As $(i_{\ell(v_w) + 1}, \cdots, i_{\ell(v)})$ is a reduced word of $v_w^{-1} v$, then the word $(i_{\ell(v_w) + 1}, \cdots, i_{\ell(v)}, i_{j_{\ell(v)+1}}, \dots, i_{j_{m - \ell(w)}}) $ is a reduced word for $w^{-1}w_0$ and must be the rightmost such word. 
\end{proof}

We will show that for any $P \in \Pw$, the Lusztig data of $P$ with respect to the reduced word $\ui$ will have zeros in the position of the subword $\ui^{w^{-1}w_0}$. 

\begin{proposition}\label{proposition:zeros}
Let $\ui = (i_1, \dots, i_m)$ by any reduced word of $w_0$. For any $w \in W$ and any $P \in \Pw$, the Lusztig data of $P$ with respect to $\ui$ will have zeros in the position of the subword $\ui^{w^{-1}w_0}$. 
\end{proposition}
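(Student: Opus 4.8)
The plan is to proceed by induction on $\ell(w_0) - \ell(w)$, peeling off the first letter of the rightmost word $\ui^{w^{-1}w_0}$ and applying the Saito reflection results of Section \ref{section:saitoreflectionresults}. Write $P = \Pol(b)$ for some $b \in B(\infty)$. The base case is $w = w_0$: then $w^{-1}w_0 = e$, the subword $\ui^{w^{-1}w_0}$ is empty, and there is nothing to prove. For the inductive step, let $j$ be the first letter of $\ui^{w^{-1}w_0}$; by Lemma \ref{lemma:reducedwords} applied with $v = e$ (so $v_w = e$), the word $\ui^{w^{-1}w_0}$ is literally the rightmost subword of $\ui$ spelling $w^{-1}w_0$, and its first letter $i_{j_1}$ corresponds to some simple reflection $s_{j_1} \in D_L(w^{-1}w_0)$. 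Equivalently, by Lemma \ref{lemma:DL(w)cupDR=s} (applied to $w_0 = w \cdot w^{-1}w_0$), there is an index, which I will also relabel, with $s_{k} \in D_R(w^{-1}w_0)$ chosen to match the \emph{last} letter of $\ui^{w^{-1}w_0}$; I will instead organize the induction from the right end of the word, which is cleaner for the Saito $*$-reflection.

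Concretely: let $j$ be the last letter of $\ui$ that lies in the rightmost subword $\ui^{w^{-1}w_0}$; since $\ui^{w^{-1}w_0}$ is rightmost, $j = i_m$ and $s_{i_m} \in D_R(w^{-1}w_0)$. By Corollary \ref{corollary:saito}, $\Pol(\sigma^*_{i_m^*}(b)) \in \P_{s_{i_m^*}w}$, and $\ell(s_{i_m^*}w) = \ell(w) + 1$ since $s_{i_m} \in D_R(w^{-1}w_0)$ forces $s_{i_m^*} \notin D_L(w)$. Applying the induction hypothesis to $w' := s_{i_m^*}w$ and to a reduced word $\ui'$ of $w_0$ obtained by deleting $i_m$ from $\ui$ and prepending $i_m^*$ — namely $\ui' = (i_m^*, i_1, \dots, i_{m-1})$ — the Lusztig data of $\Pol(\sigma^*_{i_m^*}(b))$ with respect to $\ui'$ has zeros exactly in the positions of $(\ui')^{(w')^{-1}w_0}$. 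By Lemma \ref{lemma:saitoandlusztig}, the Lusztig data of $\Pol(\sigma^*_{i_m^*}(b))$ with respect to $\ui'$ is $(0, n_1, \dots, n_{m-1})$ where $(n_1, \dots, n_m)$ is the Lusztig data of $P$ with respect to $\ui$; comparing, the zeros among $n_1, \dots, n_{m-1}$ sit in the positions prescribed by $(\ui')^{(w')^{-1}w_0}$ shifted by one. It then remains to check the combinatorial identity that the positions of $\ui^{w^{-1}w_0}$ inside $\ui$ are exactly: position $m$ (for the letter $i_m$), together with the positions of $(\ui')^{(w')^{-1}w_0}$ inside $\ui'$ translated back to $\ui$. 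This is where Lemma \ref{lemma:reducedwords} (or directly the rightmost-word bookkeeping of Lemma \ref{lemma:intersection}) does the work: deleting the last letter $i_m$ of $w^{-1}w_0$ and accounting for the prepended $i_m^*$ is precisely the recursion defining the rightmost subword. The fact that $n_m = 0$ is exactly Corollary \ref{corollary:vw=w}(iii) applied along a reduced word of $w_0$ whose initial segment of length $\ell(w)$ spells $w$ and whose letter in position $m$ is the last letter of $w^{-1}w_0$.

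The main obstacle I anticipate is purely combinatorial: verifying that the position set of the rightmost subword $\ui^{w^{-1}w_0}$ in $\ui$ decomposes under the operation ``delete $i_m$, prepend $i_m^*$'' in the way the induction needs. One must be careful that $(\ui')^{(w')^{-1}w_0}$, computed in the new word $\ui'$, maps back correctly: the prepended letter $i_m^*$ occupies position $1$ of $\ui'$ but is \emph{not} used in spelling $(w')^{-1}w_0 = w^{-1}w_0 s_{i_m}$ (indeed that product has length one less), so the rightmost subword of $\ui'$ for $(w')^{-1}w_0$ lives among positions $2, \dots, m$ of $\ui'$, i.e. positions $1, \dots, m-1$ of $\ui$, and together with position $m$ of $\ui$ this should reconstitute the positions of $\ui^{w^{-1}w_0}$. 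Establishing this cleanly may require a short auxiliary lemma about how the rightmost subword behaves under removing the final letter of the target element; I would prove it by the same reverse-lexicographic induction used in Lemma \ref{lemma:intersection}, or simply cite Lemma \ref{lemma:reducedwords} with the roles of $v$ and $v_w$ specialized. Everything else — the transfer of zeros through the Saito reflection, the membership $\Pol(\sigma^*_{i_m^*}(b)) \in \P_{s_{i_m^*}w}$, and the vanishing $n_m = 0$ — follows immediately from the results already established in Sections \ref{section:combinatorialdata} and \ref{section:saitoreflectionresults}.
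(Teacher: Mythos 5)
There is a genuine gap, and it sits at the very first step of your induction. You claim that ``since $\ui^{w^{-1}w_0}$ is rightmost, $j = i_m$ and $s_{i_m} \in D_R(w^{-1}w_0)$.'' This is false in general: the rightmost subword ends at position $m$ only when $s_{i_m}$ actually lies in $D_R(w^{-1}w_0)$, and for an arbitrary reduced word $\ui$ of $w_0$ there is no reason it should. Concretely, in the $A_3$ example of the paper with $w = s_1s_2s_3$, $w^{-1}w_0 = s_1s_2s_1$, and $\ui = (2,3,1,2,1,3)$, the rightmost subword occupies positions $3,4,5$, the last letter of $\ui$ is $3 \notin D_R(w^{-1}w_0)$, and the last Lusztig coordinate $n_6$ is genuinely nonzero for generic $P \in \Pw$. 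Your induction step says nothing in this situation, so the argument only covers the special reduced words whose final letter is a right descent of $w^{-1}w_0$. The missing ingredient is exactly Lemma \ref{lemma:saitoPw}, which you never invoke: when $s_{i_m} \notin D_R(w^{-1}w_0)$, the reflection $\sigma^*_{i_m^*}$ keeps the polytope in $\Pw$ (with the \emph{same} $w$), so one can rotate the word, replacing $\ui$ by $(i_m^*, i_1, \dots, i_{m-1})$, and repeat until the final letter of the rightmost subword sits in the last position; only then does Corollary \ref{corollary:saito} apply to peel that letter off and decrease $\ell(w^{-1}w_0)$. This two-mode use of the Saito reflections (rotation via Lemma \ref{lemma:saitoPw}, descent via Corollary \ref{corollary:saito}) is the structural core of the paper's proof, and your proposal is missing the first mode entirely.

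Two smaller points. First, your justification of $n_m = 0$ cites Corollary \ref{corollary:vw=w}(iii), which requires the initial segment of $\ui$ to spell $w$; for an arbitrary $\ui$ that is not available. What you actually need (in the case $s_{i_m} \in D_R(w^{-1}w_0)$) is that $w \leq_R w_0 s_{i_m} = s_{i_1}\cdots s_{i_{m-1}}$, so that Lemma \ref{lemma:vw=w} gives $\mu_{w_0 s_{i_m}} = \mu_{w_0}$ and hence the last edge has length zero. Second, the combinatorial bookkeeping you flag, that deleting the final position and prepending $i_m^*$ sends the rightmost subword for $w^{-1}w_0$ to the rightmost subword for $s_{i_m^*}w$ (shifted appropriately), is indeed provable by the greedy/reverse-lexicographic argument you sketch, so that acknowledged gap is benign; but it does not repair the main one, and an analogous compatibility must also be checked for the rotation steps once you add them.
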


\begin{proof}
Fix a reduced word $\ui = (i_1, \dots, i_m)$ of $w_0$. We proceed by induction on $\ell(w^{-1} w_0)$. 

When $\ell(w^{-1}w_0) = 1$, then $\ui^{w^{-1}w_0} = i_j$ for some $j$. If $j=m$, then $(i_1, \dots, i_{m-1})$ is a reduced word for $w$ and by Lemma \ref{lemma:vw=w}, $n^\ui_{m}=0$. If $j \neq m$, then $\sigma_{s_{i^*_{j+1}} \cdots s_{i^*_m}}^*(b) \in \Pw$ by Lemma \ref{lemma:saitoPw} so the reduced word $\ui' = (i_{j+1}^*, \dots, i_m^*, i_1, \dots, i_j)$ has ${\ui'}^{w^{-1}w_0}$ in the last position, hence $n_{i_j} =0$ by above.

Assume for $\ell(w^{-1}w_0) =k$, the zeros of the Lusztig data $n_\bullet^\ui$ are in the position $\ui^{w^{-1}w_0}$. Suppose $w$ is such that $\ell(w^{-1}w_0) = k+1$ and $n_\bullet^\ui$ is the Lusztig data with respect to $\ui$. If $i_j$ is the final coordinate of $\ui^{w^{-1}w_0}$, then $s_{i_{j+1}}, \dots, s_{i_m} \not\in D_R(w^{-1}w_0)$ so we can apply Lemma \ref{lemma:saitoPw} $j-1$ times so that the Lusztig data with respect to $(i_{j+1}^*, \dots, i_m^*, i_1, \dots, i_j)$ of the resulting polytope is $(0, \dots, 0, n_1, \dots, n_j)$. By the base case, $n_j=0$ and hence the Lusztig datum in the position of the final term of $\ui^{w^{-1}w_0}$ is zero. Now, apply $\sigma^*_{i_j^*}$ so that the Lusztig data with respect to $(i_j^*, i_{j+1}^*, \dots, i_m^*, i_1, \dots, i_{j-1})$ of the resulting polytope is $(0, 0, \dots, 0, n_1, \dots, n_{j-1})$. By Corollary \ref{corollary:saito}, this polytope is in $\P_{s_{i_j^*}w}$, where $\ell(s_{i_j^*}w) = \ell(w) -1$. Thus, by the induction assumption, the Lusztig data corresponding to the rest of the coordinates of $\ui^{w^{-1}w_0}$ will be zero. 
\end{proof}

\begin{example}\label{example:vertexcollapsing2}
Continuing Example \ref{example:vertexcollapsing}, we have $w = s_1s_2s_3$ and $w^{-1} w_0 = s_1s_2s_1 = s_2s_1s_2$.  

For a word $\ui$, the zeros in the Lusztig data are given by the rightmost appearance of a word of $w^{-1}w_0$. The location of these zeros in various reduced words of $w_0$ prove the vertex equalities in Example \ref{example:vertexcollapsing}.  

\renewcommand{\arraystretch}{1.5}

\begin{table}[h]
\centering
\begin{tabular}{l|l|l}
Reduced word & Lusztig data & Equality of vertices\\ \hline
$(1,2,3,1,2,1)$ & $(n_1, n_2, n_3, 0, 0, 0)$ & $\mu_{w} =\mu_{ws_1} = \mu_{ws_1s_2} = \mu_{w_0}$\\
$(2,3,1,2,1,3)$ & $(n_1, n_2, 0,0,0,n_6)$ & $\mu_{s_2s_3} =\mu_{s_2s_3s_1} = \mu_{s_2s_3s_1s_2} = \mu_{s_2s_3s_1s_2s_1}$\\
$(1,3,2,1,3,2)$ & $(n_1, n_2, 0, 0, n_5, 0)$ & $\mu_{s_1s_3} = \mu_{s_1s_3s_2} = \mu_{s_1s_3s_2s_1}$, $\mu_{ws_2s_1} = \mu_{w_0}$ \\
$(3, 2, 1, 3, 2, 3)$ & $(n_1, 0, 0, n_4, 0, n_6)$ & $\mu_{s_3} = \mu_{s_3s_1} = \mu_{s_3s_2s_1}$, $\mu_{s_3s_2s_1s_3} = \mu_{s_3s_2s_1s_3s_2}$ \\
$(1,2,1,3,2,1)$ & $(n_1, n_2, 0, n_4, 0, 0)$ & $\mu_{s_1s_2} = \mu_{s_1s_2s_1}$, $\mu_{ws_1} = \mu_{w s_1 s_2} = \mu_{w_0}$\\
$(2,1,3,2,1,3)$ & $(n_1, 0, n_3, 0, 0, n_6)$ & $\mu_{s_2} = \mu_{s_2s_1}$, $\mu_{s_2s_1s_3} = \mu_{s_2s_1s_3s_2} = \mu_{s_2s_1s_3s_2s_1}$\\
\end{tabular}
\label{table:zeros}
\caption{The zeros in the Lusztig data for $A_3$ MV polytopes}
\label{table:lusztigdata}
\end{table}
\renewcommand{\arraystretch}{1}
\end{example}

Finally, we can prove that the Lusztig data will have zeros in the positions between $\mu_{v_w}$ and $\mu_v$ for every Weyl group element $v$. 

\begin{theorem}\label{theorem:mu_w}
Fix $w \in W$. For every $P \in \Pw$ with vertex data $(\mu_v)_{v \in W}$, $\mu_v = \mu_{v_w}$ for every $v \in W$. 
\end{theorem}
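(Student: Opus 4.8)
The plan is to read the theorem off from the two technical results already established: Proposition~\ref{proposition:zeros}, which pins down the zero entries of the Lusztig data of any $P\in\Pw$, and Lemma~\ref{lemma:reducedwords}, which identifies the rightmost subword $\ui^{w^{-1}w_0}$ once $\ui$ is adapted to both $v_w$ and $v$. There is nothing to prove when $v_w = v$, so the real content is to force the collapse of all the vertices sitting strictly between $\mu_{v_w}$ and $\mu_v$ along a suitable minimal path.

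First I would fix $v \in W$ and build a reduced word of $w_0$ tailored to $v$. Because $v_w \leq_R v$ by definition of $v_w$, and $v \leq_R w_0$ always, I can chain a reduced word for $v_w$, then one for $v_w^{-1}v$, then one for $v^{-1}w_0$, obtaining a reduced word $\ui = (i_1,\dots,i_m)$ of $w_0$ with $v_w = s_{i_1}\cdots s_{i_{\ell(v_w)}}$ and $v = s_{i_1}\cdots s_{i_{\ell(v)}}$. With $\ui$ of exactly this shape, Lemma~\ref{lemma:reducedwords} says that the subword $\ui^{w^{-1}w_0}$ begins with $(i_{\ell(v_w)+1},\dots,i_{\ell(v)})$; in particular the positions $\ell(v_w)+1,\ell(v_w)+2,\dots,\ell(v)$ all belong to the index set of $\ui^{w^{-1}w_0}$.

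Next I would apply Proposition~\ref{proposition:zeros}: since $P \in \Pw$, the Lusztig datum $n_\bullet^\ui(P)$ vanishes at every position of $\ui^{w^{-1}w_0}$, hence $n_k^\ui(P) = 0$ for $\ell(v_w)+1 \leq k \leq \ell(v)$. By the relation~(\ref{equation:lusztigdatafromvertexdata}) between Lusztig data and vertices, $\mu_{w_k^\ui} - \mu_{w_{k-1}^\ui} = n_k^\ui(P)\, w_{k-1}^\ui \alpha_{i_k}^\vee = 0$ for each such $k$, and telescoping along the arc of the minimal path running from $w_{\ell(v_w)}^\ui = v_w$ to $w_{\ell(v)}^\ui = v$ yields $\mu_v = \mu_{v_w}$. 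The substantive difficulty has already been absorbed into Proposition~\ref{proposition:zeros} (through the Saito-reflection results of Section~\ref{section:saitoreflectionresults}) and Lemma~\ref{lemma:reducedwords} (through the Demazure-product description of $v_w$); what remains is purely organizational. The only point I would watch carefully is that the indexing conventions ($w_k^\ui = s_{i_1}\cdots s_{i_k}$, with $n_k^\ui$ the length of the $k$-th edge) line up so that the vanishing entries sit precisely on the segment of the path joining $\mu_{v_w}$ to $\mu_v$, and that the adapted reduced word of $w_0$ really does exist — both of which are immediate from $v_w \leq_R v \leq_R w_0$.
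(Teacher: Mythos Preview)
Your proposal is correct and follows essentially the same approach as the paper: choose a reduced word $\ui$ of $w_0$ with both $v_w$ and $v$ as initial subwords, invoke Lemma~\ref{lemma:reducedwords} to see that $\ui^{w^{-1}w_0}$ occupies the positions $\ell(v_w)+1,\dots,\ell(v)$, apply Proposition~\ref{proposition:zeros} to force the corresponding Lusztig data to vanish, and telescope. Your write-up is in fact slightly more explicit than the paper's about why the adapted reduced word exists and how the telescoping works, but the argument is the same.
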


\begin{proof}
Consider $P \in \Pw$. For $v \in W$, take a reduced word $\ui$ of $w_0$ such that both $v_w$ and $v$ are initial words, i.e. $v_w = s_{i_1} \dots s_{i_{\ell(v_w)}}$ and $v = s_{i_1} \dots s_{i_{\ell(v)}}$.  Then by Lemma \ref{lemma:reducedwords} and Lemma \ref{proposition:zeros}, we know that the Lusztig data associated to $\ui$ will have zeros in the subword $\ui^{w^{-1}w_0} = (i_{\ell(v_w) + 1}, \dots, i_{\ell(v)}, i_{j_{\ell(v)+1}}, \dots, i_{j_{m - \ell(v)}})$. Hence $\mu_{v_w} = \mu_v$. 
\end{proof}

\begin{corollary}\label{corollary:vertexdata}
Fix $w \in W$. For every $P \in \Pw$ with vertex data $(\mu_v)_{v \in W}$, $P = \conv \{ \mu_v: v \leq w\}$.
\end{corollary}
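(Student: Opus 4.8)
The plan is to read this off immediately from Theorem \ref{theorem:mu_w} together with the standard fact that a GGMS polytope is the convex hull of its vertex data, i.e. $P = \conv\{\mu_v : v \in W\}$ by \cite[Proposition 2.2]{MVpolytopes}. Since $P$ is already known to equal this convex hull, it suffices to show that the finite point set $\{\mu_v : v \in W\}$ equals the sub-collection $\{\mu_v : v \in W,\ v \leq w\}$.

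One inclusion is trivial. For the other, I would fix an arbitrary $v \in W$ and apply Theorem \ref{theorem:mu_w}, which gives $\mu_v = \mu_{v_w}$. By definition $v_w$ is the maximal-length element of $[e,v]_R \cap [e,w]$, so in particular $v_w \leq w$; hence $\mu_v$ lies in the sub-collection indexed by $[e,w]$. Taking convex hulls then yields $P = \conv\{\mu_v : v \leq w\}$. For completeness I would also record, although it is not logically needed for the equality of convex hulls, that the vertices indexed by $v \le w$ genuinely occur among those listed: if $v \le w$ then $v \in [e,v]_R \cap [e,w]$, while no element of $[e,v]_R$ has length exceeding $\ell(v)$, so $v_w = v$ and $\mu_v = \mu_{v_w}$ is honestly one of the vertices.

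There is essentially no obstacle here. All of the substantive work has been carried out in Theorem \ref{theorem:mu_w} (and, behind it, in Proposition \ref{proposition:zeros} and Lemma \ref{lemma:reducedwords}); this corollary is just a repackaging of that statement together with the definition of $v_w$ as an element of $[e,w]$.
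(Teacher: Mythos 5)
Your argument is correct and is exactly the intended deduction: the paper treats this corollary as an immediate consequence of Theorem \ref{theorem:mu_w}, using $P = \conv\{\mu_v : v \in W\}$ together with $\mu_v = \mu_{v_w}$ and $v_w \leq w$ by definition of $v_w$. No gaps; your extra remark that $v_w = v$ when $v \leq w$ is a harmless bonus.
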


\begin{remark} 
The description of $\Pw$ given by Corollary \ref{corollary:vertexdata} suggests a relationship between $\Pw$ and extremal MV polytopes defined by \cite{DemazureCrystalNaito-Sagaki}. Naito and Sagaki prove that extremal MV polytopes can be explicitly described as $P_{w \cdot  \lambda} = \conv \{ v \cdot \lambda: v \leq w\}$, where $\lambda$ is a dominant coweight. Using the Lusztig data description of these extremal MV polytopes in that paper, we can see (up to a reflection by $w_0$ and a shift to make $\mu_e =0$), these polytopes are in $\Pw$. 

In \cite{Demazurepolytopes}, Besson, Jeralds and Keirs study the weight polytopes of Demazure modules and prove they are extremal MV polytopes. These polytopes can be described in the following ways:
\[
P_\lambda^w = \conv\{ g(v) \lambda : v \in W\} =\conv \{ v \lambda: v \leq W\} = \bigcap_{v\in W} C_v^{g(v) \lambda}
\]
where $g(v) = v(v^{-1} * w)$. By Proposition \ref{proposition:demazureproduct} proved above, $g(vw_0) = v_w$. Thus, under the identification $\mu_v = g(vw_0)\lambda - g(w_0)\lambda$, $P_\lambda^w \in \Pw$. 

Another related concept are the polytopes defined in \cite{Bruhatintervalpolytopes}. For $w \in S_n$, the \emph{Bruhat interval polytope} $Q_{e,w}$ is the polytope with vertex data $(\mu_v)_{v \leq w}$ given by $\mu_v  = (v(1), \dots, v(n))$. By extending the vertex data to $(\mu_v)_{v\in W}$ by $\mu_v = \mu_{v_w}$, we see this is a polytope of highest vertex $w$. 
\end{remark} 

\subsection{The dual fan}\label{section:dualfan}

A GGMS polytope can be characterized by its dual fan in relation to a standard fan, called the Weyl fan. To describe this relationship, first we define fans and dual fans of polytopes. 

Let $V$ be a real vector space and let $V^*$ be the dual space. A \emph{polyhedral cone} in $V$ is an finite intersection of closed linear half spaces. A \emph{fan} $\F$ of $V^*$ is a collection of polyhedral cones with the following properties:
\begin{enumerate}[label=(\roman*)]
 \item Every nonempty face of a cone in $\F$ is also a cone in $\F$,
 \item The intersection of any two cones in $\F$ is a face of both,
 \item The union $\bigcup \F = V^*$.
\end{enumerate}
A fan $\F_1$ is a \emph{coarsening} of $\F_2$ if every cone of $\F_1$ is a union of cones in $\F_2$. 

Define the Weyl fan $\mathcal{W}$ in $\t^*_\R$ as the fan generated by the maximal cones
\[
C_w^* = \{ \alpha \in \t^*_\R : \langle w \cdot \alpha_i^\vee, \alpha \rangle \geq 0, \forall i \in I\}.
\]

For any convex polytope $P\subset V$, we can define the support function of $P$ as $\psi_P: V^* \rightarrow \R$ by $\psi_P(\alpha) = \min_{x\in P} \langle x, \alpha \rangle$. Define the dual fan $\mathcal{N}(P) = \{ C_F^* : F \text{ is a face of }P\}$ in $V^*$, where 
\[
C_F^* = \{ \alpha \in V^*:  \langle v, \alpha \rangle = \psi_P(\alpha), \forall v \in F\}
\]
\begin{corollary}[{\cite[Corollary A.4]{MVpolytopes}}]\label{corollary:A4ofMV}
A GGMS polytope $P$ is a polytope in $Q^\vee$ whose dual fan $\mathcal{N}(P)$ is a coarsening of the Weyl fan $\mathcal{W}$. 
\end{corollary}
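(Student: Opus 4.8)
The plan is to prove the equivalence by passing through the support function $\psi_P$. The crucial observation is a standard fact about polytopes and complete fans: for a complete polyhedral fan $\F$ in $\t^*_\R$, the normal fan $\mathcal{N}(P)$ is a coarsening of $\F$ if and only if $\psi_P$ agrees with a linear functional on each maximal cone of $\F$. One direction of this fact is immediate, since $\psi_P$ is linear on each cone of $\mathcal{N}(P)$; for the other, if $\psi_P$ is linear on a maximal cone $C$ of $\F$, then the face $F$ of $P$ on which $\langle \cdot, \alpha\rangle$ is minimized is constant for $\alpha$ in the relative interior of $C$, so $C$ lies in the single cone $C_F^*$ of $\mathcal{N}(P)$, and then every cone of $\mathcal{N}(P)$ — being a union of closed cones of $\F$ whose relative interiors it meets — is a union of cones of $\F$. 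So it suffices to understand $\psi_P$ on the maximal cones $C_w^*$ of the Weyl fan.

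For the forward implication, let $P = P(\mu_\bullet) = \bigcap_{w\in W} C_w^{\mu_w}$ be a GGMS polytope. The extreme rays of the Weyl cone $C_w^*$ are spanned by the weights $w\omega_i$, $i \in I$, since $\langle w\alpha_i^\vee, w\omega_j\rangle = \delta_{ij} \geq 0$; hence every $\alpha \in C_w^*$ can be written $\alpha = \sum_i c_i\, w\omega_i$ with all $c_i \geq 0$. Because $P \subseteq C_w^{\mu_w}$, we have $\langle x, w\omega_i\rangle \geq \langle \mu_w, w\omega_i\rangle$ for all $x \in P$ and all $i$, so $\langle x,\alpha\rangle \geq \langle \mu_w,\alpha\rangle$ for all $x \in P$; since $\mu_w \in P$ by \cite[Proposition 2.2]{MVpolytopes}, this gives $\psi_P(\alpha) = \langle \mu_w,\alpha\rangle$. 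Thus $\psi_P$ agrees with the linear functional $\langle \mu_w, -\rangle$ on all of $C_w^*$, and the fact above shows $\mathcal{N}(P)$ coarsens $\mathcal{W}$. That $P$ is a lattice polytope in $Q^\vee$ is built into the definition of a GGMS polytope.

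For the converse, suppose $P$ is a convex polytope with vertices in $Q^\vee$ whose normal fan coarsens $\mathcal{W}$. Since every cone of $\mathcal{N}(P)$ is a union of cones of $\mathcal{W}$, each full-dimensional Weyl cone $C_w^*$ is contained in a single (maximal) cone of $\mathcal{N}(P)$, namely the normal cone of a unique vertex of $P$, which we call $\mu_w \in Q^\vee$; in particular $\psi_P(w\omega_i) = \langle \mu_w, w\omega_i\rangle$ for all $i$. Moreover a ray of $\mathcal{N}(P)$, being a union of rays of $\mathcal{W}$ and one-dimensional, must equal a single ray of $\mathcal{W}$, so every facet normal of $P$ is a positive multiple of some $w\omega_i$, giving $P = \bigcap_{w,i}\{x : \langle x, w\omega_i\rangle \geq \psi_P(w\omega_i)\} = \bigcap_w C_w^{\mu_w}$. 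Finally, for $v, w \in W$ the containment $\mu_v \in P \subseteq C_w^{\mu_w}$ reads $\langle \mu_v - \mu_w, w\omega_i\rangle \geq 0$ for all $i$, i.e. $\mu_v \leq_w \mu_w$; hence $P = P(\mu_\bullet)$ is a GGMS polytope.

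The main obstacle is the bookkeeping around the coarsening lemma — in particular making precise that linearity of $\psi_P$ on the maximal cones of $\mathcal{W}$ forces each Weyl cone (not just the full-dimensional ones) to sit inside a single cone of $\mathcal{N}(P)$, so that $\mathcal{W}$ genuinely refines $\mathcal{N}(P)$. This is routine for complete fans but must be stated carefully. Everything else is a direct unwinding of the definitions of $C_w^{\mu_w}$, the Weyl fan $\mathcal{W}$, and the twisted order $\leq_w$.
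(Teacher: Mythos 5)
This corollary is quoted from \cite[Corollary A.4]{MVpolytopes}; the present paper gives no proof of it, so there is no internal argument to compare against. Your proof is the standard support-function/normal-fan argument and is essentially correct: on each chamber $C_w^*=\operatorname{cone}\{w\omega_i\}$ the support function of $P(\mu_\bullet)$ is the linear functional $\langle \mu_w,-\rangle$ (using $\mu_w\in P$ and $P\subseteq C_w^{\mu_w}$), and piecewise linearity of $\psi_P$ on the chambers is, for complete fans, equivalent to $\mathcal{N}(P)$ coarsening $\mathcal{W}$; the refinement lemma you flag as routine is indeed standard (for a cone $\tau$ of the refining fan with $x\in\operatorname{relint}\tau$ and $x\in\sigma$, the face $\sigma\cap\sigma'$ of the cone $\sigma'$ containing $\tau$ meets $\operatorname{relint}\tau$, hence contains $\tau$).

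One step of your converse needs a patch. The claim that every facet normal of $P$ is a chamber weight, and the resulting identity $P=\bigcap_{w,i}\{x:\langle x,w\omega_i\rangle\ge\psi_P(w\omega_i)\}$, tacitly assumes $P$ is full-dimensional. Degenerate cases genuinely occur (a single point is a GGMS and even an MV polytope, e.g.\ $\Pol(b_0)$, and segments along a coroot direction can also have normal fan coarsening $\mathcal{W}$); for such $P$ the cones of $\mathcal{N}(P)$ contain lines, $\mathcal{N}(P)$ has no rays, and ``facet normal'' is not available, so that step breaks as written. The fix avoids facets entirely: set $Q=\bigcap_w C_w^{\mu_w}\supseteq P$; for any $\alpha\in\t_\R^*$ choose $w$ with $\alpha\in C_w^*$ and write $\alpha=\sum_i c_i\,w\omega_i$ with $c_i\ge 0$, so every $x\in Q$ satisfies $\langle x,\alpha\rangle\ge\langle\mu_w,\alpha\rangle=\psi_P(\alpha)$ (the equality because $C_w^*\subseteq C^*_{\{\mu_w\}}$); since $P$ is the intersection of all its supporting half-spaces, $Q\subseteq P$ and hence $P=\bigcap_w C_w^{\mu_w}$. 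The rest of your argument (the choice of $\mu_w\in Q^\vee$, and $\mu_v\in P\subseteq C_w^{\mu_w}$ giving $\mu_v\le_w\mu_w$) then goes through unchanged.
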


The dual fan is a useful tool to study the vertices and hyperplanes of $P$. Maximal cones of the dual fan correspond to vertices of the polytope. If $\mathcal{N}(P)$ is a coarsening of $\mathcal{W}$, then there is an surjection from $W$ to the set of vertices of $P$; in fact, this surjection determines the choice of labelling on the vertices $\mu_w$. 

Additionally, the defining rays of the maximal cones of the dual fan correspond to the codimension 1 faces of $P$. This correspondence defines a surjection from the chamber weights $\Gamma$ to the defining rays of the maximal cones of $\mathcal{N}(P)$. 

Using Theorem \ref{theorem:mu_w}, we would like to study the dual fan of polytopes in $\Pw$. Recall from Section \ref{section:dualfan} that the Weyl fan $\mathcal{W}$ is the fan of $\t_\R^*$ with  maximal cones $C_v^*$ for $v \in W$ defined by
\[
C^*_v = \{ \beta \in \t_\R^*: \langle v \cdot \alpha_i^\vee, \beta \rangle \geq 0,\, \forall i\}.
\]
Any GGMS polytope $P$ with vertex data $(\mu_\bullet)$ is given by $P=\bigcap_{v \in W} C_v^{\mu_v}$ where
\[
C_v^{\mu_v} = \lbrace x \in \t_\R : \langle x, v \cdot \omega_i \rangle \geq \langle \mu_v, v \cdot \omega_i \rangle, \forall i\rbrace.
\]
The dual fan of a GGMS polytope $P$ is $\mathcal{N}(P) = \{C_{F,P}^*: F \text{ is a face of }P\}$ such that 
 \[
C_{F,P}^* = \{ \beta \in \t_\R^*:  \langle x, \beta \rangle = \psi_P(\beta), \forall x \in F\}.
\]
where $\psi_P(\beta) = \min_{y \in P} \langle y, \beta \rangle$. By \cite[Corollary A.4]{MVpolytopes}, (see Corollary \ref{corollary:A4ofMV}) $P$ is a coarsening of the Weyl fan $\mathcal{W}$ and the following corollary is immediate. 

\begin{corollary}\label{lemma:weylfancones}
For any GGMS polytope $P$ with vertex data $(\mu_\bullet)$, $C_v^* \subseteq C_{\mu_v,P}^*$ for every $v \in W$.   
\end{corollary}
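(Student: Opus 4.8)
The plan is to verify the inclusion directly from the two definitions, exploiting that for any fixed $v\in W$ the sets $\{v\cdot\omega_i\}_{i\in I}$ and $\{v\cdot\alpha_i^\vee\}_{i\in I}$ are dual bases of $\t_\R^*$ and $\t_\R$. Since $G$ is semisimple, the fundamental weights form a basis of $X^*$ with $\langle\alpha_i^\vee,\omega_j\rangle=\delta_{ij}$, and the weight--coweight pairing is $W$-equivariant, so $\langle v\cdot\alpha_i^\vee,\,v\cdot\omega_j\rangle=\langle\alpha_i^\vee,\omega_j\rangle=\delta_{ij}$. In particular every $\beta\in\t_\R^*$ has the expansion $\beta=\sum_{i\in I}\langle v\cdot\alpha_i^\vee,\beta\rangle\,(v\cdot\omega_i)$.

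So first I would fix $\beta\in C_v^*$ and set $c_i:=\langle v\cdot\alpha_i^\vee,\beta\rangle$, so that $\beta=\sum_{i\in I}c_i\,(v\cdot\omega_i)$ and, by definition of $C_v^*$, each $c_i\geq 0$. Next I would use that $P=\bigcap_{w\in W}C_w^{\mu_w}\subseteq C_v^{\mu_v}$, which gives $\langle y,\,v\cdot\omega_i\rangle\geq\langle\mu_v,\,v\cdot\omega_i\rangle$ for every $y\in P$ and every $i\in I$. Multiplying the $i$-th inequality by $c_i\geq 0$ and summing over $i$ yields $\langle y,\beta\rangle\geq\langle\mu_v,\beta\rangle$ for all $y\in P$. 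Since $\mu_v$ is one of the vertices of $P$ by \cite[Proposition 2.2]{MVpolytopes} (in particular $\mu_v\in P$), the minimum $\psi_P(\beta)=\min_{y\in P}\langle y,\beta\rangle$ is attained at $\mu_v$; hence $\langle\mu_v,\beta\rangle=\psi_P(\beta)$, i.e. $\beta\in C_{\mu_v,P}^*$. As $\beta\in C_v^*$ was arbitrary, this proves $C_v^*\subseteq C_{\mu_v,P}^*$.

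There is essentially no obstacle here: the only things worth noting are the $W$-equivariance of the pairing (which makes $\{v\cdot\omega_i\}$ and $\{v\cdot\alpha_i^\vee\}$ dual bases) and the fact that $\{\mu_v\}$ is an actual face of $P$, so that $C_{\mu_v,P}^*$ is a genuine cone of $\mathcal N(P)$ --- both are immediate from what is recalled above. Alternatively, the statement can be read off from Corollary \ref{corollary:A4ofMV}: since $\mathcal N(P)$ is a coarsening of the Weyl fan $\mathcal W$, the maximal cone $C_v^*$ of $\mathcal W$ is contained in a unique maximal cone of $\mathcal N(P)$, and by construction the labelling of the vertices of $P$ by $W$ is precisely the one for which that cone is $C_{\mu_v,P}^*$; the direct computation above just avoids unwinding this labelling convention.
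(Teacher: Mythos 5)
Your proof is correct. The dual-basis expansion $\beta=\sum_i\langle v\cdot\alpha_i^\vee,\beta\rangle\,v\cdot\omega_i$ with non-negative coefficients on $C_v^*$, combined with $P\subseteq C_v^{\mu_v}$ and $\mu_v\in P$ (from $P=\conv\{\mu_w\}$), does give $\psi_P(\beta)=\langle\mu_v,\beta\rangle$ for every $\beta\in C_v^*$, which is exactly the claimed inclusion. The paper itself offers no computation: it states the corollary as immediate from Corollary \ref{corollary:A4ofMV}, i.e.\ from the fact that $\mathcal{N}(P)$ coarsens the Weyl fan together with the convention that the vertex labelling $w\mapsto\mu_w$ is induced by that coarsening --- which is the alternative reading you sketch in your last paragraph. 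So your main argument is a genuinely more self-contained route: it proves the statement directly from the defining inequalities of $C_v^{\mu_v}$ without unwinding how the labelling of vertices is set up, at the modest cost of invoking $W$-invariance of the pairing and the dual-basis fact explicitly. Both are sound; your version has the advantage of being checkable line by line from the definitions recalled in Section \ref{section:dualfan}, while the paper's version is shorter once one accepts the labelling convention as part of the statement of Corollary \ref{corollary:A4ofMV}.
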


\begin{definition}
Fix $w \in W$. Let $\F^w$ be the fan of $\t_\R^*$ defined by the maximal cones for $v \in W$
\[
D_v^* := \bigcup_{\substack{u \in W \\ u_w = v}} C_u^*.
\]
$D_v^*$ is indexed by $v \in W$ such that $v \leq w$. Clearly, $\F^w$ is a coarsening of the Weyl fan. 
\end{definition}

\begin{proposition} Let $w \in W$ and suppose $P$ is an MV polytope. $P \in \Pw$ if and only if $\mathcal{N}(P)$ is a coarsening of $\F^w$. 
\end{proposition}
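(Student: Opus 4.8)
The plan is to translate the condition ``$\mathcal{N}(P)$ is a coarsening of $\F^w$'' into a purely combinatorial statement about partitions of $W$, and then to see that under this translation the equivalence reduces to Theorem~\ref{theorem:mu_w} in one direction and to the single identity $(w_0)_w = w = w_w$ in the other.

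First I would set up the dictionary. By Corollary~\ref{corollary:A4ofMV} the dual fan $\mathcal{N}(P)$ is a coarsening of the Weyl fan $\mathcal{W}$, and $\F^w$ is one by construction; so each is obtained from $\mathcal{W}$ by gluing certain collections of Weyl chambers $C_v^*$ into its maximal cones, i.e.\ each is recorded by a partition of $W$. For $\F^w$ that partition has blocks $\{u \in W : u_w = v\}$, $v \le w$, by definition of $D_v^*$. For $\mathcal{N}(P)$: by Corollary~\ref{lemma:weylfancones} the full-dimensional chamber $C_u^*$ lies in the maximal cone $C_{\mu_u,P}^*$ of $\mathcal{N}(P)$, and since $C_u^*$ is full-dimensional this is the \emph{unique} maximal cone of $\mathcal{N}(P)$ containing it; hence $C_{\mu_u,P}^* = C_{\mu_{u'},P}^*$ exactly when $\mu_u = \mu_{u'}$, so the partition of $\mathcal{N}(P)$ has blocks $\{u \in W : \mu_u = \mu_v\}$. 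Now I would invoke the standard fact that, for two fans that both coarsen a common fan, one is a coarsening of the other iff its partition is the coarser one — it suffices to check that each maximal cone of the coarser fan is a union of maximal cones of the finer one, the lower-dimensional cones following automatically by intersecting a face-defining hyperplane of a maximal cone with the maximal cones of the finer fan it contains. This yields: $\mathcal{N}(P)$ is a coarsening of $\F^w$ if and only if every block $\{u : u_w = v\}$ lies inside some block $\{u : \mu_u = \mu_{v'}\}$, i.e.\ if and only if $u_w = u'_w$ implies $\mu_u = \mu_{u'}$ for all $u,u' \in W$.

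With this reformulation both implications are short. If $P \in \Pw$, then Theorem~\ref{theorem:mu_w} gives $\mu_u = \mu_{u_w}$ for every $u \in W$, so $u_w = u'_w$ forces $\mu_u = \mu_{u_w} = \mu_{u'_w} = \mu_{u'}$; hence $\mathcal{N}(P)$ coarsens $\F^w$. Conversely, assume $\mathcal{N}(P)$ coarsens $\F^w$ and apply the criterion to the pair $u = w_0$, $u' = w$: from the definition of $v_w$ together with $[e,w_0]_R = W$ one has $(w_0)_w = w$, while trivially $w_w = w$, so $w_0$ and $w$ lie in the same block of $\F^w$; the criterion then gives $\mu_{w_0} = \mu_w$, which is exactly $P \in \Pw$. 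The only non-routine input is Theorem~\ref{theorem:mu_w}, which is already available; the step that most deserves care is the fan-theoretic reduction of the first paragraph, namely that checking the coarsening condition on maximal cones suffices — but this is routine polyhedral geometry, and in effect amounts to the observation that the maps $u \mapsto \mu_u$ and $u \mapsto u_w$ each completely determine the corresponding fan as a coarsening of $\mathcal{W}$.
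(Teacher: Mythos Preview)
Your proof is correct and follows essentially the same approach as the paper. Both use Theorem~\ref{theorem:mu_w} for the forward direction (showing each $D_v^*$ sits inside the normal cone $C_{\mu_{v_w},P}^*$) and the identity $(w_0)_w = w$ for the converse (forcing $C_{w_0}^*$ into $C_{\mu_w,P}^*$, hence $\mu_w = \mu_{w_0}$); the only difference is that you make the ``partition of $W$'' reformulation explicit, whereas the paper argues directly with the cones.
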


\begin{proof}
Consider a polytope $P \in \Pw$ with vertex data $(\mu_\bullet)$ and $v \in W$ arbitrary. For every $\beta \in \t_\R^*$, $\langle \mu_v, \beta \rangle = \langle \mu_{v_w}, \beta \rangle$ so by definition $C_{\mu_v,P}^* = C_{\mu_{v_w},P}^*$. By Corollary \ref{lemma:weylfancones}, it follows that $C_v^* \subseteq C_{\mu_{v_w},P}^*$ for every $v \in W$, hence $D_v^* \subseteq C_{\mu_{v_w},P}^*$ and $\mathcal{N}(P)$ is a coarsening of $\F^w$.

For the converse, consider an MV polytope $P$ such that the dual fan is a coarsening of $\F^w$. By Corollary \ref{lemma:weylfancones}, $C_w^* \subseteq C_{\mu_w,P}^*$. As $\mathcal{N}(P)$ is a coarsening of $\F^w$, then $D_w^* \subseteq C_{\mu_w,P}^*$ as well so $C_{w_0}^* \subseteq D_w^*$ implies $C_{w_0}^* \subseteq C_{\mu_w,P}^*$. Thus for every $\beta \in C_{w_0}^*$, $\langle \mu_w , \beta \rangle = \langle \mu_{w_0}, \beta \rangle$. But this is only possible when $\mu_w = \mu_{w_0}$ so $P \in \Pw$. 
\end{proof}

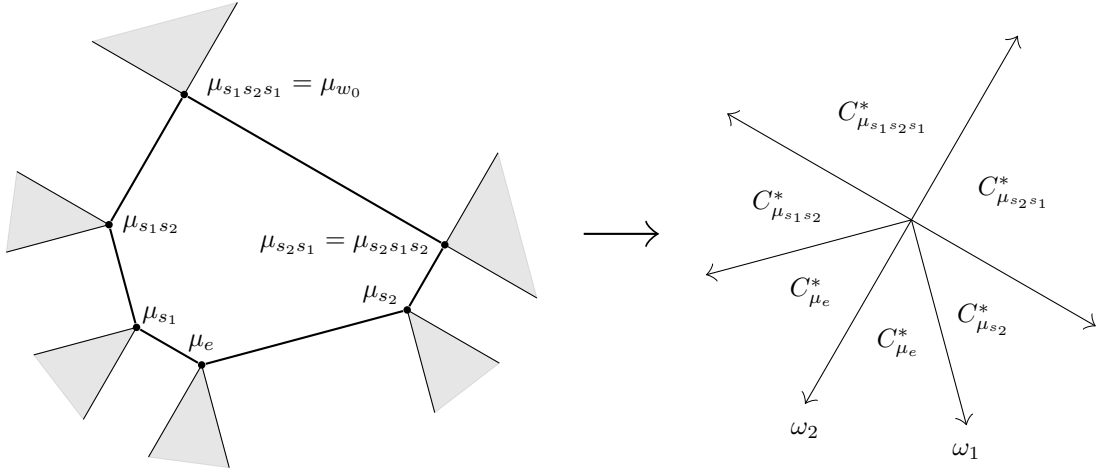
\begin{figure}
\centering
$
\begin{tikzpicture}[rotate=60]

\node[My style] (SE) at (0, 0) [label=above:$\mu_{e}$] {};
\node[My style] (S1) at (0, 1) {};
\node at (0.3,.8) {$\mu_{s_1}$};
\node[My style] (S12) at (1, 2)[label=right:$\mu_{s_1s_2}$] {};
\node[My style] (S2) at (2,-2){};
\node at (2,-1.6) {$\mu_{s_2}$};
\node[My style] (S21) at (3,-2)[label=left:{$\mu_{s_2s_1}=\mu_{s_2s_1s_2}$}] {};
\node[My style] (S121) at (3, 2) {};
\node at (3.1,2) [label=right:{$\mu_{s_1s_2s_1}=\mu_{w_0}$}]{};

\filldraw[draw=black,fill=black,opacity=0.1] (0,0) -- (-1.414,0) -- (-1, -1) -- cycle;
\filldraw[draw=black,fill=black,opacity=0.1] (3,-2) -- (4.414,-2) -- (3, -3.414) -- cycle;
\filldraw[draw=black,fill=black,opacity=0.1] (3,2) -- (3,3.414) -- (4.414, 2) -- cycle;
\filldraw[draw=black,fill=black,opacity=0.1] (0,1) -- (-1.414,1) -- (-1, 2) -- cycle;
\filldraw[draw=black,fill=black,opacity=0.1] (2,-2) -- (2, -3.414) -- (1, -3) -- cycle;
\filldraw[draw=black,fill=black,opacity=0.1] (1, 2) -- (0, 3) -- (1, 3.414) -- cycle;

\draw (0,0) -- (-1.414,0);
\draw (0,0) -- (-1, -1);

\draw (3,-2) -- (4.414,-2);
\draw (3,-2) -- (3, -3.414);

\draw (3,2) -- (3,3.414);
\draw (3,2) -- (4.414, 2);

\draw (0,1) -- (-1.414,1) ;
\draw (0,1) -- (-1,2);

\draw (2,-2) -- (2, -3.414);
\draw (2,-2) -- (1, -3);

\draw (1, 2) -- (0,3);
\draw (1,2) -- (1, 3.414);

\draw[thick] (SE) -- (S1) -- (S12) -- (S121) -- (S21) -- (S2) -- (SE);
\end{tikzpicture}
\begin{tikzpicture}

\node at (0,-3) {};
\node at (-1.5,0) {};
\node at (.5,0) {};

\draw[thick, -{{Classical TikZ Rightarrow}[scale=2]}] (-1,0) -- (0,0);
\end{tikzpicture}
\begin{tikzpicture}[rotate=60]

\node at (1, 1) {$C^*_{\mu_{s_1s_2s_1}}$};
\node at (1,-1) {$C^*_{\mu_{s_2s_1}}$};
\node at (-2/3,-3/2) {$C^*_{\mu_{s_2}}$};
\node at (-2/3,3/2) {$C^*_{\mu_{s_1s_2}}$};
\node at (-3/2, -2/3) {$C^*_{\mu_{e}}$};
\node at (-3/2, 2/3) {$C^*_{\mu_{e}}$};

\node at (-2,-2) [label=below:$\omega_1$]{};
\node at (-2.828,0) [label=below:$\omega_2$]{};

\draw[-{{Classical TikZ Rightarrow}[scale=2]}] (0,0) -- (-2.828,0);
\draw[-{{Classical TikZ Rightarrow}[scale=2]}] (0,0) -- (-2, -2);
\draw[-{{Classical TikZ Rightarrow}[scale=2]}] (0,0) -- (2.828,0);
\draw[-{{Classical TikZ Rightarrow}[scale=2]}] (0,0) -- (0, -2.828);
\draw[-{{Classical TikZ Rightarrow}[scale=2]}] (0,0) -- (0,2.828);
\draw[-{{Classical TikZ Rightarrow}[scale=2]}] (0,0) -- (-2,2);

\end{tikzpicture}
$
\caption{The dual fan of a $B_2$ polytope of highest vertex $s_1s_2s_1$}
\label{figure:dualfanB2}
\end{figure}

As a result of this correspondence, the cones of the dual fan of $P$ correspond with the vertices of $P$ while the defining rays of the maximal cones of $\mathcal{N}(P)$ will correspond with the codimension 1 faces of $P$. In the standard case, these codimension 1 faces are exactly the hyperplanes $M_\gamma$ for every $\gamma \in \Gamma$. When $w \neq w_0$, some of the hyperplanes $M_\gamma$ of $\Pw$ may have larger codimension  and hence $\Pw$ can have fewer than $|\Gamma|$ codimension 1 faces. An interesting question would be to find exactly which chamber weights label these codimension 1 faces in $\Pw$. 

\begin{question}
What are the defining rays of the maximal cones of $\F^w$?
\end{question}

These rays will correspond to some subset of the chamber weights $\Gamma_{\Pw}$. This subset will give us the defining hyperplanes of $P$, i.e. $P = \{ x: \langle x, \gamma\rangle \leq M_\gamma, \forall \gamma \in \Gamma_{\Pw}\}$. 

\begin{example}
For $B_2$ polytopes in $\P_{s_1s_2s_1}$, the hyperplanes labelled by $s_2s_1\omega_1$ and $s_1s_2s_1\omega_1$ are not defining hyperplanes of these polytopes. See Figure \ref{figure:dualfanB2} for the dual fan of such a polytope and see Figure \ref{figure:defininghyperplanesB2} for the defining hyperplanes. 
\begin{figure}[h]
\centering
$
\begin{tikzpicture}[rotate=60]

\node[My style] (SE) at (0, 0) [label=below:$\mu_{e}$] {};
\node[My style] (S1) at (0, 1) [label=left:{$\mu_{s_1}$}] {};
\node[My style] (S12) at (1, 2)[label=left:$\mu_{s_1s_2}$] {};
\node[My style] (S2) at (2,-2)[label=right:$\mu_{s_2}$] {};
\node[My style] (S21) at (3,-2)[label=right:{$\mu_{s_2s_1}=\mu_{s_2s_1s_2}$}] {};
\node[My style] (S121) at (3, 2)[label=above:{$\mu_{s_1s_2s_1}=\mu_{w_0}$}] {};

\draw[thick, -stealth] (0,1/2) -- (.707, 1/2)  node[anchor=west]{\footnotesize{$\omega_2$}};
\draw[thick, -stealth] (.5, 1.5) -- (1, 1)  node[anchor=south]{\footnotesize{$s_1\omega_1$}};
\draw[thick, -stealth] (1, -1) -- (1.5, -.5) node[anchor=south]{\footnotesize{$\omega_1$}};
\draw[thick, -stealth] (2.5, -2) -- (2.5, -1.293) node[anchor=south]{\footnotesize{$s_2\omega_2$}};
\draw[thick, -stealth] (2, 2) -- (2, 1.293) node[anchor=north]{\footnotesize{$s_1s_2\omega_2$}};
\draw[thick, -stealth] (3,0) -- (2.293,0) node[anchor=north]{\footnotesize{$s_2s_1s_2\omega_2$}};

\draw[thick] (SE) -- (S1) -- (S12) -- (S121) -- (S21) -- (S2) -- (SE);

\end{tikzpicture}
$
\caption{The hyperplanes of a $B_2$ polytope of highest vertex $s_1s_2s_1$}
\label{figure:defininghyperplanesB2}
\end{figure}
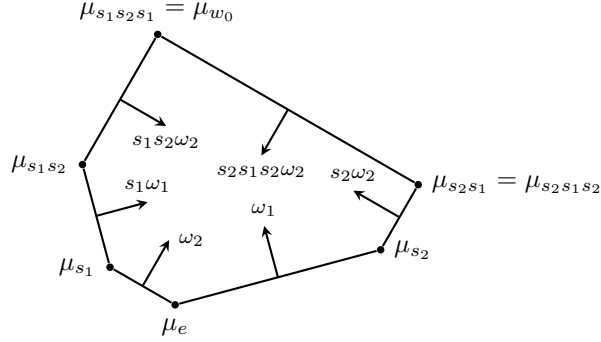
\end{example}

\begin{remark}
The normal cone of the Bruhat interval polytope $Q_{e,w}$ was described by \cite{Gaetz} in type $A$ as the equivalence class of all the linear extensions of the graphs $\Gamma_w (u)$ associated to the polytope $Q_{e,w}$. Using both descriptions of the normal cone, we expect that the set of linear extensions of the graph $\Gamma_{w}(u)$ will be exactly the set of $v \in W$ such that $v_w =u$. 
\end{remark}

\section{Tropical geometry}\label{section:tropicalpoints}

In this section, we outline the basic theory of tropical geometry to describe the correspondence between MV polytopes and the non-negative tropical points of the unipotent subgroup $N \subseteq G$. 

First, we recall the concepts of positive spaces and  tropical points as in {\cite[Section 1]{ClusterEnsembles}}.

\begin{definition} Let $X$ be an irreducible variety. A \emph{positive atlas} on $X$ is a collection of birational isomorphisms $\{ \alpha\}_{\alpha \in \mathcal{C}_X}$ over $\mathbb{Q}$ where $\alpha: T \rightarrow X$ and $T$ is a split algebraic torus. These coordinate systems satisfy:
\begin{enumerate}[label=(\roman*)]
 \item Each $\alpha$ is regular on the complement of a positive divisor in $T$ and is given by a positive rational function,
 \item For any pair $\alpha, \beta$ of coordinate systems, $\beta^{-1} \circ \alpha$ is a positive birational isomorphism of $T$.
\end{enumerate}
If $X$ has a positive atlas, we call $X$ a \em{positive space}.
\end{definition}

On an algebraic torus $T$, define the tropical points as the cocharacters of $T$, i.e $T(\Z^\trop) = X_*(T)$. Using a positive atlas, there is a unique way to define the $\Z$-tropical points of the variety $X$. 

\begin{definition}
The tropical points of a positive space $X$ is defined as 
\[
X(\Z^\trop) = \bigsqcup_\alpha T(\Z^\trop) / (\text{identifications } (\beta^{-1} \circ \alpha)^\trop).
\]
\end{definition}

For a subtraction-free function $F$ on $T$, we can tropicalize it to a function $F^\trop$ on the tropical points. To see how a tropical function is related to the original function, consider the following example:
\begin{align*}
F(x, y, z) &= \frac{xy}{z} + 2z \longmapsto F^\trop(x,y,z) = \min\{x + y -z , z\}
\end{align*}
We will call a function $F$ on $X$ \emph{positive} if it can be written as a subtraction-free expression in the coordinates of a positive atlas of $X$. We will denote the tropical function by $F^\trop$. 

\subsection{MV polytopes as tropical points}

For $G$ a reductive complex algebraic group, let $T$ be a torus of $G$, $B$ a Borel subgroup containing $T$ and $N$ its unipotent subgroup. Consider the map $x_i: \C \rightarrow N$ with image in the Chevalley subgroup of $\alpha_i$ by $x_i(a) = \exp(a E_i)$. For the tuple $\ui = (i_1, \dots, i_k)$, we define $x_{\ui}(a_1, \dots, a_k) = x_{i_1}(a_1) \cdots x_{i_k}(a_k)$. 

We will show that the variety $N$ is a positive space. For a reduced word $\ui$ of $w_0$, define the Lusztig parameterization associated to $\ui$ as the map $x_{\ui} : (\C^\times)^m \rightarrow N$ by $(a_1, \dots, a_m) \mapsto x_\ui (a_1, \dots a_m)$, where $m = \ell(w_0)$. This map is a birational isomorphism by \cite{TotalPositivityLusztig} and hence gives a coordinate system on $N$. In fact, the collection of the charts $(x_{\ui})$ form a positive atlas of $N$, called \emph{Lusztig's positive atlas} \cite{TotalPositivityLusztig}. Thus the tropical points of $N$ are defined and $N(\Z^\trop) \cong \Z^m$.

\begin{example}
Let $G = SL_3$. Since $w_0 = s_1s_2s_1 = s_2s_1s_2$, we have the two coordinates on $N$:
\begin{align*}
x_{1} (a_1) x_2(a_2) x_1(a_3) = \begin{bmatrix} 1 & a_1 + a_3 & a_1a_2 \\ 0 & 1 & a_2 \\ 0 & 0 & 1 \end{bmatrix}
&&
x_2 (b_1) x_1(b_2) x_2(b_3) = \begin{bmatrix}
 1 & b_2 & b_2 b_3 \\ 0 & 1 & b_1 + b_3 \\ 0 & 0 & 1
\end{bmatrix}.
\end{align*}
The transition maps are subtraction-free:
\[
x_{212}^{-1} \circ x_{121}(a_1, a_2, a_3) = \left(\frac{a_2a_3}{a_1 + a_3}, a_1 + a_3, \frac{a_1a_2}{a_1 + a_3}\right).
\]
Thus $N(\Z^\trop) \cong \C^3(\Z^\trop) = \Z^3$. 
\end{example}

In Section \ref{section:MVpolytopes}, we saw that the set of MV polytopes are in bijection with $\N^m$ by fixing a reduced word $\ui$ and considering the Lusztig data of $P$ with respect to $\ui$.  We would like to show that the set of MV polytopes of $G$ are in bijection with the non-negative tropical points of $N$. First, we define a positive function to pick out the ``non-negative'' points. 

Define the \emph{potential function}  $\chi: N \rightarrow \C$ by
\[
\chi(x_{\ui}(a_1, \dots, a_m)) = \sum_{k=1}^m a_k.
\]
This function is subtraction-free in the Luzstig coordinates $x_{\ui}$. In fact, $\chi$ is independent of $\ui$ and thus positive on Lusztig's positive atlas. Hence we have a tropical function $\chi^\trop$ acting on $N(\Z^\trop)$. Define the non-negative points as
\[
N(\Z^\trop)_{\geq} = \{ \ell \in N(\Z^\trop): \chi^\trop(\ell) \geq 0\}.
\]
Under the correspondence $N(\Z^\trop) \cong \Z^m$, we can write $\ell \in N(\Z^\trop)$ as $\ell = (A_1, \dots, A_m)$ for some $A_i \in \Z$. Then $\chi^\trop(\ell) \geq 0 \iff \min \{A_1, \dots, A_m\} \geq 0 \iff A_i \geq 0 \text{ for all } i$. Thus, $N(\Z^\trop)_{\geq} \cong \N^m$. By the correspondence between $\P \cong \N^m$, we can find a bijection $N(\Z^\trop)_\geq \cong \P$. 

\begin{theorem}[{\cite[Theorem 5.4]{Geometryofcanonical}}, {\cite[Theorem 4.5]{MVpolytopes}}]\label{theorem:tropicalpoints}
For $G$ a reductive algebraic group, there is a bijection between the non-negative tropical points $N(\Z^\trop)_{\geq}$ and the set of MV polytopes $\mathcal{P}$. 
\end{theorem}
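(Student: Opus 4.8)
The plan is to build the bijection by composing the identifications recorded in this section, and then to check that the composite is independent of the reduced word of $w_0$ chosen to define it.

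Fix a reduced word $\ui = (i_1, \dots, i_m)$ of $w_0$. On the tropical side, the computation above shows that the Lusztig chart $x_\ui$ identifies $N(\Z^\trop)$ with $\Z^m$ and that $\chi^\trop(\ell) = \min\{A_1, \dots, A_m\}$ for $\ell = (A_1, \dots, A_m)$, so $N(\Z^\trop)_\geq \cong \N^m$. On the polytope side, Theorem \ref{theorem:MVpolytopeslusztigdata} gives a bijection $\P \to \N^m$, $P \mapsto n_\bullet^\ui(P)$. Composing these two identifications produces, for each reduced word $\ui$, a bijection
\[
\Theta_\ui \colon N(\Z^\trop)_\geq \longrightarrow \P, \qquad \Theta_\ui((A_k)_k) = \text{the unique } P \in \P \text{ with } n_\bullet^\ui(P) = (A_k)_k .
\]
The theorem then follows once we show $\Theta_\ui = \Theta_{\uj}$ for all reduced words $\ui, \uj$ of $w_0$; the common map $\Theta$ is then the asserted bijection, and $\chi^\trop$ being chart-independent guarantees it is compatible with the non-negativity condition on both sides.

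To prove the independence, write $\tau_{\ui\uj} := \Theta_\uj^{-1} \circ \Theta_\ui$ for the induced self-map of $\N^m$; we must check that $\tau_{\ui\uj}$ equals the tropicalization $(x_\uj^{-1}\circ x_\ui)^\trop$ of the transition map of Lusztig's positive atlas, restricted to non-negative points. Both $\tau_{\ui\uj}$ and $(x_\uj^{-1}\circ x_\ui)^\trop$ are compatible with composition, and by the Word Property (Theorem \ref{theorem:weylproperties}) any two reduced words of $w_0$ are connected by a chain of braid moves, so it suffices to treat a single braid move on a pair of indices $i \neq j$. Such a move alters only a consecutive block of the word; comparing the formula defining the Lusztig data, one sees that $n_\bullet^\ui$ and $n_\bullet^\uj$ agree in every coordinate outside that block, and likewise the tropical transition map is the identity outside the block, so the claim reduces to a computation inside the rank-two parabolic generated by $x_i$ and $x_j$. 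In that rank-two situation the transition maps of the Lusztig parametrisation are the explicit subtraction-free maps of \cite[Section 4]{TotalPositivityin} (the $A_2$ case is the one displayed above for $SL_3$), whose tropicalizations are exactly the piecewise-linear maps appearing in the tropical Pl\"ucker relations; on the other hand, the $2$-face of the polytope determined by this block is itself a rank-two MV polytope, so its hyperplane data satisfy those same tropical Pl\"ucker relations by Definition \ref{definition:BZdata}, and feeding them into the linear formula for the Lusztig data yields precisely the same change-of-coordinate rule. Hence $\tau_{\ui\uj} = (x_\uj^{-1}\circ x_\ui)^\trop$ and $\Theta_\ui = \Theta_\uj$.

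The one place where genuine content sits, rather than bookkeeping, is this last rank-two matching: that Kamnitzer's rank-two description of how the Lusztig data of an MV polytope transforms under a braid move coincides with the tropicalization of Lusztig's rank-two transition map. This is exactly the content of \cite[Theorem 4.5]{MVpolytopes} and \cite[Theorem 5.4]{Geometryofcanonical} (the latter phrased in terms of tropicalized generalized minors cutting out the BZ data), and I would invoke it rather than re-derive the rank-two transition maps case by case; everything else is the routine assembly of the bijections $\P \cong \N^m \cong N(\Z^\trop)_\geq$ established earlier in the section.
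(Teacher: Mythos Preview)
Your proposal is correct and follows the same outline the paper sketches in the paragraphs surrounding the statement: identify $N(\Z^\trop)_\geq \cong \N^m$ via a Lusztig chart, use Theorem~\ref{theorem:MVpolytopeslusztigdata} for $\P \cong \N^m$, and observe independence of the reduced word via the rank-two transition maps. The paper does not give its own proof of this theorem---it is quoted from \cite{Geometryofcanonical} and \cite{MVpolytopes}---so your reduction of the only substantive step (the rank-two matching) back to those references is exactly in line with how the result is used here.
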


As Lusztig's positive atlas consists of $x_{\ui}$ for all reduced words, this bijection is independent of the reduced word used for the Lusztig data in the  bijection $\P \rightarrow \N^m$. This bijection is also compatible with the hyperplane data in the sense that the following diagram commutes:
\[
\begin{tikzcd}
N(\Z^\trop)_{\geq} \arrow[rdd, "(\Delta_\gamma \circ \eta)^\trop", swap] \arrow[rr] && \text{MV Poytopes} \arrow[ldd, "M_\gamma"] \\
&&\\
& \Z^m & 
\end{tikzcd}
\]

As these $\Delta_\gamma \circ \eta$ functions satisfy the Pl\"{u}cker relations, a consequence of this diagram commuting is that the tropical Pl\"{u}cker relations are automatically satisfied by the tropical functions $M_\gamma$. 

For a generic MV polytope, the highest vertex is labelled by the longest element of the Weyl group, $w_0$. In this thesis, we will consider MV polytopes where the highest vertex is labelled by $w$ for some $w \in W$. We will prove that Theorem \ref{theorem:tropicalpoints} is also true for this class of polytopes, where $N$ is replaced by a subvariety of $N$. 

\subsection{Tropical geometry of reduced double Bruhat cells}

We will define functions $M_\gamma$ on the tropical points of the reduced double Bruhat cell $L^{w^{-1}}$ that will send non-negative tropical points to the BZ data associated to MV polytopes of highest vertex $w$. These functions will come from the tropicalization of the generalized minors functions. 

Recall we define the maps $x_i: \C \rightarrow N$ by $x_i(a) = \exp( a E_i)$. We similarly define $y_i: \C \rightarrow N$ by $y_i(a) = \exp(a F_i)$. For $i \in I$, we fix a representative of $s_i \in G$ by $s_i = y_i(1) x_i(-1) y_i(1)$ and thus for any $w \in W$ we can fix a representation for $w \in G$ by $w = s_{i_1} \cdots s_{i_m}$ where $\ui = (i_1, \dots, i_m)$ is a reduced word for $w$. 

\begin{definition}
For $u,v \in W$, the reduced double Bruhat cell is $L^{u,v} := N u N \cap B_- v B_-$.
\end{definition}

In particular, we are interested in the reduced double Bruhat cell $L^{w^{-1}} : = L^{e,w^{-1}} = N \cap B_- w^{-1} B_-$. Following \cite[Section 5]{Geometryofcanonical}, we have a positive structure on $L^{w^{-1}}$ described as follows. Let $x_i: {\C^\times} \rightarrow L^{w^{-1}}$ be defined as in Section \ref{section:tropicalpoints}. For the reduced word $\ui= (i_1, \dots, i_m)$ of $w^{-1}$, define  $x_\ui: (\C^\times)^m \rightarrow L^{w^{-1}}$ by $x_\ui(a_1, \dots, a_m) = x_{i_1}(a_1) \dots x_{i_m}(a_m)$. From the application of \cite[Theorem 1.2]{DoubleBruhatCellsandTotalPositivity}, this is a coordinate system on $L^{w^{-1}}$. Consider the atlas given by the charts $(x_{\ui})$ where $\ui$ runs over all reduced words of $w^{-1}$. This atlas gives a positive structure on $L^{w^{-1}}$ which we will still call \emph{Lusztig's positive atlas}. As in the case of $N$, define the potential function
\[
\chi(x_\ui(a_1, \dots, a_m) ) = \sum_{i=1}^m a_i.
\]
The potential $\chi$ is still independent of $\ui$ and is positive on this atlas, so we can define the non-negative tropical points
\[
L^{w^{-1}}(\Z^\trop)_{\geq} = \{ a\in L^{w^{-1}}(\Z^\trop) : \chi^t(a) \geq 0\}.
\]

To define the functions $M_\gamma$ on $L^{w^{-1}}(\Z^\trop)$, we need to introduce the generalized minors.

\begin{definition}\label{definition:generalizedminors} Consider the highest weight representation $V(\lambda)$ of $G$. Let $\gamma$ and $\delta$ be an extremal weights of $V(\lambda)$ and let $v_\gamma$ and $v_\delta$ be vectors in $V(\lambda)$ of weight $\lambda$ and $\delta$ respectively. Let $\langle \cdot, \cdot \rangle$ denote the Shapovalov form \cite{Shapovalovform}, i.e. $\langle F_i v, w \rangle = \langle v, E_i w\rangle$.  The \emph{generalized minors} are functions $\Delta_{\delta,\gamma}: G \rightarrow \C$ such that 
\[
\Delta_{\delta,\gamma} (g) = \langle g \cdot v_\gamma, v_\delta \rangle.
\]
We use the shorthand $\Delta_\gamma$ when $\delta = \lambda$. 
\end{definition}

Denote the subset of chamber weights  $\Gamma^w = \{ v \omega_j: j \in I, v \in W \text{ such that } v \leq_R w \} \subseteq \Gamma$. By \cite[Proposition 2.8]{ClusterAlgebrasIII}, $L^{w^{-1}}$ can be defined by the vanishing conditions of generalized minors:
\begin{align}\label{equation:reducedbruhatcell}
L^{w^{-1}} = \{ g\in N: & \Delta_{\omega_i, \omega_i}(g) =1, \Delta_{\omega_i, w\omega_i} (g)\neq 0,  \Delta_{\omega_i, v\omega_i}(g) =0 \text{ for }v\omega_i \not\in \Gamma^w\}.
\end{align}

\begin{example}\label{example:reducedbruhat1}
Let $G = SL_3$. The fundamental weights can be realized as $\omega_1 = (1,0,0)$ and $\omega_2 = (1,1,0)$.  We use the shorthand $\omega_1 = 1$ and $\omega_2 = 12$, where each number indicates which coordinate is equal to $1$. The simple reflections act as the transposition $s_1 = (12)$ and $s_2 = (23)$ on the fundamental weights. 

When $w = s_1s_2$, the reduced Bruhat cell is given by
\[
L^{s_2s_1} = \left\lbrace x_2(\beta) x_1(\alpha) = \begin{bmatrix} 1 & \alpha & 0 \\ 0 & 1 & \beta \\ 0 & 0 & 1 \end{bmatrix} : \alpha, \beta \in \C^\times \right\rbrace. 
\] 
Note that $\Gamma^{s_1s_2} =\{ 1, 2, 12, 13, 23 \}$. Indeed, $\Delta_1 = 1$, $\Delta_{12}=1$, $\Delta_2 = \alpha$, $\Delta_{23} = \alpha\beta$, $\Delta_{13} = \beta$ are all nonzero but $\Delta_3=0$ as $ 3 \not\in \Gamma^{s_1s_2}$. 
\end{example}

Define the map $\eta_{w^{-1}}$ on $L^{w^{-1}}$ by setting $\eta_{w^{-1}}(x)$ to be the unique element in $N \cap B_- \overline{w^{-1}} x^T$. By \cite[Theorem 1.2, Proposition 1.3]{TotalPositivityin}, $\eta_{w^{-1}}$ is a regular automorphism of $L^{ w^{-1}}$ and $\eta_{w^{-1}}^{-1}(z) = (\eta_{w}(z^\iota))^\iota$, where $\iota$ is the anti-automorphism that sends $x_{i_1}(t_1) \dots x_{i_m}(t_m) \mapsto x_{i_m}(t_m) \dots x_{i_1}(t_1)$.

Define the $y$-coordinates $y_{\ui}(b_\bullet) := \eta_{w^{-1}}^{-1}(x_\ui(b_\bullet)) = \iota \circ \eta_w (x_{\ui}(b_\bullet)^\iota)$; these are the coordinates used in the proof of {\cite[Theorem 7.1]{MVpolytopes}} (see Theorem \ref{theorem:MVpolytopeslusztigdata}). For $\gamma \in \Gamma^w$, define $M_\gamma  = (\Delta_\gamma \circ \eta_{w^{-1}}^{-1})^\trop$. Then $(M_\gamma)_{\gamma \in \Gamma^w}$ is a collection of functions on the tropical points of $L^{w^{-1}}$. 

\begin{example}\label{example:reducedbruhat2}
Continuing Example \ref{example:reducedbruhat1}, the $y$-coordinates on $L^{s_2s_1}$ are given by \[
y_{21}(\alpha^{-1}, \beta^{-1}) = \eta_{w^{-1}}^{-1}(x_{2}(\beta) x_1(\alpha)) = \begin{bmatrix}
 1 & \alpha^{-1} & 0 \\ 0 & 1 & \beta^{-1} \\ 0 & 0 & 1 \end{bmatrix}.
\]
For $(a,b) \in L^{s_2s_1}(\Z^\trop)$, the functions $M_\gamma$ take on the values
\begin{align*}
M_1 (a,b) = 0, && M_{12}(a,b) =0, && M_2(a,b) = -a, && M_{23}(a,b) = -a-b, && M_{13}(a,b) = -b.
\end{align*}
\end{example}

Consider $\gamma \not\in \Gamma^w$. By (\ref{equation:reducedbruhatcell}), $\Delta_\gamma=0$.  We would like to  redefine these generalized minors so that we have functions which are non-zero on $L^{w^{-1}}$. 
\begin{definition}
For $v \in W$ and $s_i \in D_L(v)$, define $\Delta_{v\omega_i}^\new := \Delta_{v_w^{-1} v \omega_i, v \omega_i}$. 
\end{definition}

\begin{example}\label{example:reducedbruhat3}
Continuing \ref{example:reducedbruhat2}, we redefine the minor $\Delta^\new_3= \Delta_{s_1\omega_1, w_0 \omega_1} = \Delta_{2,3} = \beta$. Note that this is the smallest row set which results is a nonzero minor with the column set. 
\end{example}

For $\gamma \in \Gamma$, define $M_\gamma = (\Delta_\gamma^\new \circ \eta_{w^{-1}}^{-1})^\trop$. Note that $M_\gamma = (\Delta_\gamma \circ \eta_{w^{-1}}^{-1})^\trop$ for $\gamma \in \Gamma^w$. We will show that for each $\ell \in L^{w^{-1}}(\Z^\trop)_\geq$, $(M_\gamma(\ell))_{\gamma\in\Gamma}$ are the BZ data of some $P \in \Pw$. To do this, we need to show that $(M_\gamma(\ell))_{\gamma \in \Gamma}$ is a BZ datum and that the edge equalities in \ref{condition:vanishing} of Lemma \ref{lemma:BZdatahighestvertexw} hold.

First, we will show that this $\Delta_\gamma^\text{new}$ is the ``smallest'' non-zero minor; this will imply the edge equalities. We start with a technical lemma.  Recall the partial ordering on $X^*$ where $a\leq b \iff b-a \in Q_+$. 

\begin{lemma}\label{lemma:claimforzerominor}
For $b \in B$, $\alpha \in X^*$, $u \in W$, and $\lambda$ a dominant weight, if $\langle v_\alpha, buv_{\lambda} \rangle \neq 0$, then $\alpha  = u \lambda + \beta$ for $\beta \in \Delta_+ \cap u\Delta_-$. 
\end{lemma}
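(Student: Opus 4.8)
The plan is to pin down exactly which weight spaces of $V(\lambda)$ can appear in $b u v_\lambda$. Write $b = t n$ with $t \in T$ and $n \in N$; since $T$ acts on each weight space of $V(\lambda)$ by a nonzero scalar, $\langle v_\alpha, b u v_\lambda\rangle \neq 0$ if and only if $\langle v_\alpha, n u v_\lambda\rangle \neq 0$, so we may assume $b = n \in N$. The chosen representative $u \in G$ sends $v_\lambda$ to a vector of weight $u\lambda$, and the goal is to show that applying $N$ to this vector produces only weights of the form $u\lambda + \beta$ with $\beta$ in the sub-monoid $\N[\Delta_+ \cap u\Delta_-]$ of $Q_+$; then the hypothesis forces $\alpha$ to be of this shape.

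The structural input is the grading $\n = \n_u \oplus \n^u$, where $\n_u = \bigoplus_{\gamma \in \Delta_+ \cap u\Delta_-}\g_\gamma$ and $\n^u = \bigoplus_{\gamma \in \Delta_+\cap u\Delta_+}\g_\gamma$; both are Lie subalgebras, since each defining condition on roots is stable under addition, with corresponding unipotent subgroups $N_u$ and $N^u$, and the multiplication map $N_u \times N^u \to N$ is an isomorphism of varieties — the standard decomposition $N = (N\cap uN_-u^{-1})(N\cap uNu^{-1})$ attached to $u$. First I would observe that $N^u$ fixes the line $\C u v_\lambda$: indeed $u^{-1} N^u u = N \cap u^{-1}N u \subseteq N$ stabilizes $\C v_\lambda$ because $v_\lambda$ is a highest weight vector, hence $N^u$ stabilizes $u \cdot \C v_\lambda = \C u v_\lambda$. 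Next, factor $n = n_1 n_2$ with $n_1 \in N_u$ and $n_2 \in N^u$; then $n u v_\lambda = n_1 (n_2 u v_\lambda) \in n_1 \cdot \C u v_\lambda \subseteq \operatorname{span}\{ n' u v_\lambda : n' \in N_u\}$. Since $N_u = \exp(\n_u)$ and each root vector $E_\gamma$ with $\gamma \in \Delta_+ \cap u\Delta_-$ raises weight by $\gamma$, every vector in $N_u u v_\lambda$ lies in $\bigoplus_{\beta \in \N[\Delta_+\cap u\Delta_-]} V(\lambda)_{u\lambda + \beta}$, and therefore so does $n u v_\lambda$.

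Finally, if $\langle v_\alpha, n u v_\lambda\rangle \neq 0$ then the weight-$\alpha$ component of $n u v_\lambda$ is nonzero, so $\alpha = u\lambda + \beta$ for some $\beta \in \N[\Delta_+ \cap u\Delta_-]$; in particular $\beta \in \Delta_+ \cap u\Delta_-$ in the case (used in the sequel) that $\beta$ is a single root. The only genuinely non-formal ingredient is the factorization $N = N_u N^u$ combined with the remark that $N^u$ stabilizes $\C u v_\lambda$; the rest is bookkeeping with weights. The main point to be careful about is using the factorization in the correct order — $N_u$ on the left, $N^u$ acting first on $u v_\lambda$ — and recording that the "$\beta$" in the statement is to be read as ranging over the monoid generated by $\Delta_+ \cap u\Delta_-$.
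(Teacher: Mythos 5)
Your proof is correct and takes essentially the same route as the paper's: both arguments hinge on splitting the positive roots into $\Delta_+ \cap u\Delta_-$ and $\Delta_+ \cap u\Delta_+$ and observing that the latter part acts trivially on the extremal vector $u v_\lambda$ --- you implement this at the group level via the factorization $N = N_u N^u$ (after reducing $b = tn$), while the paper works in $U(\b)$ with a PBW basis acting on the Demazure module $V_u(\lambda) = U(\b)\cdot v_{u\lambda}$. Your remark that $\beta$ should really range over the monoid $\N[\Delta_+ \cap u\Delta_-]$ is exactly what the paper's own argument establishes as well, and that is the reading used in the subsequent Lemma \ref{lemma:zerocondition}.
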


\begin{proof}
Consider the Lie algebra of $G$, $\g$.  Recall the root space decomposition of $\g = \h \bigoplus_{\alpha \in \Delta} \g_\alpha$, where $\h$ is the Cartan subalgebra and $\g_\alpha = \{ x \in \g: [h,x] = \alpha(h)x, \, \forall h \in \h \}$. Set $\b = \h \bigoplus_{\alpha \in \Delta_+} \g_\alpha$. 

For $\lambda$ a dominant weight, consider the representation $V(\lambda)$. The Demazure module is defined as $V_u(\lambda) = U(\b) \cdot v_{u\lambda}$, where $v_{u\lambda}$ is a vector of weight $u\lambda$ in the 1-dimensional $u\lambda$-weight space of $V(\lambda)$. We will show that the weights of $V_u(\lambda)$ is the set $u\lambda + \Delta_+ \cap u\Delta_-$. 

First, consider $\n_u = \bigoplus_{\alpha \in \Delta_+ \cap u \Delta_-} \g_\alpha$ and $\n_{-u} = \bigoplus_{\alpha \in \Delta_+ \cap u \Delta_+} \g_\alpha$. Then $\b = \n_u \oplus \n_{-u} \oplus \h$ so we have a PBW basis $ABC$, where $A$ is a product of vectors from $\n_u$, $B$ is a product of vectors in $\n_{-u}$ and $C$ is a product of vectors from $\h$. 

Suppose that $\beta \in \Delta_+ \cap u\Delta_+$. Then $u^{-1} \beta \in \Delta_+$ as well. Consider $X_\beta \in B$. Then
\begin{align*}
X_\beta \cdot v_{u\lambda} & = X_{\beta} u \cdot v_{\lambda} = u (u^{-1} X_{\beta} u) v_{\lambda}.
\end{align*}
Note that $u^{-1} X_{\beta} u = (\text{ad}_u(X_\beta)) \cdot v = X_{u^{-1} \beta}$. But as $u^{-1} (\beta) \in \Delta_+$ and $v_\lambda$ is a highest weight vector of $V(\lambda)$, then $X_{u^{-1} \beta} \cdot v_{\lambda}=0$. 

Thus for every $b\in B$ with $b \neq 1$, $b \cdot v_{u\lambda}=0$. As $C$ does not affect the weight of $v_{u\lambda}$, then $BC v_{u\lambda}$ only has weights $u\lambda$. Thus $U(\b) \cdot v_{u\lambda} = ABC \cdot v_{u\lambda}$ has weights $u\lambda - \Delta_+ \cap u\Delta_-$ as desired. 
\end{proof}

\begin{lemma}\label{lemma:zerocondition}
Let $\lambda$ be a dominant weight and let $g \in L^{w^{-1}}$. Then $\Delta_{u\lambda, \mu}(g) = 0$ for any $\mu \not\geq wu\lambda$.
\end{lemma}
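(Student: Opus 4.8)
The plan is to reduce the statement about $\Delta_{u\lambda,\mu}(g)$ vanishing to a weight-space computation using the positive structure of $L^{w^{-1}}$, and then invoke Lemma \ref{lemma:claimforzerominor}. Recall that by definition $\Delta_{u\lambda,\mu}(g) = \langle g\cdot v_\mu, v_{u\lambda}\rangle$, so I need to understand which vectors $v_{u\lambda}$ can appear in the image $g\cdot v_\mu$ for $g\in L^{w^{-1}}$. The key structural fact is that $g\in L^{w^{-1}} = N\cap B_-w^{-1}B_-$, so $g = b_- \overline{w^{-1}} b_-'$ for some $b_-, b_-'\in B_-$; more precisely, since $g\in N$ we can write $g$ in Gaussian-type coordinates adapted to the Bruhat cell $B_-w^{-1}B_-$. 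The cleanest way is probably to use the factorization $g = [g]_-\,[g]_0\,\overline{w^{-1}}\,n$ with $n\in N$, or the dual description via $\eta$; but the essential point I want to extract is that $g\cdot v_\mu$ lies in $U(\mathfrak{n}_-)\,\overline{w^{-1}}\,U(\mathfrak{b})\cdot v_\mu$ up to $B_-$-factors that only lower the weight.

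First I would fix a dominant $\lambda$ and the irreducible representation $V(\lambda)$, and observe that $v_\mu$ is an extremal weight vector, say $v_\mu = \bar v'\cdot v_\lambda$ for the appropriate Weyl group element $v'$ with $v'\lambda = \mu$. Writing $g$ through its Bruhat decomposition in $B_-w^{-1}B_-$, I would argue that $g\cdot v_\mu = b_-\,\overline{w^{-1}}\,b_-'\cdot v_\mu$, and then note that $b_-'\cdot v_\mu \in \bigoplus_{\nu \le \mu} V(\lambda)_\nu$ since $B_-$ only weakly lowers weights. Hence $\overline{w^{-1}} b_-'\cdot v_\mu$ lies in weights $\le w^{-1}(\text{stuff})$... — here I need to be careful, because $w^{-1}$ does not preserve the dominance order. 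The correct move: the highest weight appearing in $b_-'\cdot v_\mu$ is $\mu$ itself (with the extremal line), all others strictly lower in $Q_+$; applying $\overline{w^{-1}}$ sends the $\mu$-line to the $w^{-1}\mu$-line. Then applying $b_-\in B_-$ again only lowers weights. So the weights occurring in $g\cdot v_\mu$ are all $\le w^{-1}\mu$ in the order $\le$, i.e. of the form $w^{-1}\mu - (\text{element of }Q_+)$. Therefore $\langle g\cdot v_\mu, v_{u\lambda}\rangle\neq 0$ forces $u\lambda \le w^{-1}\mu$, equivalently $wu\lambda \le ... $ — and this is where I must translate back: $u\lambda \le w^{-1}\mu$ does not immediately give $\mu \ge wu\lambda$ because $w$ is not order-preserving.

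Because of that subtlety, I expect the main obstacle is handling the non-monotonicity of Weyl group elements on the weight order, and the right tool is precisely Lemma \ref{lemma:claimforzerominor}. So the cleaner route I would actually pursue: write $g = n_- \cdot a \cdot \overline{w^{-1}} \cdot n$ with $n_-\in N_-$, $a\in T$, $n\in N$ (Bruhat decomposition in the cell), so $\langle g v_\mu, v_{u\lambda}\rangle = \langle \overline{w^{-1}} n v_\mu,\ a^{-1} n_-^{-1} v_{u\lambda}\rangle = \langle n v_\mu,\ \overline{w}\, a^{-1} n_-^{-1} v_{u\lambda}\rangle$. Now apply Lemma \ref{lemma:claimforzerominor} with the roles arranged so that $b = a^{-1}n_-^{-1}\in B_-$ acting on $v_{u\lambda}$ (after conjugating by $w$): the lemma tells us $\langle v_\alpha, b'\, x\, v_\lambda\rangle \neq 0$ forces $\alpha$ to lie in $x\lambda + (\Delta_+\cap x\Delta_-)$ for the appropriate element $x$. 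Matching $x = w u$ (or its inverse, depending on orientation) and reading off the constraint gives exactly $\mu \ge wu\lambda$ whenever the pairing is nonzero; contrapositively, $\mu \not\ge wu\lambda \implies \Delta_{u\lambda,\mu}(g) = 0$. I would write out the index bookkeeping carefully — choosing whether to transport the $B_-$ factor to the left or right slot, and whether to use $\eta_{w^{-1}}$ to swap the two arguments of the Shapovalov pairing — and then the statement follows by a direct application of Lemma \ref{lemma:claimforzerominor}; the remaining verification that the relevant $x$ equals $wu$ on the nose is the routine part.
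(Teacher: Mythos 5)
Your overall strategy (Shapovalov adjointness plus Lemma \ref{lemma:claimforzerominor} plus weight bookkeeping) is the same as the paper's, but the execution has two genuine gaps. First, the factorization you base the "cleaner route" on is not available: $B_-\overline{w^{-1}}B_- = N_-\,T\,\overline{w^{-1}}\,N_-$, so the right-hand unipotent factor can only be taken in $N_-$, not in $N$. Concretely, for $G=SL_2$ and $w^{-1}=s$, every element of $L^{s}$ is upper unitriangular with $(1,1)$ entry equal to $1$, whereas every element of $N_-Ts N$ has vanishing $(1,1)$ entry; and if you instead meant the Bruhat decomposition relative to $(B_-,B)$, the Weyl group element occurring there is not $w^{-1}$ at all (any $g\in N\subset B$ lies in the identity cell $B_-B$), so the $w$-dependence is lost. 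The paper avoids this by writing $g=b_1\overline{w^{-1}}b_2$ with $b_1,b_2\in B_-$ and transposing through the Shapovalov form, so that the factor adjacent to $\overline{u}v_\lambda$ is $b_1^t\in B$ --- the hypothesis of Lemma \ref{lemma:claimforzerominor}. (A smaller slip: moving a group element across the Shapovalov form produces its transpose, not its inverse, so the factor "$a^{-1}n_-^{-1}$" is the wrong bookkeeping, though that is repairable.)

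Second, and more importantly, the step you defer as "routine" is precisely the mathematical content of the lemma. Lemma \ref{lemma:claimforzerominor} applies to vectors of the form $b\,\overline{x}\,v_\lambda$ with $b\in B$; after the correct transposition your vector has the form $b\,\overline{w}\,b'\,\overline{u}\,v_\lambda$ with $b,b'\in B$, which is not of that shape for $x=wu$, so you cannot "match $x=wu$" and apply the lemma once. What the lemma actually gives is that $b'\overline{u}v_\lambda$ is supported in weights $u\lambda+\beta$ with $\beta\in\Delta_+\cap u\Delta_-$; after applying $\overline{w}$ and the remaining $B$-factor the weights are $wu\lambda+w\beta+Q_+$, and the whole issue --- exactly the non-monotonicity of $w$ on the dominance order that you flagged --- is to show $w\beta\in\Delta_+$. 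This is where the paper uses that $wu$ is reduced together with \cite[Corollary 2.3]{TotalPositivityin} to get the inversion-set containment $\Delta_+\cap u\Delta_-\subseteq w^{-1}\Delta_+\cap u\Delta_-$. Your proposal never invokes length-additivity of $wu$ nor any such containment, so the desired conclusion $\mu\geq wu\lambda$ whenever the pairing is nonzero does not follow from what you wrote; supplying that step is what completes the proof.
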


\begin{proof}
Take $g \in L^{w^{-1}}$, then $g = b_1 w^{-1} b_2$ for $b_1, b_2 \in B_-$. Let $\mu\in X^*$ be a weight of $V(\lambda)$ so $\Delta_{u\lambda, \mu}(g) = \langle g v_\mu, uv_{\lambda} \rangle$ for vectors $v_\lambda, v_{\mu} \in V(\lambda)$ of weights $\lambda$ and $\mu$ respectively. By the definition of the Shapovalov form, $\langle g v_\mu, u v_\lambda \rangle = \langle w^{-1} b_2 v_\mu, b_2^t uv_\lambda \rangle =  \langle v_\mu, g^t uv_\lambda \rangle$, where $g^t = b_2^t w b_1^t$. 

 Note that $g^t uv_\lambda  = \sum_{\alpha \in Q} \langle v_\alpha, g^t u v_\lambda \rangle v_\alpha $ for weight vectors $v_\alpha \in V(\lambda)$ of weight $\alpha$. In fact,
  \[
 g^t u v_\lambda =  \sum_{\alpha\in X^*} \langle w^{-1} b_2 v_{\alpha}, b_1^t u v_\lambda \rangle v_\alpha.
 \]
Let $w^{-1}b_2 v_\alpha = \sum_{\eta \in X^*} \langle w^{-1}b_2 v_\alpha, v_\eta \rangle v_\eta$. Then 
  \[
 g^t u v_\lambda =  \sum_{\alpha, \eta \in X^*}\langle w^{-1}b_2 v_\alpha, v_\eta \rangle \langle  v_\eta, b_1^t u v_\lambda \rangle v_\alpha.
 \] 
 By Lemma \ref{lemma:claimforzerominor}, $\langle  v_\eta, b_1^t u v_\lambda \rangle \neq 0 \iff \eta = u\lambda + \beta$ for $\beta \in \Delta_+ \cap u \Delta_-$. Since $B_-$ always lowers the weights,  $\langle w^{-1} b_2 v_\alpha, v_\eta \rangle \neq 0 \iff w\eta = \alpha - \gamma$ for some $\gamma \in Q_+$. Thus
 \[
 \alpha = wu\lambda + w \beta + \gamma
 \]
By \cite[Corollary 2.3]{TotalPositivityin}, as $wu$ is reduced then $\Delta_+ \cap u^{-1}(\Delta_-) \subseteq \Delta_+ \cap (wu)^{-1} (\Delta_-)$. Note that $\alpha \in \Delta_+ \cap xy(\Delta_-) \iff -x^{-1}\beta \in y\Delta_+ \cap x^{-1}(\Delta_-)$. Thus we also have the inclusion
\[
\Delta_+ \cap u(\Delta_-) \subseteq w^{-1} \Delta_+ \cap u(\Delta_-).
\]
Then $\beta \in w^{-1} \Delta_+ \cap u (\Delta_-)$ as well so there exists $\delta \in \Delta_+$ such that $\beta = w^{-1} \delta \iff w\beta = \delta$. Thus $w\beta \in \Delta_+$ and $\alpha = wu\lambda + \delta + \gamma \in wu\lambda + Q_+$.  Hence 
 \[
 g^t u v_\lambda = \sum_{\gamma \in  Q_+} \langle v_{wu\lambda + \gamma}, g^t u v_\lambda \rangle v_{wu\lambda + \gamma}
 \]
 and so $\Delta_{u\lambda, \mu}(g) = 0$ for $\mu \not\geq wu\lambda$. 
\end{proof}

\begin{corollary}\label{lemma:zerominorwu}
Fix $w \in W$ and let $u \leq_R w^{-1}w_0$. For every $s_i \in S$ such that $wus_i$ is a reduced product, then $\Delta_{u\omega_i, wus_i\omega_i} =0$ on $L^{w^{-1}}$. 
\end{corollary}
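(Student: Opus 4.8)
The plan is to derive Corollary \ref{lemma:zerominorwu} directly from Lemma \ref{lemma:zerocondition}, which we may assume. The key observation is that Lemma \ref{lemma:zerocondition} says $\Delta_{u\lambda,\mu}(g) = 0$ for $g \in L^{w^{-1}}$ whenever $\mu \not\geq wu\lambda$ in the partial order on $X^*$ (i.e. whenever $\mu - wu\lambda \notin Q_+$, equivalently $wu\lambda - \mu \notin -Q_+$). So to conclude $\Delta_{u\omega_i, wus_i\omega_i} = 0$ on $L^{w^{-1}}$, I just need to verify the hypothesis with $\lambda = \omega_i$ and $\mu = wus_i\omega_i$, namely that $wus_i\omega_i \not\geq wu\omega_i$.

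First I would compute the difference $wu\omega_i - wus_i\omega_i$. Since $s_i\omega_i = \omega_i - \alpha_i$ (because $\langle \alpha_i^\vee, \omega_i\rangle = 1$), we get $u\omega_i - us_i\omega_i = u\alpha_i$, and hence $wu\omega_i - wus_i\omega_i = wu\alpha_i$. So the claim $wus_i\omega_i \not\geq wu\omega_i$ is equivalent to $wu\alpha_i \notin -Q_+$, i.e. $wu\alpha_i$ is not a non-positive sum of simple roots; since $wu\alpha_i$ is a root (being $W$-conjugate to the simple root $\alpha_i$), this is equivalent to saying $wu\alpha_i$ is a positive root. That is exactly what the hypothesis ``$wus_i$ is a reduced product'' buys us: if $\ell(wus_i) = \ell(wu) + \ell(s_i)$, then $wu$ sends the simple root $\alpha_i$ to a positive root (a standard fact: $\ell(xs_i) > \ell(x) \iff x\alpha_i \in \Delta_+$). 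So $wu\alpha_i \in \Delta_+ \subseteq Q_+ \setminus\{0\}$, hence $wu\alpha_i \notin -Q_+$, hence $wus_i\omega_i \not\geq wu\omega_i$.

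Then I would invoke Lemma \ref{lemma:zerocondition} with $\lambda = \omega_i$, $u = u$, $\mu = wus_i\omega_i$: the hypothesis $\mu \not\geq wu\lambda = wu\omega_i$ holds by the previous paragraph, so $\Delta_{u\omega_i, wus_i\omega_i}(g) = 0$ for all $g \in L^{w^{-1}}$, which is the assertion. I should also double-check that $wus_i\omega_i$ is genuinely a weight of $V(\omega_i)$ so that the generalized minor is defined — but $wus_i\omega_i$ is an extremal weight (it is $W$-conjugate to the highest weight $\omega_i$), so this is immediate and requires no argument; the hypothesis $u \leq_R w^{-1}w_0$ is only there to guarantee we stay in the regime where these minors are the relevant ones for $\Gamma \setminus \Gamma^w$ and plays no role in the vanishing itself.

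The main obstacle is really just the bookkeeping identity $wu\lambda - \mu$: one must be careful about which direction the partial order $\leq$ on $X^*$ runs and that ``$\not\geq$'' in Lemma \ref{lemma:zerocondition} means $\mu - wu\lambda \notin Q_+$, so that the vanishing kicks in precisely because $wu\alpha_i$ is a \emph{positive} root rather than a negative one. Once that sign is pinned down, the proof is a one-line application of the lemma together with the standard length-versus-positivity fact for Coxeter groups; there is no serious difficulty, and no calculation beyond $s_i\omega_i = \omega_i - \alpha_i$.
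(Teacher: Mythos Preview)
Your proposal is correct and follows exactly the paper's own argument: compute $wu\omega_i - wus_i\omega_i = wu\alpha_i$, observe this is a positive root because $wus_i$ is reduced, conclude $wus_i\omega_i \not\geq wu\omega_i$, and invoke Lemma~\ref{lemma:zerocondition} with $\lambda=\omega_i$, $\mu=wus_i\omega_i$. One small caveat on your closing remark: the hypothesis $u \leq_R w^{-1}w_0$ is not purely cosmetic, since the proof of Lemma~\ref{lemma:zerocondition} actually uses that $w\cdot u$ is reduced (which is equivalent to $u \leq_R w^{-1}w_0$ by Lemma~\ref{lemma:reducedproductandbruhat}); however, this is already implied by the stronger assumption that $wus_i$ is a reduced product, so your conclusion that it is redundant here is still correct.
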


\begin{proof}
Since $wus_i$ is reduced, then $wu\alpha_i \in \Delta_+$. As $s_i \omega_i = \omega_i - \alpha_i$, then $wu s_i \omega_i = wu\omega_i - wu\alpha_i$ and hence $wus_i\omega_i \leq wu\omega_i$. By setting $\lambda = \omega_i$  and $\mu = wus_i\omega_i$, we can apply Lemma \ref{lemma:zerocondition} to see that $\Delta_{u\omega_i, wus_i\omega_i}(g)=0$ for $g \in L^{w^{-1}}$.
\end{proof}

\begin{conjecture}\label{lemma:zerominorvw}
Fix $w \in W$, let $v \in W$. Set $u = v_w^{-1}v$ and let $s_i \not\in D_R(v)$ such that $(vs_i)_w = v_w$. Then $\Delta_{u\omega_i, vs_i\omega_i} =0$ on $L^{w^{-1}}$. 
\end{conjecture}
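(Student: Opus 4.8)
The plan is to mimic the argument of Corollary~\ref{lemma:zerominorwu}, but replacing the role of $w^{-1}w_0$ with the more delicate element $v_w^{-1}v$ produced by Lemma~\ref{lemma:vwinverse}. The key input remains Lemma~\ref{lemma:zerocondition}: for $g \in L^{w^{-1}}$, a dominant weight $\lambda$, and $\mu \in X^*$, one has $\Delta_{u\lambda,\mu}(g)=0$ whenever $\mu \not\geq wu\lambda$ in the partial order on $X^*$. So it suffices to verify the weight inequality $vs_i\omega_i \not\geq wu\omega_i$, where $u = v_w^{-1}v$, i.e.\ to show $wu\omega_i - vs_i\omega_i \notin Q_+$ or, better, that $wu\omega_i - vs_i\omega_i$ is a \emph{nonzero} element of $Q_+$ — wait, that would give the wrong direction. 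The correct target: I must show $vs_i\omega_i - wu\omega_i \notin Q_+^{\mathrm{op}}$, i.e.\ $wu\omega_i \not\leq vs_i\omega_i$; equivalently $vs_i\omega_i - wu\omega_i$ is \emph{not} a non-negative sum of simple roots. First I would rewrite $wu = w v_w^{-1} v$. Since $v = v_w\cdot(v_w^{-1}v)$ is a reduced product (by the remark following Proposition~\ref{proposition:demazureproduct}) and $w_0 = v_w \cdot (v_w^{-1}w_0)$ is reduced, the natural thing is to exploit the identity $v_w \leq_L v$ together with $v_w \leq w$.

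First I would establish that $wu\omega_i$ and $v\omega_i$ differ in a controlled way. Write $u = v_w^{-1}v$; since $v_w \le w$, choose (Lifting/Exchange) a reduced word so that $w = v_w \cdot x$ is a reduced product, hence $wu = v_w x v_w^{-1} v$. That is awkward because $x v_w^{-1}$ need not be reduced. Instead, the cleaner route: apply Lemma~\ref{lemma:vwinverse} which gives $u = v_w^{-1}v \leq_R w^{-1}w_0$, so $wu \leq_R w\cdot(w^{-1}w_0) = w_0$, i.e.\ $wu$ is an honest length-additive product $w\cdot u$ is reduced — precisely the hypothesis shape of Corollary~\ref{lemma:zerominorwu}. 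Now $s_i \notin D_R(v)$ means $vs_i > v$, so $vs_i\omega_i = v\omega_i - \langle \alpha_i^\vee,\omega_i\rangle\, v\alpha_i = v\omega_i - v\alpha_i$ with $v\alpha_i \in \Delta_+$. The condition $(vs_i)_w = v_w$ is what guarantees $v_w^{-1}(vs_i) = u s_i$ is still $\leq_R w^{-1}w_0$, so that $wus_i = w\cdot(us_i)$ is also a reduced product. Therefore $wus_i\omega_i = wu\omega_i - wu\alpha_i$ with $wu\alpha_i \in \Delta_+$, giving $wus_i\omega_i < wu\omega_i$.

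The crux is then the comparison between $vs_i\omega_i$ and $wu\omega_i$. I claim $wu\omega_i = w\cdot(v_w^{-1}v)\,\omega_i$, and I want to relate this to $vs_i\omega_i$. Applying Lemma~\ref{lemma:zerocondition} with $\lambda=\omega_i$, $\mu = vs_i\omega_i$: it gives $\Delta_{u\omega_i,\,vs_i\omega_i}(g)=0$ provided $vs_i\omega_i \not\geq w u\omega_i$. So the whole lemma reduces to the weight-theoretic assertion
\[
w\,(v_w^{-1}v)\,\omega_i \not\leq v s_i\omega_i \quad\text{in } X^*.
\]
I would prove this by a length/positivity argument: using $w_0 = w\cdot(w^{-1}w_0)$ reduced and $u s_i \leq_R w^{-1}w_0$ (from $(vs_i)_w = v_w$ via Lemma~\ref{lemma:vwinverse} applied to $vs_i$), the coroot $w u \alpha_i$ is positive and in fact $wu\alpha_i \notin \Delta_+\cap v\Delta_-$ in a way forcing $vs_i\omega_i - wu\omega_i$ to have a negative coefficient on some simple root, exactly as in the final paragraph of the proof of Lemma~\ref{lemma:s_jw}. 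I expect the main obstacle to be precisely this last positivity bookkeeping: one must show that the single "extra" positive coroot separating $wu\omega_i$ from $wus_i\omega_i$ cannot be absorbed into $Q_+$, which should follow because $wus_i = w\cdot us_i$ reduced makes $wu s_i \omega_i \leq wu\omega_i$ \emph{and} $v_w$ being maximal in $[e,v]_R\cap[e,w]$ prevents $v\omega_i$ from dominating $wu\omega_i$. I would hunt for the clean statement — probably that $\Delta^\new_{vs_i\omega_i} = \Delta_{(vs_i)_w^{-1}(vs_i)\,\omega_i,\,vs_i\omega_i}$ is the genuinely smallest nonzero minor, and $u\omega_i = v_w^{-1}v\,\omega_i$ is strictly larger — and reduce the conjecture to that; but since this is flagged as a \texttt{conjecture} in the paper, the honest expectation is that this final step resists a short proof.
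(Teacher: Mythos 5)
There is a genuine gap here, and you have in effect acknowledged it yourself: your argument is a reduction, not a proof. Via Lemma \ref{lemma:zerocondition} (with $\lambda=\omega_i$, $\mu=vs_i\omega_i$) the statement would follow from the weight inequality $vs_i\omega_i \not\geq w\,v_w^{-1}v\,\omega_i$ in $X^*$, and your bookkeeping up to that point is sound: Lemma \ref{lemma:vwinverse} gives $u=v_w^{-1}v\leq_R w^{-1}w_0$ and, applied to $vs_i$ using $(vs_i)_w=v_w$, gives $us_i\leq_R w^{-1}w_0$, so by Lemma \ref{lemma:reducedproductandbruhat} both $w\cdot u$ and $w\cdot us_i$ are length-additive and $wu\alpha_i\in\Delta_+$. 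But the inequality itself is never established, and it is exactly the hard content. Note also that it is not clear the criterion of Lemma \ref{lemma:zerocondition} is even strong enough in general: whether $vs_i\omega_i\geq wu\omega_i$ is governed by a Bruhat comparison of minimal coset representatives modulo the stabilizer $\langle s_j: j\neq i\rangle$ of $\omega_i$, and the hypothesis $(vs_i)_w=v_w$ controls $vs_i$ itself, not its coset representative; so the "positivity bookkeeping" you defer may fail to close along this route, or at least requires a new idea beyond the argument of Corollary \ref{lemma:zerominorwu} and Lemma \ref{lemma:s_jw}.

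You should also be aware that the paper does not prove this statement at all: it is stated as a conjecture, and the remark following it only verifies the two extreme cases, $v_w=w$ (where $u\leq_R w^{-1}w_0$ and the claim is exactly Corollary \ref{lemma:zerominorwu}) and $v_w=v$ (where $u=e$, $vs_i\not\leq w$, and the vanishing comes from the defining equations (\ref{equation:reducedbruhatcell}) of $L^{w^{-1}}$ via \cite[Proposition 2.8]{ClusterAlgebrasIII} rather than from the weight criterion). Your sketch essentially recovers the first special case and correctly identifies where the general case gets stuck, so your honesty about the final step is warranted; but as a proof of the conjecture it is incomplete, and any complete argument would need either to prove the weight inequality above under the hypothesis $(vs_i)_w=v_w$ or to replace Lemma \ref{lemma:zerocondition} by a sharper vanishing criterion adapted to $L^{w^{-1}}$.
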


\begin{remark}
This conjecture is known for a few special cases. When $v_w = w$, then $u= w^{-1}v$ is an initial word of $w^{-1}w_0$ and hence the conjecture is equivalent to Corollary \ref{lemma:zerominorwu}. On the other hand, when $v_w = v$ then $u = e$ so the generalized minor of interest is of the form $\Delta_{\omega_i, vs_i\omega_i}$. By assumption, $v \leq w$ but $s_i$ is such that $(vs_i)_w = v$ and hence $vs_i \not \leq w$ by maximality of $(vs_i)_w$. Thus the conjecture follows from \cite[Proposition 2.8]{ClusterAlgebrasIII} (see (\ref{equation:reducedbruhatcell})).
\end{remark}

These two results imply the edge equalities are satisfied for large enough $\gamma$. 

\begin{proposition}\label{proposition:edgeequalities}
$(M_\gamma)_{\gamma \in \Gamma}$ satisfy the edge equalities \ref{condition:vanishing} of Lemma \ref{lemma:BZdatahighestvertexw}. In other words, for every $v\in W$ and $s_i \not\in D_R(v)$ such that $\mu_{vs_i} = \mu_v$, 
\[
  M_{v\omega_i} + M_{vs_i\omega_i} = - \sum_{j \neq i} a_{j,i} M_{v\omega_j}. 
\]
\end{proposition}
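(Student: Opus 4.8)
The plan is to recognise the asserted identity as the tropicalisation of an \emph{exact} multiplicative relation among the functions $\Delta^\new_\gamma\circ\eta_{w^{-1}}^{-1}$ on the cell $L^{w^{-1}}$, a relation obtained from a classical generalised-minor identity by deleting the summands that vanish identically on $L^{w^{-1}}$.

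Fix $v\in W$ and $s_i\notin D_R(v)$ with $\mu_{vs_i}=\mu_v$; by Theorem~\ref{theorem:mu_w} this amounts to $(vs_i)_w=v_w$, and by Lemma~\ref{lemma:edgesandLusztig} the desired equality is the vanishing of the edge $\mu_v-\mu_{vs_i}$. Since each $\Delta^\new_\gamma\circ\eta_{w^{-1}}^{-1}$ is subtraction-free in Lusztig's coordinates on $L^{w^{-1}}$, and tropicalisation carries a product of such functions to the sum of their tropicalisations and an $n$-th power to multiplication by $n$, it suffices to establish the identity of subtraction-free functions on $L^{w^{-1}}$
\[
\Delta^\new_{v\omega_i}\cdot\Delta^\new_{vs_i\omega_i}\;=\;\prod_{j\neq i}\bigl(\Delta^\new_{v\omega_j}\bigr)^{-a_{ji}},
\]
and then apply $(-)\circ\eta_{w^{-1}}^{-1}$ and tropicalise; the right-hand side makes sense because the $\Delta^\new_{v\omega_j}$ are non-vanishing on $L^{w^{-1}}$ by construction.

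To obtain this, I would start from the generalised-minor exchange relation attached to the edge $(v,vs_i)$: in an arbitrary $G$ there is an identity of the shape
\[
\Delta_{\delta,\,v\omega_i}\;\Delta_{\delta',\,vs_i\omega_i}\;=\;\prod_{j\neq i}\Delta_{\delta,\,v\omega_j}^{-a_{ji}}\;+\;\sum(\text{products of minors }\Delta_{\varepsilon,\zeta}\text{ with }\varepsilon\text{ strictly smaller than }\delta),
\]
valid for appropriate weights $\delta,\delta',\varepsilon$, and specialise the row indices so that $\Delta_{\delta,\,v\omega_i}=\Delta^\new_{v\omega_i}=\Delta_{v_w^{-1}v\,\omega_i,\,v\omega_i}$, that $\Delta_{\delta',\,vs_i\omega_i}=\Delta^\new_{vs_i\omega_i}$, and that the leading monomial is $\prod_{j\neq i}(\Delta^\new_{v\omega_j})^{-a_{ji}}$; pinning down this choice of row indices is where the description of $v_w$ and of the rightmost reduced words of Section~\ref{section:bruhatintervals} enters. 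Evaluating at $g=\eta_{w^{-1}}^{-1}(\ell)$: by the very definition of $\Delta^\new$ the minor $\Delta^\new_{v\omega_i}$ has the smallest row weight among the non-vanishing minors with column index $v\omega_i$, because every minor of strictly smaller row weight vanishes on $L^{w^{-1}}$ --- this is Corollary~\ref{lemma:zerominorwu} in the case $v\geq_R w$ (equivalently $v_w=w$), which is the only case used in condition~\ref{condition:vanishing} of Lemma~\ref{lemma:BZdatahighestvertexw} since there $v=w\cdot w_\ell^{\uj}\geq_R w$, and Conjecture~\ref{lemma:zerominorvw} in the remaining cases (which can alternatively be recovered from Theorem~\ref{theorem:mu_w} once $(M_\gamma(\ell))_\gamma$ is known to be a BZ datum in $\Pw$). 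Each correction summand carries a factor $\Delta_{\varepsilon,\zeta}$ with $\varepsilon$ strictly below the defining row weight of the relevant $\Delta^\new$, hence that factor vanishes on $L^{w^{-1}}$; so only the leading monomial survives, which is precisely the multiplicative identity we reduced to. Composing with $\eta_{w^{-1}}^{-1}$ and tropicalising yields $M_{v\omega_i}+M_{vs_i\omega_i}=-\sum_{j\neq i}a_{ji}M_{v\omega_j}$ on all of $L^{w^{-1}}(\Z^\trop)$, in particular on $L^{w^{-1}}(\Z^\trop)_{\geq}$, which is condition~\ref{condition:vanishing} of Lemma~\ref{lemma:BZdatahighestvertexw}.

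The hard part is the middle step: isolating the correct generalised-minor exchange relation --- in general type and with the \emph{twisted} column indices $v\omega_i,vs_i\omega_i,v\omega_j$ rather than the ``straight'' ones of the classical case --- and verifying the bookkeeping claim that every non-leading summand really does contain a minor whose row weight is strictly below the minimal non-vanishing one, so that Corollary~\ref{lemma:zerominorwu} applies term by term. This matching of weights relies on the combinatorics of $v_w$, the Demazure product, and the rightmost words developed in Section~\ref{section:bruhatintervals}; once it is in place the final tropicalisation is purely formal.
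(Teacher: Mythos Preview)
Your approach is essentially the paper's: both derive the multiplicative identity $\Delta^\new_{v\omega_i}\,\Delta^\new_{vs_i\omega_i}=\prod_{j\neq i}(\Delta^\new_{v\omega_j})^{-a_{ji}}$ on $L^{w^{-1}}$ from a generalised-minor exchange relation whose extra term vanishes by Corollary~\ref{lemma:zerominorwu} (for $v\geq_R w$) and Conjecture~\ref{lemma:zerominorvw} (for general $v$), then tropicalise. The ``hard part'' you anticipate is in fact simpler than you suggest: the paper invokes the specific two-term identity of Berenstein--Zelevinsky \cite[Proposition~4.1]{Tensorproductmultiplicities}, which with $u=v_w^{-1}v$ reads $\Delta^\new_{v\omega_i}\Delta^\new_{vs_i\omega_i}=\Delta_{us_i\omega_i,\,v\omega_i}\Delta_{u\omega_i,\,vs_i\omega_i}+\prod_{j\neq i}(\Delta^\new_{v\omega_j})^{-a_{ji}}$, so there is a single correction factor $\Delta_{u\omega_i,\,vs_i\omega_i}$ to kill and no rightmost-word bookkeeping is required at this step.
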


\begin{proof}
By \cite[Proposition 4.1]{Tensorproductmultiplicities}, for every $u, w$ such that $\ell(us_i) = \ell(u) +1$, $\ell(wus_i) = \ell(w) + \ell(u) + 1$, 
\[
\Delta_{u \omega_i, w u\omega_i} \Delta_{us_i \omega_i, wus_i \omega_i} = \Delta_{u s_i \omega_i, w u \omega_i} \Delta_{u \omega_i, w us_i \omega_i} + \prod_{j \neq i} \Delta_{u \omega_j, wu \omega_j}^{-a_{j,i}}
\]
By Corollary \ref{lemma:zerominorwu}, $\Delta_{u\omega_i, wus_i\omega_i}=0$ on $L^{w^{-1}}$ and by tropicalizing, we obtain
\[
M_{w u\omega_i} + M_{wus_i \omega_i} = \sum_{j \neq i}(-a_{j,i}) M_{wu \omega_j}
\]
which are exactly the edge equalities. 

For $v \in W$ arbitrary, set $u = v_w^{-1}v$. For $s_i$ such that $\ell(vs_i) = \ell(v) +1$ and $(vs_i)_w = v_w$, then by \cite[Proposition 4.1]{Tensorproductmultiplicities}
\[
\Delta^\text{new}_{v\omega_i} \Delta^\text{new}_{v s_i \omega_i} = \Delta_{u s_i \omega_i, v \omega_i} \Delta_{u \omega_i, v s_i \omega_i} + \prod_{j \neq i} \left(\Delta^\text{new}_{v \omega_j}\right)^{-a_{j,i}}
\]
By Conjecture \ref{lemma:zerominorvw}, $\Delta_{u\omega_i, vs_i\omega_i} =0$ on $L^{w^{-1}}$ and hence by tropicalizing, 
\[
M_{v\omega_i} + M_{vs_i\omega_i} = \sum_{j \neq i} (-a_{ji}) M_{v\omega_j}.
\]
So we have proved that for every $v \in V$ such that $\mu_{vs_i} = \mu_v = \mu_{v_w}$, the edge equalities $M_{v\omega_i} + M_{vs_i\omega_i} = \sum_{j \neq i} (-a_{ji}) M_{v\omega_j}$ are satisfied.  
\end{proof}

\begin{theorem}\label{theorem:BZdata}
There is a bijection $L^{w^{-1}}(\Z^\trop)_{\geq} \longrightarrow \Pw$ by 
\[
\ell \rightarrow (M_\gamma(\ell))_{\gamma \in \Gamma}.
\]
\end{theorem}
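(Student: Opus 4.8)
The plan is to build the bijection by composing the known bijection for $N$ (Theorem \ref{theorem:tropicalpoints}) with a comparison between the tropical points of $N$ and those of $L^{w^{-1}}$, and then to check that the image consists exactly of the BZ data of polytopes in $\Pw$. First I would recall that $N(\Z^\trop)_{\geq} \cong \P$ via $\ell \mapsto (M_\gamma(\ell))_{\gamma\in\Gamma}$ where on the $N$-side $M_\gamma = (\Delta_\gamma\circ\eta^{-1})^\trop$, and that under the Lusztig chart $x_\ui$ for a reduced word $\ui$ of $w_0$ the non-negative tropical points are $\N^m$ and correspond to the Lusztig data $n_\bullet^\ui(P)$. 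For $L^{w^{-1}}$, fix a reduced word $\ui$ of $w_0$ whose initial segment $(i_1,\dots,i_{\ell(w^{-1}w_0)})$ is a reduced word for $w^{-1}$; then the chart $x_\ui$ on $L^{w^{-1}}$ uses only the first $\ell(w^{-1}w_0)$ coordinates, so $L^{w^{-1}}(\Z^\trop) \cong \Z^{\ell(w^{-1}w_0)}$ and $L^{w^{-1}}(\Z^\trop)_{\geq} \cong \N^{\ell(w^{-1}w_0)}$. The inclusion $L^{w^{-1}} \hookrightarrow N$ realizes $L^{w^{-1}}(\Z^\trop)_{\geq}$ as the subset of $N(\Z^\trop)_{\geq} \cong \N^m$ where the last $m - \ell(w^{-1}w_0)$ Lusztig coordinates (with respect to a reduced word of $w_0$ refining one of $w^{-1}w_0$ at the front, but recording the Lusztig data along a path through $\mu_w$) vanish — that is, exactly the set characterized in Corollary \ref{corollary:vw=w}, which is in bijection with $\Pw$.

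Next I would make the coordinate bookkeeping precise. The subtlety is that the Lusztig chart on $L^{w^{-1}}$ parametrizes via $x_\ui$ for $\ui$ a reduced word of $w^{-1}$, while the MV-polytope Lusztig data uses the $y$-coordinates $y_\ui = \eta_{w^{-1}}^{-1}\circ x_\ui$ and a reduced word of $w_0$. So I would argue: (a) by Theorem \ref{theorem:tropicalpoints} applied to $N$, any $\ell \in N(\Z^\trop)_{\geq}$ gives a BZ datum $(M_\gamma(\ell))_{\gamma\in\Gamma^w}$ via the restriction to $\Gamma^w$, where here $M_\gamma=(\Delta_\gamma\circ\eta_{w^{-1}}^{-1})^\trop$; (b) the points of $N(\Z^\trop)_{\geq}$ that lie in the image of $L^{w^{-1}}(\Z^\trop)_{\geq}$ are precisely those whose associated polytope $P$ satisfies the vanishing condition of Corollary \ref{corollary:vw=w}, i.e. $P\in\Pw$ — this is because the equations cutting out $L^{w^{-1}}$ inside $N$ (the vanishing minors $\Delta_{\omega_i,v\omega_i}$ for $v\omega_i\notin\Gamma^w$, from \eqref{equation:reducedbruhatcell}) tropicalize to the edge equalities that by Lemma \ref{lemma:BZdatahighestvertexw} characterize $\Pw$. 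Then the functions $M_\gamma$ for $\gamma\notin\Gamma^w$, defined via $\Delta_\gamma^\new$, recover the remaining hyperplane data of $P$: I would check that on $L^{w^{-1}}$ the tropicalized $\Delta^\new_{v\omega_i}$ agrees with the hyperplane value $M_{v\omega_i}$ of the corresponding MV polytope, using Theorem \ref{theorem:mu_w} (so $\mu_v=\mu_{v_w}$) together with the defining formula $\Delta^\new_{v\omega_i}=\Delta_{v_w^{-1}v\omega_i, v\omega_i}$ and the identity $M_{v\omega_i}=\langle\mu_v, v\omega_i\rangle = \langle\mu_{v_w}, v\omega_i\rangle$.

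To assemble the bijection: define $\Psi: L^{w^{-1}}(\Z^\trop)_{\geq} \to \Pw$ by $\ell\mapsto$ the polytope with hyperplane data $(M_\gamma(\ell))_{\gamma\in\Gamma}$. Well-definedness — that $(M_\gamma(\ell))_{\gamma\in\Gamma}$ really is a BZ datum of a polytope in $\Pw$ — follows from: conditions (i) and (ii) of Definition \ref{definition:BZdata} on the subcollection $\Gamma^w$ come from Theorem \ref{theoremB}(ii) and (i) respectively (tropical Plücker relations and edge inequalities), the normalization condition (iii) is immediate since $\Delta_{\omega_i}=1$ on $L^{w^{-1}}$ and the top vertex condition holds by construction; the remaining edge inequalities and the crucial edge \emph{equalities} of Lemma \ref{lemma:BZdatahighestvertexw}\ref{condition:vanishing} are exactly Proposition \ref{proposition:edgeequalities}. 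Surjectivity: given $P\in\Pw$, its Lusztig data along a path through $\mu_w$ with respect to a suitable reduced word lies in $\N^{\ell(w^{-1}w_0)}$ (Corollary \ref{corollary:vw=w}), which under the Lusztig chart identification is a point of $L^{w^{-1}}(\Z^\trop)_{\geq}$ mapping to $P$. Injectivity: two points of $L^{w^{-1}}(\Z^\trop)_{\geq}$ with the same image have the same Lusztig data along that path, hence (via the $N$-theory) the same preimage in $\N^{\ell(w^{-1}w_0)}$, hence are equal.

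The main obstacle I expect is step (b)/the compatibility check that the map $\ell\mapsto (M_\gamma(\ell))_\gamma$ restricted from the $N$-side genuinely lands in $\Pw$ and that the $\Delta^\new$-values match the hyperplane data of that polytope — this requires carefully tracking how the change of coordinates $\eta_{w^{-1}}$ and the passage from $x_\ui$ on $N$ to $x_\ui$ on $L^{w^{-1}}$ interact, and invoking Conjecture \ref{lemma:zerominorvw} (which the paper leaves open in general) inside Proposition \ref{proposition:edgeequalities}. Consequently the cleanest honest statement is that the bijection holds unconditionally for the subcollection $(M_\gamma)_{\gamma\in\Gamma^w}$ and holds in full granting Conjecture \ref{lemma:zerominorvw}; I would flag this dependence explicitly in the proof rather than hide it, or alternatively prove injectivity/surjectivity using only the $\Gamma^w$-data (which already determines $P$ by Theorem \ref{theorem:MVpolytopeslusztigdata} since those weights suffice to read off one full minimal-path Lusztig tuple) and then separately record that the $\gamma\notin\Gamma^w$ values are the correct ones modulo the conjecture.
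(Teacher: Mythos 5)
Your overall route is the paper's: well-definedness of $\ell\mapsto(M_\gamma(\ell))_{\gamma\in\Gamma}$ via Proposition \ref{proposition:edgeequalities} together with Lemma \ref{lemma:BZdatahighestvertexw}, and bijectivity by passing from BZ data to Lusztig data along a reduced word of $w_0$ beginning with a reduced word of $w$, using Corollary \ref{corollary:vw=w}/Proposition \ref{proposition:zeros} and Theorem \ref{theorem:MVpolytopeslusztigdata} to identify $\Pw$ with $\N^{\ell(w)}$ and matching this with the tropical points of $L^{w^{-1}}$. Your flagging of the dependence on Conjecture \ref{lemma:zerominorvw} is also legitimate: the paper's proof of Proposition \ref{proposition:edgeequalities} invokes that conjecture for general $v$, so the theorem as used is conditional outside the cases covered by Corollary \ref{lemma:zerominorwu}.

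However, there is a concrete error that recurs throughout your count: you systematically swap $\ell(w)$ and $\ell(w^{-1}w_0)$. The positive atlas on $L^{w^{-1}}$ consists of charts $x_\ui$ for $\ui$ a reduced word of $w^{-1}$, so it has $\ell(w^{-1})=\ell(w)$ coordinates and $L^{w^{-1}}(\Z^\trop)_{\geq}\cong\N^{\ell(w)}$, not $\N^{\ell(w^{-1}w_0)}$; correspondingly, Corollary \ref{corollary:vw=w} forces the \emph{last} $\ell(w_0)-\ell(w)$ Lusztig coordinates of $P\in\Pw$ to vanish, leaving free data in $\N^{\ell(w)}$ (this is exactly the identification the paper uses in its final step). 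As written, your surjectivity/injectivity argument counts against the wrong torus, so it would not close; fixing the lengths restores the paper's argument verbatim. A smaller point: your step (b), asserting that the minors cutting out $L^{w^{-1}}$ inside $N$ ``tropicalize to the edge equalities,'' is not a valid mechanism — an identically vanishing minor is not a positive function and cannot be tropicalized directly. The paper instead applies the generalized minor identity of \cite{Tensorproductmultiplicities}, kills one term on $L^{w^{-1}}$ by Corollary \ref{lemma:zerominorwu} (resp.\ Conjecture \ref{lemma:zerominorvw}), and only then tropicalizes the resulting subtraction-free identity; since you ultimately route through Proposition \ref{proposition:edgeequalities} this is repairable, but the heuristic as stated should not be leaned on.
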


\begin{proof}
First, we show is that the collection $(M_\gamma)_{\gamma\in \Gamma}$ is the BZ datum of an MV polytope in $\Pw$. The collection $(M_{\gamma})_{\gamma \in \Gamma^w}$ satisfies the tropical Pl\"{u}cker relations as $\Delta_\gamma$ satisfies the Pl\"{u}cker relations. The collection $(M_\gamma)_{\gamma \in \Gamma}$ satisfies the edge equalities \ref{condition:vanishing} of Lemma \ref{lemma:BZdatahighestvertexw} by Proposition \ref{proposition:edgeequalities} and thus we can recursively define these tropical functions by the collection $(M_\gamma)_{\gamma \in \Gamma^w}$ using the relation (\ref{equation:vanishing}). It is easy to see that these $(M_{\gamma})_{\gamma \in \Gamma\backslash \Gamma^w}$ will also satisfy the tropical Pl\"{u}cker relations, thus $(M_\gamma)_{\gamma \in \Gamma}$ is the BZ datum of some MV polytope, $P \in \P$. Finally, by Lemma \ref{lemma:BZdatahighestvertexw}, $P \in \Pw$ and so this map is well defined.

To show this map is a bijection, fix a reduced word $\ui = (i_1, \dots, i_m)$ of $w_0$ such that a $(i_1, \dots, i_{\ell(w)})$ is a reduced word for $w$. The map 
\begin{align}\label{equation:bijectionfromBZtolusztig}
(M_\gamma(\ell))_{ \gamma \in \Gamma} \mapsto \left(-M_{w_k^\ui s_{k+1} \omega_{k+1}} - M_{ w_k^\ui \omega_{k+1}} + \sum_{j \neq i_{k+1}} a_{j, i_{k+1}} M_{w_{k+1}^\ui \omega_j}\right)_{k=0}^{m-1}
\end{align}
sends the BZ data of $P$ to the Lusztig data of $P$ with respect to the reduced word of $\ui$. By Proposition \ref{proposition:zeros}, $n_{k} =0$ for $k > \ell(w)$ so by Theorem \ref{theorem:MVpolytopeslusztigdata}, (\ref{equation:bijectionfromBZtolusztig}) is a bijection from $\Pw \rightarrow \N^{\ell(w)}$.  But $L^{w^{-1}}(\Z^\trop)_\geq \cong \N^{\ell(w)}$ so by composing these maps, the inverse $(M_\gamma(\ell))_{\gamma \in \Gamma} \mapsto \ell$ is a bijection. 
\end{proof}

\bibliographystyle{alpha}
\bibliography{bibliography.bib}

\end{document}